\documentclass[a4paper, reqno]{amsart}
\usepackage[margin=1.5in]{geometry}
\usepackage[utf8]{inputenc}
\usepackage[english]{babel}

\usepackage{csquotes}
\usepackage{graphicx}
\usepackage[utf8]{inputenc}
\usepackage[T1]{fontenc}
\usepackage[dvipsnames]{xcolor}
\usepackage{subfiles}

\usepackage[mathscr]{eucal}
\usepackage[shortlabels]{enumitem}
\usepackage[super]{nth}
\usepackage{leftindex}
\usepackage[normalem]{ulem} 

\usepackage[backend=biber,
    style=ext-alphabetic,
    sorting=nyt,
    giveninits=true,
    isbn=false,
    maxalphanames=4,
    date=year,
    maxbibnames=99,
    backref=false,
    articlein=false]{biblatex}
\DeclareRobustCommand{\SkipTocEntry}[5]{}

\usepackage{amsmath}
\usepackage{amssymb}

\usepackage{stmaryrd}
\usepackage{amsthm}
\usepackage{graphicx}

\usepackage{dsfont} 
\usepackage{mathtools}
\usepackage{xparse}
\usepackage{tikz-cd}
\tikzset{
    symbol/.style={%
        draw=none,
        every to/.append style={%
            edge node={node [sloped, allow upside down, auto=false]{$#1$}}}
    }
}
\usepackage{quiver}
\usepackage{spectralsequences}

\usepackage{aliascnt}
\let\oldtheorem\newtheorem
\RenewDocumentCommand{\newtheorem}{s m o m O{}}{%
\IfBooleanTF{#1}%
{\oldtheorem*{#2}{#4}}%
{\IfNoValueTF{#3}{\oldtheorem{#2}{#4}[#5]}%
{\newaliascnt{#2}{#3}%
\oldtheorem{#2}[#2]{#4}%
\aliascntresetthe{#2}}}}

\usepackage[pdfborder={0 0 0}]{hyperref}
\hypersetup{
   colorlinks=true,
   allcolors=.,
   bookmarksdepth = 3
}
\usepackage[capitalize, nameinlink]{cleveref}
\crefformat{equation}{#2(#1)#3}
\Crefformat{equation}{#2(#1)#3}
\crefname{section}{Section}{Sections}
\crefname{subsection}{Section}{Sections}
\crefname{subsubsection}{Section}{Sections}
\crefname{subappendix}{Section}{Sections}
\crefname{subsubappendix}{Section}{Sections}
\crefname{construction}{Construction}{Constructions}
\crefname{observation}{Observation}{Observations}
\crefname{introcorollary}{Corollary}{Corollaries}

\usepackage{microtype}

\theoremstyle{plain}
\newtheorem{theorem}{Theorem}[section]
\newtheorem{introtheorem}{Theorem}
\newtheorem{introcorollary}[introtheorem]{Corollary}
\newtheorem{lemma}[theorem]{Lemma}
\newtheorem{proposition}[theorem]{Proposition}
\newtheorem{corollary}[theorem]{Corollary}

\newtheorem*{theorem*}{Theorem}
\newtheorem*{claim*}{Claim}

\Crefname{introtheorem}{Theorem}{Theorems}

\theoremstyle{definition}
\newtheorem{definition}[theorem]{Definition}
\newtheorem{remark}[theorem]{Remark}
\newtheorem{example}[theorem]{Example}

\newtheorem{construction}[theorem]{Construction}

\newtheorem{warning}[theorem]{Warning}
\newtheorem{observation}[theorem]{Observation}

\theoremstyle{remark}

\makeatletter
\usepackage{contour}

\contourlength{0.6pt} 

\makeatletter
\DeclareRobustCommand{\myuline}[2][0pt]{%
  \ifmmode
    \uline{\hphantom{#2}\kern-#1}%
    \kern#1%
    \mathllap{\mathpalette\my@cont@{#2}}%
  \else
    \uline{\phantom{#2}\kern-#1}%
    \kern#1%
    \llap{\contour{white}{#2}}%
  \fi
}
\newcommand{\my@cont@}[2]{\contour{white}{\mbox{$\m@th#1#2$}}}
\makeatother

\definecolor{cbblue}{RGB}{0,114,178}
\definecolor{cborange}{RGB}{230,159,0}

\DeclareMathOperator{\im}{Im}

\DeclareMathOperator{\End}{End}
\DeclareMathOperator{\coEnd}{coEnd}

\newcommand{\id}{\mathrm{id}}
\newcommand{\bartimes}{\mathbin{\bar\times}}
\DeclareMathOperator*{\colim}{colim}

\DeclareMathOperator{\ev}{ev}

\DeclareMathOperator{\free}{free}
\DeclareMathOperator{\fgt}{fgt}
\DeclareMathOperator{\forget}{forget}

\DeclareMathOperator{\triv}{triv}

\DeclareMathOperator{\LMod}{LMod}
\DeclareMathOperator{\RMod}{RMod}
\DeclareMathOperator{\BMod}{BiMod}

\DeclareMathOperator{\CMon}{CMon}

\DeclareMathOperator{\Alg}{Alg}
\DeclareMathOperator{\coAlg}{coAlg}
\DeclareMathOperator{\CAlg}{CAlg}
\DeclareMathOperator{\coCAlg}{coCAlg}
\DeclareMathOperator{\Map}{Map}
\DeclareMathOperator{\ulmap}{\myuline{\mathrm{Map}}}
\DeclareMathOperator{\ulnat}{\myuline{\mathrm{Nat}}}
\DeclareMathOperator{\Sym}{Sym}

\DeclareMathOperator{\cross}{cr}

\DeclareMathOperator{\Bahr}{Bar}
\DeclareMathOperator{\Cobar}{Cobar}

\DeclareMathOperator{\const}{const}

\DeclareMathOperator{\Sh}{Sh}
\DeclareMathOperator{\Sp}{Sp}
\DeclareMathOperator{\red}{red}

\newcommand{\hide}[1]{}

\newlist{numberenum}{enumerate}{1}
\setlist[numberenum]{\upshape(\arabic*)}

\newcommand{\Fun}{\mathrm{Fun}}
\newcommand{\LaxFun}{\mathrm{LaxFun}}
\newcommand{\OplaxFun}{\mathrm{OplaxFun}}

\newcommand{\FunL}{\mathrm{Fun}^{\mathrm{L}}}

\newcommand{\Funtwo}{\mathrm{Fun}_2}

\newcommand{\FunLM}{\Fun_{\lten}}
\newcommand{\FunRM}{\Fun_{\rten}}
\newcommand{\FunBM}{\Fun_{\bten}}
\newcommand{\iFunLM}{\underline\Fun_{\lten}}
\newcommand{\iFunRM}{\underline\Fun_{\rten}}
\newcommand{\FunLL}[1]{\FunLM}
\newcommand{\FunRR}[1]{\FunRM}
\newcommand{\FunBB}[2]{\FunBM}
\newcommand{\iFunL}[1]{\iFunLM}
\newcommand{\iFunR}[1]{\iFunRM}

\newcommand{\Fin}{\mathrm{Fin}}

\newcommand{\cA}{\sA}
\newcommand{\cB}{\sB}
\newcommand{\cC}{\sC}
\newcommand{\cD}{\sD}
\newcommand{\cE}{\sE}

\newcommand{\cM}{\sM}

\newcommand{\cO}{\sO}
\newcommand{\cP}{\sP}

\newcommand{\cV}{\sV}

\newcommand{\sA}{\mathscr{A}}
\newcommand{\sB}{\mathscr{B}}
\newcommand{\sC}{\mathscr{C}}
\newcommand{\sD}{\mathscr{D}}
\newcommand{\sE}{\mathscr{E}}
\newcommand{\sF}{\mathscr{F}}

\newcommand{\sM}{\mathscr{M}}

\newcommand{\sO}{\mathscr{O}}
\newcommand{\sP}{\mathscr{P}}
\newcommand{\sQ}{\mathscr{Q}}

\newcommand{\sT}{\mathscr{T}}

\newcommand{\sV}{\mathscr{V}}
\newcommand{\sW}{\mathscr{W}}
\newcommand{\sX}{\mathscr{X}}
\newcommand{\sY}{\mathscr{Y}}
\newcommand{\sZ}{\mathscr{Z}}

\newcommand{\LSp}{{L\mathrm{Sp}}}
\newcommand{\St}{\mathrm{St}}

\newcommand{\diff}{\mathscr{D}\mathrm{iff}}
\newcommand{\diffsp}{\mathscr{D}\mathrm{iff}_{\mathrm{St}}}
\newcommand{\Diff}{\diff}

\newcommand{\fdiffsp}{\mathscr{D}\mathrm{iff}^{\mathrm{FSym}}_{\mathrm{St}}}

\newcommand{\presl}{\mathscr{P}\mathrm{r}^{\mathrm{L}}}
\newcommand{\preslst}{\mathscr{P}\mathrm{r}^{\mathrm{L}}_{\mathrm{St}}}

\newcommand{\pressym}{\mathscr{P}\mathrm{r}^{\mathrm{Sym}}}

\newcommand{\pressymst}{\mathscr{P}\mathrm{r}^{\mathrm{Sym}}_{\mathrm{St}}}

\newcommand{\pressymstc}{\mathscr{P}\mathrm{r}^{\mathrm{Sym}}_{\mathrm{St}, \geq 1}}

\newcommand{\Pfin}{\mathscr{P}^{\mathrm{fin}}}
\newcommand{\Spc}{\mathscr{S}}

\newcommand{\Cat}{\mathscr{C}\mathrm{at}}

\newcommand{\PreCattwo}{\mathscr{P}\mathrm{re}\mathscr{C}\mathrm{at}_2}

\newcommand{\MMor}{\mathscr{M}\mathrm{or}}

\newcommand{\Lie}{\mathbb{L}}

\newcommand{\Env}{\mathrm{Env}}

\newcommand{\op}{\mathrm{op}}

\newcommand{\coop}{\mathrm{coop}}

\newcommand{\SymFun}{\mathrm{SymFun}}
\newcommand{\SymFunlin}{\mathrm{SymFun}^{\mathrm{lin}}}

\newcommand{\SSeq}{\mathrm{SSeq}}

\DeclareMathOperator{\Ind}{Ind}
\DeclareMathOperator{\Coind}{Coind}

\newcommand{\un}{\smallint}

\newcommand{\Tw}{\mathrm{Tw}}

\DeclareMathOperator{\Ar}{Ar}

\newcommand{\Exc}{\mathrm{Exc}}

\newcommand{\mlin}{P_{\vec{1}}}

\newcommand{\maAlg}{\mathcal{G}_{\mathrm{Alg}}}

\newcommand{\reAlg}{\Lambda_{\mathrm{Alg}}}

\newcommand{\Sph}{\mathbb{S}}
\newcommand{\unit}{\mathbf{1}}

\newcommand{\bbR}{\mathbb{R}}

\makeatletter
\newcommand{\oset}[3][0ex]{%
	\mathrel{\mathop{#3}\limits^{
			\vbox to#1{\kern-2\ex@
				\hbox{$\scriptstyle#2$}\vss}}}}
\makeatother
\newcommand{\eqarrow}{\oset[-.16ex]{\sim}{\longrightarrow}}

\tikzset{
	rot90/.style={anchor=south, rotate=90, inner sep=.5mm}
} 

\NewDocumentCommand\derprojlim{e{_}}{\mathchoice
{\varprojlim  \IfValueT{#1}{_{\mathclap{#1}}}{}^{\!1\!}\mathop{}}
{\varprojlim^1  \IfValueT{#1}{_{#1}}}
{\varprojlim^1  \IfValueT{#1}{_{#1}}}
{\varprojlim^1  \IfValueT{#1}{_{#1}}}}

\newcommand{\Op}{\mathrm{Op}}
\newcommand{\Mon}{\mathrm{Mon}}

\newcommand{\lax}{\mathrm{lax}}
\newcommand{\oplax}{\mathrm{oplax}}

\newcommand{\lten}{\mathrm{LM}}
\newcommand{\rten}{\mathrm{RM}}
\newcommand{\bten}{\mathrm{BM}}

\makeatletter
 \def\subsection{\@startsection{subsection}{1}%
 \z@{.7\linespacing\@plus\linespacing}{.5\linespacing}%
 {\normalfont\bfseries\centering}}
\makeatother

\renewcommand{\subset}{\subseteq}

\renewcommand{\epsilon}{\varepsilon}

\begin{document}

\title{The product rule in Goodwillie calculus}

\author{Max Blans}
\address{University of Oxford}
\email{max.blans@maths.ox.ac.uk}

\author{Thomas Blom}
\address{Max Planck Institute for Mathematics, Bonn}
\email{blom@mpim-bonn.mpg.de}

\begin{abstract}
In this paper, we prove a product rule for Goodwillie derivatives: given a differentiable $\infty$-category $\sC$ whose stabilization is equivalent to the $\infty$-category $\Sp$ of spectra, we show that the derivatives functor 
\[
\partial_* \colon \Fun^\omega(\sC, \Sp) \to \RMod_{\partial_*{\id_\sC}}(\SSeq(\Sp))
\]
is strong symmetric monoidal, where the source is equipped with the pointwise tensor product and the target with Day convolution.
Since the Koszul dual of $\partial_*{\id_\sC}$ can be recovered as a coendomorphism operad from Day convolution, this product rule is useful for calculating the operad $\partial_*{\id_\sC}$ in examples.

We derive the product rule as a consequence of the more general statement that taking derivatives preserves cartesian products on the $(\infty, 2)$-categorical level. In fact, the main theme of this paper is that the extraction of Goodwillie derivatives preserves a lot of structure when regarded as a functor of $(\infty, 2)$-categories: apart from products, it also preserves cotensors and certain pullbacks. 

We illustrate how our results can be used to calculate Goodwillie derivatives by determining the operad structure on the derivatives of the identity functor in pointed spaces, algebras over an operad and sheaves on a site.

\end{abstract}

\maketitle

\setcounter{tocdepth}{1} 
\tableofcontents

\section{Introduction}

In \cite{blansblom2025chainrulegoodwilliecalculus} we showed that for a large class of categories\footnote{We write category for $(\infty,1)$-category.} the Goodwillie derivatives of the identity functor carry the structure of an operad.
This generalizes the well-known result of Ching that the derivatives of the identity functor in pointed spaces are equivalent to the spectral Lie operad.
It is the purpose of the present paper to introduce techniques for calculating these operad structures.

Let us start by describing the setting to which our results apply.
Suppose that $\sC$ is a compactly generated, or more generally a differentiable category such that its stabilization is equivalent to the category $\Sp$ of spectra.
Then the derivatives $\partial_*{\id_\sC}$ carry the structure of an operad in spectra and for every 
filtered colimit preserving functor $F \colon \sC \to \Sp$ the derivatives $\partial_*F$ form a right $\partial_*{\id_\sC}$-module in the category $\SSeq(\Sp)$ of symmetric sequences in $\Sp$.
We will write $\Fun^\omega(\sC, \Sp)$ for the category of filtered colimit preserving functors from $\sC$ to $\Sp$.
The category $\SSeq(\Sp)$ has a symmetric monoidal structure given by Day convolution, which lifts to the category $\RMod_{\partial_*{\id_\sC}}(\SSeq(\Sp))$.
Our first main result is the following product rule for Goodwillie derivatives.

\begin{introtheorem}[Product rule] \label{introthm: product-rule}
    The Goodwillie derivatives functor
    \[
    \partial_* \colon \Fun^{\omega}(\sC, \Sp) \to \RMod_{\partial_*{\id_\sC}}(\SSeq(\Sp))
    \]
    admits a strong symmetric monoidal structure, where the source is equipped with the pointwise tensor product and the target is equipped with Day convolution.
\end{introtheorem}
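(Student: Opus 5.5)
We derive this from the statement announced in the abstract: extracting derivatives preserves cartesian products at the $(\infty,2)$-categorical level. The pointwise tensor product on $\Fun^\omega(\sC,\Sp)$ is built from products and the smash product, so a product-preserving derivatives construction should carry one monoidal structure to the other.

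First, we factor the pointwise tensor product of $F,G\colon \sC\to\Sp$ as
\[
\sC \xrightarrow{\Delta} \sC\times\sC \xrightarrow{F\times G} \Sp\times\Sp \xrightarrow{\wedge} \Sp.
\]
By the chain rule from our previous work \cite{blansblom2025chainrulegoodwilliecalculus}, applied to functors of several variables (differentiable categories with multi-variable derivatives), $\partial_*(F\otimes G)$ is the composite of $\partial_*(\wedge)$, $\partial_*(F\times G)$ and $\partial_*(\Delta)$. Second, we compute each factor.
\begin{itemize}
\item Product preservation identifies the stabilization of $\sC\times\sC$ with $\Sp\times\Sp$. The derivatives of $F\times G$ are then the external pair $(\partial_*F,\partial_*G)$, a right module over $\partial_*\id_{\sC\times\sC}$, which is the "two-coloured product" of $\partial_*\id_\sC$ with itself.
\item $\partial_*\wedge$ is the bisymmetric sequence concentrated in bidegree $(1,1)$ with value $\Sph$.
\item $\partial_*\Delta$ has value $\Sph[\Sigma_n/(\Sigma_i\times\Sigma_j)]$ in arity $n=i+j$, since $\Delta$ is linear in each coordinate.
\end{itemize}
Composing these gives
\[
\partial_n(F\otimes G)\simeq\bigvee_{i+j=n}\Sigma_{n+}\wedge_{\Sigma_i\times\Sigma_j}\partial_iF\wedge\partial_jG,
\]
which is the Day convolution formula. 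The right $\partial_*\id_\sC$-action is the one induced through $\partial_*\Delta$, and this should match the diagonal action defining Day convolution on right modules.

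Third, we promote this to a coherent strong symmetric monoidal structure. The pointwise tensor product is the cocartesian-fibred version of the construction above, taken over $\Delta\colon \sC\to\sC^{\times k}$ for all $k$, with the symmetric group actions. We express it as an $\infty$-operad map built from the functoriality of the $(\infty,2)$-functor $\partial_*$ on the cartesian monoidal structure of differentiable categories. Concretely:
\begin{itemize}
\item $\partial_*$ preserves products, so it is cartesian monoidal and sends the commutative algebra structure on $\sC$ (diagonals) to a commutative structure on $\partial_*\id_\sC$ in a suitable bimodule $(\infty,2)$-category.
\item It sends $\Sp$ with its smash product, an algebra in the lax sense, to $\SSeq$ with Day convolution.
\end{itemize}
Taking mapping categories $\Fun^\omega(\sC,\Sp)\to\RMod_{\partial_*\id}(\SSeq)$ then gives a lax monoidal functor. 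We check that it is strong using the explicit formula from the second step.

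The main obstacle is this coherence step. We need the derivatives functor as a genuinely $(\infty,2)$-functor preserving products, together with an identification of the induced structure on $\RMod_{\partial_*\id}$ with Day convolution. I would attack the first part first, by setting $\partial_*$ up as a functor of $(\infty,2)$-categories into a Morita-type $(\infty,2)$-category of symmetric-sequence bimodules and checking that products go to products there. I would then identify the monoidal structure on the mapping category via the universal property of Day convolution.
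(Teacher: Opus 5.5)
Your overall architecture matches the paper's. $\partial_*\colon\diff\to\MMor$ is a strong $2$-functor preserving finite products, so it sends the nonunital commutative monoid $(\Sp,\wedge)$ in $\diff$ to a nonunital commutative monoid on $(\Sp,\unit_{\Sp})$ in $\MMor$. The mapping category $\RMod_{\partial_*\id_\sC}(\SSeq(\Sp))\simeq\MMor((\Sp,\partial_*\id_\sC),(\Sp,\unit_{\Sp}))$ then inherits a pointwise structure for which $\partial_*$ is automatically strong, not just lax.

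The genuine gap is identifying this structure with Day convolution, which is where the real content of the theorem lies. Knowing that the binary product has underlying symmetric sequence $\bigoplus_{i+j=n}\Ind^{\Sigma_n}_{\Sigma_i\times\Sigma_j}A_i\otimes B_j$ does not identify two symmetric monoidal structures. There is also no universal property of Day convolution on $\RMod_{\cO}(\SSeq(\Sp))$ to appeal to: being the free stable presentably symmetric monoidal category on one generator is a property of $\SSeq(\Sp)$ alone.

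The paper closes this gap in two steps. First, it defines Day convolution on right modules as the pointwise structure induced by a second commutative monoid structure on $\Sp$ in $\pressymst$, built from the universal property of $\cP_{\LSp}$ and duality for $\Sym$. Second, it proves a rigidity statement: there is a unique nonunital commutative monoid structure on $\Sp$ in $\pressymst$ whose multiplication is the arity-$2$ ``Hessian'' projection $\Sym(\Sp\oplus\Sp)\to\Sp$. That uniqueness needs the appendix result $\partial_*\circ\Lambda\simeq\id$ on $\pressymstc$. This moves the question to nonunital commutative algebra structures on $\Sp$ in $\presl$ with multiplication $\otimes$, which are quasi-unital, hence unital, hence unique because $\Sp$ is idempotent. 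Your computation of $\partial_*\wedge$ is exactly the input to this uniqueness, not a substitute for it.

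The second gap is the unit. The statement concerns all finitary functors, with $\partial_0F=F(*)$ and unit $\const_{\Sph}$. However, $(\Sp,\wedge)$ is only a \emph{nonunital} monoid in $\diff$, since a reduced functor $\ast\to\Sp$ must pick out $0$. Moreover, the chain rule you invoke in your first step does not apply when the inner functor $F\times G$ is unreduced. So your argument yields at best a nonunital strong symmetric monoidal functor on $\Fun^{\ast,\omega}(\sC,\Sp)$.

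The paper upgrades this by showing both sides are unitalizations in $\CAlg(\presl)$. Namely, $\Fun^{\omega}(\sC,\Sp)\simeq\Fun^{\ast,\omega}(\sC,\Sp)\oplus\Sp$ and $\RMod_{\cO}(\SSeq)\simeq\RMod_{\cO}(\SSeq_{\geq 1})\oplus\Sp$ (using $\cO(0)=0$), and one then applies $(-)^{\mathrm{un}}$.

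A smaller slip: $\partial_*\Delta$ is not $\Sph[\Sigma_n/(\Sigma_i\times\Sigma_j)]$. Since $\Delta$ preserves limits and colimits, $\partial_*\Delta$ is free on $\partial_1\Delta$. The induced representations come instead from composing $\partial_*\wedge$, which sits in bidegree $(1,1)$, with the pair $(\partial_*F,\partial_*G)$.
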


\begin{remark}
    In the statement of this theorem and throughout the rest of this introduction, one can replace $\Sp$ by any of its stable presentable localizations.
\end{remark}

\begin{remark}
    Hahn and Yuan proved a product rule on the level of Goodwillie towers in \cite{HahnYuan2019}, which yields a symmetric monoidal functor
    \[
    \partial_* \colon \Fun^\omega(\sC, \Sp) \to \SSeq(\Sp)
    \]
    by passing to the associated graded.
    \cref{introthm: product-rule} is a strengthening of their result, providing a lift of this symmetric monoidal functor to the category $\RMod_{\partial_*{\id_\sC}}(\SSeq(\Sp))$.
    The existence of such a lift was conjectured by Malin in \cite[Conjecture 3.2.1]{malinUnstable1semiadditivityClassifying2025}.
\end{remark}

To understand the usefulness of the product rule for determining the operad structure on $\partial_*{\id_\sC}$, recall that if $\sO$ is a (reduced) operad in spectra, we can construct another operad $K\sO$ called the \emph{Koszul dual} of $\sO$.
In case all the terms $\sO(n)$ of the operad are dualizable spectra, we have that $KK\sO \simeq \sO$.
The starting point of our approach is that the operad $K\sO$ can be recovered from the symmetric monoidal category $\RMod_{\sO}(\SSeq(\Sp))$, as we will now explain.

Given any stable presentably symmetric monoidal category $\cV$ and an object $x \in \cV$, we can form the coendomorphism operad $\coEnd(x)$.
This is an operad in spectra whose $n$th term is given by 
\[
\coEnd(x)_n \simeq \ulmap(x, x^{\otimes n}),
\]
where $\ulmap$ denotes the mapping spectrum in $\cV$.
We will sometimes write $\coEnd_\cV(x)$ to emphasize that $x$ is an object of $\cV$.
If we let $\cV = \RMod_{\sO}(\SSeq(\Sp))$ with the Day convolution product and assume that $\sO$ is levelwise dualizable, then it turns out that there is an equivalence of operads
\[
\coEnd(\unit) \simeq K\sO,
\]
where $\unit$ denotes the trivial right $\sO$-module on the sphere spectrum considered as a symmetric sequence concentrated in arity $1$.
Since $\partial_*{\Sigma^\infty_\sC} \simeq \unit$, the strong monoidal structure on $\partial_*$ coming from the product rule induces a map of operads
\[
\coEnd_{\Fun(\sC, \Sp)}(\Sigma^\infty_\sC) \to \coEnd_{\RMod_{\partial_*{\id_\sC}}}(\unit),
\]
which turns out to be an equivalence. 
We therefore obtain the following result.

\begin{introcorollary} \label{introcor: product-rule-koszul-duality}
Let $\sC$ be a differentiable category with $\Sp(\sC) \simeq \Sp$, such that $\partial_n{\id_\sC}$ is dualizable for all $n \geq 1$.
Then there is an equivalence of operads
\[
\coEnd_{\Fun(\sC, \Sp)}(\Sigma^\infty_\sC) \simeq K\partial_*{\id_\sC},
\]
where the left-hand side denotes the coendomorphism operad of the stabilization functor $\Sigma^\infty_\sC$ in the symmetric monoidal category
$\Fun^{\omega}(\sC, \Sp)$ equipped with the pointwise tensor product.
\end{introcorollary}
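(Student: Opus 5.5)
The proof has three ingredients: the product rule (\cref{introthm: product-rule}), the identification $\coEnd(\unit)\simeq K\sO$ in $\RMod_{\sO}(\SSeq(\Sp))$, and full faithfulness of $\partial_*$ on the relevant mapping spectra. Set $\sO=\partial_*\id_\sC$. The plan is as follows.

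\emph{Step 1: transport along the monoidal functor.} By \cref{introthm: product-rule}, $\partial_*$ is a strong symmetric monoidal, exact functor between stable presentably symmetric monoidal categories. It therefore induces a map of coendomorphism operads
\[
\coEnd_{\Fun^\omega(\sC,\Sp)}(\Sigma^\infty_\sC) \to \coEnd_{\RMod_{\sO}}(\partial_*\Sigma^\infty_\sC).
\]
This map is natural in the data, because $\coEnd$ is functorial for strong monoidal functors (lax monoidal would suffice in this direction). Since $\Sigma^\infty_\sC$ is linear with first derivative the sphere, $\partial_*\Sigma^\infty_\sC\simeq\unit$. Hence the target is $\coEnd(\unit)$.

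\emph{Step 2: identify the target with $K\sO$.} I would show $\coEnd_{\RMod_\sO}(\unit)\simeq K\sO$ when $\sO$ is levelwise dualizable.
\begin{itemize}
\item Resolving $\unit$ by the bar construction $B(\unit,\sO,\sO)$ gives
\[
\ulmap_{\RMod_\sO}(\unit,\unit^{\otimes n}) \simeq \ulmap_{\SSeq}\bigl(B(\unit,\sO,\unit),\unit^{\otimes n}\bigr).
\]
\item Here $\unit^{\otimes n}$ is $\Sigma_n$ acting freely on $\Sph$ in arity $n$.
\item Taking mapping spectra into it extracts the Spanier--Whitehead dual of the arity-$n$ term of the bar construction $B\sO(n)$. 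By dualizability this dual is $K\sO(n)$.
\end{itemize}
One must also check that the operad composition maps match: the cooperad structure on $B\sO$ comes from Day convolution diagonals, and dualizing gives the operad $K\sO$. This is where the symmetric monoidal structure on $\RMod_\sO$ enters.

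\emph{Step 3: the comparison map is an equivalence (main obstacle).} It suffices to show, for each $n$, that
\[
\partial_*\colon \ulmap(\Sigma^\infty_\sC,(\Sigma^\infty_\sC)^{\otimes n}) \to \ulmap_{\RMod_\sO}(\unit,\unit^{\otimes n})
\]
is an equivalence of spectra. The functor $(\Sigma^\infty_\sC)^{\otimes n}$ is $n$-homogeneous-like, namely $n$-excisive. The argument runs as follows.
\begin{itemize}
\item Because $\Sigma^\infty_\sC$ is $1$-excisive and $\Fun^\omega$ of linear functors is $\Fun^L(\Sp,\Sp)\simeq\Sp$, the source should be computed by the universal property of linear approximation, $\ulmap(\Sigma^\infty_\sC,G)\simeq \ulmap(\Sigma^\infty_\sC,P_1G)$. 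This identifies it with the cross-effect or first-derivative data of $G=(\Sigma^\infty_\sC)^{\otimes n}$.
\item That is not quite right, since maps out of a linear functor into an $n$-excisive functor involve the whole tower. So instead I would invoke the full faithfulness of $\partial_*$ on $n$-excisive functors, as established in \cite{blansblom2025chainrulegoodwilliecalculus}: $n$-excisive functors $\sC\to\Sp$ are equivalent to truncated right $\sO$-modules.
\item Both $\Sigma^\infty_\sC$ and its tensor powers are $n$-excisive and filtered-colimit preserving, so the map on mapping spectra is an equivalence.
\end{itemize}
The delicate point is confirming that this classification applies in the differentiable (not necessarily compactly generated) setting and is compatible with the maps produced by the monoidal structure. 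If it does not apply directly, I would fall back on a direct computation: compute both sides via the Goodwillie tower of $(\Sigma^\infty_\sC)^{\otimes n}$, using the product rule of Hahn--Yuan for the layers, and compare layerwise.
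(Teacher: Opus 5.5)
Your Steps 1 and 2 follow the paper's outline: transport along the (unital) product rule, use $\partial_*\Sigma^\infty_\sC\simeq\unit$, and identify $\coEnd_{\RMod_{\sO}}(\unit)\simeq K\sO$. The paper takes this last identification from Blom--Malin--Taggart (\cref{prop:KP-is-coend-operad}). Your arity-wise computation giving $\mathbf{D}(B\sO(n))$ is correct once you note that $\unit^{\circledast n}$ is a trivial module. However, matching the operad structures is the actual content of that theorem, not a routine check.

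The genuine gap is in Step 3. You invoke the claim that $\partial_*$ is fully faithful on $n$-excisive functors, i.e.\ that $n$-excisive functors $\sC\to\Sp$ are the same as truncated right $\partial_*\id_\sC$-modules. This is false.
\begin{itemize}
\item Take $\sC=\Sp$, where $\partial_*\id_\Sp\simeq\unit$ and right modules are just symmetric sequences.
\item The functors $P_2(\Sigma^\infty\Omega^\infty)$ and $\id\oplus(-)^{\otimes 2}_{h\Sigma_2}$ have equivalent derivatives but are not equivalent.
\item The extension is classified by the Tate diagonal, which is a nonzero class in $\pi_0\ulnat(\id,\Sigma(-)^{\otimes 2}_{h\Sigma_2})$. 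Maps of symmetric sequences from arity $1$ to arity $2$ vanish.
\end{itemize}
So polynomial functors carry extra Tate gluing data that derivatives do not see.

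Your fallback does not repair this. The functor $(\Sigma^\infty_\sC)^{\otimes n}$ is in fact $n$-homogeneous, not merely $n$-excisive, so its tower has a single layer. ``Comparing layerwise'' is therefore just the original problem of mapping a $1$-homogeneous functor into an $n$-homogeneous one. That is exactly where the naive answer (zero) can be wrong: for $\sC=\Spc_*$ the answer is $\Sph$.

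The missing idea is a special property of the target. In \cref{prop:main-computation-coend-operad} the paper argues as follows.
\begin{itemize}
\item $\partial_*\colon\diff\to\MMor$ is a strong $2$-functor, so it preserves the adjunction $\Sigma^\infty_\sC\dashv\Omega^\infty_\sC$.
\item Write $(\Sigma^\infty_\sC)^{\otimes n}=\id_\Sp^{\otimes n}\circ\Sigma^\infty_\sC$. Combining the product rule with the chain rule, the mate correspondence identifies your map with
\[
\ulnat(\Sigma^\infty_\sC\Omega^\infty_\sC,\id_\Sp^{\otimes n})\longrightarrow\ulmap_{\SSeq}\bigl(\partial_*(\Sigma^\infty_\sC\Omega^\infty_\sC),\unit^{\circledast n}\bigr).
\]
\item This map is an equivalence because $\partial_*\colon\Fun^{\ast,\omega}(\Sp,\Sp)\to\SSeq(\Sp)$ has a right adjoint whose unit on an $n$-homogeneous functor is the norm map (\cref{cor: unit-right-adjoint-der-norm,cor: right-adjoint-derivatives-finitary}).
\item For $\id_\Sp^{\otimes n}$ the coefficient $\partial_n\id_\Sp^{\otimes n}\simeq\Ind_*^{\Sigma_n}\Sph$ is induced. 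Hence the Tate construction vanishes, the norm map is an equivalence, and $\id_\Sp^{\otimes n}$ lies in the image of that right adjoint.
\end{itemize}
This Tate vanishing is precisely what fails for $(-)^{\otimes 2}_{h\Sigma_2}$ in the counterexample above.
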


As the following example illustrates, the coendomorphism operad of $\Sigma^\infty_\sC$ can sometimes be easily identified, so that we can calculate $\partial_*{\id_\sC}$.

\begin{example}
    Suppose that $\sC$ is the category of pointed spaces $\Spc_*$.
    Evaluation in $S^0$ induces a map of operads
    \[
    \coEnd_{\Fun(\Spc_*, \Sp)}(\Sigma^\infty) \to \coEnd_{\Sp}(\Sph),
    \]
    which is an isomorphism in positive degrees by the Yoneda lemma.
    Since $\coEnd_{\Sp}(\Sph)$ is a model for the unital commutative operad in spectra, it follows that $\coEnd_{\Fun(\Spc_*, \Sp)}(\Sigma^\infty)$ is equivalent to the \emph{nonunital} commutative operad $\mathbf{Com}$.
    We conclude that $\partial_*{\id_{\Spc_*}}$ is given by the Koszul dual of $\mathbf{Com}$, which is the definition of the spectral Lie operad. In this way, we recover Ching's result in our setting.
    See \cref{ssec:pointed-spaces} for full details.
\end{example}

The product rule can also be used to determine the right $\partial_*{\id_\sC}$-module structure on the derivatives of a functor.
Given a reduced finitary\footnote{A functor is reduced if it preserves the zero object; it is finitary if it preserves filtered colimits.} functor $F \colon \sC \to \Sp$, we can form a symmetric sequence 
\[
\ulnat(F, (\Sigma^\infty_\sC)^{\otimes \bullet}) \coloneqq \{ \ulnat(F, (\Sigma^\infty_{\sC})^{\otimes n}) \}_{n \geq 1}
\]
which is a right module over $\coEnd(\Sigma^\infty_\sC)$.
The product rule then implies:

\begin{introcorollary} \label{introcor: right-module-der}
    Suppose that both $\partial_n{\id_\sC}$ and $\partial_nF$ are dualizable for all $n \geq 1$.
    Under the equivalence $\coEnd(\Sigma^\infty_\sC) \simeq K\partial_*{\id_\sC}$, the right module $\ulnat(F, (\Sigma^\infty_\sC)^{\otimes \bullet})$ is equivalent to the Koszul dual of the right $\partial_*{\id_\sC}$-module $\partial_*F$.
\end{introcorollary}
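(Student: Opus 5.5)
The plan is to run the argument for \cref{introcor: product-rule-koszul-duality} one level up, using the strong symmetric monoidal functor of \cref{introthm: product-rule} together with the identification $\coEnd(\unit) \simeq K\partial_*{\id_\sC}$ for the Day convolution structure on $\RMod_{\partial_*{\id_\sC}}(\SSeq(\Sp))$. The general principle is this. If $\cV$ is stable presentably symmetric monoidal and $x, y \in \cV$, then the symmetric sequence $\ulmap(y, x^{\otimes \bullet})$ is a right module over $\coEnd(x)$, by composing with the operations $x \to x^{\otimes k}$ applied tensor-factorwise. A strong symmetric monoidal exact functor $\Phi$ induces a map of pairs (operad, right module)
\[
(\coEnd(x), \ulmap(y, x^{\otimes \bullet})) \to (\coEnd(\Phi x), \ulmap(\Phi y, (\Phi x)^{\otimes \bullet})),
\]
and this map is natural in $\Phi$.

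I would apply this to $\Phi = \partial_*$, $x = \Sigma^\infty_\sC$ and $y = F$. Using $\partial_*\Sigma^\infty_\sC \simeq \unit$, we get a map of right modules over the map of operads $\coEnd(\Sigma^\infty_\sC) \to \coEnd(\unit)$, and the latter was already shown to be an equivalence in the proof of \cref{introcor: product-rule-koszul-duality}. The target module is $\ulmap_{\RMod}(\partial_*F, \unit^{\otimes \bullet})$. The second step is to identify this with the Koszul dual of $\partial_*F$. This should be the module-level analogue of $\coEnd(\unit) \simeq K\sO$. In the Day convolution structure, $\unit^{\otimes n}$ is the trivial right $\sO$-module concentrated in arity $n$ with value $\Sigma^\infty_+ \Sigma_n$. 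Hence $\ulmap_{\RMod_\sO}(M, \unit^{\otimes n})$ is the $n$th term of the Spanier--Whitehead dual of the indecomposables $M \circ_\sO \unit$, that is, of the bar construction $B(M, \sO, \unit)$. This is exactly $KM$ as a right module over $K\sO = \coEnd(\unit)$. I would reprove this by the same bar-construction argument used for the operad case: resolve $M$ by free right modules $N \circ \sO$, for which $\ulmap(N\circ\sO, \unit^{\otimes n}) \simeq \ulmap_{\SSeq}(N, \unit^{\otimes n})$, and then check compatibility with the $\coEnd(\unit)$-action.

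The third step is to show that the map $\ulnat(F, (\Sigma^\infty_\sC)^{\otimes n}) \to \ulmap(\partial_*F, \unit^{\otimes n})$ is an equivalence for every $n$. For this I would use that $(\Sigma^\infty_\sC)^{\otimes n}$ is $n$-homogeneous, namely $(\Sigma^\infty)^{\wedge n}$ with trivial derivatives outside arity $n$. Maps from a finitary $F$ into it therefore factor through $P_nF$ and are computed from the Goodwillie tower. The claim is that $\partial_*$ is fully faithful on maps into such functors. I would argue this by induction up the tower of $F$, using the fiber sequences $D_kF \to P_kF \to P_{k-1}F$ and the fact that $\partial_*$ is exact. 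The base cases are homogeneous functors $D_kF \simeq (\partial_kF \otimes (\Sigma^\infty)^{\wedge k})_{h\Sigma_k}$, where both sides can be computed explicitly. Dualizability of $\partial_kF$ and $\partial_k{\id_\sC}$ is used here to commute duals with homotopy orbits (via norms, which are equivalences for these free actions), and again in the second step to make the Spanier--Whitehead duals behave.

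I expect the main obstacle to be this full faithfulness in the third step, specifically controlling the non-split extensions in the Goodwillie tower of $F$, which encode the module structure. If induction on the tower becomes too delicate, my fallback is to deduce the module statement formally from the operad statement. One would view the pair $(\Sigma^\infty_\sC, F)$ as an object of a symmetric monoidal category with an extra ``module'' variable, and reuse verbatim whatever argument gave $\coEnd(\Sigma^\infty_\sC) \simeq \coEnd(\unit)$.
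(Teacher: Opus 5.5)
Your first two steps agree with the paper's proof (\cref{thm: koszul-dual-der-id-is-coend-sigma-infty}). Strong monoidality of $\partial_*$ makes $\ulnat(F,(\Sigma^\infty_\sC)^{\otimes \bullet}) \to \ulmap_{\RMod}(\partial_*F,\unit^{\circledast \bullet})$ a map of right modules over $\coEnd(\Sigma^\infty_\sC)\to\coEnd(\unit)$. The identification of the target with $K\partial_*F$ is \cref{prop:KP-is-coend-operad}, which the paper imports from Blom--Malin--Taggart; this is the only place dualizability is used.

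The gap is in your third step, which is the real content of the proof (\cref{prop:main-computation-coend-operad}). The induction over the tower is not the problem: both sides are exact in $F$, so non-split extensions cost nothing. But the induction pushes all the difficulty into the homogeneous base cases, and these are not explicit.

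If $H$ is $k$-homogeneous with $k<n$, then $\ulmap_{\RMod}(\partial_*H,\unit^{\circledast n})$ is the dual of the arity-$n$ part of $B(\partial_*H)\simeq \partial_*H\circ B\partial_*\id_\sC$. This is in general nonzero and involves the whole Koszul dual cooperad. The classification of homogeneous functors says nothing about maps from a $k$-homogeneous functor into an $n$-homogeneous one.

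In fact $H=\Sigma^\infty_\sC$ is such a base case, and for it your claim is literally $\ulnat(\Sigma^\infty_\sC,(\Sigma^\infty_\sC)^{\otimes n})\simeq\ulmap_{\RMod}(\unit,\unit^{\circledast n})\simeq(K\partial_*\id_\sC)(n)$. For $\Spc_*$ this is the Yoneda lemma, but for a general unstable $\sC$ there is no hands-on computation.

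Granting \cref{introcor: product-rule-koszul-duality} only settles $k=1$. For $2\le k<n$, after pulling out $\partial_kF$ using dualizability, you still need $\ulnat((\Sigma^\infty_\sC)^{\otimes k},(\Sigma^\infty_\sC)^{\otimes n})\simeq\ulmap_{\RMod}(\unit^{\circledast k},\unit^{\circledast n})$, and the operad statement does not provide this. Your fallback is circular: in the paper the operad equivalence is not an independent input but the special case $F=\Sigma^\infty_\sC$ of the very comparison you are trying to establish.

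The idea you are missing is to transport the problem to $\LSp$ along $\Sigma^\infty_\sC\dashv\Omega^\infty_\sC$. Since $\partial_*\colon\diff\to\MMor$ is a functor of $2$-categories, it carries this to an adjunction $\partial_*\Sigma^\infty_\sC\dashv\partial_*\Omega^\infty_\sC$ in $\MMor$. The mate correspondence on both sides then identifies your comparison map with
\[
\ulnat(F\Omega^\infty_\sC,\id_{\LSp}^{\otimes n})\longrightarrow\ulmap_{\SSeq}(\partial_*(F\Omega^\infty_\sC),\unit^{\circledast n}).
\]
Now $\partial_*\colon\Fun^{\ast,\omega}(\LSp,\LSp)\to\SSeq(\LSp,\LSp)$ has a right adjoint whose unit at an $n$-homogeneous functor is the norm map (\cref{cor: right-adjoint-derivatives-finitary,cor: unit-right-adjoint-der-norm}). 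For $\id_{\LSp}^{\otimes n}$ the norm is an equivalence, because $\partial_n(\id_{\LSp}^{\otimes n})\simeq\Ind^{\Sigma_n}_*L\Sph$ is induced. Hence the comparison is an equivalence for every reduced finitary $F$ at once, with no induction over the tower. Unreduced $F$ follow from the splittings of \cref{lem:nonreduced-decomposition-fun,lem:decomposition-rmod}.

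This step uses no dualizability. The norm equivalence comes from the freeness of the $\Sigma_n$-action on $\unit^{\circledast n}$, not from dualizability of $\partial_kF$ or $\partial_k\id_\sC$ as your proposal suggests.
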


We will derive the product rule as a consequence of the much more general statement that the extraction of Goodwillie derivatives preserves cartesian products as a functor of $2$-categories\footnote{We write $2$-category for $(\infty, 2)$-category.}.
To explain this, we will need to recall some facts about the chain rule.

Suppose that $\sC$ and $\sD$ are differentiable categories such that we have fixed equivalences
\[
\Sp(\sC) \simeq \Sp \simeq \Sp(\sD).
\]
Given a reduced finitary functor $F \colon \sC \to \sD$, we showed in \cite{blansblom2025chainrulegoodwilliecalculus} that $\partial_*F$ carries a $(\partial_*{\id_\sD}, \partial_*{\id_\sC})$-bimodule structure.
We also constructed a Morita $2$-category $\MMor_{\Op}$ whose objects are operads in spectra, such that for any pair of operads $\sO$ and $\sP$ there is an equivalence
\[
\MMor_{\Op}(\sO, \sP) \simeq \BMod_{(\sP, \sO)}(\SSeq(\Sp)),
\]
where the left-hand side denotes the mapping category in $\MMor_{\Op}$.
Composition in $\MMor_{\Op}$ is given by the two-sided bar construction of bimodules, also known as the relative composition product.
Write $\diff_{\Sp}$ for the $2$-category of differentiable categories with stabilization equivalent to $\Sp$ and reduced finitary functors between them. 
The main theorem proved in \cite{blansblom2025chainrulegoodwilliecalculus} is that there exists a strong functor of $2$-categories
\[
    \partial_* \colon \diff_{\Sp} \to \MMor_{\Op}
\]
that sends $\sC$ to $\partial_*{\id_\sC}$ and a functor $F \colon \sC \to \sD$ to the bimodule $\partial_*F$.
Since this is a strong functor of $2$-categories, it follows that if $\sC \xrightarrow{G} \sD \xrightarrow{F} \sE$ is a pair of composable morphisms in $\diff_{\Sp}$, then
\[
\partial_*{FG} \simeq \partial_*{F} \circ_{\partial_*{\id_\sD}} \partial_*{G},
\]
which is precisely the chain rule in Goodwillie calculus.

In fact, the restriction to categories whose stabilization is equivalent to $\Sp$ is not necessary and the functor above can be extended to the entire $2$-category $\diff$ of differentiable categories and reduced finitary functors.
In this generality, the notion of symmetric sequence needs to be generalized so that we also have to replace $\MMor_{\Op}$ by a larger Morita category $\MMor$.
See \cref{ssec: prelim-chain-rule} for a full explanation.

The main thrust of this paper is that the $2$-categorical formulation of the chain rule is not just a convenient way of phrasing the result, but also reveals a lot of structure that is preserved by the Goodwillie derivatives functor, providing new means of calculation.
The following theorem is an example of this perspective.

\begin{introtheorem} \label{introthm: products-cotensors-pullbacks}
    The Goodwillie derivatives functor
    \[
    \partial_* \colon \diff \to \MMor
    \]
    preserves finite cartesian products.
    It also preserves cotensors and certain pullbacks.
\end{introtheorem}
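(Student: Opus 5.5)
We want to show that the strong functor $\partial_* \colon \diff \to \MMor$ sends a finite product $\sC \times \sD$ in $\diff$ to a product in $\MMor$; cotensors and pullbacks will then follow from the same argument.

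\textbf{Products in the two $2$-categories.} In $\diff$, the product of $\sC$ and $\sD$ is $\sC \times \sD$, with projections $p_1, p_2$. Both are reduced and finitary, and $\Sp(\sC\times\sD) \simeq \Sp(\sC)\times\Sp(\sD)$. In $\MMor$, I expect the product of two operads $\sO$ and $\sP$ to be the \enquote{disjoint union} operad $\sO \sqcup \sP$. Its underlying category of colors is the product of the underlying stable categories, and the multilinear operations that mix colors from both factors vanish. Concretely, symmetric sequences on $\Sp(\sC)\times\Sp(\sD)$ split into pieces indexed by how many inputs lie in each factor. A bimodule over $\sO\sqcup\sP$ on one side, and over an arbitrary $\sQ$ on the other, then splits as a pair consisting of a $(\sO,\sQ)$-bimodule and a $(\sP,\sQ)$-bimodule. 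This gives
\[
\MMor(\sQ,\sO\sqcup\sP)\simeq \MMor(\sQ,\sO)\times\MMor(\sQ,\sP).
\]
I would verify this at the level of the generalized symmetric sequences of \cref{ssec: prelim-chain-rule}: a left module structure forces the mixed components to be determined by the unmixed ones, via the vanishing of mixed operations together with the unit.

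\textbf{Plan.} The proof then has three steps.

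\begin{enumerate}
\item Compute $\partial_*\id_{\sC\times\sD}$. The $n$-homogeneous layers of the identity on a product are computed factorwise, since cross-effects of $(X,Y)\mapsto(X,Y)$ that mix the two variables vanish. This gives an equivalence of symmetric sequences $\partial_*\id_{\sC\times\sD}\simeq \partial_*\id_\sC\sqcup\partial_*\id_\sD$.
\item Promote this to an equivalence of operads. Since $\partial_*$ is a functor of $2$-categories, $\partial_*p_i$ are bimodules. The chain rule gives $\partial_*(p_i\circ s_i)$, where $s_i$ is the inclusion along zero, and $\partial_*(p_i\circ s_i)\simeq\partial_*\id$. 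From these one gets operad maps $\partial_*\id_{\sC\times\sD}\to\partial_*\id_\sC$ and $\partial_*\id_{\sC\times\sD}\to\partial_*\id_\sD$, and hence a comparison map into the product $\partial_*\id_\sC \sqcup \partial_*\id_\sD$ in $\MMor$. This comparison is an equivalence because it is one on underlying symmetric sequences by step 1.
\item Check the universal property on mapping categories. For any $\sE$, the functor
\[
\Fun^\omega_*(\sE,\sC\times\sD)\to \Fun^\omega_*(\sE,\sC)\times\Fun^\omega_*(\sE,\sD)
\]
is an equivalence. We must show that the induced functor on bimodule categories commutes with this decomposition, i.e.\ that $\partial_*(F,G)$ corresponds to the pair $(\partial_*F,\partial_*G)$. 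This follows from the chain rule applied to $p_1\circ(F,G)$ and $p_2\circ(F,G)$, once we know that composing with the bimodule $\partial_*p_i$ is exactly projection onto the $i$th component.
\end{enumerate}

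\textbf{Cotensors and pullbacks.} Cotensors $\sC^K$, for $K$ a finite category or space, are treated in the same way. One uses $\Fun(K,\sC)$, with derivatives $\partial_*\id$ computed pointwise in $K$. These should match the cotensor in $\MMor$, computed through bimodules with coefficients in $\Fun(K,-)$. Certain pullbacks, namely those along maps inducing equivalences or split maps on stabilizations, can then be assembled from products and cotensors.

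\textbf{Main obstacle.} I expect the main obstacle to be step 2 together with the identification of products in $\MMor$. The $2$-categorical functor $\partial_*$ is only a strong functor, so coherence between the projections must be handled abstractly. The cleanest route is to show that both $2$-categories admit products, and that $\partial_*$ preserves the property of a cone being a product because it induces equivalences on the mapping categories out of any object. That is the check in step 3, combined with the computation on identities in step 1.
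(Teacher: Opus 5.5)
Your step 1 is fine, and your description of the product in $\MMor$ agrees with the paper's $(\sA\times\sB,\sO\bartimes\sP)$. The argument that $\partial_*$ preserves products, however, has a genuine gap in steps 2 and 3.

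\textbf{Step 2.} The bimodules $\partial_*p_i$, together with the chain rule for $p_i\circ s_i=\id$, only show that $\partial_*s_i$ is a section of $\partial_*p_i$ in $\MMor$. They do not produce algebra maps. Such maps would in any case have to go between algebras in different monoidal categories, $\SSeq(\Sp\sC\times\Sp\sD,\Sp\sC\times\Sp\sD)$ and $\SSeq(\Sp\sC,\Sp\sC)$. The comparison $1$-morphism $(\partial_*p_1,\partial_*p_2)$ is just a bimodule. Knowing that source and target have equivalent underlying symmetric sequences says nothing about whether that bimodule is invertible.

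\textbf{Step 3.} This step cannot fill the gap. Your square only tests the universal property against objects and $1$-morphisms of the form $\partial_*F$. Being a product in $\MMor$ requires
\[
\MMor((\sA,\sO),\partial_*(\sC\times\sD))\to\MMor((\sA,\sO),\partial_*\sC)\times\MMor((\sA,\sO),\partial_*\sD)
\]
to be an equivalence for every $(\sA,\sO)$ and every bimodule. Since $\partial_*$ is not an equivalence (nor essentially surjective) on mapping categories, the square says nothing about the bottom map.

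\textbf{The missing ingredient and how to repair it.} You need something that controls all bimodules. The paper gets this from \cref{thm:equational-description-prod}: in these $2$-categories a span is a product if and only if its legs have right adjoints with invertible counits and $\id_w\Rightarrow i_1p_1\times i_2p_2$ is invertible. These conditions are transported by any $2$-functor that locally preserves finite products, and $\partial_*$ does so by \cref{prop: local-exactness-derivatives}. Alternatively, you can repair step 3 in the style of the paper's cotensor proof. Since $p_i$ preserves limits and colimits, \cref{prop: derivatives-free-left-right-module} makes $\partial_*p_i$ free on $\partial_1p_i$ both as a left and as a right module. Then $M\mapsto(\partial_*p_1\circ M,\partial_*p_2\circ M)$ lies over the equivalence $\SSeq(\sA,\Sp\sC\times\Sp\sD)\simeq\SSeq(\sA,\Sp\sC)\times\SSeq(\sA,\Sp\sD)$ and preserves free bimodules, so it is an equivalence by monadicity. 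Your phrase ``composing with $\partial_*p_i$ is projection'' is exactly what this freeness provides. It has to hold for all bimodules, including their left module structures, not only for those of the form $\partial_*(F,G)$.

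\textbf{Cotensors and pullbacks.} For cotensors, ``should match'' is where all the content lies. The paper shows that each $\partial_*\ev_k$ is free on $\partial_1\ev_k$ on both sides, since $\ev_k$ preserves limits and colimits, and then runs the monadicity argument above. This works for all small $K$, so there is no need to restrict to finite $K$. For pullbacks, the proposed reduction fails. Products and cotensors only produce limits of discrete diagrams (weighted by categories), so they cannot build the pullback of a cospan. Moreover, no class of squares is actually analyzed in your proposal. The paper treats squares in which $p^*$ is a fully faithful right adjoint and $q^*$ is a right adjoint, both reduced and finitary. Its key idea is the formula $\bar p^*\bar p_!\simeq\colim_n f^n$, where $f$ is the pushout of $q_!p^*p_!q^*\leftarrow q_!q^*\rightarrow\id$, built from units and counits. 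This formula involves only adjunction data, pushouts and sequential colimits in mapping categories. It is therefore carried along by $\MMor(x,-)\circ\partial_*$, which locally preserves these colimits. Comparing the image of this unit with the unit of the left adjoint of the inclusion of the pullback of the image cospan then shows that the image square is again a pullback.
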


One of the most useful consequences of this result is that it allows one to differentiate symmetric monoidal categories and functors.
Suppose that $\sC$ is a nonunital symmetric monoidal differentiable category such that the tensor product functor is reduced and finitary.
Then $\sC$ defines a nonunital commutative monoid in $\diff$.
Since $\partial_*$ preserves products, it also preserves commutative monoids, so that we obtain a nonunital commutative monoid structure on $\partial_*{\id_\sC}$ in $\MMor$.
Applying any corepresentable functor $\MMor(\sO, -)$, we obtain a nonunital symmetric monoidal structure on $\BMod_{(\partial_*{\id_\sC}, \sO)}(\SSeq(\Sp))$.
It is this idea that leads to a proof of the product rule.

The fact that the derivatives functor preserves cotensors entails a computation of operad structure on the derivatives of the identity in functor categories.

\begin{remark}
From the $2$-categorical derivatives functor one can easily construct a functor
\[
\maAlg \colon \diff_{\Sp} \to \diff_{\Sp}
\]
of $2$-categories that sends $\sC$ to $\Alg_{\partial_*{\id_\sC}}(\Sp)$ and a functor $F \colon \sC \to \sD$ to
\[
\partial_*{F} \circ_{\partial_*{\id_\sC}} - \colon \Alg_{\partial_*{\id_\sC}}(\Sp) \to \Alg_{\partial_*{\id_\sD}}(\Sp).
\]
We call $\maAlg$ the \emph{Goodwillie transform}.
Just like the derivatives functor, it extends to the whole $2$-category $\diff$.
The Goodwillie transform inherits the preservation properties of $\partial_*$ proved in \cref{introthm: products-cotensors-pullbacks}, making it a very useful tool for calculation.
For instance, applying $\maAlg$ to a nonunital symmetric monoidal category $\sC$ in $\diff_{\Sp}$ yields a symmetric monoidal structure on $\Alg_{\partial_*{\id_\sC}}(\Sp)$.
It also preserves (lax) symmetric monoidal functors.

The properties of the Goodwillie transform proved in this paper are the main ingredients for the proofs in \cite{blansheuts2026characterizationspectrallieoperad}, where the first author and Heuts establish analogues of many classical facts about pointed spaces in the category of spectral Lie algebras and prove a characterization of the spectral Lie operad.
\end{remark}

To illustrate our results, we end this paper with some examples.
First of all, we compute the derivatives of the identity functor in the category of algebras over an operad in spectra.

\begin{introtheorem} \label{introthm:der-id-o-alg}
    Let $\sO$ be a strongly positive\footnote{This means that $\sO(0) = 0$ and $\sO(1) = \Sph$. This condition is sometimes referred to in the literature as \emph{reduced}, but this clashes with standard Goodwillie calculus terminology.} operad in spectra and write $\Alg_\sO(\Sp)$ for the category of $\sO$-algebras.
    Then there is an equivalence of operads $\partial_*\id_{\Alg_\sO(\Sp)} \simeq \sO$.
\end{introtheorem}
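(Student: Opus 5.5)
We deduce the result from \cref{introcor: product-rule-koszul-duality}. Write $\sC = \Alg_\sO(\Sp)$. Since $\sO$ is strongly positive, $\sC$ is compactly generated, hence differentiable. Its stabilization is $\Sp$, with $\Sigma^\infty_\sC$ the indecomposables functor $\mathrm{TAQ} = \unit \circ_\sO -$ and $\Omega^\infty_\sC$ the trivial algebra functor. By \cref{introcor: product-rule-koszul-duality}, it suffices to identify $\coEnd_{\Fun(\sC,\Sp)}(\Sigma^\infty_\sC)$ with $K\sO$ and to verify that $\partial_n\id_\sC$ is dualizable. Then $\partial_*\id_\sC \simeq KK\partial_*\id_\sC \simeq K(K\sO)$, and this is $\sO$ provided the $\sO(n)$ are dualizable. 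Without a finiteness hypothesis on $\sO$, double Koszul duality may fail, so I would avoid it and work directly on the module side instead.

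The direct route is to compute with \cref{introthm: product-rule}. Write $\sP = \partial_*\id_\sC$. The functor $\Sigma^\infty_\sC$ sends the free algebra $\free(X) = \bigoplus_n \sO(n)\otimes_{\Sigma_n} X^{\otimes n}$ to $X$. More generally, for a right $\sO$-module $M$, consider functors of the form $A \mapsto M \circ_\sO A$. These are finitary, and their derivatives are computed by first computing on free algebras and then passing to bar constructions. A functor $\sC \to \Sp$ is determined by its restriction to free algebras on finite spectra, together with its preservation of sifted colimits, and functors of the form $M\circ_\sO -$ preserve sifted colimits. So the plan has three steps.

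\textbf{Step 1.} Show that the functor $\Fun^{\omega,\mathrm{sift}}(\sC,\Sp) \to \RMod_\sO(\SSeq(\Sp))$, obtained by restricting along $\free$, is an equivalence. Here a functor $G$ is sent to $G\circ\free$, which is an analytic functor $\Sp\to\Sp$ with a right $\sO$-action. Moreover, this equivalence is symmetric monoidal for the pointwise tensor product on the source and Day convolution on the target: the pointwise product of $M\circ_\sO-$ and $N\circ_\sO-$ is $(M\otimes_{\mathrm{Day}}N)\circ_\sO -$.

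\textbf{Step 2.} Compute the Goodwillie derivatives of $M\circ_\sO -$. The functor $M\circ_\sO -$ is the geometric realization of $M\circ\sO^{\circ k}\circ U$, where $U$ is the forgetful functor. The derivatives of $U$ are known to be $\sO$ as a $(\unit,\sO)$-bimodule, since $U$ commutes with sifted colimits and $U\circ\free$ is the analytic functor with coefficients $\sO$. The chain rule for $\partial_*\colon\diff\to\MMor$ then gives $\partial_*U\simeq \sO$, and $\partial_*\Sigma^\infty_\sC \simeq \unit$. From this I would derive a map of operads $\sP\to\sO$: the chain rule $\partial_*(U\circ\free)\simeq \partial_*U\circ_{\sP}\partial_*\free$ makes $\sO$ a $(\unit,\sP)$-module compatible with the right $\sO$-action, and the composite $\sP\to\sO$ comes from the unit $\id\to U\free$ on trivial algebras.

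\textbf{Step 3.} Show that $\sP\to\sO$ is an equivalence by checking it on $\partial_*\id_\sC \simeq \partial_*(\Omega^\infty\text{-}\mathrm{TAQ}\text{ tower})$. More concretely, since $U$ is conservative and preserves sifted colimits and finite limits, $\partial_*U\simeq\partial_*\id_\sC$ as right $\sP$-modules. This holds because the derivatives of the forgetful functor to the stabilization are computed through $\Sp(\sC)\simeq\Sp$, with $U=\Omega^\infty_\sC$ up to the identification of the stabilization. Hence $\sP\simeq\partial_*U\simeq\sO$ as symmetric sequences. The operad structures then agree, because the action of $\sP$ on $\partial_*U = \sP$ is the right regular action, and this action is identified with the action of $\sO$ on itself.

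The main obstacle is the claim in Step 3 that $\partial_*U\simeq\partial_*\id_\sC$ compatibly with the operad structures. The matching of symmetric sequences is the easy part; the difficulty is making the identification of operad structures rigorous. If Step 3 cannot be made precise in this generality, I would fall back on the coEnd computation. For $\sO$ with dualizable terms, evaluation on free algebras gives $\coEnd(\Sigma^\infty_\sC)\simeq\coEnd_{\RMod_\sO}(\unit)\simeq K\sO$, using Step 1 and the fact recalled in the introduction. The general case would then follow by writing $\sO$ as a filtered colimit of levelwise dualizable operads, using that both sides commute with such colimits.
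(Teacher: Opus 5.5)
\textbf{There is a genuine gap.} Steps~2--3 recover $\partial_*\id_\sC\simeq\sO$ as symmetric sequences, but never establish the equivalence of operads, and Step~1, on which your fallback rests, is false.

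Your Step~1 fails already for $\sO=\unit$. Then $\Alg_\sO(\Sp)=\Sp$ and $\free=\id$, so Step~1 would say that finitary sifted-colimit-preserving endofunctors of $\Sp$ are the same as symmetric sequences, with $A\mapsto\Lambda_A$ as inverse. But $\Lambda$ is not even full. The Tate diagonal $X\to(X^{\otimes 2})^{t\Sigma_2}$, followed by the boundary map of the norm cofibre sequence, is a natural transformation $\id\Rightarrow\Sigma(X^{\otimes 2})_{h\Sigma_2}$ between functors of the form $\Lambda_A$. It is nonzero: otherwise the (nonzero) Tate diagonal would lift to $\ulnat(\id,(\id^{\otimes 2})^{h\Sigma_2})\simeq(\mathbf{D}\partial_2\id_{\Sp})^{h\Sigma_2}=0$. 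Yet there are no nonzero maps between symmetric sequences concentrated in arities $1$ and $2$. Derivatives forget exactly this kind of information, which is why the product rule is a property of the functor $\partial_*$ rather than an equivalence of categories. Two consequences follow. Your identification $\coEnd(\Sigma^\infty_\sC)\simeq K\sO$ ``using Step~1'' is unsupported; it is Malin's theorem, which the paper deduces as a corollary of the very statement you are proving. And the passage from levelwise dualizable $\sO$ to general $\sO$ is unjustified: $\sO\mapsto\partial_*\id_{\Alg_\sO}$ comes with no evident natural comparison map to $\sO$, let alone one commuting with filtered colimits of operads.

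Steps~2--3 do give the symmetric-sequence statement (Pereira's result), once the justification is corrected. $\partial_*\free$ is the free left $\sP$-module on $\partial_1\free\simeq\unit$, so the chain rule gives $\partial_*U\simeq\partial_*(U\free)=\partial_*\Lambda_\sO\simeq\sO$. And $\partial_*U\simeq\sP$ as right $\sP$-modules by \cref{prop: derivatives-free-left-right-module}, because $U$ preserves finite limits and $U\circ\triv_\sO=\id$. It is not because $U=\Omega^\infty_\sC$; that functor is $\triv_\sO$.

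The operad structure, which is the whole content, is only asserted:
\begin{itemize}
\item $\partial_*U$ carries a right $\sP$-action, not a right $\sO$-action, so treating it as ``$\sO$ as a $(\unit,\sO)$-bimodule'' presupposes the answer.
\item The map $\sP\to\sO$ ``from the unit on trivial algebras'' is never constructed.
\item Saying the two regular actions agree under $\sP\simeq\sO$ is circular.
\item You never identify $\partial_*(U\free)$ with $\sO$ as an \emph{operad}. Knowing $U\free=\Lambda_\sO$ only pins down the symmetric sequence; compatibility of the monad structure of $\Lambda_\sO$ with the monoidal structure of $\partial_*$ is exactly \cref{introthm: derivatives-of-lambda}, a substantial input.
\end{itemize}

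The missing idea is to transport the monad structure $2$-categorically. The paper's first proof applies the Goodwillie transform to $\free\dashv U$. By \cref{cor: monadic-adjunction-goodwillie-transf} and \cref{prop: derivatives-free-left-right-module}, this gives a monadic adjunction with right adjoint $\fgt_{\sP}$ and monad $\maAlg(\Lambda_\sO)\simeq\Lambda_\sO$. Hence $\Lambda_\sP\simeq\Lambda_\sO$ as monads, and differentiating once more gives $\sP\simeq\sO$ as operads. The second proof combines $\cot_\sO\triv_\sO\simeq\Lambda_{B\sO}$ as comonads with \cref{thm: calculus-and-koszul-duality-main} and bar--cobar duality.

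Your Step~2 can also be completed in the same spirit directly in $\MMor$. The strong $2$-functor $\partial_*$ sends $\free\dashv U$ to an adjunction with monad $\partial_*\Lambda_\sO\simeq\sO$. Since $\partial_*\free\simeq\sP$ as a left $\sP$-module, this is the canonical adjunction between $(\Sp,\unit)$ and $(\Sp,\sP)$, whose monad is $\sP$. None of these arguments needs dualizability.
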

\begin{remark}
    Pereira already proved in his thesis \cite[Theorem 11.13]{PereiraThesis} that the underlying symmetric sequence of $\partial_*{\id_{\Alg_\sO(\Sp)}}$ is equivalent to $\sO$. 
\end{remark}

We also calculate the derivatives of the identity functor in categories of sheaves.
This result is most efficiently stated in terms of the Goodwillie transform.

\begin{introtheorem} \label{introthm:sheaves}
    Let $\sT$ be a site and $\sC$ a differentiable category.
    Write $\Sh(\sT; \sC)$ for the category of $\sC$-valued sheaves on $\sT$.
    Then there is an equivalence of categories
    \[
    \maAlg(\Sh(\sT; \sC)) \simeq \Sh(\sT; \Alg_{\partial_*{\id_\sC}}(\Sp{\sC})).
    \]
\end{introtheorem}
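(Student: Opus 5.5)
The plan is to deduce this from the preservation properties of \cref{introthm: products-cotensors-pullbacks}, applied to the Goodwillie transform $\maAlg$, by presenting $\Sh(\sT;\sC)$ as a limit that $\maAlg$ commutes with.

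\textbf{Cotensors.} First, $\Fun(\sT^{\op},\sC)$ is the cotensor of $\sC$ by the category $\sT^{\op}$ in $\diff$. Note that it is differentiable, with stabilization $\Fun(\sT^{\op},\Sp(\sC))$, and evaluation maps and precomposition functors are reduced and finitary. Since $\partial_*$ preserves cotensors, $\maAlg$ should satisfy
\[
\maAlg(\Fun(\sT^{\op},\sC)) \simeq \Fun(\sT^{\op},\maAlg(\sC)) = \Fun(\sT^{\op},\Alg_{\partial_*\id_\sC}(\Sp(\sC))).
\]
Concretely, the derivatives of the identity of the functor category are the pointwise derivatives of $\id_\sC$, as a symmetric sequence in $\Fun(\sT^{\op},\Sp(\sC))$. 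Algebras over such a pointwise operad are presheaves of algebras. I would check this identification compatibly with the forgetful functors to $\Fun(\sT^{\op},\Sp(\sC))$.

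\textbf{Passing to sheaves.} Next, $\Sh(\sT;\sC)\subset\Fun(\sT^{\op},\sC)$ is cut out by the descent condition. I would express it as an iterated limit in $\diff$. For each covering sieve $R\hookrightarrow h_U$, the sheaf condition says that
\[
F(U)\to \lim_{V\in R}F(V)
\]
is an equivalence. So $\Sh$ is the pullback of the presheaf category against the full subcategory where two functors to $\Fun(\Ar(\Delta^0)\text{-type diagrams},\sC)$ agree. More cleanly, $\Sh(\sT;\sC)$ is the equalizer (a pullback) of the two maps $\prod_{R} \Fun(\sT^{\op},\sC)\rightrightarrows \prod_R \Fun(\Delta^1,\sC)$. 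The first sends $F$ to the canonical map; the second sends it to the identity on its source or target. Equivalently, it is a pullback along the diagonal-type inclusion $\sC\to\Fun(\Delta^1,\sC)$ of equivalences. This inclusion has the cotensor description $\Fun(\Delta^1,\sC)\simeq\Fun(\Delta^0,\sC)$ on the equivalence locus.

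The main obstacle is verifying that this pullback is one of the ``certain pullbacks'' preserved by $\partial_*$. The functors involved must be reduced and finitary, which requires the limits $\lim_{V\in R}$ to commute with filtered colimits. That fails for infinite sieves, so I would first try to reduce to sites whose sheaf condition is detected by finite covers. Alternatively, I would work with the sheafification adjunction: $\Sh$ is a left exact localization of presheaves, and I would show that $\maAlg$ carries reflective subcategories given by such localizations to the corresponding localization of algebra presheaves.

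\textbf{Conclusion.} Granting this, applying $\maAlg$ to the pullback and using the first step on every corner yields the same pullback description with $\sC$ replaced by $\Alg_{\partial_*\id_\sC}(\Sp(\sC))$. The forgetful functor $\Alg_{\partial_*\id_\sC}(\Sp(\sC))\to\Sp(\sC)$ is conservative and preserves limits. Hence that pullback is exactly $\Sh(\sT;\Alg_{\partial_*\id_\sC}(\Sp(\sC)))$, giving the claimed equivalence.
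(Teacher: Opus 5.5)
\textbf{There is a genuine gap.} Your cotensor step is exactly the paper's first step. The passage from presheaves to sheaves, however, is the real content of the theorem, and you do not establish it: you write ``granting this''. The pullback route also cannot be completed as proposed.

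\cref{prop: pullbacks-preserved-by-derivatives} needs every functor in the square to be reduced and finitary. Your $q^*\colon F\mapsto (F(U)\to\lim_{V\in R}F(V))_R$ is in general not finitary, because the limit over a covering sieve is not a finite limit; even a finitely generated sieve gives a \v{C}ech totalization. In fact no presentation of this kind exists in general. If $\Sh(\sT;\sC)$ were such a pullback, with $p^*$ fully faithful and $p^*,q^*$ finitary, it would be closed under filtered colimits in presheaves. That fails, e.g.\ for the open subsets of an infinite discrete space, where the sheaf condition involves an infinite product.

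Your conclusion step hits the same wall. Identifying $\maAlg(q^*)$ with the sieve-limit functor for presheaves of algebras requires $\lim_R$ itself to be a morphism of $\diff$, which is why the paper only asserts preservation of limit functors over finite $K$. Reducing to sites with finite covers is not explained, and would at best give a special case.

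Your fallback is the paper's route, but it leaves out the idea that makes it work.
\begin{itemize}
\item Applying $\maAlg$ to the sheafification adjunction $L\dashv\iota$ gives a reflective localization $\maAlg(L)\dashv\maAlg(\iota)$ between $\sP(\sT;\Alg_{\partial_*\id_\sC})$ and $\maAlg(\Sh(\sT;\sC))$.
\item Because $L$ is left exact and $\iota$ preserves limits, \cref{prop: derivatives-free-left-right-module} shows that $\partial_*L$ and $\partial_*\iota$ are free right modules on $\partial_1L$ and $\partial_1\iota$.
\item Hence $\maAlg(L)$, $\maAlg(\iota)$ and the unit are computed on underlying objects by $\partial_1L\dashv\partial_1\iota$, i.e.\ by sheafification of $\Sp\sC$-valued presheaves.
\item Since the forgetful functor $\Alg_{\partial_*\id_\sC}\to\Sp\sC$ preserves and reflects limits, $\maAlg(\iota)$ lands in $\Sh(\sT;\Alg_{\partial_*\id_\sC})$.
\item The unit of the restricted adjunction is an equivalence, because it forgets to the unit of sheafification on sheaves.
\end{itemize}
The sheaf condition is thus only ever checked in $\Sp\sC$, and $\maAlg$ is never applied to a sieve limit.

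Your finitariness concern is not misplaced, though. Applying $\maAlg$ to $\iota$ tacitly requires $\iota$ to be finitary, which the paper does not verify. That requirement is still weaker than what your route needs.
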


There is an important technical result that plays a role in the proof of both the product rule and \cref{introthm:der-id-o-alg}, and that is interesting in itself.
Given a symmetric sequence $A \in \SSeq_{\geq 1}(\Sp)$ concentrated in positive degrees, we can form a functor $\Lambda_A \colon \Sp \to \Sp$ given by the formula
\[
    \Lambda_A(X) = \bigoplus_{n \geq 1} (A_n \otimes X^{\otimes n})_{h\Sigma_n}.
\]
Since this is the direct sum of homogeneous functors corresponding to $A$, we find that $\partial_* \Lambda_A \simeq A$.
The construction $A \mapsto \Lambda_A$ extends to a strong monoidal functor
\[
\Lambda \colon \SSeq_{\geq 1}(\Sp) \to \End^{\ast, \omega}(\Sp),
\]
with respect to the composition product on the source and functor composition on the target.
We will show that the derivatives functor provides a strong monoidal retraction of this functor.
This implies for instance that if $\sO$ is an operad in spectra, then we have an equivalence of operads $\partial_*\Lambda_\sO \simeq \sO$.

We will prove this result as a consequence of a more general $2$-categorical statement. 
Writing $\diff_{\St} \subseteq \diff$ for the $2$-category of stable differentiable categories, the derivatives functor restricted to $\diff_{\St}$ factors through a subcategory $\pressymstc \subseteq \MMor$ consisting of stable categories and symmetric sequences.
The construction above extends to a functor $\Lambda \colon \pressymstc \to \diff_{\St}$, and we will show:

\begin{introtheorem} \label{introthm: derivatives-of-lambda}
    The composite
    \[
    \pressymstc \xrightarrow{\Lambda} \diff_{\St} \xrightarrow{\partial_*} \pressymstc
    \]
    is equivalent to the identity functor.
\end{introtheorem}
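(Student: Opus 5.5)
The plan is to build a natural equivalence of $2$-functors $\partial_*\circ\Lambda \simeq \id$ in three stages: first on objects, then on mapping categories, and finally by checking that the identification is compatible with composition. Throughout I use the explicit description of $\Lambda$ on $\pressymstc$. An object $\sV$ (a stable presentable category) goes to itself, viewed as a differentiable category. A morphism, i.e. a symmetric sequence $A$ of functors $A_n \colon \sV^{\otimes n} \to \sW$ concentrated in arities $\geq 1$, goes to
\[
\Lambda_A(X) = \bigoplus_{n\geq 1} A_n(X^{\otimes n})_{h\Sigma_n}.
\]

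\emph{Objects.} For stable $\sV$ we have $\Sp(\sV)\simeq \sV$. The derivatives $\partial_*{\id_\sV}$ form the trivial operad: the identity is linear, so it is its own first Goodwillie layer and all higher derivatives vanish. Hence $\partial_*\Lambda(\sV)$ is the unit object $\sV$ of $\MMor$, which lies in $\pressymstc$.

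\emph{Mapping categories.} Fix $\sV,\sW$. The functor $\Lambda_{\sV,\sW}$ followed by $\partial_*$ lands in bimodules over the two trivial operads, that is, in plain symmetric sequences. We must identify $\partial_*\Lambda_A \simeq A$ naturally in $A$. Since $\partial_*$ commutes with filtered colimits and direct sums in the finitary setting, and each summand $A_n(X^{\otimes n})_{h\Sigma_n}$ is $n$-homogeneous with $n$th derivative $A_n$ by the classification of homogeneous functors in the stable setting, we get the equivalence objectwise. To make it natural, I would construct a comparison map directly. The map should be the unit/counit of the adjunction between symmetric sequences and functors realizing $\partial_*$ as the cross-effect (multilinearization) construction. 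Concretely, the multilinearized $n$th cross effect of $\Lambda_A$ contains $\Sigma_n$-equivariantly the term $A_n$, and this gives a map $A \to \partial_*\Lambda_A$. It is then an equivalence by the computation just described.

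\emph{Functoriality.} Here we need coherent compatibility with composition, i.e. $\partial_*(\Lambda_B\circ\Lambda_A)\simeq B\circ A$, compatibly with the chain rule equivalence $\partial_*\Lambda_B\circ\partial_*\Lambda_A\simeq\partial_*(\Lambda_B\Lambda_A)$. This step is the main obstacle, because $\partial_*$ is only defined as a $2$-functor through the machinery of \cite{blansblom2025chainrulegoodwilliecalculus}, so one cannot check coherences by hand.

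My approach to this obstacle is a universal property argument. I would exhibit the composite $\partial_*\circ\Lambda$ as a functor that is the identity on objects and on mapping categories induces equivalences. Such a functor is an equivalence. The remaining task is to show it is equivalent to the identity, not merely an autoequivalence. For that I would identify $\Lambda$ with the restriction of a right adjoint (or section) to $\partial_*$ as constructed in the chain-rule paper, where $\partial_*$ on $\diff_{\St}$ is built by passing to stabilizations and taking multilinear parts. I would then check that the unit of this construction restricted along $\Lambda$ is the map built above. If the chain-rule paper presents $\partial_*$ via a lax/oplax Kan extension from a subcategory of "free" or linear objects, it suffices to check the equivalence on the generating sub-$2$-category of linear functors, i.e. symmetric sequences concentrated in arity one. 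On that sub-$2$-category both functors are the evident inclusion $\preslst\to\diff_{\St}$ and $\partial_*$ is the identity, so the claim follows by density. I expect verifying this density/generation step to be the genuinely hard part.
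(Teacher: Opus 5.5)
\textbf{There is a genuine gap in the functoriality step.} Your treatment of objects and of individual mapping categories is fine, and it matches what the paper needs. The map $A \to \partial_*\Lambda_A$ you describe is essentially the unit of the adjunction $p_! \dashv \cross$ followed by multilinearization. The paper also shows this is an equivalence by reducing to symmetric sequences concentrated in finitely many arities (Lemma 3.4.5 of the chain-rule paper) and then passing to filtered colimits.

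The gap is in the functoriality step, which is the actual content of the theorem. The ``density'' principle you invoke is not available. Nothing makes $\pressymst$ (or $\pressymstc$) generated by arity-one symmetric sequences in a sense where a $2$-functor is determined by its restriction there. The chain-rule paper also does not present $\partial_*$ as a Kan extension from linear functors: on $\diffsp$ it is defined as $\widetilde{\mlin}\circ\widetilde{\cross}$, a lax functor into formal symmetric sequences $\sF$ followed by a strong local localization.

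Concretely, every symmetric sequence $F$ concentrated in arity $n$ factors as the arity-one morphism $F_n$ composed with the universal sequence $\delta_n \colon \sA \to \sA^{\otimes n}_{h\Sigma_n}$, which is the identity in arity $n$. The morphisms $\delta_n$ lie outside your sub-$2$-category. All the nonlinear information lives there, in particular the compatibility of the chain-rule equivalence $\partial_*\Lambda_B \circ \partial_*\Lambda_A \simeq \partial_*(\Lambda_B\Lambda_A)$ with $B \circ A$. Checking that $\partial_*\Lambda$ is the identity on linear morphisms says nothing about this. Likewise, being the identity on objects and an equivalence on hom-categories only makes $\partial_*\Lambda$ some autoequivalence.

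\textbf{The missing idea} is a way to assemble your hom-wise unit maps into a coherent transformation of $2$-functors. The paper does this in three steps.
\begin{enumerate}
\item It identifies $\Lambda \simeq \widetilde{\Lambda}\circ k$. Here $\widetilde{\Lambda}$ is the oplax local left adjoint of $\widetilde{\cross}$ and $k$ is the lax local right adjoint of $\widetilde{\mlin}$. The composite only makes sense because $\widetilde{\Lambda}$ is strong on the image of $k$. The identification uses the $2$-categorical Yoneda lemma after extending to $\pressymst$, where $\Lambda$ is corepresented by the point.
\item It proves a mate lemma for icons, via lax functor classifiers (envelopes). Applied to the identity icon of $\widetilde{\Lambda}k$, this gives an icon $\eta \colon k \Rightarrow \widetilde{\cross}\widetilde{\Lambda}k$ whose components are exactly your hom-wise units.
\item The span of icons $\id \Leftarrow \widetilde{\mlin}k \Rightarrow \widetilde{\mlin}\widetilde{\cross}\widetilde{\Lambda}k \simeq \partial_*\Lambda$ consists of componentwise equivalences, hence of natural equivalences.
\end{enumerate}
Your suggestion to identify $\Lambda$ with a section of $\partial_*$ and check the unit points in this direction. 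Without a mate construction for lax and oplax functors, however, the coherence problem stays unresolved.
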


What follows is an overview of the contents of this paper.
In Section 2, we review the material on operads, Koszul duality, and the chain rule that we will need.
Section 3 is concerned with various 2-categorical structures preserved by the derivatives functor and contains a proof of \cref{introthm: products-cotensors-pullbacks}.
In Section 4 we prove the product rule (\cref{introthm: product-rule}) and its consequences (\cref{introcor: product-rule-koszul-duality,introcor: right-module-der}).
In Section 5 we give examples, proving \cref{introthm:der-id-o-alg,introthm:sheaves}.
Appendix A is devoted to the proof of \cref{introthm: derivatives-of-lambda}.
In Appendix B, we record a proof of the fact that the derivatives functor $\partial_* \colon \Fun(\Sp, \Sp) \to \SSeq(\Sp)$ admits a right adjoint, and that the unit of this adjunction on a homogeneous functor is given by the norm map.

\subsection*{Acknowledgments}

The authors would like to thank Gijs Heuts, Connor Malin and Niall Taggart for numerous conversations related to the contents of this work.
The second author is especially grateful to Connor Malin for explaining to him that a product rule of the form proved in \cref{introthm: product-rule} would allow one to compute operad structures on derivatives in terms of coendomorphism operads (\cref{introcor: product-rule-koszul-duality}).

MB was supported by the Royal Society through grant URF\textbackslash R1\textbackslash 211075 and wishes to thank the University of Oxford for its hospitality.
TB is grateful to the Max Planck Institute for Mathematics in Bonn for its hospitality and support during the writing of this paper.

\subsection*{Notation}
\begin{itemize}
    \item We write category for $(\infty, 1)$-category and $2$-category for $(\infty, 2)$-category.
    \item Given a stable category $\cC$, we write $\ulmap(x,y)$ for the mapping spectrum between $x,y \in \cC$.
    \item Given another category $\cD$, we write $\ulnat(F,G)$ for the spectrum of natural transformations between functors $F,G \colon \cD \to \cC$, i.e.\ the mapping spectrum in $\Fun(\cD,\cC).$
    \item Throughout this paper, $\LSp$ will always denote an accessible reflective localization
    \[\begin{tikzcd}[ampersand replacement=\&]
        \Sp \&\& \LSp
        \arrow[""{name=0, anchor=center, inner sep=0}, "L", shift left=2, from=1-1, to=1-3]
        \arrow[""{name=1, anchor=center, inner sep=0}, shift left=1, hook', from=1-3, to=1-1]
        \arrow["\dashv"{anchor=center, rotate=-90}, draw=none, from=0, to=1]
    \end{tikzcd}\]
    of $\Sp$ such that $\LSp$ is stable.
    \item If $\cC \curvearrowright \cM \curvearrowleft \cD$ is a bitensored category, then we will sometimes write $\BMod_{(\cC,\cD)}$ for $\BMod_{(\cC,\cD)}(\cM)$, leaving $\cM$ implicit.
\end{itemize}

\section{Preliminaries}

This section contains a brief recollection of various preliminaries needed throughout the paper.
We assume the reader is familiar with the theory of Goodwillie calculus as developed in \cite{Goodwillie2003} and \cite[\S 6.1]{HA}.
We will also freely use the language of $2$-category theory; one can find an exposition of the basic definitions and results that we need in \cite[Appendix A]{blansblom2025chainrulegoodwilliecalculus}.

\subsection{The chain rule} \label{ssec: prelim-chain-rule}

The purpose of this subsection is to recall the main results on the chain rule from \cite{blansblom2025chainrulegoodwilliecalculus}.
We will particularly emphasize the $2$-categorical perspective on this result, as this will play a central role throughout our paper.

We start by defining the categories and functors to which the theory applies.

\begin{definition}
    A category $\sC$ is called \emph{differentiable} if it is presentable, pointed, and filtered colimits commute with finite limits in $\sC$.
    A functor $F \colon \sC \to \sD$ is called \emph{reduced} if it preserves the zero object and \emph{finitary} if it preserves filtered colimits.
\end{definition}

We write $\Fun^{\ast, \omega}(\sC, \sD)$ for the full subcategory of $\Fun(\sC, \sD)$ spanned by the reduced finitary functors.

\begin{definition}
    Let $\diff$ denote the $2$-category of differentiable categories and reduced finitary functors.
    Write $\diffsp$ for the full subcategory of $\diff$ spanned by the stable presentable categories.
\end{definition}

Next we define the target of the derivatives functor.

\begin{definition}\label{def:functor-sseq}
    Let $\sA$ and $\sB$ be stable presentable categories.
    The category of \emph{functor symmetric sequences} from $\sA$ to $\sB$ is defined as
    \[
    \SSeq(\sA, \sB) \coloneqq \FunL(\Sym\sA, \sB),
    \]
    where $\Sym\sA = \bigoplus_{n \geq 0} \sA^{\otimes n}_{h\Sigma_n}$ is the symmetric algebra functor in $\presl$.
\end{definition}

We will usually write symmetric sequence instead of functor symmetric sequence.

\begin{remark}
    Somewhat more concretely, a symmetric sequence $F \in \SSeq(\sA, \sB)$ consists of a sequence of functors
    \[
    F_n \colon \sA^{\times n}_{h\Sigma_n} \to \sB \qquad \text{for $n \geq 0$}
    \]
    such that the underlying functor of $F_n$ preserves colimits in each variable separately.
    We call $F_n$ the arity $n$ component of $F$.
    We define $F$ to be \emph{positive} if $F_0 \simeq \ast$, and we write $\SSeq_{\geq 1}(\sA, \sB) \subseteq \SSeq(\sA, \sB)$ for the full subcategory spanned by the positive symmetric sequences.
    In case $\sA = \sB$, we call $F$ \emph{strongly positive} if it is positive and additionally $F_1 \simeq \id_\sA$.
    We write $\SSeq_+(\sA,\sA) \subseteq \SSeq(\sA, \sA)$ for the full subcategory spanned by the strongly positive symmetric sequences.
\end{remark}

\begin{remark}\label{remark:SSeq-vs-FunSSeq}
    Classically, a symmetric sequence in a category $\cA$ is defined as a functor 
    \[
    \Fin^{\simeq} \to \cA,
    \]
    where $\Fin^{\simeq}$ denotes the category of finite sets and bijections.
    By the equivalence $\amalg_{n \geq 0} B\Sigma_n \simeq \Fin^{\simeq}$, this is the same as a sequence $\{X_n\}_{n \geq 0}$, where $X_n \in \cA^{B\Sigma_n}$.
    We will usually write $\SSeq(\sA)$ for $\Fun(\Fin^\simeq, \sA)$.
    
    To see how this notion of symmetric sequence is related to the one we just defined, let us mention that if $\cA$ is an idempotent algebra in $\preslst$ and $\cB$ an $\cA$-module, then $\SSeq(\cA,\cB)$ is equivalent to the classical category of symmetric sequences $\SSeq(\cB)$.
    Namely, if $\cA$ is idempotent, then $\Sym \cA \simeq \bigoplus_{n \geq 0} \sA_{h\Sigma_n}$, and hence
    \[\SSeq(\cA,\cB) = \FunL(\Sym \cA, \cB) \simeq \bigoplus_{n \geq 0} \FunL(\cA, \cB)^{h\Sigma_n} \simeq \Fun(\amalg_n B\Sigma_n, \cB).\]
    This is for instance the case for $\sA = \Sp$.
    However, in general the category of (functor) symmetric sequences $\SSeq(\cA,\cB)$ of \cref{def:functor-sseq} is not of the form $\Fun(\Fin^\simeq,\cC)$ for any stable presentable category $\cC$.
\end{remark}

\begin{remark}
    Given symmetric sequences $F \in \SSeq(\sB, \sC)$ and $G \in \SSeq(\sA, \sB)$, we can form a new symmetric sequence
    \[
    F \circ G \in \SSeq(\sA, \sC),
    \]
    called the \emph{composition product} of $F$ and $G$.
    If $F$ and $G$ are positive, $F \circ G$ is positive as well and given by the formula
    \[
    (F \circ G)_I \simeq \bigoplus_{E \in \mathrm{Part}(I)} F_E \circ \{G_J\}_{J \in E}.
    \]
    Here we have indexed our symmetric sequences on non-empty finite sets.
    The direct sum is indexed by the set of partitions of $I$.
    See \cite[Proposition 3.2.9]{blansblom2025chainrulegoodwilliecalculus} for a formula that also works for non-positive symmetric sequences.
\end{remark}

Symmetric sequences can be assembled in a $2$-category.

\begin{proposition}[{\cite[\S 3.2]{blansblom2025chainrulegoodwilliecalculus}}]\label{prop:definition-prsymst}
    There is a $2$-category $\pressymst$ such that:
    \begin{enumerate}[\upshape{(}1\upshape{)}]
        \item The objects of $\pressymst$ are presentable stable categories;
        \item The category of $1$-morphisms from $\sA$ to $\sB$ in $\pressymst$ is equivalent to $\SSeq(\sA, \sB)$;
        \item Composition in $\pressymst$ is given by the composition product.
    \end{enumerate}
\end{proposition}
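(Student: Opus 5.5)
We construct $\pressymst$ as a sub-$2$-category of a Kleisli-type $2$-category for the comonad $\Sym$ on $\preslst$. The key observation is that $\Sym$, the free commutative algebra functor on $\preslst$, carries a natural structure making it a $2$-categorical comonad once we pass to the appropriate setting. The cofree coalgebra side is what makes composition work: a colimit-preserving functor $\Sym\sA \to \sB$ should compose with $\Sym\sB \to \sC$ via a "comultiplication" $\Sym\sA \to \Sym\Sym\sA$. This is not literally available in $\preslst$, so instead we realize the $2$-category as the full sub-$2$-category of $\CAlg(\preslst)$, or more precisely of the $2$-category of commutative coalgebras, spanned by the objects $\Sym\sA$.

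Concretely, the plan is as follows. First, note that $\Sym\sA$ is a commutative algebra in $\preslst$ and that colimit-preserving symmetric monoidal functors $\Sym\sA \to \Sym\sB$ correspond to colimit-preserving functors $\sA \to \Sym\sB$. These are the wrong variance. The correct structure comes from viewing $\Sym\sA$ as a cocommutative bialgebra: the diagonal-type map $\sA \to \sA\oplus\sA$ induces $\Sym\sA \to \Sym\sA\otimes\Sym\sA$. Then define the $1$-morphisms $\sA \to \sB$ to be the colimit-preserving functors $\Sym\sA \to \sB$, and define the composite of $G\colon \Sym\sA \to \sB$ and $F\colon \Sym\sB \to \sC$ as
\[
\Sym\sA \to \Sym(\Sym_{\geq 1}\sA)\oplus(\text{arity-}0\text{ terms}) \xrightarrow{\Sym G} \Sym\sB \xrightarrow{F} \sC,
\]
where the first map is the canonical coalgebra structure map expressing $\Sym$ as a comonad on the category of cocommutative coalgebras in $\preslst$ (the divided-power-free, "cofree via norms" structure, available since everything is stable and we use homotopy orbits throughout). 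Formally, the objects $\Sym\sA$ are the cofree cocommutative coalgebras in a suitable sense, and $\pressymst$ is the co-Kleisli $2$-category of this comonad, so associativity and unitality follow from the comonad axioms, which are automatic from the general machinery for (co)Kleisli $2$-categories of a $2$-comonad.

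Second, we verify the three properties. Property (1) holds by construction. Property (2) is the definition of the co-Kleisli mapping categories, which are $\FunL(\Sym\sA,\sB) = \SSeq(\sA,\sB)$. For (3), we unwind the composition formula on positive symmetric sequences: expanding $\Sym\sA$ into $\bigoplus_n \sA^{\otimes n}_{h\Sigma_n}$ and the comonad structure map into sums over partitions, the composite evaluated in arity $I$ becomes $\bigoplus_{E\in\mathrm{Part}(I)} F_E\circ\{G_J\}_{J\in E}$, matching the composition product.

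The main obstacle is the first step: producing the comonad structure on $\Sym$ coherently at the $(\infty,2)$-level, since $\Sym$ is naturally a monad (free commutative algebra), not a comonad, and the needed coalgebra maps involve identifying homotopy orbits with the correct indexing over partitions. I would first try to obtain it by passing to the opposite/dual via the equivalence of $\preslst$ with right adjoints, or by identifying $\Sym\sA$ with the cofree cocommutative coalgebra in $\preslst$ restricted to the subcategory where norm maps are ignored (working with orbits on both sides), and then invoking a general co-Kleisli construction for comonads on $2$-categories.
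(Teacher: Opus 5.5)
Your architecture matches the paper's: $\pressymst$ is the full sub-$2$-category of $\coCAlg(\preslst)$ spanned by the objects $\Sym\sA$, equivalently a co-Kleisli category for the cofree-coalgebra comonad. The gap is that the one substantive input is exactly what you leave open as the ``main obstacle''. That input is the fact that $\Sym\sA=\bigoplus_n\sA^{\otimes n}_{h\Sigma_n}$ \emph{is} the cofree commutative coalgebra on $\sA$ in $\presl$, and the reasons you offer for it are not the right ones. It has nothing to do with $\sA$ being stable, and there is no question of ``ignoring norm maps''; the point is that in $\presl$ the norm maps are equivalences. Colimits in $\presl$ are computed as limits in $\widehat{\Cat}$ of the diagram of right adjoints. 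Hence $(\sA^{\otimes n})_{h\Sigma_n}\simeq(\sA^{\otimes n})^{h\Sigma_n}$ and countable direct sums agree with products, so $\Sym\sA\simeq\prod_n(\sA^{\otimes n})^{h\Sigma_n}$. Since $\otimes$ on $\presl$ preserves colimits in each variable, it also preserves these particular limits. The argument dual to the formula for free commutative algebras then shows that this object, with comultiplication induced by the diagonal $\sA\to\sA\oplus\sA$, is cofree. So in $\presl$ the functor $\Sym$ is simultaneously the free-algebra monad and the cofree-coalgebra comonad. Your remark that it is ``naturally a monad, not a comonad'' misses precisely the phenomenon that makes the construction work.

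Once you have this lemma, you do not need co-Kleisli machinery for $(\infty,2)$-comonads, which is not readily available. The full sub-$2$-category of $\coCAlg(\preslst)$ on the cofree objects is a $2$-category because $\preslst$ is a symmetric monoidal $2$-category, and associativity and unitality come for free. Property (1) holds by definition. Property (2) is the cofree property, but you need it on mapping \emph{categories}, $\coCAlg(\preslst)(\Sym\sA,\Sym\sB)\simeq\FunL(\Sym\sA,\sB)$, not only on mapping spaces. For (3), the composite of $G\colon\Sym\sA\to\sB$ and $F\colon\Sym\sB\to\sC$ is $F\circ\Sym(G)\circ\delta$, where $\delta\colon\Sym\sA\to\Sym\Sym\sA$ is the coalgebra map lifting $\id_{\Sym\sA}$. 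Your partition bookkeeping then yields the stated formula, and the ad hoc splitting into $\Sym(\Sym_{\geq 1}\sA)$ plus arity-$0$ terms is unnecessary.
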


\begin{remark}
    The $2$-category $\pressymst$ is defined as the full subcategory of the $2$-category $\coCAlg(\preslst)$ of commutative coalgebras in $\presl$ spanned by the cofree objects.
    The proof that this definition satisfies the properties stated in the previous proposition relies on the fact that for a presentable category $\sC$, the category $\Sym(\sC)$ is the cofree commutative coalgebra on $\sC$ in $\presl$.
\end{remark}

\begin{remark}
    This proposition implies that for every stable presentable category $\sA$, the composition product defines a monoidal structure on the category $\SSeq(\sA, \sA)$.
    In case $\sA = \Sp$, we have an equivalence $\SSeq(\Sp, \Sp) \simeq \Fun(\Fin^\simeq, \Sp)$, and this monoidal structure agrees with the ordinary composition product of symmetric sequences, as defined for instance in \cite[\S 4.1.2]{brantnerThesis}; see \cite[Proposition 3.2.16]{blansblom2025chainrulegoodwilliecalculus} for a comparison.
    It follows that an algebra for the composition product in $\SSeq(\Sp, \Sp)$ is the same as an operad in spectra.
\end{remark}

\begin{remark}
    Since positive symmetric sequences are closed under the composition product, $\pressymst$ has a locally full subcategory $\pressymstc$ spanned by the positive symmetric sequences.
\end{remark}

If $F \colon \sC \to \sD$ is a reduced finitary functor between differentiable categories, then its Goodwillie derivatives define a symmetric sequence
\[
\partial_*F \in \SSeq_{\geq 1}(\Sp\sC, \Sp\sD),
\]
see \cite[\S 3.1.3]{blansblom2025chainrulegoodwilliecalculus}.
One of the main results of \cite{blansblom2025chainrulegoodwilliecalculus} is that this assignment can be extended to a functor of $2$-categories.

\begin{theorem}[{\cite[Theorem 3.3.2]{blansblom2025chainrulegoodwilliecalculus}}]
    There is a lax functor
    \[
    \partial_* \colon \diff \to \pressymstc
    \]
    that sends $\sC$ to $\Sp\sC$ and that is given by the derivatives functor
    \[
    \partial_* \colon \Fun^{\ast, \omega}(\sC, \sD) \to \SSeq_{\geq 1}(\Sp\sC, \Sp\sD)
    \]
    on mapping categories.
    The restriction of $\partial_*$ to $\diffsp$ is a strong functor of $2$-categories.
\end{theorem}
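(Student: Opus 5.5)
The plan is to build $\partial_*$ in two stages: first on the stable part $\diffsp$, where it should be a strong functor, and then on all of $\diff$ by conjugating with the stabilization adjunctions $\Sigma^\infty_\sC \dashv \Omega^\infty_\sC$, which is where laxness enters. Throughout I would handle coherence through universal properties rather than by writing explicit simplicial formulas.

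\textbf{The stable case.} For stable presentable $\sA,\sB$ I would start from the functor
\[
\Lambda \colon \SSeq_{\geq 1}(\sA,\sB) \to \Fun^{\ast,\omega}(\sA,\sB),
\qquad
\Lambda_F(X) = \textstyle\bigoplus_{n\geq 1} F_n(X,\dots,X)_{h\Sigma_n}.
\]
\begin{enumerate}[\upshape{(}1\upshape{)}]
    \item Show that $\Lambda$ assembles into a strong functor of $2$-categories $\Lambda \colon \pressymstc \to \diffsp$. I would check this using the description of $\pressymst$ as cofree commutative coalgebras in $\presl$: a map $\Sym\sA \to \sB$ induces a map of coalgebras $\Sym\sA \to \Sym\sB$, and $\Lambda_F$ is obtained by restricting along the diagonal $\sA \to \Sym\sA$.
    \item Identify $\partial_*$ on mapping categories as the left adjoint of $\Lambda$. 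Here $\partial_n F$ is the multilinearization of the $n$th cross effect, and the adjunction should come from the Taylor tower together with the universal property of multilinearization. This gives an equivalence
    \[
    \Map(\partial_*F, G) \simeq \Map(F, \Lambda_G)
    \]
    at least after restricting to a suitable subcategory, with the correct identification of $\Lambda_G$ as a right adjoint being the subject of Appendix B.
    \item Conclude that $\partial_*$ is an oplax functor on $\diffsp$, since a local left adjoint of a strong functor acquires an oplax structure by taking mates.
    \item Show that the resulting comparison maps $\partial_*(FG) \to \partial_*F \circ \partial_*G$ are equivalences. This is the classical chain rule for functors between stable categories: by compatibility with filtered colimits one reduces to homogeneous layers, and then the multilinear part of $FG$ is computed by Lurie's formula as a sum over partitions. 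That sum matches the composition product formula recalled above.
    \item Deduce that $\partial_*$ is strong on $\diffsp$, so in particular lax.
\end{enumerate}

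\textbf{General differentiable categories.} For a reduced finitary functor $F \colon \sC \to \sD$ I would use the stabilization
\[
\Sp F \coloneqq \Sigma^\infty_\sD\, F\, \Omega^\infty_\sC \colon \Sp\sC \to \Sp\sD,
\]
and set $\partial_*F \coloneqq \partial_*(\Sigma^\infty_\sD F\Omega^\infty_\sC)$, computed in the stable case. One has to justify that this agrees with the classical derivatives of $F$. The reason is that $\Omega^\infty_\sC$ is $1$-excisive with first derivative the identity, and $\Sigma^\infty_\sD$ is linear, so neither pre- nor postcomposition changes the multilinearized cross effects.

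The lax structure on a composite $\sC \xrightarrow{G} \sD \xrightarrow{F} \sE$ then comes from the counit $\Sigma^\infty_\sD\Omega^\infty_\sD \to \id_{\Sp\sD}$, inserted as
\[
\Sigma^\infty F\Omega^\infty\,\Sigma^\infty G\Omega^\infty \to \Sigma^\infty F G\Omega^\infty,
\]
and composed with the strong structure from the stable case:
\[
\partial_*F \circ \partial_*G \simeq \partial_*\bigl(\Sigma^\infty F\Omega^\infty\,\Sigma^\infty G\Omega^\infty\bigr) \to \partial_*(FG).
\]
The unit constraint comes from $\partial_*\id_\sC \leftarrow \unit$, which is induced by the analogous map $\id \to \Sigma^\infty\Omega^\infty$. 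On $\diffsp$ the counit $\Sigma^\infty\Omega^\infty \to \id$ is an equivalence, so these constraints agree with the stable ones, and the restriction to $\diffsp$ is strong.

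\textbf{Main obstacle.} I expect the hardest step to be full coherence. That means making the stable adjunction $\partial_* \dashv \Lambda$ natural as a local adjunction of $2$-categories, and producing the conjugation by $\Sigma^\infty \dashv \Omega^\infty$ as a lax functor rather than only as a family of compatible $2$-cells. For the second part I would first try to realize stabilization $\sC \mapsto \Sp\sC$, $F \mapsto \Sigma^\infty F \Omega^\infty$, as a lax functor $\diff \to \diffsp$ directly. For instance, one can express it as a lax transformation built from the unit and counit of stabilization, or as a lax functor obtained from a (co)cartesian fibration of $2$-categories. Then I would compose this lax functor with the strong functor $\partial_*$ on $\diffsp$.
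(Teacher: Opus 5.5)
Your proposal has a genuine gap, and it sits exactly where the statement goes beyond the stable case.

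\textbf{The unstable extension is wrong in two ways.} First, the definition $\partial_*F \coloneqq \partial_*(\Sigma^\infty_\sD F\Omega^\infty_\sC)$ does not compute the derivatives of $F$. Neither precomposition with $\Omega^\infty_\sC$ (which is $1$-excisive but does not preserve pushouts) nor postcomposition with $\Sigma^\infty_\sD$ (which does not preserve pullbacks) commutes with forming Taylor towers. For $\sC=\sD=\Spc_*$ and $F=\id$ you would get $\partial_*(\Sigma^\infty\Omega^\infty)$, which is $\Sph$ in every arity (the commutative cooperad). But $\partial_2\id_{\Spc_*}\simeq\Sph^{-1}$. In general, $\partial_*\Sigma^\infty$ and $\partial_*\Omega^\infty$ are $\unit$ with trivial actions, so the chain rule (\cref{thm:chain-rule-cherry}) gives $\partial_*(\Sigma^\infty F\Omega^\infty)\simeq \unit\circ_{\partial_*\id_\sD}\partial_*F\circ_{\partial_*\id_\sC}\unit$. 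This is the Koszul dual of $\partial_*F$ (compare \cref{thm: calculus-and-koszul-duality-main}), not $\partial_*F$ itself.

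Second, your comparison $2$-cell points the wrong way. In $\Sigma^\infty_\sE F\Omega^\infty_\sD\Sigma^\infty_\sD G\Omega^\infty_\sC$ the middle composite is the monad $\Omega^\infty_\sD\Sigma^\infty_\sD$ on $\sD$, not the comonad on $\Sp\sD$. The only available $2$-cell is the unit $\id_\sD\Rightarrow\Omega^\infty_\sD\Sigma^\infty_\sD$, which gives $\Sigma^\infty FG\Omega^\infty\Rightarrow\Sigma^\infty F\Omega^\infty\Sigma^\infty G\Omega^\infty$. Likewise there is no natural map $\id\Rightarrow\Sigma^\infty\Omega^\infty$, only the counit.

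So conjugation by stabilization is an \emph{oplax} functor $\diff\to\diffsp$. Composing it with the stable $\partial_*$ produces the coalgebra $\partial_*(\Sigma^\infty_\sC\Omega^\infty_\sC)$ and its comodules, not the algebra $\partial_*\id_\sC$ and its bimodules. Getting from there to the statement would need a $2$-functorial cobar construction plus the identification $\partial_*\id_\sC\simeq\Cobar\,\partial_*(\Sigma^\infty_\sC\Omega^\infty_\sC)$. In the paper that identification is a consequence of the lax functor, not an ingredient.

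\textbf{The stable half also fails at step (2).} On mapping categories, $\partial_*$ is not left adjoint to $\Lambda$. Appendix B (\cref{prop: right-adjoint-derivatives} and \cref{cor: unit-right-adjoint-der-norm}) shows that the right adjoint is built from homotopy fixed points and products. Its unit on a homogeneous functor is the norm map, whose cofibre is a Tate construction. That cofibre is nonzero already for $X\mapsto (X\otimes X)_{h\Sigma_2}$ on $\Sp$, where on finite spectra it is $2$-completion. So there is no local adjunction $\partial_*\dashv\Lambda$ from which to extract an oplax structure by mates.

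In the construction recalled in the appendix, the adjunction lives one step earlier. There $\partial_*=\widetilde{\mlin}\circ\widetilde{\cross}$, where:
\begin{itemize}
\item $\widetilde{\cross}$ is a lax functor taking symmetric cross-effects. It is valued in formal symmetric sequences on the categories $\Pfin$ of formal nonempty finite colimits; this is what makes composites of cross-effects and the lax comparison maps definable.
\item $\widetilde{\Lambda}$ is the oplax local left adjoint of $\widetilde{\cross}$ (not of $\partial_*$).
\item $\widetilde{\mlin}$ is a strong functor assembled from the multilinearization localizations.
\end{itemize}
The lax structure therefore comes from $\widetilde{\cross}$, and strongness on $\diffsp$ is a separate verification.
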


\begin{remark}\label{remark:bimodule-structure-on-partialF}
    This theorem implies that the derivatives functor
    \[
    \partial_* \colon \Fun^{\ast, \omega}(\sC, \sC) \to \SSeq_{\geq 1}(\Sp\sC, \Sp\sC)
    \]
    admits a lax monoidal structure, which is strong monoidal if $\sC$ is stable.
    Moreover, for every pair of differentiable categories $\sC$ and $\sD$, the functor
    \[
    \partial_* \colon \Fun^{\ast, \omega}(\sC, \sD) \to \SSeq_{\geq 1}(\Sp\sC, \Sp\sD)
    \]
    admits the structure of a lax functor of bitensored categories, where the bitensoring is given by pre- and postcomposition with endomorphisms.
    The derivatives of the identity $\partial_*{\id_\sC}$ therefore obtain the structure of an algebra in $\SSeq_{\geq 1}(\Sp\sC, \Sp\sC)$, and for every reduced finitary functor $F \colon \sC \to \sD$, its derivatives $\partial_*{F}$ obtain the structure of a $(\partial_*{\id_\sD}, \partial_*{\id_\sC})$-bimodule in $\SSeq(\Sp\sC, \Sp\sD)$.
\end{remark}

The chain rule in Goodwillie calculus states that the assignment that sends $F$ to the bimodule $\partial_*{F}$ is functorial: it takes functor composition to the relative composition product of bimodules.
This functoriality can best be expressed in terms of the following construction.

\begin{proposition}[{\cite[Theorem 5.1]{Blom2024StraighteningEveryFunctor}}]
    There exists a $2$-category\footnote{By construction, the Morita category is only a $2$-precategory, meaning that the Segal object in categories defining it is not complete/univalent. In this paper, $\MMor$ always denotes the completion of this $2$-precategory (cf.\ \cite[Remark A.1.16]{blansblom2025chainrulegoodwilliecalculus}).} $\MMor$, called the Morita category of $\pressymst$, such that:
    \begin{enumerate}[\upshape{(}1\upshape{)}]
        \item The objects of $\MMor$ are pairs $(\sA, \sO)$, where $\sA$ is a stable presentable category and $\sO \in \Alg(\SSeq(\sA, \sA))$;
        \item There is an equivalence of categories
        \[
        \MMor((\sA, \sO), (\sB, \sP)) \simeq \BMod_{(\sP, \sO)}(\SSeq(\sA, \sB)),
        \]
        where the left-hand side denotes the mapping category from $(\sA, \sO)$ to $(\sB, \sP)$ in $\MMor$;
        \item If $M \in \MMor((\sB, \sP), (\sC, \sQ))$ and $N \in \MMor((\sA, \sO), (\sB, \sP))$ are morphisms in the Morita category, then their composition is the $(\sQ, \sO)$-bimodule given by the relative composition product $M \circ_\sP N$ of bimodules.
    \end{enumerate}
\end{proposition}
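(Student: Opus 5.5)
The plan is to obtain $\MMor$ as an instance of a general Morita construction for $2$-categories, and then to check that $\pressymst$ satisfies its hypotheses. Concretely, given a $2$-category $\mathbb{X}$ whose mapping categories admit geometric realizations, and whose composition functors preserve geometric realizations separately in each variable, one builds a $2$-(pre)category $\MMor(\mathbb{X})$. It is given as a Segal object in categories $[n] \mapsto \MMor(\mathbb{X})_n$. Here $\MMor(\mathbb{X})_n$ consists of strings of objects $\sA_0, \dots, \sA_n$ of $\mathbb{X}$, algebras $\sO_i$ in the monoidal category $\mathbb{X}(\sA_i,\sA_i)$, and $(\sO_{i}, \sO_{i-1})$-bimodules in $\mathbb{X}(\sA_{i-1}, \sA_i)$.

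To make this precise and functorial in $[n]$, I would use the standard device (Haugseng's construction of higher Morita categories) of forming algebras over the nonsymmetric "$\Delta^{\op}$-indexed" generalized operads $\Delta^{\op}_{/[n]}$. These encode $n+1$ algebras with bimodules between consecutive ones. The needed input is that $\mathbb{X}$, viewed as a category enriched or fibred over $\Delta^{\op}$ via its nerve, has the relevant composition maps as lax structure. Face maps that delete an interior vertex $i$ are then defined by the relative tensor product $M_{i+1}\circ_{\sO_i} M_i$, computed as the geometric realization of the two-sided bar construction $M_{i+1}\circ\sO_i^{\circ\bullet}\circ M_i$. Properties (1)--(3) are then read off: objects are pairs $(\sA,\sO)$, mapping categories are bimodule categories, and composition is the relative composition product.

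The key verification for $\mathbb{X}=\pressymst$ is that the composition product $F\circ G$ preserves geometric realizations in each variable. In $F$ this holds because $F\circ G$ is computed by precomposing the colimit-preserving functor $F\colon \Sym\sB\to\sC$ with a coalgebra map $\Sym\sA\to\Sym\sB$ determined by $G$, so $F\mapsto F\circ G$ preserves all colimits. In $G$, the map $G\mapsto F\circ G$ is built from the functors $G\mapsto G^{\otimes n}_{h\Sigma_n}$ (the cofree coalgebra extension of $G$). Each of these preserves sifted colimits, because $\Delta^{\op}$ is sifted, so the diagonal $\Delta^{\op}\to(\Delta^{\op})^{n}$ is cofinal and tensor powers commute with geometric realizations. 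Combining this with the formula of \cite[Proposition 3.2.9]{blansblom2025chainrulegoodwilliecalculus} shows that $F\circ -$ preserves geometric realizations.

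The main obstacle is proving that the resulting simplicial object satisfies the Segal condition and that compositions are associative coherently. This amounts to showing that iterated bar constructions compose: $(M\circ_\sP N)\circ_\sO L\simeq M\circ_\sP(N\circ_\sO L)$, and that $\sO\circ_\sO M\simeq M$. I would handle both by the bisimplicial argument: the iterated two-sided bar construction is the diagonal of a bisimplicial object. It can be realized in either order precisely because composition preserves geometric realizations in each variable separately. Unitality comes from the extra degeneracy of $\sO\circ\sO^{\circ\bullet}\circ M$. Finally, the result is only a $2$-precategory, and I would pass to its univalent completion as in the footnote, which does not change mapping categories or composition.
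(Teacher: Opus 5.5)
Your proposal follows essentially the paper's route: the paper does not reprove this statement but takes it from the general Morita construction of \cite[Theorem 5.1]{Blom2024StraighteningEveryFunctor}, which applies to $2$-categories whose mapping categories admit geometric realizations preserved by composition in each variable, and the input needed for $\pressymst$ is exactly the colimit behaviour of the composition product from \cite[Proposition 3.2.9]{blansblom2025chainrulegoodwilliecalculus} that you check. Your Haugseng-style outline of the general construction is sound, with one small caveat: $\Delta^{\op}_{/[n]}$-algebras (equivalently, lax functors $[n] \to \mathbb{X}$) carry bimodules for all $i<j$, not only consecutive ones, so to obtain a $2$-precategory you must restrict to the composite ones (those with $M_{ik} \simeq M_{jk} \circ_{\sO_j} M_{ij}$) and to invertible algebra maps in level $0$.
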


\begin{remark}
    The relative composition product $M \circ_\sP N$ mentioned in the previous proposition is the bar construction computed as the colimit of the usual simplicial diagram
    \[
    \begin{tikzcd}
          \cdots M \circ \sP \circ \sP \circ N \ar[r, shift left=2] \ar[r] \ar[r, shift right=2] & M \circ \sP \circ N \ar[l, shift left, shorten=0.4em] \ar[l, shift right, shorten=0.4em] \ar[r, shift left] \ar[r, shift right] & M \circ N, \ar[l, shorten=0.4em]
   \end{tikzcd}
    \]
    where the face maps are given by the multiplication of $\sP$ and its right and left action on $M$ and $N$ respectively.
\end{remark}

The chain rule can then be stated as follows:

\begin{theorem}[{\cite[Theorem 4.4.7]{blansblom2025chainrulegoodwilliecalculus}}]\label{thm:chain-rule-cherry}
    There is a strong functor of $2$-categories
    \[
    \partial_* \colon \diff \to \MMor
    \]
    that sends $\sC$ to the pair $(\Sp\sC, \partial_*{\id_\sC})$ and a functor $F \colon \sC \to \sD$ to the $(\partial_*{\id_{\sD}}, \partial_*{\id_{\sC}})$-bimodule $\partial_*{F}$.
\end{theorem}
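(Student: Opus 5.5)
The plan is to build $\partial_*\colon \diff \to \MMor$ in two stages. First, upgrade the lax functor $\partial_*\colon \diff \to \pressymstc$ of \cite[Theorem 3.3.2]{blansblom2025chainrulegoodwilliecalculus} to a functor into the Morita category. Second, show that this functor is strong.

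For the first stage I would invoke the universal property of the Morita construction from \cite{Blom2024StraighteningEveryFunctor}: a lax functor $\Phi\colon \sX \to \sY$ of $2$-categories should induce a functor $\sX \to \MMor(\sY)$. This functor sends an object $x$ to the monad $\Phi(\id_x)$ and a morphism $f\colon x \to y$ to $\Phi(f)$. The bimodule structure on $\Phi(f)$ comes from the lax structure maps $\Phi(\id_y)\circ\Phi(f) \to \Phi(f)$ and $\Phi(f)\circ \Phi(\id_x) \to \Phi(f)$. This is exactly the structure described in \cref{remark:bimodule-structure-on-partialF}. For a composable pair $\sC \xrightarrow{G} \sD \xrightarrow{F} \sE$, the lax composition map $\partial_*F \circ \partial_*G \to \partial_*(FG)$ coequalizes the two actions of $\partial_*\id_\sD$. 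It therefore factors through the bar construction and gives a comparison map
\[
\partial_*F \circ_{\partial_*\id_\sD} \partial_*G \longrightarrow \partial_*(FG).
\]
The induced functor to $\MMor$ is a priori only lax, with these maps as its structure maps. Strongness means that every such comparison map is an equivalence. Unitality is automatic, since the unit of $\MMor$ at $(\Sp\sD,\partial_*\id_\sD)$ is the bimodule $\partial_*\id_\sD$ itself.

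The second stage is the real content: the chain rule. I would first reduce to the case where $\sD$ is stable. Factor $\id_\sD \simeq \Omega^\infty\Sigma^\infty$-resolutions: write the identity of $\sD$ via the cosimplicial object $(\Omega^\infty_\sD\Sigma^\infty_\sD)^{\bullet+1}$. The functor $\partial_*$ is strong on $\diffsp$, so for the stable pieces one gets $\partial_*(F\Omega^\infty) \circ \partial_*(\Sigma^\infty\Omega^\infty)^{\circ n} \circ \partial_*(\Sigma^\infty G)$. This identifies the bar construction computing $\partial_*F\circ_{\partial_*\id_\sD}\partial_*G$ with a (co)bar resolution built from the comonad $\Sigma^\infty\Omega^\infty$ on $\Sp\sD$. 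The point is that $\partial_*\id_\sD$ should be recovered from $\partial_*(\Sigma^\infty\Omega^\infty)$ via Koszul duality, which is the Arone--Ching picture.

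The main obstacle is the interchange of derivatives with the resulting totalization, since $\partial_*$ does not commute with limits in general. To handle this I would work one arity at a time. The $n$th derivatives depend only on $P_n$, and after reduction to positive symmetric sequences the relevant (co)simplicial objects in arities $\le n$ are nilpotent: only finitely many partitions contribute. This makes the totalizations effectively finite, so they commute with $P_n$ and with $\partial_n$. The argument uses that $\sC,\sD$ are differentiable and that $F,G$ are finitary, so that $P_n$ commutes with the filtered colimits used. With the comparison map an equivalence in each arity, the functor $\diff \to \MMor$ is strong, which proves \cref{thm:chain-rule-cherry}.
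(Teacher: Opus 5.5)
Your first stage fits the framework the paper uses. The statement is imported from \cite{blansblom2025chainrulegoodwilliecalculus}, where the lax functor $\partial_*\colon\diff\to\pressymstc$ is promoted to the Morita category. Strongness then amounts to the comparison maps $\partial_*F\circ_{\partial_*\id_\sD}\partial_*G\to\partial_*(FG)$ being equivalences.

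The gap is in the second stage, at the step you flag yourself. The coaugmentation $FG\to\mathrm{Tot}\bigl(F(\Omega^\infty_\sD\Sigma^\infty_\sD)^{\bullet+1}G\bigr)$ is not an equivalence in general: it is a Bousfield--Kan type completion. For example, take $\sD=\Spc_*$, $F=G=\id$, and $BG$ with $G$ a nontrivial perfect group. Since $\Sigma^\infty BG\simeq\Sigma^\infty BG^+$, the map factors through the simply connected $BG^+$. So you cannot ``write the identity of $\sD$'' via this resolution. What must be proved is that the coaugmentation becomes an equivalence after taking derivatives:
\[
\partial_*(FG)\simeq\mathrm{Tot}\,\partial_*\bigl(F(\Omega^\infty_\sD\Sigma^\infty_\sD)^{\bullet+1}G\bigr).
\]
Your nilpotence remark does not give this, for two reasons.
\begin{itemize}
\item Arity-wise finiteness holds for the cosimplicial symmetric sequence $\partial_*(F\Omega^\infty_\sD)\circ\partial_*(\Sigma^\infty_\sD\Omega^\infty_\sD)^{\circ\bullet}\circ\partial_*(\Sigma^\infty_\sD G)$, because $\partial_*(\Sigma^\infty_\sD\Omega^\infty_\sD)$ is strongly positive. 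It does not hold for the cosimplicial functor $F(\Omega^\infty_\sD\Sigma^\infty_\sD)^{\bullet+1}G$, whose totalization is a genuinely infinite limit that $P_n$ and $\partial_n$ need not commute with.
\item Even granting that interchange, you would have computed the derivatives of the completion, not of $FG$.
\end{itemize}
Already for $F=G=\id_\sD$, the missing statement on underlying symmetric sequences is $\partial_*\id_\sD\simeq\Cobar\,\partial_*(\Sigma^\infty_\sD\Omega^\infty_\sD)$, which is \cref{thm: calculus-and-koszul-duality-main}. You only say this ``should'' hold. Together with the module versions you need to rewrite $\partial_*F\circ_{\partial_*\id_\sD}\partial_*G$ as a cotensor product over $\partial_*(\Sigma^\infty_\sD\Omega^\infty_\sD)$, it is a substantial theorem, not a formal consequence of nilpotence.

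There is a second, smaller problem. Strongness of $\partial_*$ on $\diffsp$ does not let you split $\partial_*\bigl(F\Omega^\infty_\sD(\Sigma^\infty_\sD\Omega^\infty_\sD)^{k}\Sigma^\infty_\sD G\bigr)$ as $\partial_*(F\Omega^\infty_\sD)\circ\partial_*(\Sigma^\infty_\sD\Omega^\infty_\sD)^{\circ k}\circ\partial_*(\Sigma^\infty_\sD G)$ unless $\sC$ and $\sE$ are stable. Otherwise $F\Omega^\infty_\sD$ and $\Sigma^\infty_\sD G$ are not morphisms of $\diffsp$. So the reduction you want is to stable source and target, not to stable $\sD$. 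You can achieve it by applying the two-sided bar construction, which is conservative on positive bimodules, together with $\partial_*G\circ_{\partial_*\id_\sC}\unit\simeq\partial_*(G\Omega^\infty_\sC)$ and $\unit\circ_{\partial_*\id_\sE}\partial_*F\simeq\partial_*(\Sigma^\infty_\sE F)$ from \cref{thm: calculus-and-koszul-duality-main}.

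Once $\sE$ is stable, the derivative-level completion statement is automatic whenever $F$ factors as $H\circ\Sigma^\infty_\sD$, because the resolution is then split by the counit of $\Sigma^\infty_\sD\dashv\Omega^\infty_\sD$. A natural repair is therefore an induction up the Taylor tower of $F$, whose homogeneous layers have this form; this uses that in arities $\le n$ both sides depend only on $P_nF$. That induction, and the Koszul duality input, are what your sketch is missing.
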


\begin{remark}
    The fact that this is a strong functor of $2$-categories has the following implication: if $F \colon \sD \to \sE$ and $G \colon \sC \to \sD$ is a pair of reduced finitary functors between differentiable categories, then there is an equivalence
    \[
    \partial_*{FG} \simeq \partial_*F \circ_{\partial_*{\id_\sD}} \partial_*{G}.
    \]
    This is the usual way of writing the chain rule.
\end{remark}

\begin{remark}
    If $\sA$ is a stable presentable category, we write
    \[
    \unit_{\sA} \in \SSeq(\sA, \sA)
    \]
    for the identity morphism of $\sA$ in $\pressymst$.
    It is given by the identity functor $\id_\sA \colon \sA \to \sA$ considered as a symmetric sequence concentrated in arity $1$.
    If $\sB$ is another stable presentable category, then there is clearly an equivalence of categories
    \[
    \BMod_{(\unit_\sB, \unit_\sA)}(\SSeq(\sA, \sB)) \simeq \SSeq(\sA, \sB).
    \]
    In fact, the full subcategory of $\MMor$ spanned by the pairs $(\sA, \unit_\sA)$ is equivalent to $\pressymst$.
    Since the restriction of the lax derivatives functor $\partial_* \colon \diff \to \pressymst$ to $\diffsp$ is a strong functor, we have that
    \[
    \partial_*{\id_\sA} \simeq \unit_\sA
    \]
    for every stable presentable category $\sA$.
    It follows that the functor $\partial_* \colon \diff \to \MMor$ restricted to $\diffsp$ factors as
    \[
    \diffsp \xrightarrow{\partial_*} \pressymst \subseteq \MMor.
    \]
\end{remark}

We will need the following result about the exactness of composition in $\MMor$.

\begin{proposition} \label{prop: exactness-composition-mor}
    Let $(\sA, \sO)$, $(\sB, \sP)$, and $(\sC, \sQ)$ be objects of $\MMor$.
    The relative composition product
    \[
    \BMod_{(\sQ, \sP)}(\SSeq(\sB, \sC)) \times \BMod_{(\sP, \sO)}(\SSeq(\sA, \sB)) \to \BMod_{(\sQ, \sO)}(\SSeq(\sA, \sC))
    \]
    preserves small colimits and finite limits in the first variable.
    It preserves sifted colimits in the second variable.
\end{proposition}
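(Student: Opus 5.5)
We first reduce to the composition product in $\SSeq$ and then pass to the bar construction. The forgetful functors $\BMod_{(\sQ,\sO)}(\SSeq(\sA,\sC)) \to \SSeq(\sA,\sC)$ are conservative. They preserve all limits and all sifted colimits, and, once we know the relevant monads preserve them, also the remaining colimits. Since the relative composition product is the geometric realization of the simplicial object $M \circ \sP^{\circ k} \circ N$, it suffices to establish two things. First, exactness properties of the plain composition product $F \circ G$ on symmetric sequences. Second, compatibility of these properties with geometric realization.

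\textbf{Step 1: the plain composition product.} Here $\SSeq(\sA,\sB)$ is $\FunL(\Sym\sA,\sB)$ and $\pressymst$ is a full subcategory of $\coCAlg(\preslst)$. Composition $F\circ G$ corresponds to composing $\Sym\sA \to \Sym\sB \to \sC$, where $G$ is extended to the coalgebra map $\Sym\sA\to\Sym\sB$ via cofreeness.

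In the first variable, $F \mapsto F\circ G$ is precomposition with a fixed left adjoint functor. Hence it preserves all colimits and limits computed in $\FunL$, and in particular it is exact.

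In the second variable, the extension $\tilde G$ is built from the pieces $G^{\otimes n}$ with $\Sigma_n$-orbits, as in the explicit partition formula $\bigoplus_E F_E\circ\{G_J\}$. Tensor powers $G\mapsto G^{\otimes n}$ preserve sifted colimits, because the diagonal of a sifted category is cofinal. Coinvariants and direct sums commute with colimits. So $G \mapsto F\circ G$ preserves sifted colimits.

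\textbf{Step 2: passage to bimodules.} Colimits in $\BMod$ are computed as follows. Sifted colimits are computed underlying. General colimits are computed underlying in the first variable's situation, where acting from the left by $\sQ$ preserves colimits by Step 1 applied in the first variable. Actually, the left action $\sQ\circ-$ only preserves sifted colimits, so we must be careful.

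For the first variable, which is where the claim concerns all small colimits, I would argue as follows. $M\mapsto M\circ_\sP N$ is a functor between stable categories, since bimodule categories over stable $\SSeq$ are stable: the forgetful functor preserves finite limits and is conservative. So it suffices to show that it preserves finite limits and filtered colimits, together with preservation of finite colimits, which in the stable setting is equivalent to preservation of finite limits.

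Each simplicial level $M\circ\sP^{\circ k}\circ N$ is exact and colimit-preserving in $M$ by Step 1. Geometric realization commutes with colimits, and in the stable setting it also commutes with finite limits, because finite limits are finite colimits up to shift. Exactness is checked on underlying objects.

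For the second variable, each level preserves sifted colimits in $N$. We use Step 1 iterated, together with the fact that the left action of $\sQ$ and $M$ preserves sifted colimits. Realization commutes with sifted colimits, and sifted colimits in $\BMod$ are computed underlying.

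\textbf{Main obstacle.} The hard part is Step 1 in the second variable, in the non-idempotent generality where $\SSeq(\sA,\sB)$ is not a classical category of symmetric sequences. I would handle it using \cite[Proposition 3.2.9]{blansblom2025chainrulegoodwilliecalculus}, which expresses $(F\circ G)$ via $F_n$ applied to $\Sym$-components of $G$, and then reduce to the fact that $G\mapsto G^{\otimes n}$ in $\presl$-mapping categories preserves sifted colimits.
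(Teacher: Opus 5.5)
\textbf{There is a genuine gap in your argument that $-\circ_\sP N$ preserves all small colimits in the first variable.} Your treatment of sifted colimits in both variables, and of finite limits in the first variable, is correct and matches the paper.

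The colimit argument rests on the claim that $\BMod_{(\sQ,\sP)}(\SSeq(\sB,\sC))$ is stable because the forgetful functor is conservative and preserves finite limits. That does not follow, and it is false in general. The forgetful functor creates limits. Colimits, however, are computed underlying only when the monad $X\mapsto\sQ\circ X\circ\sP$ preserves them, and $X\mapsto\sQ\circ X$ is not exact. Your own remark that it only preserves sifted colimits was exactly the right worry, and the stability claim then discards it.

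Here is a concrete counterexample with $\sP=\unit_\sB$. The coproduct of two free modules is $\sQ\circ(X\oplus Y)$. The product is computed underlying as $(\sQ\circ X)\oplus(\sQ\circ Y)$. These already differ for $\sB=\sC=\Sp$, $X=Y=\unit_\Sp$ and any $\sQ$ with $\sQ(2)\neq 0$: in arity $2$ one gets $\sQ(2)^{\oplus 4}$ versus $\sQ(2)^{\oplus 2}$. So the category is not even additive. If $\sQ$ has a nonzero arity-$0$ part, the initial object $\sQ\circ 0\circ\sP$ is nonzero while the terminal object is $0$, so the category is not even pointed. The paper's $\Alg_\sO(\sA)\simeq\BMod_{(\sO,\unit_\ast)}(\SSeq(\ast,\sA))$, whose stabilization is $\sA$, is another non-stable instance.

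Consequently, preservation of finite limits and filtered colimits tells you nothing about coproducts or pushouts of bimodules. Your observation that each level $M\circ\sP^{\circ k}\circ N$ preserves colimits in $M$ concerns colimits of underlying symmetric sequences, and these are not the colimits in $\BMod_{(\sQ,\sP)}$.

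\textbf{What is missing is a direct check of finite coproducts,} which together with sifted colimits generate all small colimits. The paper checks the initial object $\sQ\circ 0\circ\sP$ by hand. For binary coproducts it writes bimodules as sifted colimits of free ones, reducing to
\[
((\sQ\circ X\circ\sP)\sqcup(\sQ\circ Y\circ\sP))\circ_\sP N\simeq\sQ\circ(X\oplus Y)\circ N\simeq\sQ\circ(X\circ N\oplus Y\circ N)\simeq(\sQ\circ X\circ N)\sqcup(\sQ\circ Y\circ N).
\]
This chain uses three facts:
\begin{itemize}
\item $-\circ N$ preserves coproducts;
\item coproducts in $\RMod_\sO$ are computed underlying, since $-\circ\sO$ preserves colimits;
\item $\sQ\circ-\colon\RMod_\sO\to\BMod_{(\sQ,\sO)}$ is a left adjoint.
\end{itemize}
Stability enters only in the finite-limit statement, and there it is the stability of the underlying $\SSeq(\sA,\sC)$ that is used, not that of the bimodule categories.
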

\begin{proof}
    It follows from \cite[Proposition 3.2.9]{blansblom2025chainrulegoodwilliecalculus} that the composition product preserves all small colimits in the first variable and sifted colimits in the second variable.
    Since the forgetful functor from bimodules to symmetric sequences preserves and reflects sifted colimits, and since colimits commute with colimits, we therefore find that the relative composition product commutes with sifted colimits in both variables.
    
    To prove that the relative composition product preserves all small colimits in the first variable, it now suffices to show that it preserves finite coproducts.
    The initial object in $\BMod_{(\sQ, \sP)}$ is given by $\sQ \circ 0 \circ \sP$, and since it is clear that this is preserved by the relative composition product, we only have to verify that it preserves binary coproducts.
    By writing our bimodules as a sifted colimit of free bimodules, we can further reduce to the case of a binary coproduct of free bimodules.
    So suppose that $X, Y \in \SSeq(\sB, \sC)$ and $M \in \BMod_{(\sP, \sO)}(\SSeq(\sA, \sB))$.
    Writing $\sqcup$ for the coproduct of bimodules and $\oplus$ for the coproduct of symmetric sequences, we then have
    \begin{align*}
        ((\sQ \circ X \circ \sP) \sqcup (\sQ \circ Y \circ \sP)) \circ_\sP M & \simeq \sQ \circ (X \oplus Y) \circ \sP \circ_\sP M \\
        & \simeq \sQ \circ (X \oplus Y) \circ M \\
        &\simeq  \sQ \circ (X \circ M \oplus Y \circ M) \\
        & \simeq (\sQ \circ X \circ M) \sqcup (\sQ \circ Y \circ M) \\
        &\simeq (\sQ \circ X \circ \sP \circ_\sP M) \sqcup (\sQ \circ Y \circ \sP \circ_\sP M).
    \end{align*}
    The first equivalence follows since the free $(\sQ, \sP)$-bimodule functor preserves colimits.
    The third equivalence follows from the fact that the composition product preserves coproducts in the first variable.
    This also implies that coproducts in right $\sO$-modules are computed in the category of symmetric sequences, so that $X \circ M \oplus Y \circ M$ is also the coproduct in right $\sO$-modules.
    The penultimate equivalence holds since
    \[
    \sQ \circ - \colon \RMod_\sO \to \BMod_{(\sQ, \sO)}
    \]
    is left adjoint to the forgetful functor, and therefore preserves colimits.
    This completes the proof that the relative composition product preserves binary coproducts, and therefore all small colimits in the first variable.
    
    By stability of the categories $\SSeq(\sB, \sC)$ and $\SSeq(\sA,\sC)$, the composition product also preserves limits in the first variable.
    Since both limits and sifted colimits in categories of bimodules are computed in the underlying category, and since sifted colimits commute with finite limits in any stable category, it follows that the relative composition product preserves finite limits in the first variable as well. 
\end{proof}

\subsection{Unreduced functors}\label{ssec:unreduced-functors}

We now briefly discuss derivatives of unreduced functors.
This will be used to upgrade the product rule proved in \cref{sec:prod-rule} to a \emph{unital} strong symmetric monoidal structure; observe that the unit $\const_{\unit}$ of the pointwise tensor product in $\Fun(\sC,\sD)$ is rarely reduced, hence a product rule for reduced functors would necessarily be \emph{nonunital}.

Let us first recall that the $n$th derivative $\partial_n F$ of a reduced finitary functor $F \colon \cC \to \cD$ is defined as the unique colimit preserving functor $(\Sp{\cC})^{\otimes n}_{h\Sigma_n} \to \Sp{\cD}$ such that
\[D_nF(x) \simeq \Omega^\infty_\cD \partial_n F(\Sigma^\infty_\cC x, \cdots , \Sigma^\infty_\cC x)_{h \Sigma_n}\]
naturally in $n$.
Here $D_nF$ denotes the fiber of $P_nF \to P_{n-1}F$.
This definition does not actually require $F$ to be reduced: the existence (and uniqueness) of such a functor $\partial_n F$ only requires that $D_nF$ is $n$-homogeneous and finitary, which also holds if $F$ itself is not reduced.
In fact, $D_n$ does not see the difference between $F$ and its reduction.

\begin{lemma}\label{lem:partial-vs-reduction}
    Let $F \colon \cC \to \cD$ be a finitary functor between differentiable categories.
    Then there are natural equivalences
    \[D_nF \simeq D_n(\red F) \quad \text{and} \quad \partial_n F \simeq \partial_n (\red F)\]
    for $n \geq 1$.
\end{lemma}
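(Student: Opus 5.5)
We first fix what $\red F$ is. Since $\cD$ is pointed and has finite limits, $\red F$ is the fiber $\red F = \mathrm{fib}(F \to \const_{F(0)})$, where the map $F \to \const_{F(0)}$ is induced by the maps $x \to 0$. This is natural in $F$. The functor $\red F$ is reduced, and it is finitary: filtered colimits commute with finite limits in $\cD$, and $\const_{F(0)}$ is finitary.

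The strategy is to show that each $P_n$ carries this fiber sequence to a fiber sequence, and then to compute the effect on the constant term.

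\textbf{Step 1: $P_n$ preserves the fiber sequence.} Recall $P_n G = \colim_k T_n^k G$, where $T_n G(x) = \lim_{U \neq \emptyset} G(C_U(x))$ is a finite limit over joins. Hence $T_n$ commutes with finite limits of functors. The colimit defining $P_n$ is sequential, hence filtered, so it commutes with finite limits in the differentiable category $\cD$. It follows that $P_n$ preserves the fiber sequence $\red F \to F \to \const_{F(0)}$.

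\textbf{Step 2: the constant term.} A constant functor is $0$-excisive, since the relevant cubes are constant and so cartesian. Therefore $P_n \const_{F(0)} \simeq \const_{F(0)}$ for all $n \geq 0$, and in particular the map $P_n \const_{F(0)} \to P_{n-1}\const_{F(0)}$ is an equivalence for $n \geq 1$.

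\textbf{Step 3: the layers.} Taking fibers of $P_n \to P_{n-1}$ commutes with the fiber sequences from Step 1, since limits commute with limits. This gives a natural fiber sequence
\[D_n(\red F) \to D_n F \to D_n \const_{F(0)} \simeq 0,\]
and hence the equivalence $D_nF \simeq D_n(\red F)$ for $n \geq 1$.

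\textbf{Step 4: the derivatives.} The functor $\partial_n$ is characterized uniquely, and functorially, by $D_n$ through the classification of finitary $n$-homogeneous functors. The equivalence of $D_n$ from Step 3 therefore transports to $\partial_n F \simeq \partial_n(\red F)$. Naturality in $F$ follows because every construction used (fiber, $T_n$, colimit, classification) is functorial.

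The main obstacle is Step 1, and specifically the point that the sequential colimit commutes with the fiber in $\cD$. That is exactly where differentiability of $\cD$ is used. Everything else is formal.
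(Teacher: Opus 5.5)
Your proposal is correct and follows essentially the same route as the paper. Both proofs write $\red F$ as the fiber of $F \to \const_{F(*)}$, use that $P_n$ and $P_{n-1}$ (hence $D_n$) preserve fiber sequences, note that $D_n$ kills constant functors for $n \geq 1$, and transport the equivalence of layers to $\partial_n$ via the classification of finitary homogeneous functors. The only difference is that the paper cites left exactness of $P_n$ from Lurie's Higher Algebra, Theorem 6.1.1.10, whereas you reprove it from the $T_n$-construction and the differentiability of $\cD$.
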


\begin{proof}
    Recall that $\red F$ is defined as the fiber of $F \to \const_{F(*)}$, cf.\ \cite[Construction 6.1.3.15]{HA}.
    The result about $D_n$ follows since $P_n$ and $P_{n-1}$ (and hence also $D_n$) preserve fiber sequences by \cite[Theorem 6.1.1.10]{HA}.
    The result about $\partial_n$ then follows from Proposition 3.1.26, Theorem 3.1.29 and Definition 3.1.30 of \cite{blansblom2025chainrulegoodwilliecalculus}.
\end{proof}

\begin{warning}
    Although the definition of the derivatives $\partial_*$ extends to unreduced functors, the chain rule (\cref{thm:chain-rule-cherry}) does not extend to unreduced functors in a naive way. 
    This can be traced back to the fact that \cite[Proposition 1.3]{AroneChing2011} fails for a composite $F \circ G$ if $G$ is not reduced.
\end{warning}

The definition of $\partial_n F$ crucially depends on the fact that $D_nF$ can be (canonically) delooped if $n \geq 1$, cf.\  \cite[Theorem 2.1]{Goodwillie2003} and \cite[Corollary 6.1.2.9]{HA}.
For $n=0$ this generally fails if $F$ is unreduced, since $D_0 F = \const_{F(*)}$.
Hence there is no sensible definition of $\partial_0 F$ for general finitary functors $F \colon \cC \to \cD$.\footnote{If $F$ is reduced, one can of course define $\partial_0 F = *$.} However, if the target $\cD$ is stable, then $\cD \simeq \Sp \cD$ and one can do the following:

\begin{definition}\label{def:definition-partial0}
    Let $F \colon \cC \to \cD$ be a (possibly unreduced) finitary functor between differentiable categories such that $\cD$ is stable.
    Then we define $\partial_0 F = F(*)$.
    More generally, we define $\partial_* F$ to be the symmetric sequence $\{\partial_n F\}_{n \geq 0}$.
\end{definition}

Note that when $F$ is reduced, this does not conflict with the notation $\partial_* F$ for the positive symmetric sequence $\{\partial_n F\}_{n \geq 1}$.
Namely, we generally view $\SSeq_{\geq 1}(\Sp \cC, \Sp \cD)$ as the full subcategory of $\SSeq(\Sp \cC, \Sp \cD)$ spanned by the symmetric sequences $G$ for which $G(0) = *$.

\subsection{The Goodwillie transform} \label{ssec-goodwillie-transform}

The purpose of this section is to define the category $\Alg_\sO(\sA)$ of $\sO$-algebras in $\sA$ for $\sO \in \Alg(\SSeq(\sA, \sA))$ and introduce a functor of $2$-categories, called the Goodwillie transform, that sends a differentiable category $\sC$ to the category of $\partial_*{\id_\sC}$-algebras in $\Sp\sC$.

\begin{definition}
    Let $\Lambda \colon \pressymst \to \Cat$ be the functor corepresented by the one point category $\ast$. 
    We will also write $\Lambda$ for the restriction of this functor to $\pressymstc$.

\end{definition}

\begin{remark} \label{ex: sseq-arity-0}
    For $\sA$ a stable presentable category, we have equivalences
    \[
    \SSeq(\ast, \sA) = \FunL(\Sym(\ast), \sA) \simeq \FunL(\Sp, \sA) \simeq \sA,
    \]
    so that $\Lambda(\sA) \simeq \sA$.
    Suppose that $\sB$ is another stable presentable category and $F \in \SSeq(\sA, \sB)$.
    We will write $\Lambda_F \colon \sA \to \sB$ for the image of $F$ under $\Lambda$.
    By \cite[Proposition 3.2.13]{blansblom2025chainrulegoodwilliecalculus}, it is given by the formula
    \[
    \Lambda_F(x) \simeq \bigoplus_{n \geq 0} F_n(x, \ldots, x)_{h\Sigma_n}.
    \]
\end{remark}

Given a symmetric sequence $F \in \SSeq(\sA,\sB)$, the formula from the previous remark shows that $\Lambda_F$ is the direct sum of homogeneous functors corresponding to $F$, so that we have $\partial_* \Lambda_F \simeq F$.
This is the basis for the following useful result, which we prove in the appendix:

\begin{proposition}[{\cref{prop: app-derivatives-lambda}}] \label{prop: lambda-diff-is-id}
    The composite
    \[
    \begin{tikzcd}
    \pressymstc \ar[r, "\Lambda"] & \diffsp \ar[r, "\partial_*"] & \pressymstc
    \end{tikzcd}
    \]
    is equivalent to the identity functor of the $2$-category $\pressymstc$.
\end{proposition}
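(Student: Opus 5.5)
The plan is to build a natural comparison between the composite $\partial_* \circ \Lambda$ and the identity, and then check it is an equivalence on objects and on mapping categories. The first step is to see that $\Lambda$ actually lands in $\diffsp$. Every $\sA$ is stable presentable, hence differentiable. For positive $F$, the functor $\Lambda_F$ is reduced and finitary, since each $F_n$ preserves colimits in each variable and homotopy orbits commute with filtered colimits. On objects the composite sends $\sA$ to $\Sp\sA \simeq \sA$, and $\partial_*\Lambda_F \simeq F$ by the formula in \cref{ex: sseq-arity-0}, because $\Lambda_F$ is a sum of homogeneous functors. On mapping categories, $\partial_*\circ\Lambda \colon \SSeq_{\geq 1}(\sA,\sB) \to \SSeq_{\geq 1}(\sA,\sB)$ is therefore objectwise equivalent to the identity. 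The real content is to make this equivalence coherent, i.e.\ to produce a natural transformation of $2$-functors that is strong, so that it is compatible with composition products.

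To get coherence, I would not compare the functors by hand. Instead I would exhibit $\partial_*\circ\Lambda$ as a functor \emph{corepresented} by the same data as the identity. Recall that $\pressymstc$ sits inside $\coCAlg(\preslst)$ via $\sA \mapsto \Sym\sA$, and that $\Lambda$ is corepresented by the point: $\Lambda = \pressymst(\ast,-)$. The derivatives functor on $\diffsp$ is built in \cite{blansblom2025chainrulegoodwilliecalculus} from the Taylor tower and homogeneous layers. My plan is to construct a natural comparison map $\Lambda \circ \partial_* \Rightarrow \id$ on $\diffsp$, or dually $\id \Rightarrow \partial_*\circ\Lambda$, as follows. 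The key input is that for a symmetric sequence $F$, the functor $\Lambda_F$ carries a canonical map to the product of its homogeneous pieces; this is the norm-type map from Appendix B. Through the universal property of $\Sym$ as a cofree coalgebra, this gives a map of coalgebra morphisms $\Sym\sA \to \Sym\sB$ that is natural in $F$ and compatible with composition, since $\Lambda$ is a $2$-functor. Concretely, I would show that the $2$-functor $\partial_*\circ\Lambda$ is determined by its effect on the generating data: on objects, and on arity-wise components of mapping categories. I would then argue via a Yoneda-type statement, namely that an endofunctor of $\pressymstc$ that is the identity on objects, together with a natural equivalence $\partial_*\Lambda_F \simeq F$ that is monoidal with respect to composition, is equivalent to the identity.

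The main obstacle is the monoidality: we need $\partial_*(\Lambda_F\circ\Lambda_G) \simeq F\circ G$ compatibly with all higher coherences. Since $\partial_*$ is strong on $\diffsp$ and $\Lambda$ is a strong functor, the composite is already a strong $2$-functor. So what must be shown is that the pointwise equivalences $\partial_*\Lambda_F\simeq F$ assemble into a pseudonatural transformation. I would attempt this by constructing the comparison in a universal case and propagating it. First, work arity by arity using multilinearization: the $n$th cross-effect of $\Lambda_F$ recovers $F_n$ functorially, which gives a natural map of functors $\SSeq\to\SSeq$. Second, check compatibility with composition on free/representable symmetric sequences, where both sides are computed by the explicit partition formula for $F\circ G$. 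Third, extend by colimits, using that both sides preserve the relevant colimits; here \cref{prop: exactness-composition-mor} is the model for the exactness properties. The verification on the partition formula, matching the diagonal terms $D_n(\Lambda_F\circ\Lambda_G)$ with $(F\circ G)_n$, is where I expect the real work to lie.
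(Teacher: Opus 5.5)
You get the objects and mapping categories right, and you correctly identify the real issue: producing a \emph{coherent} equivalence between $\partial_*\circ\Lambda$ and the identity (an invertible icon), not just a family of equivalences $\partial_*\Lambda_F\simeq F$. But none of the mechanisms you propose supplies this coherence.

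Your ``Yoneda-type statement'' restates the goal: an equivalence $\partial_*\Lambda_F\simeq F$ compatible with composition at all higher levels \emph{is} the invertible icon you are trying to construct.

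Corepresentability cannot be used directly either. $\Lambda$ is corepresented by $\ast$ on $\pressymst$ but not on $\pressymstc$, since positive symmetric sequences out of $\ast$ are zero. On non-positive symmetric sequences $\Lambda$ produces unreduced functors, for which the chain rule fails, so $\partial_*\circ\Lambda$ has no evident extension to $\pressymst$.

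The norm map of Appendix B is a map $\Lambda_F\Rightarrow\Psi\partial_*\Lambda_F$ in $\Fun(\sA,\sB)$, not in $\SSeq(\sA,\sB)$. Moreover $\Psi$ is not compatible with composition, so the norm map gives neither a map $F\to\partial_*\Lambda_F$ nor any coherence. A transformation $\Lambda\circ\partial_*\Rightarrow\id$ inducing identities on derivatives does not exist in general, since it would split Taylor towers.

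Finally, checking compatibility with composition on free objects via the partition formula and extending by colimits can at best show that certain $2$-cells commute up to homotopy. It does not construct the infinite hierarchy of coherence data of a transformation of $(\infty,2)$-functors. Note also that $\partial_*(\Lambda_F\circ\Lambda_G)\simeq F\circ G$ is automatic, because $\Lambda$ and $\partial_*|_{\diffsp}$ are both strong. What is at stake is the coherent compatibility of the chosen equivalences $\partial_*\Lambda_F\simeq F$ with these structure maps.

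The missing idea is to build the comparison from the construction of $\partial_*$ itself, so that $2$-categorical adjunction theory supplies the coherence. In the paper, $\partial_*=\widetilde{\mlin}\circ\widetilde{\cross}$ factors through the $2$-precategory $\sF$ of formal symmetric sequences. There, the lax functor $\widetilde{\cross}$ has an oplax local left adjoint $\widetilde{\Lambda}$, and the strong functor $\widetilde{\mlin}$ has a lax, locally fully faithful local right adjoint $k$.

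One then shows that $\widetilde{\Lambda}$ is strong on the image of $k$ and that $\widetilde{\Lambda}k\simeq\Lambda$. This is where the Yoneda lemma genuinely enters, after extending $\widetilde{\Lambda}$ and $k$ to $\fdiffsp$ and $\pressymst$, where $\Lambda$ is corepresentable. An icon version of the mate correspondence, proved using envelopes (lax functor classifiers), turns the identity icon of $\widetilde{\Lambda}k$ into a unit icon $\eta\colon k\Rightarrow\widetilde{\cross}\widetilde{\Lambda}k$. This gives a zigzag of icons $\id\Leftarrow\widetilde{\mlin}k\Rightarrow\widetilde{\mlin}\widetilde{\cross}\widetilde{\Lambda}k\simeq\partial_*\Lambda$, whose left leg is the counit of the local adjunction $(\widetilde{\mlin},k)$.

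An icon whose components are equivalences is a natural equivalence, so only a local statement remains. That statement is that the unit $j(X)\to\cross\, p_!\, j(X)$ becomes an equivalence after multilinearization, which is essentially your first step. It is proved in the chain rule paper for symmetric sequences concentrated in finitely many arities and extended by filtered colimits. So your arity-wise computation is the right pointwise input, but without the adjunction and mate machinery there is no way to assemble it into an equivalence of $2$-functors.
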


\begin{definition}
    Let $\sA$ be a stable presentable category and let $\sO \in \Alg(\SSeq(\sA,\sA))$.
    Applying the functor $\Lambda$ yields a monad $\Lambda_\sO \colon \sA \to \sA$.
    We write $\Alg_\sO(\sA)$ for the category of $\Lambda_\sO$-algebras in $\sA$.
    If the category $\sA$ is clear from context, we will often write $\Alg_\sO$ instead of $\Alg_\sO(\sA)$.
\end{definition}

We will now define an extension of the functor $\Lambda$ to the Morita category.

\begin{definition}
    Let $\reAlg \colon \MMor \to \Cat$ be the functor corepresented by $(\ast, \unit_\ast)$.
\end{definition}

\begin{remark}
    The functor $\reAlg \colon \MMor \to \Cat$ sends a pair $(\sA, \sO)$ to the category
    \[
    \BMod_{(\sO, \unit_\ast)}(\SSeq(\ast, \sA)) \simeq \LMod_\sO(\sA).
    \]
    Note that the left action of $\SSeq(\sA, \sA)$ on $\SSeq(\ast, \sA) \simeq \sA$ is given by the monoidal functor $\Lambda \colon \SSeq(\sA, \sA) \to \End(\sA)$, so that we get an equivalence
    \[
    \LMod_\sO(\sA) \simeq \Alg_\sO(\sA).
    \]
    The functor $\reAlg$ sends a morphism $M \in \MMor((\sA, \sO), (\sB, \sP))$, which is the same as a $(\sP, \sO)$-bimodule in $\SSeq(\sA, \sB)$, to the functor
    \[
    \reAlg(M) \colon \Alg_{\sO}(\sA) \to \Alg_{\sP}(\sB)
    \]
    that sends an $\sO$-algebra $X$ to the $\sP$-algebra $M \circ_{\sO} X$.
\end{remark}

The following functor will play an important role throughout this paper.

\begin{definition}
    The \emph{Goodwillie transform} is the functor $\maAlg \colon \diff \to \Cat$ given by the composite
    \[
    \diff \xrightarrow{\partial_*} \MMor \xrightarrow{\reAlg} \Cat.
    \]
\end{definition}

\begin{remark}
Concretely, the Goodwillie transform sends a differentiable category $\sC$ to $\Alg_{\partial_*{\id_\sC}}(\Sp\sC)$, and a reduced finitary functor $F \colon \sC \to \sD$ to the functor
\[
\Alg_{\partial_*{\id_\sC}} \to \Alg_{\partial_*{\id_\sD}} \colon X \mapsto \partial_*{F} \circ_{\partial_*{\id_\sC}} X. 
\]
\end{remark}

We now prove two basic properties of the Goodwillie transform.

\begin{proposition} \label{prop: goodwillie-transf-reduced-sifted-colim-pres}
    Let $F \colon \sC \to \sD$ be a reduced finitary functor between differentiable categories.
    Then $\maAlg(F) \colon \maAlg(\sC) \to \maAlg(\sD)$ is reduced and preserves sifted colimits.
\end{proposition}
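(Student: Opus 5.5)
Write $\sO = \partial_*{\id_\sC}$, $\sP = \partial_*{\id_\sD}$ and $M = \partial_*F$, a $(\sP,\sO)$-bimodule in $\SSeq_{\geq 1}(\Sp\sC, \Sp\sD)$. By the Remark following the definition of $\reAlg$, $\maAlg(F)$ is the functor $X \mapsto M \circ_\sO X$ from $\Alg_\sO(\Sp\sC)$ to $\Alg_\sP(\Sp\sD)$. Here an $\sO$-algebra $X$ is viewed as a left $\sO$-module in $\SSeq(\ast, \Sp\sC) \simeq \Sp\sC$, that is, as a $(\sO, \unit_\ast)$-bimodule. The plan is to check the two properties directly on this formula, using \cref{prop: exactness-composition-mor} and the positivity of $M$.

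\textbf{Sifted colimits.} Apply \cref{prop: exactness-composition-mor} to the objects $(\ast, \unit_\ast)$, $(\Sp\sC, \sO)$ and $(\Sp\sD, \sP)$ of $\MMor$. It says that the relative composition product
\[
\BMod_{(\sP,\sO)}(\SSeq(\Sp\sC,\Sp\sD)) \times \BMod_{(\sO,\unit_\ast)}(\SSeq(\ast,\Sp\sC)) \to \BMod_{(\sP,\unit_\ast)}(\SSeq(\ast,\Sp\sD))
\]
preserves sifted colimits in the second variable. Under the identifications $\BMod_{(\sO,\unit_\ast)}(\SSeq(\ast,\Sp\sC)) \simeq \Alg_\sO(\Sp\sC)$ and $\BMod_{(\sP,\unit_\ast)}(\SSeq(\ast,\Sp\sD)) \simeq \Alg_\sP(\Sp\sD)$, this is exactly the statement that $\maAlg(F) = M \circ_\sO -$ preserves sifted colimits. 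No further input is needed for this part.

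\textbf{Reducedness.} The zero object of $\Alg_\sO(\Sp\sC)$ is the algebra with underlying object $0$. This holds because $\sO$ is positive: $\Lambda_\sO(0) \simeq \bigoplus_{n\geq 1} \sO_n(0,\ldots,0)_{h\Sigma_n} \simeq 0$, so $0$ carries a unique algebra structure and is both initial and terminal. It therefore suffices to compute $M \circ_\sO 0$. This is the geometric realization of the bar construction with $k$-simplices $M \circ \sO^{\circ k} \circ 0$, and the forgetful functor to $\Sp\sD$ preserves this sifted colimit. Each term is $\Lambda_{M\circ\sO^{\circ k}}(0)$, which vanishes because $M \circ \sO^{\circ k}$ is positive (positive symmetric sequences are closed under $\circ$). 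Hence the realization is $0$, and $\maAlg(F)(0) \simeq 0$.

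\textbf{Main obstacle.} The only delicate point is the identification of the zero algebra, together with the fact that the underlying object of $M\circ_\sO X$ is computed as the realization of $\Lambda_{M\circ\sO^{\circ k}}(X)$. I would justify both through the left action of $\SSeq$ on $\SSeq(\ast,-)$ given by $\Lambda$, as in the Remark defining $\reAlg$, and then conclude.
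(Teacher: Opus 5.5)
Your proposal is correct and follows the paper's proof. Preservation of sifted colimits comes from \cref{prop: exactness-composition-mor} in the second variable, via the description $\maAlg(F) = \partial_*F \circ_{\partial_*\id_\sC} -$, and reducedness comes from the composition-product formula together with the positivity of $\partial_*$; you spell out the reducedness step (the zero algebra, the bar construction, and the vanishing of $\Lambda_{M\circ\sO^{\circ k}}(0)$) in more detail than the paper does.
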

\begin{proof}
    The fact that $\maAlg(F)$ preserves sifted colimits follows immediately from the description of the Goodwillie transform given in the previous remark together with \cref{prop: exactness-composition-mor}.
    That it is reduced follows immediately from the formula of the composition product, using that $\partial_*$ takes values in positive symmetric sequences.
\end{proof}

\begin{proposition} \label{prop: alg-O-differentiable}
    The Goodwillie transform $\maAlg \colon \diff \to \Cat$ factors through $\diff \subseteq \Cat$.
\end{proposition}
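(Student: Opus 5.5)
We have to show two things: for every differentiable $\sC$, the category $\maAlg(\sC) = \Alg_{\partial_*{\id_\sC}}(\Sp\sC)$ is differentiable, and for every reduced finitary $F$, the functor $\maAlg(F)$ is reduced and finitary. The second part is already \cref{prop: goodwillie-transf-reduced-sifted-colim-pres}, since filtered colimits are sifted. So everything reduces to the objects. Write $\sA = \Sp\sC$ and $\sO = \partial_*{\id_\sC}$, a positive algebra in $\SSeq(\sA,\sA)$. We must show that $\Alg_\sO(\sA)$ is presentable, pointed, and has filtered colimits commuting with finite limits.

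\emph{Presentability.} By \cref{ex: sseq-arity-0}, $\Lambda_\sO(x) \simeq \bigoplus_{n \geq 1} \sO_n(x,\ldots,x)_{h\Sigma_n}$. Each summand is a colimit-preserving multivariable functor precomposed with the diagonal, followed by homotopy orbits. Hence $\Lambda_\sO$ preserves sifted colimits, and in particular it is accessible. (This is also the case $F = \id$ of \cref{prop: exactness-composition-mor}, viewing $X \mapsto \sO \circ X$ as composition with an object of $\SSeq(\ast,\sA)$.) The category of algebras over an accessible monad on a presentable category is presentable, and the forgetful functor $U \colon \Alg_\sO(\sA) \to \sA$ creates limits and sifted colimits.

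\emph{Pointedness.} Because $\sO$ is positive, $\Lambda_\sO(0) \simeq 0$. So $0$ carries a unique algebra structure. It is terminal, since $U$ creates limits, and it is initial, since the free algebra on $0$ is $\Lambda_\sO(0) = 0$. Hence $\Alg_\sO(\sA)$ is pointed.

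\emph{Filtered colimits and finite limits.} Finite limits and filtered colimits in $\Alg_\sO(\sA)$ are both computed by $U$ in $\sA$, using that $U$ creates limits and sifted colimits. Since $U$ is conservative, the comparison map $\colim_i \lim_j X_{ij} \to \lim_j \colim_i X_{ij}$ is an equivalence in $\Alg_\sO(\sA)$ as soon as it is one after applying $U$. In $\sA$ it is an equivalence because $\sA$ is stable. This proves differentiability.

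The only step needing real care is the claim that $\Lambda_\sO$ preserves filtered (indeed sifted) colimits, which underlies both presentability and the creation of filtered colimits by $U$. This holds because $x \mapsto (x,\ldots,x)$ composed with a functor preserving colimits separately in each variable preserves sifted colimits, while $h\Sigma_n$ and the direct sum commute with all colimits. Finally, $\maAlg(\sC)$ lands in $\diff$ and $\maAlg$ sends morphisms to reduced finitary functors; since $\diff \subseteq \Cat$ is a locally full subcategory, these two facts give the factorization.
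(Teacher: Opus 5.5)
Your proposal is correct and follows the paper's proof essentially step for step. You first handle morphisms with \cref{prop: goodwillie-transf-reduced-sifted-colim-pres}, then show $\Alg_\sO(\sA)$ is presentable because $\Lambda_\sO$ is finitary, is pointed because $\Lambda_\sO(0) \simeq 0$, and has filtered colimits commuting with finite limits because both are computed in $\sA$. The only difference is that you spell out the justifications in more detail than the paper does.
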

\begin{proof}
    It follows from the previous proposition that every functor in the image of $\maAlg$ is reduced and finitary.
    It therefore suffices to show that $\Alg_\sO(\sA)$ is differentiable for $(\sA, \sO) \in \MMor$ with $\sO(0)=0$.
    Since $\Lambda_\sO$ preserves filtered colimits, it follows from \cite[Corollary 6.8]{henry2021highertheoriesmonads} that $\Alg_\sO(\sA)$ is presentable.
    It is pointed since $\Lambda_\sO$ is reduced.
    Now observe that limits and filtered colimits in this category are computed in $\sA$, where they commute since $\sA$ is differentiable.
    Hence $\Alg_\sO(\sA)$ is differentiable as well.
\end{proof}

\subsection{Koszul duality}

We will now briefly recall the relation between Koszul duality and Goodwillie calculus.
The form of Koszul duality we make use of was developed by Lurie in \cite[Section 5.2]{HA}.
In our setting, it takes the following form.

\begin{theorem}[{\cite[Theorem 4.2.4]{blansblom2025chainrulegoodwilliecalculus}}] \label{thm:koszul-duality-operads}
    Let $\sA$ be a stable presentable category. 
    There is an adjoint equivalence of categories
    \[
    \begin{tikzcd}
        \Alg(\SSeq_+(\sA, \sA)) \ar[r, shift left, "B"] & \coAlg(\SSeq_+(\sA, \sA)). \ar[l, shift left, "C"]
    \end{tikzcd}
    \]
\end{theorem}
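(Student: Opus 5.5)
The plan is to obtain the adjunction from Lurie's general bar--cobar formalism in \cite[Section 5.2]{HA}, and then to prove that it is an equivalence by induction along the arity filtration.

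\textbf{Constructing the adjunction.} Write $\sM = \SSeq(\sA,\sA)$, monoidal under the composition product with unit $\unit_\sA$.
\begin{enumerate}[\upshape{(}1\upshape{)}]
\item A strongly positive algebra $\sO$ is canonically augmented, $\sO \to \unit_\sA$. The map is projection onto arity $1$, which is available because $\sO_0 \simeq 0$ and $\sO_1 \simeq \id_\sA$, so arity $1$ is split off.
\item Dually, a strongly positive coalgebra is canonically coaugmented.
\item I would identify $\Alg(\SSeq_+(\sA,\sA))$ with the augmented algebras in $\sM$ whose augmentation ideal lies in arities $\geq 2$. Similarly for coalgebras.
\item Lurie's construction then gives an adjunction between augmented algebras and coaugmented coalgebras. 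The left adjoint is $B\sO = \unit_\sA \circ_\sO \unit_\sA$, computed as the geometric realization of the bar construction $\unit_\sA \circ \sO^{\circ \bullet} \circ \unit_\sA$. The right adjoint is $C(\sQ)$, the totalization of the cobar construction.
\item One checks that $B$ and $C$ preserve strong positivity. The arity $0$ and arity $1$ parts of each simplicial or cosimplicial level are constant, equal to $0$ and $\id_\sA$, so they survive realization and totalization unchanged.
\end{enumerate}

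\textbf{Arity finiteness.} The key technical observation concerns the bar construction written in terms of the augmentation ideal $\bar\sO$. Its normalized $k$-simplices are $\bar\sO^{\circ k}$, and since $\bar\sO$ is concentrated in arities $\geq 2$, the symmetric sequence $\bar\sO^{\circ k}$ vanishes in arities $< k+1$. Consequently, in any fixed arity $n$:
\begin{enumerate}[\upshape{(}1\upshape{)}]
\item the realization defining $B\sO$ is a finite colimit;
\item dually, the totalization defining $C(\sQ)$ is a finite limit, using the arity-wise formula for the composition product recalled above.
\end{enumerate}
Since everything is stable, these finite colimits and limits commute with each other. They also commute with the composition product in both variables arity-wise: the arity $n$ part of $F \circ G$ depends only on the arity $\leq n$ parts of $F$ and $G$, through finitely many multilinear expressions. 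This is exactly where the unrestricted statement would fail, because the composition product does not commute with totalizations in its second variable in general.

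\textbf{The unit $\sO \to CB\sO$ is an equivalence.} I would argue by induction on arity, using the truncations $\tau_{\leq n}$ to arities $\leq n$. These are monoidal localizations of $\SSeq_+(\sA,\sA)$, and $B$ and $C$ commute with them by the finiteness above.
\begin{enumerate}[\upshape{(}1\upshape{)}]
\item For the inductive step, I would analyze the fiber of $\tau_{\leq n}\sO \to \tau_{\leq n-1}\sO$. It is a square-zero extension concentrated in arity $n$, and on such an extension $B$ acts in arity $n$ by a shift.
\item More simply, one can compare the arity $n$ terms directly. Modulo contributions from lower arities, which are handled by the inductive hypothesis, the arity $n$ term of $CB\sO$ is $\Sigma^{-1}\Sigma\,\sO_n \simeq \sO_n$.
\end{enumerate}
The counit $BC\sQ \to \sQ$ is treated dually. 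Alternatively, Lurie's criterion \cite[Theorem 5.2.2.17]{HA} can be invoked: it requires the bar and cobar constructions to commute with the relevant composition products, which is exactly the finiteness established above.

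\textbf{Main obstacle.} I expect the hard step to be this commutation of totalizations with the composition product in the second variable. I would handle it by reducing to finite arity truncations, where all totalizations are finite limits in a stable category.
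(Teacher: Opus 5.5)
Your overall framework is the right one. The paper gives no proof of its own here: it imports the statement from \cite[Theorem 4.2.4]{blansblom2025chainrulegoodwilliecalculus}, built on Lurie's bar--cobar adjunction from \cite[Section 5.2]{HA}. Your canonical (co)augmentations, the preservation of strong positivity, and the observation that in a fixed arity $n$ the bar and cobar constructions are $(n-1)$-skeletal, resp.\ $(n-1)$-coskeletal, are all fine.

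The gap is in the one step that carries the content: invertibility of the unit. Work in arities $\le n$ and let $\sO'$ be the truncation of $\sO$ to arities $<n$, viewed as an algebra with $\sO'_n=0$. The bookkeeping you describe shows that $\sO\to\sO'$ and $CB\sO\to CB\sO'$ have arity-$n$ fibres $\sO_n$ and $\Omega\Sigma\sO_n\simeq\sO_n$, compatibly with the unit. Hence $\sO_n\to(CB\sO)_n$ is an equivalence \emph{if and only if} $(CB\sO')_n\simeq 0$.

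So the ``contributions from lower arities'' in your step (2) are exactly $(CB\sO')_n$, and they must vanish. Your inductive hypothesis concerns only the arities $<n$ of $CB$, so it says nothing about them. This vanishing is the actual content of Koszul duality. Already for $n=3$, with $\sO_2\neq 0$ and $\sO$ concentrated in arities $\le 2$, $(B\sO')_3$ is a nonzero object of two-vertex trees. The vanishing of $(CB\sO')_3$ holds only because a specific cobar differential is an equivalence. Route (1) hits the same wall: to carry invertibility of the unit along the square-zero extension $\sO\to\sO'$, you need it for $\sO'$ at level $n$, which is again $(CB\sO')_n\simeq 0$.

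Your fallback does not close the gap either. Commutation of bar and cobar with composition products makes $B$ and $C$ well defined with the expected underlying objects. It cannot by itself force the unit to be invertible. For example, over a field $k$ the augmented algebra $k\times k\to k$ has idempotent augmentation ideal and trivial bar construction, even though every construction involved is well behaved. So any usable criterion must use nilpotence essentially, not only as bookkeeping.

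One way to supply the missing acyclicity is as follows.
\begin{enumerate}
\item Since $\circ$ commutes with geometric realizations in both variables, $(B\sO)^{\circ k}\simeq\unit\circ_\sO\cdots\circ_\sO\unit$ with $k+1$ factors.
\item So $CB\sO$ is the totalization of the Amitsur-type object $T^{\bullet+1}\sO$ for the exact functor $T=(-)\circ_\sO\unit$ on $(\sO,\sO)$-bimodules, with the unit as coaugmentation.
\item The fibre of $\sO\to\mathrm{Tot}_k$ is then $J^{k+1}\sO$, where $J=\mathrm{fib}(\mathrm{id}\to T)$.
\item Because $\sO$ is strongly positive, $J$ raises the bottom arity of a bimodule by one. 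Hence in each arity the tower is eventually constant at $\sO$.
\end{enumerate}

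Two smaller inaccuracies:
\begin{itemize}
\item The normalized bar simplices are not $\bar\sO^{\circ k}$, since $\circ$ is not additive in its second variable. They are tree-shaped composites with all vertices in $\bar\sO$; these still vanish in arities $<k+1$, so your finiteness conclusion survives.
\item Since $\circ$ is not exact in its second variable, moving realizations and totalizations past it there uses that the diagonals $\Delta^{\mathrm{op}}\to(\Delta^{\mathrm{op}})^{\times m}$ and $\Delta\to\Delta^{\times m}$ are cofinal, resp.\ initial. Arity-wise finiteness alone does not justify it.
\end{itemize}
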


\begin{remark}
We say that $\sO$ and $B\sO$ are each other's Koszul duals.
We will sometimes write $\Bahr$ and $\Cobar$ instead of $B$ and $C$.
\end{remark}

\begin{remark}
    Suppose $\sO \in \Alg(\SSeq_+(\sA, \sA))$ is a strongly positive algebra.
    The projection $\sO \to \unit_\sA$ onto the arity $1$ component makes $\unit_\sA$ into an $\sO$-bimodule and
    the underlying symmetric sequence of $B\sO$ can be computed as the two-sided bar construction
    \[
    B\sO = \unit_\sA \circ_\sO \unit_\sA.
    \]
    For any strongly positive coalgebra $\sQ$, the underlying symmetric sequence of $CQ$ is computed by the analogous cobar construction.
\end{remark}

\begin{example} \label{ex: koszul-duality-spectra}
    If we let $\sA = \Sp$ in \cref{thm:koszul-duality-operads}, we obtain an adjoint equivalence
    \[
    \begin{tikzcd}
        \Op(\Sp) \ar[r, shift left, "B"] & \mathrm{coOp}(\Sp). \ar[l, shift left, "C"]
    \end{tikzcd}
    \]
    between the categories $\Op(\Sp)$ and $\mathrm{coOp}(\Sp)$ of strongly positive operads and cooperads in spectra.
\end{example}

The version of Koszul duality discussed so far sends operads to cooperads.
By using Spanier--Whitehead duality, there is also a version that sends operads to operads.

\begin{proposition}[{\cite[Lecture 16]{lurieThursdayKoszul}}]
    The functor $\mathbf{D} \colon \SSeq(\Sp)^\op \to \SSeq(\Sp)$ given by taking levelwise Spanier--Whitehead duals admits a lax monoidal structure.
\end{proposition}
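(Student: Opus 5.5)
The idea is to build the lax monoidal structure on $\mathbf{D}$ by comparing the composition products $\mathbf{D}X \circ \mathbf{D}Y$ and $\mathbf{D}(X \circ Y)$ aritywise. The most economical route is to present $\mathbf{D}$ as a composite of functors whose monoidal behaviour is already known, instead of writing out coherences by hand.

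For $X, Y \in \SSeq(\Sp)$, the $n$th term of the composition product is
\[
(X\circ Y)_n \simeq \bigoplus_{k} \Big(X_k \otimes \bigoplus_{n_1+\dots+n_k=n} \Ind\, Y_{n_1}\otimes\cdots\otimes Y_{n_k}\Big)_{h\Sigma_k},
\]
built from tensor products, finite induction and homotopy orbits. Spanier--Whitehead duality $\mathbf{D}=\ulmap(-,\Sph)$ has three relevant properties:
\begin{itemize}
\item it carries tensor products to lax comparison maps $\mathbf{D}A\otimes \mathbf{D}B\to \mathbf{D}(A\otimes B)$;
\item it turns direct sums into products, and via the finite-sum/product identification and the norm, induction into coinduction;
\item it turns orbits into fixed points, $\mathbf{D}(Z_{h\Sigma_k})\simeq (\mathbf{D}Z)^{h\Sigma_k}$.
\end{itemize}
The comparison map $\mathbf{D}X\circ \mathbf{D}Y \to \mathbf{D}(X\circ Y)$ is then the composite
\[
\big(\mathbf{D}X_k\otimes \textstyle\bigotimes_i \mathbf{D}Y_{n_i}\big)_{h\Sigma_k} \xrightarrow{\ \mathrm{Nm}\ } \big(\mathbf{D}X_k\otimes \textstyle\bigotimes_i \mathbf{D}Y_{n_i}\big)^{h\Sigma_k} \longrightarrow \mathbf{D}\big(X_k\otimes \textstyle\bigotimes_i Y_{n_i}\big)^{h\Sigma_k},
\]
combined with sum-to-product maps over $k$ and over partitions.

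To obtain all the higher coherences at once, I would work with symmetric sequences through the functors $\Lambda$. Recall from \cref{prop: lambda-diff-is-id} and the appendix on the right adjoint of $\partial_*$ that $\partial_*\colon \Fun(\Sp,\Sp)\to\SSeq(\Sp)$ has a right adjoint, and that the unit on homogeneous functors is the norm. The plan is to realize $\mathbf{D}$ as follows:
\begin{enumerate}
\item Send $X$ to the functor $\Phi_X \colon A \mapsto \prod_n \ulmap(X_n, A^{\otimes n})^{h\Sigma_n}$, the "coanalytic" functor attached to $X$.
\item Observe that $X\mapsto \Phi_X$ is contravariantly and oplax compatible with composition: there is a natural map $\Phi_X\circ \Phi_Y \leftarrow \Phi_{X\circ Y}$, or, dually, a map in the appropriate direction coming from the diagonal-type maps $\ulmap(Y,\,\cdot\,)^{\otimes k}\to \ulmap(Y^{\otimes k},\,\cdot\,^{\otimes k})$.
\item Take derivatives. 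On the multilinear parts, the derivatives of $\Phi_X$ are $\mathbf{D}X_n$, once one uses that the $n$th cross-effect of $A\mapsto (Z\otimes A^{\otimes n})^{h\Sigma_n}$ is linearized to $Z$.
\item Apply the lax monoidal structure of $\partial_*$ on endofunctors of $\Sp$, which is in fact strong since $\Sp$ is stable, by \cref{thm:chain-rule-cherry} and the remarks following it. This transports the structure from functors to symmetric sequences, and the coherences are inherited from functor composition.
\end{enumerate}
Alternatively, following Lurie, one can define $\mathbf{D}$ as the $\SSeq(\Sp)$-enriched mapping object into the unit inside the opposite of a suitable duoidal or bitensored setting. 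Lax monoidality of $\ulmap(-,\unit)$ then follows formally from the fact that the composition product is closed in the first variable.

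The main obstacle is step (3), together with the correct variance in step (2). Fixed-point functors are not finitary, so $\partial_*\Phi_X$ has to be interpreted carefully, for instance via the multilinearized cross-effects. The composite must also genuinely produce maps $\mathbf{D}X\circ\mathbf{D}Y\to\mathbf{D}(X\circ Y)$ and not maps in the reverse direction. If this becomes unwieldy, the fallback is the formal enriched argument: exhibit $\SSeq(\Sp)^{\op}$ as right-tensored over $\SSeq(\Sp)$ compatibly with $\circ$, and deduce lax monoidality of $\ulmap(-,\Sph\text{ in arity }1)$ from general Day-type properties of internal mapping objects.
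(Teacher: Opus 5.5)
The paper does not prove this statement itself; it quotes Lurie's notes. So your argument has to stand on its own, and as written it does not. Your aritywise comparison map (lax tensor, sums to products, the norm $Z_{h\Sigma_k}\to Z^{h\Sigma_k}$) is the right map. All the content, however, lies in making it coherent, and the mechanism you propose for that fails at step (3).

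Homotopy fixed points are $m$-excisive but not $m$-homogeneous. By the Tate square, $A\mapsto (Z\otimes A^{\otimes m})^{h\Sigma_m}$ has $(m-1)$-excisive approximation $(Z\otimes A^{\otimes m})^{t\Sigma_m}$, so it has nonzero lower derivatives. For instance, let $X$ be $\Sph$ with trivial action in arity $2$ and zero elsewhere. Then $\Phi_X(A)=(A\otimes A)^{h\Sigma_2}$, and $\partial_1\Phi_X$ is the exact functor $A\mapsto (A\otimes A)^{t\Sigma_2}$. This is nonzero (at $A=\Sph$ it is $\Sph^{t\Sigma_2}\simeq \Sph^\wedge_2$ by Lin's theorem), while $(\mathbf{D}X)_1=0$. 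So $\partial_*\Phi_X\not\simeq\mathbf{D}X$. These Tate terms survive restriction to finite spectra, so this is not merely a finiteness technicality. Separately, for non-dualizable $X_n$ the top derivative is the multilinear functor $\ulmap(X_n,A_1\otimes\cdots\otimes A_n)$. This does not preserve colimits, so it is not $\mathbf{D}X_n$ as an object of $\SSeq(\Sp)$.

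Step (4) is not available either. The strong monoidality of $\partial_*$ coming from \cref{thm:chain-rule-cherry} concerns reduced finitary functors. Your $\Phi_X$ involves infinite products, homotopy fixed points and $\ulmap(X_n,-)$, and $\Phi_X(0)\simeq\mathbf{D}X_0\neq 0$ whenever $X_0\neq 0$. The paper warns in \cref{ssec:unreduced-functors} that the chain rule fails for unreduced inner functors, and derivatives of reduced functors never see the arity-$0$ part that the statement includes. Finally, step (2) is itself a coherence problem of the same kind as the original one (the natural map there is $\Phi_X\circ\Phi_Y\to\Phi_{X\circ Y}$); it is not something inherited for free from functor composition.

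The fallback does not work as stated. The composition product is closed in the first variable, but the resulting internal hom into the unit $\unit$ (the sphere in arity $1$) is $[X,\unit]_n\simeq\ulmap((X^{\circledast n})_1,\Sph)$. For positive $X$ this is just $\mathbf{D}X_1$ in arity $1$, not levelwise Spanier--Whitehead duality. Moreover, the formal closed-monoidal argument would produce maps $[X,\unit]\circ[Y,\unit]\to[Y\circ X,\unit]$, with the factors in the wrong order.

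Levelwise duality is instead the Day-convolution enriched hom $\mathbf{D}X\simeq\{\ulmap_{\SSeq(\Sp)}(X,\unit^{\circledast n})\}_{n\geq 0}$, coming from the $(\SSeq(\Sp),\circ^{\mathrm{rev}})$-enrichment recalled in \cref{ssec:coend-operads-and-Koszul-duality}. A formal proof along those lines would combine that enrichment with the fact that each $-\circ Y$ is symmetric monoidal for $\circledast$. That is a different argument, and your proposal does not supply it.
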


As a consequence of this proposition, the Spanier--Whitehead dual of a cooperad has the structure of an operad.

\begin{definition}
    Let $\sO \in \Op(\Sp)$.
    We define the \emph{Koszul dual operad} $K\sO$ of $\sO$ to be the Spanier--Whitehead dual of $B\sO$.
\end{definition}

\begin{remark}
    One can check that Spanier--Whitehead duality restricts to a strong monoidal anti-equivalence on the full subcategory of $\SSeq(\Sp)$ spanned by the symmetric sequences which are levelwise dualizable.
    It follows that if $\sO$ is an operad in spectra such that $\sO(n)$ is a dualizable spectrum for all $n \geq 1$, then $KK\sO \simeq \sO$.
\end{remark}

There is also a notion of Koszul duality for modules.

\begin{proposition}[{\cite[Theorem 3.26]{brantner2023pd}}]
    Let $\sA$ be a stable presentable category and let $\sO \in \Alg(\SSeq_+(\sA, \sA))$.
    For any stable presentable category $\sB$, there is an adjunction
    \[
    \begin{tikzcd}
    \RMod_{\sO}(\SSeq(\sA, \sB)) \ar[r, shift left, "B"] & \mathrm{RcoMod}_{B\sO}(\SSeq(\sA, \sB)) \ar[l, shift left, "C"].
    \end{tikzcd}
    \]
    The left adjoint $B$ sends a right $\sO$-module $M$ to a right $B\sO$-comodule with underlying object $M \circ_\sO \unit$.
    There are also versions of this adjunction for left modules and bimodules.
\end{proposition}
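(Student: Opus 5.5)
The plan is to deduce the statement formally from the $2$-categorical structure of the Morita category $\MMor$, rather than by manipulating bar constructions by hand.

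\textbf{The adjunction in $\MMor$.} The augmentation $\sO \to \unit_\sA$ makes $\unit_\sA$ into both an $(\unit_\sA, \sO)$-bimodule and an $(\sO, \unit_\sA)$-bimodule. This gives $1$-morphisms
\[
f \colon (\sA, \sO) \to (\sA, \unit_\sA) \quad \text{and} \quad g \colon (\sA, \unit_\sA) \to (\sA, \sO)
\]
in $\MMor$, both with underlying symmetric sequence $\unit_\sA$.

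I will show that $g \dashv f$ in $\MMor$.
\begin{itemize}
\item The composite $g \circ f$ is the $\sO$-bimodule $\unit_\sA \circ \unit_\sA \simeq \unit_\sA$. The unit $\id_{(\sA,\sO)} = \sO \to g \circ f$ is the augmentation, regarded as a map of $\sO$-bimodules.
\item The composite $f \circ g = \unit_\sA \circ_\sO \unit_\sA$ is the underlying symmetric sequence of $B\sO$. The counit $f \circ g \to \unit_\sA$ is the map induced by the augmentation on the bar construction.
\end{itemize}
The triangle identities reduce to the statement that $\unit_\sA \circ_\sO \sO \simeq \unit_\sA$ via the action map, which is the standard extra-degeneracy argument for the bar construction. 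I expect this step to be the main technical obstacle. The delicate part is making the unit and counit coherent in $\MMor$ rather than only on underlying objects.

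\textbf{The comonad and the functor $B$.} A $2$-categorical adjunction $g \dashv f$ makes $f \circ g$ a comonad on $(\sA, \unit_\sA)$, that is, a coalgebra in $\SSeq(\sA,\sA)$. I will identify this coalgebra with Lurie's bar coalgebra $B\sO$ of \cref{thm:koszul-duality-operads}. Both are the coalgebra structure on $\unit_\sA \circ_\sO \unit_\sA$ induced by the augmentation, and I plan to compare them using the universal property of Lurie's construction in \cite[\S 5.2]{HA}.

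Next I use that $\RMod_\sO(\SSeq(\sA,\sB)) \simeq \MMor((\sA,\sO),(\sB,\unit_\sB))$. Precomposition with $g$ is the functor
\[
M \mapsto M \circ g = M \circ_\sO \unit_\sA.
\]
A standard formal argument about adjunctions in a $2$-category promotes $M \circ g$ to a right comodule over the comonad $f \circ g$. The coaction is
\[
M \circ g \to M \circ g \circ f \circ g,
\]
obtained by whiskering the unit of the adjunction. This functor is the desired $B$, and it has the stated underlying object $M \circ_\sO \unit$.

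\textbf{The right adjoint $C$.} By \cref{prop: exactness-composition-mor}, the functor $M \mapsto M \circ_\sO \unit_\sA$ preserves all small colimits, since it is a relative composition product in its first variable. Colimits of right comodules are computed underlying, so $B$ preserves colimits. The category $\mathrm{RcoMod}_{B\sO}$ is presentable, as coalgebras for an accessible colimit-preserving comonad on a presentable category. The adjoint functor theorem therefore yields the right adjoint $C$. Concretely, $C$ is the cobar construction: the totalization of $N \circ (B\sO)^{\circ \bullet} \circ f$, equivalently the comodule cotensor $N \,\square_{B\sO}\, \unit$.

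The left-module and bimodule versions follow by the same argument, postcomposing with $f$ (respectively composing on both sides) instead of precomposing with $g$.
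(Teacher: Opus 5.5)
The paper does not prove this statement; it quotes it from Brantner--Campos--Nuiten. Your Morita-category skeleton is mostly sound, apart from one mislabelling. The unit $\sO \to g\circ f$ and counit $f\circ g \to \unit_\sA$ you write down are those of $f \dashv g$, not $g \dashv f$. It is $f \dashv g$ that makes $f\circ g$ a comonad on $(\sA,\unit_\sA)$ and gives $-\circ g \dashv -\circ f$ on mapping categories, and that is what your coaction $M\circ g \to M\circ g\circ f\circ g$ actually uses. The step you single out as the main obstacle is harmless: a $1$-morphism in an $(\infty,2)$-category is a left adjoint as soon as it is one in the homotopy $2$-category. So exhibiting the unit and counit with triangle identities up to invertible $2$-cells suffices. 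The comonad structure on $f\circ g$ and the lift of $-\circ g$ to comodules then follow formally. The colimit-preservation and adjoint-functor-theorem step (or the explicit cobar totalization) is also fine.

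The real gap is the sentence identifying the comonad $f\circ g$ with $B\sO$. In the statement, $B\sO$ is the coalgebra of \cref{thm:koszul-duality-operads}, i.e.\ Lurie's bar construction from \cite[\S 5.2]{HA}. That construction is characterized by a representability property that makes no reference to any Morita $(\infty,2)$-category. Two $E_1$-coalgebras with the same underlying object $\unit_\sA\circ_\sO\unit_\sA$, both ``induced by the augmentation'', are not thereby identified. So your sentence ``both are the coalgebra structure induced by the augmentation'' presupposes the comparison rather than proving it. You would have to show that $f\circ g$, with the comonad structure coming from $f\dashv g$, satisfies Lurie's universal property, or compare the two by some other concrete route. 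This comparison is exactly the nontrivial input the citation supplies, and the paper needs it. For instance, $KM$ is defined as a module over $K\sO=\mathbf{D}B\sO$, and \cref{thm: calculus-and-koszul-duality-main} combines the module-level $B$ with the operad-level bar--cobar equivalence. As written, your argument establishes an adjunction $\RMod_\sO(\SSeq(\sA,\sB))\rightleftarrows \mathrm{RcoMod}_{f\circ g}(\SSeq(\sA,\sB))$ over the Morita comonad $f\circ g$, not yet over $B\sO$.
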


\begin{remark}
    If $\sO$ is an operad in spectra and $M$ is a right $\sO$-module in $\SSeq(\Sp)$, then we will write $KM$ for the right $K\sO$-module Spanier--Whitehead dual to $BM$.
\end{remark}

Suppose that $\sC$ is a differentiable category.
We then have the stabilization adjunction
\[
\begin{tikzcd}
    \Sigma^\infty_\sC \colon \sC \ar[r, shift left] & \Sp{\sC} \ar[l, shift left] \colon \Omega^\infty_\sC.
\end{tikzcd}
\]
Applying the derivatives functor to the comonad $\Sigma^\infty_\sC \Omega^\infty_\sC$, we obtain a strongly positive coalgebra $\partial_*{\Sigma^\infty_\sC \Omega^\infty_\sC}$.
The connection between Goodwillie calculus and Koszul duality is as follows.

\begin{theorem}[{\cite[Theorem 4.3.1, Corollary 4.4.4]{blansblom2025chainrulegoodwilliecalculus}}] \label{thm: calculus-and-koszul-duality-main}
    There is an equivalence
    \[
    \partial_*{\id_\sC} \simeq \Cobar\partial_*{\Sigma^\infty_\sC \Omega^\infty_\sC}.
    \]
    In other words, $\partial_*{\Sigma^\infty_\sC \Omega^\infty_\sC}$ is the Koszul dual of $\partial_*{\id_\sC}$.
    Moreover, for any differentiable category $\sD$, there is a commutative diagram
    \[
    \begin{tikzcd}
        \Fun^{\ast, \omega}(\sC, \sD) \ar[d, "- \circ \Omega^\infty_\sC"'] \ar[r, "\partial_*"] & \RMod_{\partial_*{\id_\sC}}(\SSeq(\Sp{\sC}, \Sp{\sD})) \ar[d, "B"] \\
        \mathrm{RcoMod}_{\Sigma^\infty_\sC\Omega^\infty_\sC}(\Fun^{\ast, \omega}(\Sp{\sC}, \sD) \ar[r, "\partial_*"] \ar[r] & \mathrm{RcoMod}_{\partial_*{\Sigma^\infty_\sC \Omega^\infty_\sC}}(\SSeq(\Sp{\sC}, \Sp{\sD})).
    \end{tikzcd}
    \]
    The analogous result is true for left modules and bimodules.
\end{theorem}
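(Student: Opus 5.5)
The plan is to obtain everything formally from the strong functor $\partial_* \colon \diff \to \MMor$ of \cref{thm:chain-rule-cherry}, applied to the stabilization adjunction $\Sigma^\infty_\sC \dashv \Omega^\infty_\sC$.

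First I check that this adjunction lives in $\diff$. Both functors are reduced. $\Sigma^\infty_\sC$ preserves all colimits. $\Omega^\infty_\sC$ is finitary because $\sC$ is differentiable: $\Omega^\infty_\sC$ is computed by the sequential limit $\lim_n \Omega^n$ of finite limits, and filtered colimits commute with these finite limits in $\sC$ and in $\Sp\sC$. Hence $\Sigma^\infty_\sC \dashv \Omega^\infty_\sC$ is an adjunction in the $2$-category $\diff$. A strong functor of $2$-categories preserves adjunctions, together with the associated monad and comonad. We therefore get an adjunction
\[
\partial_*\Sigma^\infty_\sC \dashv \partial_*\Omega^\infty_\sC
\]
in $\MMor$ between $(\Sp\sC, \unit)$ and $(\Sp\sC, \partial_*{\id_\sC})$. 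The comonad $\Sigma^\infty_\sC\Omega^\infty_\sC$ on $\Sp\sC$ is sent to the comonad $\partial_*\Sigma^\infty_\sC \circ_{\partial_*{\id_\sC}} \partial_*\Omega^\infty_\sC$. Since $\Sigma^\infty_\sC$ and $\Omega^\infty_\sC$ are $1$-excisive, their derivatives are $\unit$ concentrated in arity $1$. Their bimodule structures over $\partial_*{\id_\sC}$ must then factor through the augmentation $\partial_*{\id_\sC} \to \unit$, because the arity $\geq 2$ parts of the composition products land in arity $\geq 2$, where $\unit$ vanishes. We conclude
\[
\partial_*(\Sigma^\infty_\sC\Omega^\infty_\sC) \simeq \unit \circ_{\partial_*{\id_\sC}} \unit = B\partial_*{\id_\sC}
\]
on underlying symmetric sequences. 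Here $\partial_*{\id_\sC}$ is strongly positive because $\partial_1{\id_\sC} \simeq \id_{\Sp\sC}$.

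The main obstacle is upgrading this to an equivalence of coalgebras. The comultiplication induced by the adjunction in $\MMor$ must agree with the bar-construction coalgebra structure of \cref{thm:koszul-duality-operads}. I would argue this via Lurie's description in \cite[\S 5.2]{HA}. There, the Koszul dual coalgebra of an augmented algebra $\sO$ is exactly the comonad of the adjunction in the Morita category between $\unit$ and $\sO$, whose left adjoint is the augmentation bimodule $\unit$ and whose right adjoint is $\unit$ viewed as a left $\sO$-module. Since adjoints are unique, our adjunction $\partial_*\Sigma^\infty_\sC \dashv \partial_*\Omega^\infty_\sC$ is identified with this universal one. Once this is in place, applying the inverse equivalence $C$ of \cref{thm:koszul-duality-operads} gives $\partial_*{\id_\sC} \simeq \Cobar\partial_*(\Sigma^\infty_\sC\Omega^\infty_\sC)$.

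For the commutative square, let $F \in \Fun^{\ast,\omega}(\sC,\sD)$, viewed as a $1$-morphism in $\diff$. Strong functoriality gives
\[
\partial_*(F\Omega^\infty_\sC) \simeq \partial_*F \circ_{\partial_*{\id_\sC}} \partial_*\Omega^\infty_\sC \simeq \partial_*F \circ_{\partial_*{\id_\sC}} \unit = B(\partial_*F),
\]
naturally in $F$, as $2$-functoriality also acts on $2$-morphisms. The right coaction of $\Sigma^\infty_\sC\Omega^\infty_\sC$ on $F\Omega^\infty_\sC$ is whiskering of $F$ with the coaction map $\Omega^\infty_\sC \to \Omega^\infty_\sC\Sigma^\infty_\sC\Omega^\infty_\sC$ coming from the unit of the adjunction. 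This whiskering is sent to whiskering of $\partial_*F$ with the corresponding unit in $\MMor$, which under the identification above is the $B\partial_*{\id_\sC}$-comodule structure on $B(\partial_*F)$. This yields the commutative diagram. The left-module and bimodule versions follow by the same argument, whiskering on the other side with $\Sigma^\infty_\sD$ (respectively on both sides).
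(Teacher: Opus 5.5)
The paper does not prove this statement; it quotes it from \cite[Theorem 4.3.1, Corollary 4.4.4]{blansblom2025chainrulegoodwilliecalculus}. There it is established \emph{before} the strong functor $\partial_*\colon\diff\to\MMor$ of \cref{thm:chain-rule-cherry} (Theorem 4.4.7 there). That strong functor is the chain rule, and, as in Arone--Ching's original argument \cite{AroneChing2011}, it is obtained from this Koszul duality rather than the other way round. Your proof takes \cref{thm:chain-rule-cherry} as input, so as a proof of the cited result it is circular.

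Granting the chain rule, your opening steps are sound. The adjunction $\partial_*\Sigma^\infty_\sC\dashv\partial_*\Omega^\infty_\sC$ in $\MMor$ is exactly what the paper itself uses in the proof of \cref{prop:main-computation-coend-operad}. It does give $\partial_*(\Sigma^\infty_\sC\Omega^\infty_\sC)\simeq\unit\circ_{\partial_*\id_\sC}\unit$ on underlying symmetric sequences.

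Even granting the chain rule, however, the step you call the main obstacle is not settled by your citation. The functors $B$ and $C$ of \cref{thm:koszul-duality-operads} are Lurie's bar and cobar constructions from \cite[\S 5.2]{HA}, produced by his pairing/duality formalism. What is available (cf.\ the remark after \cref{thm:koszul-duality-operads}) is only that the \emph{underlying} symmetric sequence of $B\sO$ is $\unit\circ_\sO\unit$.

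That the comultiplication of $B\sO$ agrees, coherently, with the one induced by the unit $\sO\to\unit$ of the Morita adjunction between $(\Sp\sC,\unit)$ and $(\Sp\sC,\sO)$ is a separate comparison theorem. Here the unit is the augmentation, viewed as a map of $\sO$-bimodules. ``Uniqueness of adjoints'' only identifies your adjunction with that Morita adjunction; it does not identify its comonad with $B\sO$. An equivalence of underlying symmetric sequences says nothing about the algebra structure on $\partial_*\id_\sC$, so this comparison is the entire content of the first claim and must be proved.

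The same gap affects the square. You need the comodule structure on $\partial_*F\circ_{\partial_*\id_\sC}\unit$ obtained by whiskering with the unit to agree with the one produced by the module Koszul duality of \cite[Theorem 3.26]{brantner2023pd}.

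Two smaller points:
\begin{itemize}
\item $\Omega^\infty_\sC$ is finitary because filtered colimits in $\Sp\sC$ are computed levelwise, since $\Omega$ commutes with them in a differentiable $\sC$. It is not because $\Omega^\infty_\sC$ is a sequential limit of finite limits; filtered colimits need not commute with sequential limits.
\item ``The module structures factor through the augmentation'' needs more than an arity count on the action map. You need that restriction along $\partial_*\id_\sC\to\unit$ is an equivalence onto right (resp.\ left) modules concentrated in arity $1$. This holds because arity truncation is compatible with the composition product of positive symmetric sequences.
\end{itemize}
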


We will often make use of the following result.

\begin{proposition} \label{prop: derivatives-free-left-right-module}
    Let $F \colon \sC \to \sD$ be a reduced finitary functor between differentiable categories. Write $\iota \colon \partial_1{F} \to \partial_*{F}$ for the inclusion of the arity $1$ component.
    \begin{enumerate}[\upshape{(}1\upshape{)}]
        \item If $F$ preserves finite limits, then $\iota$ exhibits $\partial_*{F}$ as the free right $\partial_*{\id_\sC}$-module on $\partial_1{F}$.
        \item If $F$ preserves finite colimits, then $\iota$ exhibits $\partial_*{F}$ as the free left $\partial_*{\id_\sD}$-module on $\partial_1{F}$.
    \end{enumerate}
\end{proposition}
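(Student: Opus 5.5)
We plan to deduce both statements from Koszul duality, namely \cref{thm: calculus-and-koszul-duality-main}, together with the fact that $B$ and $C$ send free modules to cofree comodules. We treat (1) in detail; (2) is the mirror image, using the left-module version of \cref{thm: calculus-and-koszul-duality-main}.

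For (1), suppose $F$ preserves finite limits. The right $\partial_*{\id_\sC}$-module structure on $\partial_*F$ is that of \cref{remark:bimodule-structure-on-partialF}, and $\iota$ induces a map of right modules
\[
\partial_1 F \circ \partial_*{\id_\sC} \to \partial_*F .
\]
We want this map to be an equivalence. We first use Koszul duality to reduce to a computation of comodules. By \cref{thm: calculus-and-koszul-duality-main}, $B\partial_*F \simeq \partial_*(F\Omega^\infty_\sC)$ as right $\partial_*(\Sigma^\infty_\sC\Omega^\infty_\sC)$-comodules. Since $\partial_*{\id_\sC}$ is strongly positive, and both modules are positive, bar construction should detect equivalences. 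This can be checked arity by arity by induction, using the filtration of the bar construction, whose arity-$n$ term involves only lower arities of the module and the arity-$1$ part $\unit$. So it suffices to show that $B$ of the displayed map is an equivalence. The bar construction of a free module is $B(\partial_1F\circ\partial_*{\id_\sC}) \simeq \partial_1F$, concentrated in arity $1$ (with trivial coaction). The task therefore reduces to showing that $\partial_*(F\Omega^\infty_\sC)$ is concentrated in arity $1$ with value $\partial_1F$.

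For this reduction, note that $F\Omega^\infty_\sC \colon \Sp\sC \to \sD$ preserves finite limits, since both factors do. A finite-limit preserving functor out of a stable category is $1$-excisive: it takes pushout squares, which are pullback squares in $\Sp\sC$, to pullback squares. Hence $P_1(F\Omega^\infty_\sC) \simeq F\Omega^\infty_\sC$, so $D_n(F\Omega^\infty_\sC)=0$ and $\partial_n(F\Omega^\infty_\sC)=0$ for $n\ge 2$. In arity $1$ we have $\partial_1(F\Omega^\infty_\sC)\simeq \partial_1F\circ\partial_1\Omega^\infty_\sC\simeq\partial_1F$, by the arity-one chain rule and $\partial_1\Omega^\infty_\sC\simeq\unit$. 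It remains to check that the resulting equivalence is the one induced by $\iota$; this follows by comparing arity-$1$ components, where $\iota$ is the identity.

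For (2), suppose $F$ preserves finite colimits. We apply the left-comodule version of \cref{thm: calculus-and-koszul-duality-main} to $\Sigma^\infty_\sD F$, which preserves finite colimits and has target $\Sp\sD$. The same argument, with $P_1$ of a right exact functor into a stable category, shows that $\partial_*(\Sigma^\infty_\sD F)$ is concentrated in arity $1$. Conservativity of bar constructions on positive left modules then gives the freeness of $\partial_*F$ as a left $\partial_*{\id_\sD}$-module.

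I expect the main obstacle to be the conservativity step, namely that $B$ reflects equivalences between positive modules, together with identifying the comparison map as the one induced by $\iota$. If the filtration argument for conservativity proves awkward, I would first try to use $C B M \simeq M$ for positive modules over a strongly positive algebra, which would give conservativity directly.
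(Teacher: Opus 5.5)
Your proposal is correct and follows essentially the same route as the paper. Both arguments use \cref{thm: calculus-and-koszul-duality-main} to reduce to the observation that $F\Omega^\infty_\sC$ (resp.\ $\Sigma^\infty_\sD F$) is linear, so its derivatives are $\partial_1 F$ concentrated in arity $1$, which is a trivial comodule. The difference is only in packaging: the paper takes the Koszul dual of this trivial comodule to get the free module and identifies it with $\partial_*F$, whereas you apply $B$ to the map induced by $\iota$ and use that $B$ is conservative on positive modules, which has the mild advantage of showing directly that it is $\iota$ that exhibits the freeness.
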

\begin{proof}
    Assume that $F \colon \sC \to \sD$ preserves finite limits.
    Then the composite $F \Omega^\infty_\sC \colon \Sp\sC \to \sD$ is a linear functor, so that $\partial_*{F \Omega^\infty_\sC}$ is given by $\partial_1(F \Omega^\infty_\sC) \simeq \partial_1{F}$ concentrated in arity $1$.
    This is a trivial right $\partial_*{\Sigma^\infty_\sC \Omega^\infty_\sC}$-comodule for degree reasons, so that its Koszul dual is the free right $\partial_*{\id_\sC}$-module on $\partial_1{F}$.
    But this Koszul dual is equivalent to $\partial_*{F}$ by \cref{thm: calculus-and-koszul-duality-main}.
    This proves (1).
    The proof of (2) goes exactly the same, using postcomposition with $\Sigma^\infty_\sD$ instead of precomposition with $\Omega^\infty_\sC$.
\end{proof}

\subsection{Localizations of the category of spectra}\label{ssec:localizations-of-spectra}

For a general stable presentably symmetric monoidal category $\cA$, algebras in $\SSeq(\cA,\cA)$ are not the same as $\cA$-enriched operads.
We will now describe a class of stable categories for which this \emph{is} the case.

\begin{definition}\label{def:localization-of-spectra}
    Let $L : \Sp \rightleftarrows \cA : U$ be an adjunction between presentable stable categories and suppose that $U$ is fully faithful.
    In that case we call $\cA$ a \emph{(Bousfield) localization of the category $\Sp$ of spectra}, and denote it by $\LSp$.
    We will write $L\Sph$ for the image of the sphere spectrum $\Sph$ under $L$.
\end{definition}

Note that we will always implicitly assume that $\LSp$ is stable if we call $\LSp$ a localization of the category of spectra; this excludes, for example, the category $\Sp_{\geq 0}$ of connective spectra.

\begin{definition}
    Let $\cA$ be a stable category and $\LSp$ a localization of $\Sp$.
    Then $\cA$ is called \emph{$\LSp$-local} or \emph{$\LSp$-enriched} if its mapping spectra lie in $\LSp$.
    We write $\presl_\LSp$ for the full subcategory of $\preslst$ spanned by the presentable stable $\LSp$-local categories.
\end{definition}

\begin{proposition}
    Let $L \colon \Sp \to \LSp$ be a localization of $\Sp$.
    Then the map $L \colon \Sp \to \LSp$ exhibits $\LSp$ as an idempotent algebra in $\preslst$.
    Moreover, the forgetful functor $\LMod_\LSp(\presl) \to \presl$ is fully faithful and its essential image is $\presl_\LSp$.
\end{proposition}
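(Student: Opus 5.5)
The plan is to first show that $L\colon \Sp \to \LSp$ exhibits $\LSp$ as an idempotent commutative algebra in $\preslst$. For this I need that the multiplication map $\LSp \otimes \LSp \to \LSp$, induced by the canonical map $\Sp \otimes \LSp \simeq \LSp$ together with $L$, is an equivalence. Equivalently, the map $L\otimes \id \colon \Sp\otimes\LSp \to \LSp\otimes\LSp$ should be an equivalence.

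Recall that for presentable stable $\cA,\cB$ one has $\cA\otimes\cB \simeq \Fun^{\mathrm{R}}(\cA^\op,\cB)$, the limit-preserving functors. Using this:
\begin{itemize}
\item $\LSp\otimes\LSp$ identifies with the full subcategory of $\Fun^{\mathrm{R}}(\Sp^\op,\LSp)\simeq \LSp$ spanned by the functors that factor through $\LSp^\op$.
\item A limit-preserving functor $\Sp^\op \to \LSp$ is of the form $X\mapsto \ulmap(X,E)$ with $E\in\LSp$.
\item Such a functor automatically inverts $L$-equivalences, since $E$ is local.
\end{itemize}
So the functors that factor through $\LSp^\op$ are all of them, and the map is an equivalence. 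This is the standard argument that smashing-type localizations of $\Sp$ are idempotent in $\presl$. Concretely, $\LSp\otimes\LSp\simeq \Fun^{\mathrm{R}}(\LSp^\op,\LSp)$ and restriction along $L^\op$ identifies it with $\Fun^{\mathrm{R}}(\Sp^\op,\LSp)\simeq\LSp$.

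Next, by the general theory of idempotent algebras (HA, Proposition 4.8.2.9), the forgetful functor $\LMod_\LSp(\presl)\to\presl$ is fully faithful. Its essential image consists of those $\cA$ for which $\cA\simeq\Sp\otimes\cA\to\LSp\otimes\cA$ is an equivalence. Since that map is always a localization, it remains to show that this condition holds precisely when $\cA$ is $\LSp$-local. The same computation as above gives $\LSp\otimes\cA\simeq\Fun^{\mathrm{R}}(\LSp^\op,\cA)$, sitting inside $\Fun^{\mathrm{R}}(\Sp^\op,\cA)\simeq\cA$ as the functors $X\mapsto$ (cotensor of $a$ by $X$) that invert $L$-equivalences.

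The step I expect to be the main obstacle is checking that the functor attached to $a\in\cA$ inverts $L$-equivalences exactly when $\ulmap(b,a)\in\LSp$ for all $b$. The plan is to use Yoneda:
\[\Map(b, a^X)\simeq \Omega^\infty\ulmap(X,\ulmap(b,a)).\]
The functor inverts $L$-equivalences $X\to X'$ for all $a$ if and only if every mapping spectrum $\ulmap(b,a)$ is $L$-local. Hence $\LSp\otimes\cA\to\cA$ is an equivalence, i.e.\ $\cA$ lies in the essential image, if and only if $\cA$ is $\LSp$-local.

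Stability of $\LSp$, assumed throughout, ensures all categories involved are stable, so the mapping-spectrum formulation makes sense.
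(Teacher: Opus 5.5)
Your proof is correct, but it takes a genuinely different route from the paper. The paper does no computation with tensor products. It cites the Carmeli--Schlank--Yanovski result on localizations of a presentably symmetric monoidal category that are compatible with the tensor product (applied to $\Sp$). Its only real work is to show that, for a localization of $\Sp$, this compatibility is equivalent to stability of $\LSp$. The key input there is that $L$, being colimit-preserving between stable presentable categories, is $\Sp$-linear, so $L(X\otimes Y)\simeq X\otimes_{\Sp}LY$ depends only on $LY$.

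You instead reprove the needed special case from the model $\cA\otimes\cB\simeq\Fun^{\mathrm{R}}(\cA^\op,\cB)$. In this model, $L\otimes\id_\cB$ is the localization onto the limit-preserving functors $\Sp^\op\to\cB$ that invert $L$-equivalences. A cotensor/Yoneda computation then shows these are all of them when $\cB=\LSp$, and in general exactly when all mapping spectra of $\cB$ lie in $\LSp$. Stability enters your argument in two places. First, $\Sp\otimes\LSp\simeq\LSp$ gives that limit-preserving functors $\Sp^\op\to\LSp$ are of the form $\ulmap(-,E)$. Second, exactness of $L$ ensures $\ulmap(-,E)$ inverts $L$-equivalences as a spectrum-valued functor. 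This is the same input the paper packages as $\Sp$-linearity.

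Your route is self-contained and gives the ``mapping spectra in $\LSp$'' description of the essential image directly. The paper's route is shorter and isolates the conceptual point that every stable localization of $\Sp$ is automatically symmetric monoidal. For the same reason, ``smashing-type'' is a misnomer: your argument, like the proposition, covers non-smashing examples such as $K(n)$-localization.

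Two loose ends are worth making explicit. First, you assert rather than justify that, in the $\Fun^{\mathrm{R}}$ model, the right adjoint of $L\otimes\id_\cB$ is restriction along $L^\op$. This is standard, and it is what shows that the specific map $L\otimes\id$ is invertible, rather than merely producing some equivalence $\LSp\otimes\LSp\simeq\LSp$.

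Second, you prove idempotence in $\preslst$ but then apply the theory of idempotents to $\LMod_\LSp(\presl)$ and write $\cA\simeq\Sp\otimes\cA$, which presupposes that $\cA$ is stable. You should add two remarks:
\begin{itemize}
\item $\Spc\to\Sp\to\LSp$ is then idempotent in $\presl$ as well, since $\Sp$ is and $\LSp$ is stable.
\item Any $\cA$ with $\cA\simeq\LSp\otimes\cA$ is an $\Sp$-module, hence stable.
\end{itemize}
With these, your description of the essential image agrees with the one coming from $\presl$.
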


\begin{proof}
    This follows from \cite[Proposition 5.2.10]{CarmeliSchlankea2021AmbidexterityHeight} applied to $\mathcal{M} = \Sp$.
    However, there is a slight difference in their assumptions and ours: they assume $L \colon \Sp \to \LSp$ is compatible with the symmetric monoidal structure of $\Sp$ but only require $\LSp$ to be presentable.
    We instead assume that $\LSp$ is stable presentable, but don't require $L$ to be compatible with the symmetric monoidal structure of $\Sp$.
    Let us therefore show that these two conditions are equivalent to each other.

    If $\Sp \to \LSp$ is a map in $\presl$ that is compatible with the symmetric monoidal structure on $\Sp$, then it lifts to a map in $\CAlg(\presl)$ by \cite[Proposition 2.2.1.9]{HA}.
    In particular, it is a map of $\Sp$-modules, hence $\LSp$ is stable by \cite[Proposition 4.8.2.18]{HA}.
    For the converse, suppose $\LSp$ is stable.
    Then $\LSp$ lies in $\preslst$, so let us write $\otimes_{\Sp}$ for the left action of $\Sp$ on $\LSp$.
    By \cite[Example 2.2.1.7]{HA}, we need to show that for any $X$ and $Y$ in $\Sp$, the map $L(X \otimes Y) \to L(X \otimes ULY)$ is an equivalence.
    Observe that $\Sp \to \LSp$ is a map of $\Sp$-modules by \cite[Proposition 4.8.2.18]{HA}, so we may identify the map $L(X \otimes Y) \to L(X \otimes ULY)$ with $X \otimes_{\Sp} L(Y) \to X \otimes_{\Sp} LUL(Y)$.
    Since $L(Y) \to LUL(Y)$ is an equivalence in $\LSp$, this map is invertible.
\end{proof}

It follows in particular that $\LSp$ is a presentably symmetric monoidal category with unit given by $L\Sph$ and tensor product given by $X \otimes Y = L(UX \otimes UY)$.
By \cite[Proposition 4.8.2.10]{HA}, $\LMod_{\LSp}(\presl) \simeq \presl_{\LSp} \hookrightarrow \presl$ is a (nonunital) symmetric monoidal full subcategory inclusion, whose left adjoint is the strong symmetric monoidal functor $\LSp \otimes -$.
Moreover, let us note that by \cite[Corollary 4.8.1.10]{HA}, the presheaves functor $\cP \colon (\Cat,\times) \to (\presl,\otimes)$ is strong monoidal and that $\Fun^{\mathrm{L},\otimes}(\cP(\cC),\cA) \simeq \Fun^\otimes(\cC,\cA)$ for any presentably symmetric monoidal category $\cA$.
Defining $\cP_{\LSp}(\cC) \coloneqq \cP(\cC) \otimes \LSp \simeq \Fun(\cC^\op, \LSp)$, it follows that
\[\Fun^{\mathrm{L},\otimes}(\cP_\LSp(\cC),\cA) \simeq \Fun^{\otimes}(\cC,\cA)\]
for any $\cA$ in $\CAlg(\presl_\LSp)$, where the equivalence is given by restriction along the Yoneda embedding $\cC \hookrightarrow \cP(\cC) \to \cP_\LSp(\cC)$.
We will call this presentably symmetric monoidal structure on $\cP_{\LSp}(\cC)$ the \emph{Day convolution}.

\begin{observation}\label{obs:SSeq-is-free-symmon-cat}
It follows that $\SSeq(\LSp) \coloneqq \cP_\LSp(\Fin^\simeq) \simeq \Fun(\Fin^\simeq,\LSp)$, equipped with its Day convolution, is the free stable presentably symmetric monoidal $\LSp$-local category on a single generator.
\end{observation}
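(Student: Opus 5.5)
The plan is to deduce the observation formally from the two facts just recorded: the universal property of $\cP_\LSp(\cC)$ with respect to symmetric monoidal functors into $\LSp$-local presentably symmetric monoidal categories, and the universal property of $\Fin^\simeq$ as the free symmetric monoidal category on one object. Concretely, for any $\cA \in \CAlg(\presl_\LSp)$ we aim for the chain of equivalences
\[
\Fun^{\mathrm{L},\otimes}(\SSeq(\LSp), \cA) \simeq \Fun^{\otimes}(\Fin^\simeq, \cA) \simeq \cA^{\simeq},
\]
natural in $\cA$. Here the first equivalence is restriction along the Yoneda embedding $\Fin^\simeq \hookrightarrow \cP_\LSp(\Fin^\simeq)$, as established in the preceding paragraph, and the second is evaluation at the one-point set.

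The steps are as follows. First, we identify $\cP_\LSp(\Fin^\simeq)$ with $\Fun(\Fin^\simeq, \LSp)$, using $(\Fin^\simeq)^\op \simeq \Fin^\simeq$, since $\Fin^\simeq$ is a groupoid. This is already stated in the definition of $\cP_\LSp$. Second, we note that $\Fin^\simeq$, with disjoint union as its symmetric monoidal structure, is the free symmetric monoidal category on the object $\{1\}$. This is the standard fact that the free $\mathbb{E}_\infty$-space on a point is $\coprod_n B\Sigma_n$; in $\Cat$ one may cite \cite[Section 2.4.3]{HA}, or the description of free commutative algebras in \cite[Section 3.1.3]{HA}. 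Since $\cA^\simeq \to \cA$ induces an equivalence on symmetric monoidal functors out of a groupoid, $\Fun^\otimes(\Fin^\simeq,\cA) \simeq \Fun^\otimes(\Fin^\simeq,\cA^\simeq) \simeq \cA^\simeq$. Third, we check that the induced symmetric monoidal structure on $\Fun(\Fin^\simeq,\LSp)$ is the Day convolution. This holds by definition in this paper, since Day convolution was defined as the presentably symmetric monoidal structure transported along $\cP\otimes\LSp$.

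The only point requiring care, and the main obstacle, is the phrase "stable $\LSp$-local". We must confirm that $\cP_\LSp(\Fin^\simeq)$ is itself stable and $\LSp$-local, so that it lies in $\CAlg(\presl_\LSp)$. It is a functor category into the stable $\LSp$-local category $\LSp$, and mapping spectra in functor categories are limits of mapping spectra in $\LSp$. Since $\LSp \subseteq \Sp$ is closed under limits, these mapping spectra are again $\LSp$-local. We also need the universal property to range exactly over $\CAlg(\presl_\LSp)$. This follows from the identification $\presl_\LSp \simeq \LMod_\LSp(\presl)$ together with the fact that $\LSp \otimes -$ is a symmetric monoidal left adjoint to the inclusion. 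Using these two facts, the universal property of $\cP(\Fin^\simeq)$ in $\CAlg(\presl)$ transfers to that of $\cP(\Fin^\simeq)\otimes\LSp$ in $\CAlg(\presl_\LSp)$.
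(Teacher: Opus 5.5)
Your proposal is correct and follows the argument the paper intends: specialize the universal property $\Fun^{\mathrm{L},\otimes}(\cP_\LSp(\cC),\cA)\simeq\Fun^{\otimes}(\cC,\cA)$ for $\cA\in\CAlg(\presl_\LSp)$ to $\cC=\Fin^\simeq$, the free symmetric monoidal category on one object. This universal property comes from the strong symmetric monoidal localization $\LSp\otimes-$ together with the universal property of $\cP$.

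One small correction: as categories, $\Fun^{\otimes}(\Fin^\simeq,\cA)\simeq\cA$, not $\cA^\simeq$, because symmetric monoidal transformations correspond to arbitrary maps in $\cA$, not only equivalences. So your claim that $\cA^\simeq\to\cA$ induces an equivalence holds only after passing to cores, i.e.\ on mapping spaces in $\CAlg(\presl_\LSp)$. That is all that freeness on one generator requires.
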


This observation was used by Brantner \cite[\S 4.1.2]{brantnerThesis} to define the composition product monoidal structure on $\SSeq(\LSp)$, whose algebras are \emph{$\LSp$-enriched operads}.\footnote{This definition of $\LSp$-enriched operads has recently been compared to the other common definitions of $\LSp$-enriched operads by Arakawa \cite{Arakawa2026EquivalenceBrantnersChuHaugsengs}.}
Namely, note that
\[\Fun^{\mathrm{L},\otimes}(\SSeq(\LSp),\SSeq(\LSp)) \simeq \SSeq(\LSp)\]
by \cref{obs:SSeq-is-free-symmon-cat}.
The left-hand side has a monoidal structure given by composition of 1-morphisms in $\CAlg(\presl_\LSp)$, and the composition product on $\SSeq(\LSp)$ is defined as the reverse of this monoidal structure.

We saw in \cref{remark:SSeq-vs-FunSSeq} that $\SSeq(\LSp,\LSp) \simeq \SSeq(\LSp)$, where the equivalence $\SSeq(\LSp,\LSp) \to \SSeq(\LSp)$ is given by evaluating a functor symmetric sequence at copies of $L\Sph$, and the converse equivalence sends a symmetric sequence $\{X_n\}_{n \geq 0}$ to the functor symmetric sequence $\{(X_n \otimes (-)^{\otimes n})_{h\Sigma_n}\}_{n \geq 0}$.
We defined another composition product on $\SSeq(\LSp,\LSp)$ in \cref{prop:definition-prsymst} by viewing it as a mapping category in $\pressymst \subset \coCAlg(\preslst)$, so let us show that these agree.

\begin{proposition}\label{prop:bratner-vs-our-composition-prod}
    The composition product on $\SSeq(\LSp,\LSp)$ agrees with Brantner's composition product on $\SSeq(\LSp)$ under the equivalence $\SSeq(\LSp,\LSp) \simeq \SSeq(\LSp)$.
    In particular, algebras in $\SSeq(\LSp,\LSp)$ agree with $\LSp$-enriched operads.
\end{proposition}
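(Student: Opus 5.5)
We will show that the two monoidal structures on $\SSeq(\LSp)$ coincide by realizing both as composition of $1$-morphisms in a single monoidal category, and then identifying those categories. Brantner's composition product is, by definition, the reverse of composition in $\Fun^{\mathrm{L},\otimes}(\SSeq(\LSp),\SSeq(\LSp))$, the endomorphism category of $\SSeq(\LSp)$ in $\CAlg(\presl_\LSp)$. Ours is composition in $\SSeq(\LSp,\LSp)=\FunL(\Sym\LSp,\LSp)\simeq \Fun^{\mathrm{L}}_{\coCAlg}(\Sym\LSp,\Sym\LSp)$, the endomorphism category of $\Sym\LSp$ in $\coCAlg(\preslst)$. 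The plan is therefore to produce a functor of $2$-categories relating these two endomorphism objects, and to check that on mapping categories it is compatible with the equivalence $\SSeq(\LSp,\LSp)\simeq\SSeq(\LSp)$ described above.

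The bridge is duality in $\presl_\LSp$. Since $\LSp$ is idempotent, $\Sym\LSp\simeq\bigoplus_n \LSp_{h\Sigma_n}\simeq \Fun(\Fin^\simeq,\LSp)=\SSeq(\LSp)$. Moreover, $\SSeq(\LSp)$ is dualizable in $\presl_\LSp$ and self-dual, because it is a presheaf category on the groupoid $\Fin^\simeq$, which satisfies $(\Fin^{\simeq})^\op\simeq\Fin^\simeq$.

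Taking duals gives an anti-equivalence between dualizable objects of $\coCAlg(\presl_\LSp)$ and dualizable objects of $\CAlg(\presl_\LSp)$. Under this anti-equivalence, the cofree coalgebra $\Sym\LSp$ corresponds to the free algebra $\SSeq(\LSp)$ with Day convolution, by \cref{obs:SSeq-is-free-symmon-cat}. This works because the dual of the cofree–forget adjunction is the forget–free adjunction. Since the duality is contravariant on $1$-morphisms, it sends composition to reversed composition, which accounts precisely for the ``reverse'' in Brantner's definition. In this way we obtain a monoidal equivalence
\[
\Fun^{\mathrm{L}}_{\coCAlg}(\Sym\LSp,\Sym\LSp)\simeq \Fun^{\mathrm{L},\otimes}(\SSeq(\LSp),\SSeq(\LSp))^{\mathrm{rev}}.
\]

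It remains to check that the underlying equivalence of categories agrees with the stated one. A symmetric sequence $F=\{F_n\}$ viewed as a coalgebra map $\Sym\LSp\to\Sym\LSp$ should dualize to the symmetric monoidal left adjoint $\SSeq(\LSp)\to\SSeq(\LSp)$ sending the generator (the unit in arity $1$) to $\{F_n(L\Sph,\dots,L\Sph)\}_n$. To verify this, evaluate on the generator, using that restriction along the generator identifies $\Fun^{\mathrm{L},\otimes}(\SSeq(\LSp),\SSeq(\LSp))$ with $\SSeq(\LSp)$. Concretely, one unwinds the duality pairing $\SSeq(\LSp)\otimes\SSeq(\LSp)\to\LSp$ on the summand $\LSp_{h\Sigma_n}$. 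The statement about algebras then follows immediately, since equivalent monoidal structures have equivalent algebras; $\LSp$-enriched operads are by definition algebras for Brantner's product.

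We expect the main obstacle to be making the duality between cofree coalgebras and free algebras rigorous as an equivalence of monoidal endomorphism categories, rather than only on objects. For this we would use that passing to duals is a symmetric monoidal involution on the full subcategory of dualizable objects of $\presl_\LSp$. This induces an equivalence $\coCAlg(\mathrm{dual})\simeq\CAlg(\mathrm{dual})^\op$, and restricting that equivalence to endomorphisms yields the reversed monoidal structure.
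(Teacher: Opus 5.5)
There is a genuine gap at your very first identification. Your overall strategy matches the one the paper uses: dualize the cofree commutative coalgebra to the free commutative algebra among dualizable objects of $\presl_\LSp$, with duality reversing the order of composition of $1$-morphisms.

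The composition product on $\SSeq(\LSp,\LSp)$ is defined via $\pressymst\subseteq\coCAlg(\preslst)$. So the coalgebra whose endomorphisms you must compute is $\Sym\LSp$ formed in $\preslst$, where $\LSp^{\otimes 0}=\Sp$. Idempotence of $\LSp$ only collapses the summands with $n\geq 1$, giving $\Sym_{\preslst}\LSp\simeq\Sp\oplus\bigoplus_{n\geq1}\LSp_{h\Sigma_n}$. For $\LSp\not\simeq\Sp$ this is not $\LSp$-local, so it is not $\Fun(\Fin^\simeq,\LSp)$, and its counit takes values in $\Sp$.

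Consequently you cannot dualize this coalgebra inside $\presl_\LSp$. You cannot transport in the other direction either. The inclusion $\presl_\LSp\hookrightarrow\preslst$ is only lax monoidal, since it does not preserve the unit, so it induces no functor $\coCAlg(\presl_\LSp)\to\coCAlg(\preslst)$. What your duality argument actually computes is the endomorphism monoidal category of the cofree coalgebra $\Sym_{\presl_\LSp}\LSp\simeq\SSeq(\LSp)$ in $\coCAlg(\presl_\LSp)$. The proposal gives no monoidal comparison between this and the endomorphism category of $\LSp$ in $\pressymst$, which is where the composition product on $\SSeq(\LSp,\LSp)$ is defined.

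That comparison is exactly \cref{lem:inclusion-pressymlsp-into-pressymst}. The localization $-\otimes\LSp\colon\preslst\to\presl_\LSp$ is strong monoidal, so it does induce a $2$-functor on commutative coalgebras. It sends $\Sym_{\preslst}\sC$ to $\Sym_{\presl_\LSp}(\sC\otimes\LSp)$, and hence restricts to a $2$-functor $\pressymst\to\pressym_\LSp$. On mapping categories with $\LSp$-local target it is the equivalence $\FunL(\Sym\sD,\sC)\simeq\FunL(\Sym\sD\otimes\LSp,\sC)$. Since $\LSp\otimes\LSp\simeq\LSp$ and $2$-functors respect composition, this yields a monoidal equivalence between $\SSeq(\LSp,\LSp)$ with its composition product and $\pressym_\LSp(\LSp,\LSp)$.

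Once you insert this step, the rest of your argument goes through: the duality $\coCAlg\simeq\CAlg^\op$ on dualizable objects of $\presl_\LSp$ (via \cref{lem:Sym-dualizable}, or here simply the self-duality of $\SSeq(\LSp)$), the reversal of composition, and the check on the generator. That completed argument is essentially the paper's proof, which reruns the $\Sp$-case argument with $\preslst$ replaced by $\presl_\LSp$.
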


For the proof, it will be convenient to work with the full sub-2-category $\pressym_\LSp$ of $\coCAlg(\presl_\LSp)$ spanned by the cofree commutative coalgebras.
It follows as in \cite[Proposition 3.2.2]{blansblom2025chainrulegoodwilliecalculus} that the cofree commutative coalgebra on $\cC \in \preslst$ is given by the formula $\Sym \cC \simeq \coprod_{n \geq 0} \cC^{\otimes n}_{h\Sigma_n}$; we will simply write $\cC$ for this object in $\pressym_\LSp$.

\begin{lemma}\label{lem:inclusion-pressymlsp-into-pressymst}
    There is a fully faithful inclusion $\pressym_\LSp \hookrightarrow \pressymst$ which is the identity on objects.
\end{lemma}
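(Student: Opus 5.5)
The plan is to realize $\pressym_\LSp$ and $\pressymst$ as full sub-2-categories of two coalgebra 2-categories, and to relate those via the symmetric monoidal inclusion $\presl_\LSp \hookrightarrow \preslst$.

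First, recall from the discussion above that $\LMod_\LSp(\presl) \simeq \presl_\LSp \hookrightarrow \presl$ is a fully faithful, nonunital symmetric monoidal inclusion. It lands in $\preslst$ because $\LSp$-local categories are stable. The tensor product of two $\LSp$-local categories in $\preslst$ is again $\LSp$-local, since $\LSp$ is idempotent. So the inclusion $\iota \colon \presl_\LSp \hookrightarrow \preslst$ is symmetric monoidal apart from units, and it preserves the relevant colimits.

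A nonunital symmetric monoidal fully faithful functor induces a functor on commutative coalgebras. The subtlety is the counit: a counital coalgebra in $\presl_\LSp$ has counit to $\LSp$, not to $\Sp$. I would handle this by composing the counit with the right adjoint $\LSp \hookrightarrow \Sp$. Equivalently, use that $\iota$ is right adjoint to $\LSp \otimes -$, hence lax symmetric monoidal including units, with unit map $\Sp \to \LSp$. Lax monoidal functors do not send coalgebras to coalgebras, so the cleaner route is the following. A commutative coalgebra in $\preslst$ whose underlying category is $\LSp$-local is the same as a commutative coalgebra in $\presl_\LSp$. The reason is that $\LSp \otimes -$ is a strong symmetric monoidal localization, and any map $\sC \to \Sp$ in $\presl$ from an $\LSp$-local $\sC$ factors uniquely through $\LSp$, because $\FunL(\sC,\Sp) \simeq \FunL(\sC,\LSp)$ when $\sC \simeq \LSp \otimes \sC$. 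Concretely, $\coCAlg(\presl_\LSp) \to \coCAlg(\preslst)$ is obtained by applying the strong monoidal left adjoint $\LSp\otimes-$ in reverse. I would make this precise by identifying both 2-categories with coalgebras whose structure maps live in the full subcategory, together with the counit correspondence just described.

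On mapping categories, a map of coalgebras from $\Sym_\LSp\sA$ to $\Sym_\LSp\sB$ is by cofreeness a colimit-preserving functor $\Sym_\LSp\sA \to \sB$. In $\pressymst$ the corresponding maps are functors $\Sym\sA \to \sB$. Since $\sB$ is $\LSp$-local, $\FunL(\Sym\sA,\sB) \simeq \FunL(\LSp\otimes\Sym\sA,\sB)$. The remaining input is that $\LSp\otimes\Sym\sA \simeq \Sym_\LSp\sA$ for $\LSp$-local $\sA$. This holds because $\LSp \otimes -$ commutes with colimits, tensor powers and homotopy orbits, and $\sA^{\otimes n}$ is already local. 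Equivalently, the cofree coalgebra on $\sA$ in $\preslst$ is already the image of the cofree one in $\presl_\LSp$. This identifies the mapping categories, giving full faithfulness; the identity on objects is clear.

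The main obstacle is the coherent 2-categorical bookkeeping with the nonunitality of the inclusion: producing an actual functor $\coCAlg(\presl_\LSp) \to \coCAlg(\preslst)$ compatible with the counits. I would try first to build it as $\coCAlg$ of the composite $\presl_\LSp \simeq \LMod_\LSp(\presl) \to \presl$. The strategy is to regard $\LSp$-modules as $\Sp$-modules equipped with the idempotent structure, so the forgetful functor becomes an oplax monoidal functor with respect to the counit direction.
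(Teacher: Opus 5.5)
\textbf{There is a genuine gap: the functor $\pressym_\LSp \hookrightarrow \pressymst$ is never constructed.} Your third paragraph is correct, and it is in fact the key verification. However, it only identifies $\FunL(\Sym\sA,\sB)$ with $\FunL(\Sym_\LSp\sA,\sB)$ abstractly; no 2-functor inducing this identification is produced.

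The route you propose relies on lifting counits $\sC \to \LSp$ uniquely to $\sC \to \Sp$. You justify this by ``$\FunL(\sC,\Sp)\simeq\FunL(\sC,\LSp)$ when $\sC\simeq\LSp\otimes\sC$'', but the adjunction $\LSp\otimes-\dashv\iota$ only controls maps \emph{into} $\LSp$-local categories, namely $\FunL(\LSp\otimes\cD,\cC)\simeq\FunL(\cD,\cC)$ for local $\cC$. For maps out of a local $\sC$ into the non-local $\Sp$, the correct formula is $\FunL(\sC,\Sp)\simeq\FunL(\sC,\FunL(\LSp,\Sp))$. So your claim is equivalent to $L\circ-\colon\FunL(\LSp,\Sp)\to\FunL(\LSp,\LSp)\simeq\LSp$ being an equivalence.

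In particular, the claim demands a colimit-preserving $G\colon\LSp\to\Sp$ with $L\circ G\simeq\id_\LSp$. The inclusion $\LSp\hookrightarrow\Sp$ does not qualify, since it is a right adjoint and in general does not preserve colimits. This is a genuine duality statement. It holds, for instance, if $\LSp$ is dualizable in $\presl$, and for $\Sp_{K(n)}$ the monochromatic layer functor is such a $G$. But it is not a formal consequence of idempotence, and you give no argument for an arbitrary localization.

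The same problem affects your last paragraph. The forgetful functor $\LMod_\LSp(\presl)\to\presl$ is canonically \emph{lax} monoidal, with unit comparison $\Sp\to\LSp$. Making it oplax would require exactly such a map $\LSp\to\Sp$ in $\presl$. Even granting the counit correspondence objectwise, you would still have to make it coherent, which you leave open.

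\textbf{How to fix it.} Avoid mapping coalgebras from $\presl_\LSp$ to $\preslst$ altogether, and go the other way, as the paper does.

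\begin{enumerate}
\item The left adjoint $\LSp\otimes-\colon\preslst\to\presl_\LSp$ is strong symmetric monoidal, so it induces a functor $\coCAlg(\preslst)\to\coCAlg(\presl_\LSp)$.
\item This functor preserves cofree coalgebras: the same computation you give shows $\LSp\otimes\Sym\cD\simeq\Sym_\LSp(\LSp\otimes\cD)$ for every $\cD$. This yields $F\colon\pressymst\to\pressym_\LSp$ with $\cD\mapsto\cD\otimes\LSp$.
\item For $\cC$ local, the equivalence $\varepsilon\colon F(\cC)=\cC\otimes\LSp\simeq\cC$, viewed in arity $1$, induces $\pressymst(\cD,\cC)\simeq\pressym_\LSp(F\cD,\cC)$ naturally in $\cD$. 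This is precisely your identification $\FunL(\Sym\cD,\cC)\simeq\FunL(\LSp\otimes\Sym\cD,\cC)$.
\item A 2-categorical representability criterion then produces a right adjoint of $F$. It is defined on $\pressym_\LSp$, is the identity on objects, and is fully faithful because its counit $\varepsilon$ is invertible.
\end{enumerate}

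This sidesteps the counit problem entirely, since no map $\cC\to\Sp$ ever has to be produced.
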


There are two subtleties with this statement:
First, observe that the formula for $\Sym \cC$ depends on whether it is computed in $\preslst$ or $\presl_\LSp$, since $\cC^{\otimes 0} = \Sp$ in $\preslst$ while $\cC^{\otimes 0} = \LSp$ in $\presl_\LSp$.
Secondly, the inclusion $\presl_\LSp \hookrightarrow \preslst$ is only lax monoidal on the unit, hence it does not induce a map $\coCAlg(\presl_\LSp) \hookrightarrow \coCAlg(\preslst)$.

\begin{proof}
    We saw that the left adjoint $- \otimes \LSp \colon \preslst \to \presl_\LSp$ is strong monoidal, hence it induces a functor $\coCAlg(\preslst) \to \coCAlg(\presl_\LSp)$.
    It is easily verified that this functor preserves cofree commutative coalgebras, hence it restricts to a functor $F \colon \pressymst \to \pressym_\LSp$ sending $\cC$ to $\cC \otimes \LSp$.
    We claim that the desired inclusion is the (2-categorical) right adjoint of $F$.
    Given $\cC$ in $\presl_\LSp$, consider the action map $\varepsilon \colon F(\cC) = \cC \otimes \LSp \eqarrow \cC$ in $\presl_\LSp$, viewed as a symmetric sequence concentrated in arity 1.
    It is easily verified that for any $\cD$ in $\pressymst$, both functors
    \[\pressymst(\cD, \cC) \xrightarrow{F} \pressym_\LSp(F(\cD),F(\cC)) \xrightarrow{\varepsilon \circ -} \pressym_{\LSp}(F(\cD),\cC)\]
    are equivalences.
    It follows by the dual of \cite[Corollary A.2.18]{blansblom2025chainrulegoodwilliecalculus} that $F$ admits a fully faithful right adjoint which is the identity on objects.
\end{proof}

It follows that in \cref{prop:bratner-vs-our-composition-prod}, we may view $\SSeq(\LSp,\LSp)$ as the endomorphism monoidal category of $\LSp$ in $\pressym_\LSp$ instead of in $\pressymst$.
The result is then a consequence of the following lemma.

\begin{lemma}\label{lem:Sym-dualizable}
    Let $\cA$ in $\presl_\LSp$ be dualizable.
    Then $\Sym \cA$ is again dualizable in $\presl_\LSp$.
\end{lemma}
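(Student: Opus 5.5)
Write $\otimes$ for the tensor product of $\presl_\LSp$ and $\unit = \LSp$ for its unit. The formula recalled just before \cref{lem:inclusion-pressymlsp-into-pressymst} gives
\[
\Sym\cA \simeq \coprod_{n\geq 0} \cA^{\otimes n}_{h\Sigma_n}
\]
in $\presl_\LSp$. The plan is to prove two things. First, each summand $\cA^{\otimes n}_{h\Sigma_n}$ is dualizable. Second, in $\presl_\LSp$ an arbitrary coproduct of dualizable objects is dualizable; this second point needs care, since in general only finite coproducts of dualizable objects are dualizable.

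\textbf{Step 1: the summands.} Since $\cA$ is dualizable, so is $\cA^{\otimes n}$, with dual $(\cA^\vee)^{\otimes n}$, and this duality is $\Sigma_n$-equivariant. Colimits in $\presl_\LSp$ are computed as limits along right adjoints. Hence the homotopy orbits $\cA^{\otimes n}_{h\Sigma_n}$ agree with the homotopy fixed points $(\cA^{\otimes n})^{h\Sigma_n}$, as in the ambidexterity of $\presl$ for spaces. Concretely, I will produce the duality datum for $\cA^{\otimes n}_{h\Sigma_n}$ with dual $(\cA^{\vee})^{\otimes n}_{h\Sigma_n}$, as follows.
\begin{itemize}
\item The evaluation is obtained from the $\Sigma_n$-equivariant evaluation on $\cA^{\otimes n}\otimes(\cA^\vee)^{\otimes n}$. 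Passing to orbits gives a map
\[
(\cA^{\otimes n}\otimes(\cA^{\vee})^{\otimes n})_{h(\Sigma_n\times\Sigma_n)} \to (\cA^{\otimes n}\otimes(\cA^{\vee})^{\otimes n})_{h\Sigma_n} \to \unit .
\]
Here the first map is the transfer along the diagonal $\Sigma_n\to\Sigma_n\times\Sigma_n$; it exists because $\presl$ is $0$-semiadditive and in fact $\infty$-semiadditive for spaces.
\item The coevaluation is built dually, using the norm equivalence $(-)_{h\Sigma_n}\simeq(-)^{h\Sigma_n}$.
\end{itemize}
An alternative is to cite the known fact that dualizable objects in $\presl$, which are the same as dualizable modules over $\unit$, are closed under colimits indexed by $\pi$-finite spaces such as $B\Sigma_n$. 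Either way, the triangle identities reduce to the equivariant triangle identities for $\cA^{\otimes n}$.

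\textbf{Step 2: the infinite coproduct.} This is the main obstacle. I would use that coproducts in $\presl_\LSp$ are also products, since $\presl$ is semiadditive for arbitrary small index sets. So $\coprod_i \cD_i \simeq \prod_i \cD_i$, and $\otimes$ distributes over both. Take $\cD_i = \cA^{\otimes i}_{h\Sigma_i}$ with duals $\cD_i^\vee$, and set $\cD = \bigoplus_i\cD_i$ and $\cD^\vee = \bigoplus_i\cD_i^\vee$.
\begin{itemize}
\item The coevaluation is the composite
\[
\unit \to \prod_i \cD_i\otimes\cD_i^\vee \simeq \coprod_i \cD_i\otimes\cD_i^\vee \to \cD\otimes\cD^\vee ,
\]
where the first map is the product of the individual coevaluations.
\item The evaluation is the composite
\[
\cD^\vee\otimes\cD \simeq \coprod_{i,j}\cD_i^\vee\otimes\cD_j \to \unit ,
\]
given by the individual evaluation on the diagonal summands and zero off the diagonal; this uses that $\presl_\LSp$ is enriched in stable categories, so zero maps exist.
\end{itemize}
I expect the triangle identities to hold componentwise, because all maps are diagonal with respect to the decomposition. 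The delicate point is checking that $\prod$ and $\coprod$ genuinely agree in $\presl$ for infinite index sets, including the compatibility of $\otimes$ with this biproduct. I would cite Lurie's result that $\presl$ has all small biproducts, namely that the product equals the coproduct, and combine it with the fact that $\otimes$ preserves colimits in each variable.
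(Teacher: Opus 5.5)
Your argument is correct, but it takes a different route from the paper's.

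\textbf{The paper's route.} The paper constructs no duality data. It reruns the proof of Lemma 3.2.20 of the chain-rule paper. The input there is a structural characterization of dualizable objects: Lurie's in \emph{Spectral Algebraic Geometry} for $\preslst$, replaced by Ramzi's Theorem 1.49 in the $\LSp$-local setting. Roughly, dualizable objects are the retracts of compactly generated ones. Since $\Sym$ preserves retracts, and compact generation survives tensor powers, $\Sigma_n$-orbits and direct sums, the lemma follows.

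\textbf{Your route.} You show directly that dualizable objects are closed under the two kinds of colimits in $\Sym\cA\simeq\coprod_n\cA^{\otimes n}_{h\Sigma_n}$. Colimits over $B\Sigma_n$ are handled by the ambidexterity of $\presl$. Arbitrary coproducts are handled because they are biproducts compatible with $\otimes$.

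\textbf{Step 2.} This step is sound. Coproducts in $\presl$ are computed as products, so they are biproducts. The triangle identities can be checked in the homotopy category after restricting to each summand $\cD_k$. There the composite is an infinite sum with a single nonzero term, namely the inclusion of $\cD_k$.

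\textbf{Step 1.} This step is only sketched. Saying the triangle identities ``reduce to the equivariant ones'' hides the real verification: the compatibility of the norm and of the diagonal transfer with $\otimes$. You should cite the standard fact rather than leave an outline: when $\otimes$ preserves colimits in each variable, colimits of dualizable objects over ambidextrous spaces are dualizable.

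\textbf{Comparison.} Your route is formal and uniform. It works the same way in $\preslst$ and in $\presl_\LSp$, and in fact whenever these semiadditivity properties hold. So no new characterization theorem is needed when passing to the $\LSp$-local setting. It also identifies the dual explicitly as $\Sym(\cA^\vee)$. The paper's route is a one-line reduction once the characterization of dualizable categories is available, and it avoids all coherence bookkeeping for duality data.
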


\begin{proof}
    This follows by the same proof as \cite[Lemma 3.2.20]{blansblom2025chainrulegoodwilliecalculus} if one replaces the citation \cite[Proposition D.7.3.1]{SAG} with \cite[Theorem 1.49]{ramzi2024dualizablecats}.
\end{proof}

\begin{proof}[Proof of \cref{prop:bratner-vs-our-composition-prod}]
    This follows by exactly the same argument as \cite[Proposition 3.2.16]{blansblom2025chainrulegoodwilliecalculus}, replacing $\preslst$ with $\presl_\LSp$ throughout the proof and using \cref{lem:inclusion-pressymlsp-into-pressymst} to identify $\SSeq(\LSp,\LSp)$ with the endomorphism monoidal category $\pressym_\LSp(\LSp,\LSp)$.
\end{proof}

\subsection{Day convolution and coendomorphism operads}\label{ssec:coend-operads-and-Koszul-duality}

In \cite{BlomMalinTaggart}, Malin, Taggart and the second author give a description of Koszul duality for operads in terms of \emph{coendomorphism operads}.
We will use this description in \cref{ssec:coend-operad-suspension} to prove \cref{introcor: product-rule-koszul-duality,introcor: right-module-der}, which are then used in \cref{ssec:pointed-spaces} to show that the operad structure on $\partial_* \id_{\Spc_*}$ obtained through the chain rule (see \cref{remark:bimodule-structure-on-partialF}) agrees with the spectral Lie operad.
Let us therefore summarize the constructions and results from \cite{BlomMalinTaggart} that we use.
Throughout, fix a localization $\LSp$ of $\Sp$ in the sense of \cref{def:localization-of-spectra}.

If $\cV$ is a presentably symmetric monoidal $\LSp$-local category, that is, an object of $\CAlg(\presl_\LSp)$, then by the equivalence
\[\cV \simeq \Fun^{\mathrm{L},\otimes}(\Sym(\LSp),\cV)\]
we obtain a right $\Fun^{\mathrm{L},\otimes}(\Sym(\LSp),\Sym(\LSp))$-action on $\cV$.
Since Brantner's composition product on $\SSeq(\LSp)$ is obtained by reversing the monoidal structure on $\Fun^{\mathrm{L},\otimes}(\Sym(\LSp),\Sym(\LSp))$, we obtain a left $\SSeq(\LSp,\LSp)$-action on $\cV$.
Given $x \in \cV$, one can verify that the symmetric sequence $\{\ulmap(x^{\otimes n},x)\}_{n \geq 0}$ in $\LSp$ is an endomorphism object of $x$ in the sense of \cite[\S 4.7.1]{HA}.
In particular, by \cite[Corollary 4.7.1.40]{HA} it obtains the structure of an operad; we define this to be the \emph{endomorphism operad} of $x$.
For a general $\cV$, one can do the following:

\begin{construction}
    Let $\cV$ be a locally small stably symmetric monoidal $\LSp$-local category and $x$ an object of $\cV$.
    Pick an arbitrary small full stable symmetric monoidal subcategory $\cV_0$ such that $x \in \cV_0$.
    The \emph{coendomorphism operad} $\coEnd(x)$ of $x$ is defined as the endomorphism operad of the corepresentable $\ulmap(x,-)$ in $\cP_\LSp(\cV_0^\op)$.
    It follows by the enriched Yoneda lemma that $\coEnd(x)_n \simeq \ulmap(x,x^{\otimes n})$.
    Moreover, since endomorphism operads are functorial in strong symmetric monoidal functors, this construction is independent of the chosen full subcategory $\cV_0$.
\end{construction}

In fact, \cite{BlomMalinTaggart} will show that any stably symmetric monoidal $\LSp$-local category $\cV$ comes with a canonical $(\SSeq(\LSp),\circ^\mathrm{rev})$-enrichment given by 
\[\mathrm{hom}(x,y) \simeq \{\ulmap(x,y^{\otimes n})\}_{n \geq 0},\]
where $\circ^\mathrm{rev}$ denotes the reversed composition product defined by $A \circ^\mathrm{rev} B \coloneqq B \circ A$.
From this it follows not only that $\coEnd(x) = \{\ulmap(x,x^{\otimes n})\}_{n \geq 0}$ admits an operad structure, but also that $\{\ulmap(y,x^{\otimes n})\}_{n \geq 0}$ admits a right $\coEnd(x)$-module structure for $y$ in $\cV$.

In \cite{BlomMalinTaggart}, the following is then proved.
Recall that an operad $\cO$ is called \emph{strongly positive} if $\cO(0)=0$ and $\cO(1) = \LSp$.

\begin{theorem}\label{prop:KP-is-coend-operad}
    Let $\cO$ be a strongly positive $\LSp$-enriched operad such that for every $n$, the spectrum $\cO(n)$ is dualizable in $\LSp$.
    Let $\unit$ be the trivial right $\cO$-module $L\Sph$ concentrated in degree 1.
    Then the coendomorphism operad 
    \[
    \coEnd(\unit) = \ulmap(\unit, \unit^{\circledast \bullet})
    \]
    is equivalent to the Koszul dual $K\cO$, where $\circledast$ denotes the Day convolution product on $\RMod_\cO(\SSeq(\LSp))$.
    Moreover, for any levelwise dualizable right $\cO$-module $M$, the right $K\cO$-module $\ulmap(M,\unit^{\circledast \bullet})$ is the Koszul dual $KM$.
\end{theorem}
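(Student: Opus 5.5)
The plan is to compare both sides by producing an explicit model for $\coEnd(\unit)$ in terms of the bar construction $B\cO = \unit \circ_\cO \unit$. The central tool is a Yoneda-type identification of mapping spectra out of $\unit$ in right $\cO$-modules. Since $\unit \simeq \unit \circ_\cO \cO$ as a right module, I will resolve $\unit$ by the bar resolution $\unit \simeq |\unit \circ \cO^{\circ \bullet} \circ \cO|$ by free right $\cO$-modules. For any right $\cO$-module $N$, this gives
\[
\ulmap_{\RMod_\cO}(\unit, N) \simeq \mathrm{Tot}\, \ulmap_{\SSeq}(\unit \circ \cO^{\circ\bullet}, N),
\]
which is the cobar-type totalization computing maps from $\unit$ into $N$. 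Specializing to $N = \unit^{\circledast n}$, which is the trivial module $L\Sph$ concentrated in arity $n$ with $\Sigma_n$ permuting factors, only the arity-$n$ parts contribute. The totalization should then become $\ulmap((\unit \circ_\cO \unit)_n, L\Sph) = \mathbf{D}(B\cO)_n$. Dualizability of each $\cO(n)$ is what makes the termwise mapping spectra agree with the Spanier--Whitehead duals of the bar terms, and lets $\mathrm{Tot}$ of duals become the dual of the geometric realization; the arity-wise finiteness of the simplicial object in each arity makes the latter legitimate. This identifies $\coEnd(\unit)_n \simeq (K\cO)_n$ as $\Sigma_n$-spectra.

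The harder part is matching the operad structures, not just the underlying symmetric sequences. Here I would not compare compositions by hand. Instead, I would use the canonical $(\SSeq(\LSp),\circ^{\mathrm{rev}})$-enrichment. The functor $\unit \circ_\cO - $ (Koszul duality for right modules, the adjunction $B$ above) should be promoted to a lax symmetric monoidal functor from $(\RMod_\cO, \circledast)$ to right $B\cO$-comodules with Day convolution, sending $\unit$ to the trivial comodule $L\Sph$. Strong symmetric monoidal functors induce maps of coendomorphism operads, and the lax version should still give an operad map $\coEnd(\unit) \to \coEnd_{\mathrm{coMod}}(L\Sph)$. The target is identified with $\mathbf{D}B\cO$ because maps out of the trivial comodule into $L\Sph^{\otimes n}$ are computed by the cobar complex, which is essentially tautological via cofreeness of $\SSeq$. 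The previous paragraph then shows this operad map is an equivalence levelwise.

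For the module statement, I would run the same argument with $\ulmap(M, \unit^{\circledast \bullet})$, resolving $M$ by $M \circ \cO^{\circ\bullet}\circ\cO$. This yields $\mathbf{D}(M\circ_\cO\unit) = KM$, and the module structures are matched by the same functoriality. I expect the main obstacle to be making the monoidality of the right-module Koszul duality functor precise enough to carry the operad structure. A fallback is to work in a small presheaf model $\cP_{\LSp}(\cV_0^\op)$ as in the construction, where endomorphism operads are functorial.
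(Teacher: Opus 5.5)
The paper does not prove this theorem; it quotes it from \cite{BlomMalinTaggart}. There, the operad structure on $\coEnd(x)$ and the module structures on $\{\ulmap(y,x^{\otimes n})\}$ come from a canonical $(\SSeq(\LSp),\circ^{\mathrm{rev}})$-enrichment. Your proposal therefore has to stand on its own.

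Your first paragraph is correct, and simpler than you make it. Restriction along the augmentation $\cO\to\unit$ is strong symmetric monoidal for Day convolution, since it is precomposition with a 1-morphism of $\MMor$ (\cref{lem:pointwise-tensor-prod}). So $\unit^{\circledast n}$ is the trivial module on $\Ind_*^{\Sigma_n}L\Sph\simeq\Coind_*^{\Sigma_n}L\Sph$ in arity $n$. This is a free $\Sigma_n$-spectrum, not $L\Sph$ with permuted factors. The adjunction $(-\circ_\cO\unit)\dashv\triv$ then gives $\ulmap_{\RMod_\cO}(\unit,\unit^{\circledast n})\simeq\ulmap_{\SSeq}(B\cO,\unit^{\circledast n})\simeq\mathbf{D}((B\cO)_n)$ directly. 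Dualizability plays no role at this stage, since $\mathbf{D}$ turns realizations into totalizations unconditionally.

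The genuine gap is in the comparison of operad structures, which is the real content of the theorem.

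\emph{Wrong target object.} For right modules the Koszul duality functor is $M\mapsto M\circ_\cO\unit$, not $\unit\circ_\cO -$. It sends the trivial module $\unit$ to $\unit\circ_\cO\unit=B\cO$, which is the \emph{cofree} right $B\cO$-comodule on $\unit$. It is the free module $\cO$ that goes to the trivial comodule. This is not cosmetic. For the trivial comodules $\unit$ and $\unit^{\circledast n}$, your cobar complex has terms $\ulmap_{\SSeq}(\unit,\unit^{\circledast n}\circ(B\cO)^{\circ k})$. These vanish for $n\geq 2$, because $(\unit^{\circledast n}\circ Z)_1=0$ for any positive $Z$. So the target of your comparison map would be the trivial operad, and the map could not be an equivalence in general.

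\emph{Wrong variance.} A lax symmetric monoidal functor $F$ induces maps of endomorphism operads, not coendomorphism operads. To get $\coEnd(x)\to\coEnd(Fx)$ you need maps $F(x^{\otimes n})\to F(x)^{\otimes n}$, i.e.\ an oplax or strong structure.

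Both points can be repaired. The functor $-\circ_\cO\unit$ is in fact strong monoidal, being precomposition in $\MMor$. Also $(B\cO)^{\circledast n}\simeq\unit^{\circledast n}\circ B\cO$ is cofree, so mapping spectra between these comodules are tautologically $\mathbf{D}((B\cO)_n)$.

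What then remains is the substance of the theorem, and you do not supply it:
\begin{enumerate}
\item a Day convolution structure on $\mathrm{RcoMod}_{B\cO}$ lifting the one on $\SSeq$;
\item a proof that Koszul duality is strong monoidal into it;
\item a coherent identification of the composition on $\coEnd(B\cO)$ with the operad structure on $\mathbf{D}B\cO$ built from the comultiplication of $B\cO$ and the lax monoidal structure of $\mathbf{D}$.
\end{enumerate}
Calling this ``essentially tautological'' skips exactly the $\infty$-categorical coherence problem that the enrichment in \cite{BlomMalinTaggart} is designed to handle. The same applies to your module statement.
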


Let us explain what we mean by the ``Day convolution product'' on $\RMod_{\cO}(\SSeq(\LSp))$.
As mentioned above, $\SSeq(\LSp) = \cP_\LSp(\Fin^\simeq)$ admits a Day convolution symmetric monoidal structure (with respect to disjoint union on $\Fin^\simeq$), which we will denote by $\circledast$.
It is given by the formula
\[
(A \circledast B)_n \simeq \bigoplus_{i + j = n} \Ind_{\Sigma_i\times\Sigma_j}^{\Sigma_n} A_i \otimes B_j.
\]
It makes $\SSeq(\LSp)$ into the free presentably symmetric monoidal $\LSp$-local category.
This Day convolution product on $\RMod_\cO(\SSeq(\LSp))$ is defined as a lift of the Day convolution product of $\SSeq(\LSp)$.
The moral reason for the existence of such a lift is that the composition product right action of $\SSeq(\LSp)$ on itself distributes over $\circledast$, in the sense that there is an equivalence
\[(S \circledast T) \circ U \simeq (S \circ U) \circledast (T \circ U).\]
We refer the reader to \cref{def:convolution-product-sseq} for an explicit construction of the Day convolution product on $\RMod_\cO(\SSeq(\LSp))$ and to \cref{prop:day-convolution-is-indeed-lift} for a proof that it is a lift of the Day convolution product on $\SSeq(\LSp)$.

\section{Properties of the Morita category}

The goal of this section is to prove that the functor $\partial_* \colon \diff \to \MMor$ preserves a number of $2$-categorical constructions.
We start in \cref{ssec: local-exactness-MMor} by showing that it preserves colimits and finite limits on mapping categories.
In \cref{ssec:products-MMor}, we prove that it preserves cartesian products.
\cref{ssec: cotensors-mmor} deals with the preservation of cotensors, and \cref{ssec: pullbacks-mmor} with the preservation of certain pullbacks.

\subsection{Local exactness} \label{ssec: local-exactness-MMor}

\begin{definition}
    Let $F \colon \sX \to \sY$ be a functor of $2$-categories.
    Let $K$ be a small category.
    We say that $F$ \emph{locally preserves $K$-shaped colimits} if for every pair of objects $x, y \in \sX$ the induced functor
    \[
    F \colon \sX(x, y) \to \sY(F(x), F(y))
    \]
    preserves $K$-shaped colimits.
    We say that $F$ \emph{locally preserves colimits} if it locally preserves $K$-shaped colimits for all small categories $K$.
    In the same way, we define what it means for $F$ to locally preserve limits.
\end{definition}

All preservation properties of the derivatives functor that we will prove in this section rely on the following result.

\begin{proposition}\label{prop: local-exactness-derivatives}
    The functor $\partial_* \colon \diff \to \MMor$ locally preserves colimits and finite limits.
\end{proposition}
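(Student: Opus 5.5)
We need to show that for differentiable categories $\sC,\sD$ the functor
\[
\partial_* \colon \Fun^{\ast,\omega}(\sC,\sD) \to \BMod_{(\partial_*\id_\sD,\partial_*\id_\sC)}(\SSeq(\Sp\sC,\Sp\sD))
\]
preserves small colimits and finite limits.

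The first step is a reduction to underlying symmetric sequences. The forgetful functor from bimodules to $\SSeq(\Sp\sC,\Sp\sD)$ is conservative and preserves limits and sifted colimits. Finite limits and sifted colimits of bimodules are therefore computed underlying, as was used in the proof of \cref{prop: exactness-composition-mor}. Finite coproducts need separate treatment, since coproducts of bimodules are not computed underlying. The plan is as follows.

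(a) Show that the underlying functor $\partial_* \colon \Fun^{\ast,\omega}(\sC,\sD) \to \SSeq(\Sp\sC,\Sp\sD)$ preserves finite limits and sifted colimits (in fact all colimits).

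(b) Deduce preservation of finite limits and sifted colimits at the bimodule level.

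(c) Handle finite coproducts, i.e.\ the initial object and binary coproducts, directly.

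For (a), we work arity by arity, since limits and colimits in $\SSeq$ are computed aritywise. By \cite[Theorem 6.1.1.10]{HA}, $P_n$ commutes with finite limits and filtered colimits. Consequently $D_n$ commutes with finite limits, and also with all colimits once we use the identification of $D_n F$ with $\Omega^\infty_\sD\partial_nF(\Sigma^\infty_\sC-,\dots)_{h\Sigma_n}$. Since $\partial_n F$ is the multilinear functor classifying $D_n F$, and this classification is an equivalence between $n$-homogeneous finitary functors and symmetric multilinear functors, we need the following claim.

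\emph{Claim.} The assignment $F \mapsto \partial_nF$ preserves finite limits and geometric realizations.

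For finite limits, this reduces to the statement for $D_n$, because the classification equivalence preserves finite limits. For colimits, the cleanest route is to write $\partial_n F$ as a cross-effect formula: the multilinearized $n$th cross effect of $\Sigma^\infty_\sD F$, or alternatively $\Sigma^\infty$ composed with the derivative of the stabilization. Multilinearization is a filtered colimit of finite limits, and in a differentiable category filtered colimits commute with finite limits. Thus $\partial_n$ commutes with filtered colimits and finite limits. Since the target is stable, a functor preserving finite limits also preserves finite colimits, so $\partial_n$ preserves all colimits, provided the formula is natural in $F$. If $\sD$ is not stable, I would first postcompose with $\Sigma^\infty_\sD$. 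The issue there is that $\Sigma^\infty_\sD$ preserves colimits but not finite limits. I would therefore use the standard fact that $\partial_n F\simeq\partial_n(\Omega^\infty\Sigma^\infty F)$ only up to the comodule structure. A cleaner choice is to cite the construction of $\partial_n$ in \cite[\S 3.1.3]{blansblom2025chainrulegoodwilliecalculus} as $\partial_nF = $ multilinearized cross effect of $F$ followed by $\Sp$-stabilization in each variable, which is visibly built from finite limits and filtered colimits.

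For (c), the initial object of $\Fun^{\ast,\omega}(\sC,\sD)$ is the zero functor, and $\partial_*0 = 0$ is the free bimodule on $0$, hence initial. For binary coproducts $F\sqcup G$, there is a comparison map $\partial_*F\sqcup\partial_*G\to\partial_*(F\sqcup G)$ in bimodules. To show it is an equivalence, I would resolve $F$ and $G$ by a sifted colimit (the bar construction) and reduce to the case where both sides are compatible with the previously established sifted colimit preservation. Alternatively, I would use \cref{thm: calculus-and-koszul-duality-main}: pass to the Koszul-dual comodule side, where coproducts of functors $\Sp\sC\to\sD$ of the form $F\Omega^\infty$ are coproducts of comodules and are computed underlying. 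I expect this coproduct step to be the main obstacle, because the bimodule coproduct is not the underlying sum and the comparison requires some care with the two-sided module structure. The cleanest approach might be to show instead that the functor has a right adjoint: being accessible and preserving sifted colimits and finite coproducts is equivalent to preserving all colimits.
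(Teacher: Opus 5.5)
Your reduction (b) and the finite-limit half of (a) are fine. When $\sD$ is stable the whole plan works: then $\partial_*\id_\sD\simeq\unit$, the target is $\RMod_{\partial_*\id_\sC}$ with colimits computed underlying, and $\Fun^{\ast,\omega}(\sC,\sD)$ is stable, so the underlying $\partial_*$ is exact.

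For unstable $\sD$, however, step (a) is wrong as stated. The inference ``the target is stable, so preserving finite limits implies preserving finite colimits'' needs the source to be stable, and $\Fun^{\ast,\omega}(\sC,\sD)$ is not. The conclusion is also false. For $\sD=\Spc_*$, Ganea's fibre sequence gives $\mathrm{cr}_2(\id\vee\id)(X_1,X_2)\simeq\Sigma\,\Omega(X_1\vee X_1)\wedge\Omega(X_2\vee X_2)$. So $\partial_2(\id\vee\id)$ has underlying spectrum $(\Sigma^{-1}\Sph)^{\oplus 4}$, whereas $\partial_2\id\oplus\partial_2\id$ has $(\Sigma^{-1}\Sph)^{\oplus 2}$.

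More seriously, you never establish what (b) actually needs, namely that the underlying $\partial_n$ preserves geometric realizations. The multilinearized cross-effect formula is built from finite limits and filtered colimits in $\sD$. Geometric realizations do not commute with finite limits in, e.g., $\Spc_*$, so that formula only gives you filtered colimits. Your fallback $\partial_nF\simeq\partial_n(\Omega^\infty\Sigma^\infty F)$ is false: take $F=\id_{\Spc_*}$, where the left side is $\Lie(n)$ and the right side is $\Sph$.

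Some genuine input is needed here. One option is the left-module form of \cref{thm: calculus-and-koszul-duality-main}, which recovers $\partial_*F$ as the cobar construction on the left $\partial_*(\Sigma^\infty_\sD\Omega^\infty_\sD)$-comodule $\partial_*(\Sigma^\infty_\sD F)$. That comodule preserves all colimits in $F$. In each arity the cobar construction reduces to a finite limit of terms $(Q^{\circ k}\circ M)(n)$, each of which commutes with sifted colimits in $M$ inside the stable category $\SSeq(\Sp\sC,\Sp\sD)$.

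Step (c) is only a list of options, and none goes through as stated.
A bar-type resolution needs a class of functors whose derivatives are free bimodules and which is closed under coproducts in $\Fun^{\ast,\omega}(\sC,\sD)$. The functors $\Omega^\infty_\sD G\Sigma^\infty_\sC$ have free derivatives, but their coproducts are not of that form. Passing to bicomodules via $B$ only helps if $B$ detects equivalences on the bimodules in question, which you have not shown. The adjoint-functor reformulation merely restates what must be proved.

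An argument that does work with the paper's tools is $2$-categorical. Write $F\sqcup G\simeq\nabla\circ(F,G)$, where $\nabla\colon\sD\times\sD\to\sD$ is left adjoint to the diagonal in $\diff$. The strong functor $\partial_*$ preserves this adjunction. By \cref{cor:partial-to-mmor-preserves-prod}, which uses only the finite-limit half, it also preserves the product $\sD\times\sD$. Hence postcomposition with $\partial_*\nabla$ is left adjoint to the diagonal of the mapping category, i.e.\ it computes coproducts of bimodules, and the chain rule gives $\partial_*(F\sqcup G)\simeq\partial_*F\sqcup\partial_*G$. Running the same argument with cotensors (\cref{thm: derivatives-preserves-cotensors}, whose proof does not use the present proposition) and the adjunction $\colim_K\dashv\Delta_K$ handles all small colimits at once, including the sifted ones.

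For comparison, the paper gives no direct argument here; it defers to \cite[Proposition 3.10]{blansheuts2026characterizationspectrallieoperad}.
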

\begin{proof}
This is proved as \cite[Proposition 3.10]{blansheuts2026characterizationspectrallieoperad} for the restriction of $\partial_*$ to the subcategory of $\diff$ spanned by the differentiable categories whose stabilization is equivalent to $\Sp$.
The statement for $\diff$ is proved in exactly the same way.
\end{proof}

\begin{remark}
        In more concrete terms, this proposition says that for each pair of differentiable categories $\sC$ and $\sD$, the functor
    \[
    \partial_* \colon \Fun^{\ast, \omega}(\sC, \sD) \to \BMod_{(\partial_*\id_\sD, \partial_* \id_\sC)}(\SSeq(\Sp\sC, \Sp\sD)) 
    \]
    preserves colimits and finite limits.
\end{remark}

\begin{corollary}
    The Goodwillie transform $\maAlg \colon \diff \to \diff$ locally preserves colimits and finite limits.
\end{corollary}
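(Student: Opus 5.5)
The plan is to write the Goodwillie transform as the composite $\maAlg = \reAlg \circ \partial_*$. Then \cref{prop: local-exactness-derivatives} takes care of the first factor, and it remains to show that $\reAlg \colon \MMor \to \Cat$ locally preserves colimits and finite limits, at least on the image of $\partial_*$. A composite of two functors of $2$-categories that each locally preserve a given class of (co)limits locally preserves that class, because on mapping categories the induced functor is just the composite of the two induced functors. One point of interpretation: the statement regards $\maAlg$ as landing in $\diff$ (\cref{prop: alg-O-differentiable}). The relevant mapping category is then $\Fun^{\ast,\omega}(\maAlg(\sC), \maAlg(\sD))$, and colimits and finite limits there are computed pointwise, since filtered colimits commute with finite limits in the differentiable target.

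Fix objects $(\sA,\sO)$ and $(\sB,\sP)$ of $\MMor$ with $\sO$ and $\sP$ positive. The functor to analyse is
\[
\BMod_{(\sP,\sO)}(\SSeq(\sA,\sB)) \to \Fun^{\ast,\omega}(\Alg_\sO(\sA), \Alg_\sP(\sB)), \qquad M \mapsto M \circ_\sO (-).
\]
Because colimits and limits in the target are pointwise, it suffices to show that for each fixed $\sO$-algebra $X$ the functor $M \mapsto M \circ_\sO X$ into $\Alg_\sP(\sB)$ preserves colimits and finite limits. An $\sO$-algebra $X$ is a $(\sO,\unit_\ast)$-bimodule in $\SSeq(\ast,\sA)$, and $M \circ_\sO X$ is the composition in $\MMor$ of $M$ with $X$. \cref{prop: exactness-composition-mor} says precisely that this relative composition product preserves small colimits and finite limits in the first variable. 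This settles the claim.

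The one step needing care is that the $2$-categorical identification of $\reAlg$ on mapping categories must agree with the functor $M \mapsto M \circ_\sO -$. That agreement is the content of the remark describing $\reAlg$, since $\reAlg$ is corepresented by $(\ast,\unit_\ast)$ and so acts on morphisms by postcomposition. The main obstacle is making sure that the (co)limits being preserved are the intended ones, namely those in $\Fun^{\ast,\omega}$ rather than in all of $\Fun$. This is handled by observing that the full subcategory of reduced finitary functors is closed under colimits and finite limits in $\Fun$: it is closed under colimits because colimits commute with filtered colimits, and under finite limits because of differentiability. Hence pointwise preservation is enough.
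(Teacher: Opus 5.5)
Your proposal is correct and follows the paper's proof. You factor $\maAlg = \reAlg \circ \partial_*$, handle $\partial_*$ with \cref{prop: local-exactness-derivatives}, and handle $\reAlg$ with the first-variable exactness of the relative composition product in \cref{prop: exactness-composition-mor}. Your further check that $\Fun^{\ast,\omega}$ is closed under colimits and finite limits in $\Fun$, so that pointwise preservation suffices, spells out a detail the paper leaves implicit (reducedness of the resulting functors is also immediate).
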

\begin{proof}
    Since $\maAlg = \reAlg \circ \partial_*$ and $\partial_*$ locally preserves colimits and finite limits, it suffices to show that the same holds for $\reAlg$, which follows from \cref{prop: exactness-composition-mor}.
\end{proof}

\subsection{Products}\label{ssec:products-MMor}

The purpose of this section is to show that the functor $\partial_* \colon \diff \to \MMor$ preserves finite products.
We will deduce this from the fact that $\partial_*$ \emph{locally} preserves finite products.
Consider the following illustrative example.

\begin{example}\label{ex:cartesian-product-additive-cat}
    In an additive category, cartesian products admit an ``equational'' description.
    Namely, the maps
    \[x \xleftarrow{p_1} w \xrightarrow{p_2} y\]
    exhibit $w$ as the cartesian product of $x$ and $y$ if and only if there exist $i_1 \colon x \to w$ and $i_2\colon y \to w$ such that
    \begin{enumerate}[(1)]
        \item\label{item1:cart-additive} $p_1 i_1 \simeq \id_x$ and $p_2 i_2 \simeq \id_y$,
        \item\label{item2:cart-additive} $p_1 i_2 \simeq 0$ and $p_2 i_1 \simeq 0$,
        \item\label{item3:cart-additive} $i_1 p_1 + i_2 p_2 \simeq \id_{w}$.
    \end{enumerate}
\end{example}

In particular, this implies that any $\mathrm{CGrp}$-enriched functor between additive categories preserves cartesian products.
We will prove a categorification of this statement.

\begin{theorem}\label{thm:equational-description-prod}
    Let $\sX$ be a 2-category such that
    \begin{itemize}
        \item the mapping categories $\sX(-,-)$ admit finite cartesian products,
        \item the mapping categories $\sX(-,-)$ are pointed, and
        \item composition of 1-morphisms $\circ \colon \sX(y,z) \times \sX(x,y) \to \sX(x,z)$ preserves finite cartesian products in the first variable.
    \end{itemize}
    A span $x \xleftarrow{p_1} w \xrightarrow{p_2} y$ in $\sX$ exhibits $w$ as the cartesian product of $x$ and $y$ if and only if $p_1$ and $p_2$ admit right adjoints $i_1$ and $i_2$ such that
    \begin{enumerate}[\upshape{(}1\upshape{)}]
        \item\label{item1:equational-description-prod} the counits $\epsilon_1 \colon p_1 i_1 \Rightarrow \id_x$ and $\epsilon_2 \colon p_2 i_2 \Rightarrow \id_y$ are invertible, and
        \item\label{item2:equational-description-prod} the 2-morphism $(\eta_1,\eta_2) \colon \id_{w} \Rightarrow (i_1 p_1) \times (i_2 p_2)$ is invertible.
    \end{enumerate}
\end{theorem}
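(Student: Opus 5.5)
The plan is to argue representably: a span $x \xleftarrow{p_1} w \xrightarrow{p_2} y$ exhibits $w$ as a product exactly when, for every $v$, the functor
\[
\Phi = (p_1\circ -,\, p_2 \circ -) \colon \sX(v,w) \to \sX(v,x) \times \sX(v,y)
\]
is an equivalence. The hypotheses are used only in two ways. Composition preserves finite products in the first (outer) variable. In particular, composition preserves the terminal object, so $0 \circ f \simeq 0$, where $0$ is the zero object of each pointed mapping category.

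\emph{Only if.} Suppose $w = x \times y$. I will define $i_1 \colon x \to w$ as the $1$-morphism corresponding to $(\id_x, 0) \in \sX(x,x)\times\sX(x,y)$, and define $i_2$ symmetrically. To see that $p_1 \dashv i_1$, I check on mapping categories. For $f \colon v \to w$ and $g \colon v \to x$, the universal property gives
\[
\sX(v,w)(f, i_1 g) \simeq \sX(v,x)(p_1 f, g) \times \sX(v,y)(p_2 f, 0).
\]
Here $i_1 g$ corresponds to $(g, 0\circ g) = (g,0)$. The second factor is contractible because $0$ is terminal. This is natural in $v$ and compatible with precomposition, so it gives a $2$-categorical adjunction whose counit $p_1 i_1 \simeq \id_x$ is invertible by construction.

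For (2), I use that products in $\sX(w,w)$ are computed componentwise under $\Phi$. Thus $i_1p_1 \times i_2 p_2$ corresponds to $(p_1,0)\times(0,p_2) \simeq (p_1,p_2)$, which is the image of $\id_w$. It then remains to check that $(\eta_1,\eta_2)$ induces this identification. This follows from the triangle identities, since the components of $p_k\eta_k$ are inverse to $\epsilon_k p_k$.

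\emph{If.} Given adjunctions $p_k \dashv i_k$, postcomposition gives adjunctions $p_k\circ - \dashv i_k \circ -$ on mapping categories. Hence $\Phi$ has a right adjoint $\Psi(f,g) = (i_1 f) \times (i_2 g)$. The unit of $\Phi \dashv \Psi$ at $h$ is
\[
h \to (i_1p_1h)\times(i_2p_2h) \simeq (i_1p_1 \times i_2p_2)\circ h,
\]
using preservation of products in the outer variable. This is $(\eta_1,\eta_2)$ whiskered by $h$, so it is invertible by (2). Therefore $\Phi$ is fully faithful, and it remains to prove essential surjectivity. Using (1), $\Phi(i_1 f) \simeq (f, p_2 i_1 f)$, and similarly for $i_2 g$. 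So the key claim is the "orthogonality" $p_2 i_1 \simeq 0$ and $p_1 i_2 \simeq 0$. Once this holds, I apply $\Phi$ to $\Psi(f,g)$. The counit is then identified with an equivalence by computing $\sX(v,x)(f', p_1\Psi(f,g))$ through the adjunction and using that $i_2$ contributes trivially after applying $p_1$.

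The main obstacle is proving $p_2 i_1 \simeq 0$ from (1) and (2). My first attempt is to precompose the invertible $2$-morphism in (2) with $i_1$. By the triangle identities, this yields an equivalence
\[
i_1 \xrightarrow{\ \sim\ } i_1 \times i_2p_2i_1
\]
whose first component is the identity. Consequently, the projection $i_1 \times i_2p_2i_1 \to i_1$ is an equivalence, which forces $i_2 p_2 i_1$ to be terminal in $\sX(x,w)$, that is, $i_2p_2i_1 \simeq 0$. Applying $p_2 \circ -$ and the invertible counit $\epsilon_2$ then gives $p_2 i_1 \simeq p_2 i_2 p_2 i_1 \simeq p_2 0$. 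Finally, $p_2 \circ 0 \simeq 0$ because $p_2 \circ -$ is precomposition in the second variable of $\sX(w,y)\times\sX(x,w)\to\sX(x,y)$. I will check this either directly, or by noting that $0 \in \sX(x,w)$ factors as $0 \circ (\text{anything})$ and composition preserves the terminal object in the first variable.
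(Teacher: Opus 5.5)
Your ``only if'' direction matches the paper's argument, and so does the first half of your ``if'' direction: the right adjoint $\Psi(f,g)=i_1f\times i_2g$, and invertibility of the unit via (2) plus product-preservation in the outer variable.

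\textbf{Where you diverge: essential surjectivity.} The paper packages this step into \cref{lem:recognizing-cartesian-products-in-Cat}. There, $\Psi=i_{1*}\bartimes i_{2*}$ is conservative, because $i_{1*}\times i_{2*}$ is fully faithful and $\times$ is conservative on a pointed category with finite products. A left adjoint with invertible unit and conservative right adjoint is then an equivalence.

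You instead first extract the orthogonality relations $p_2i_1\simeq 0\simeq p_1i_2$, which the paper only mentions in a remark. You then identify the counit by Yoneda. Spelled out with $h=i_1f\times i_2g$, the computation is
\[
\sX(v,x)(f',p_1h)\simeq \sX(v,w)(i_1f',h)\simeq \sX(v,x)(f',f)\times\sX(v,y)(p_2i_1f',g)\simeq \sX(v,x)(f',f).
\]
The three steps use, in order:
\begin{itemize}
\item full faithfulness of $\Phi$, together with $\Phi(i_1f')\simeq(f',0)$ and $0$ being initial;
\item full faithfulness of $i_{1*}$ and the adjunction $p_{2*}\dashv i_{2*}$;
\item orthogonality.
\end{itemize}
This works. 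It makes the analogy with item (2) of \cref{ex:cartesian-product-additive-cat} explicit, at the cost of extra length, whereas the paper needs only conservativity of $\times$.

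Both routes use pointedness in essentially the same way. Your step ``the projection $i_1\times i_2p_2i_1\to i_1$ is an equivalence, hence $i_2p_2i_1$ is terminal'' fails without it; for instance, take the first factor to be initial in $\mathrm{Set}$. It holds here because $\sX(x,w)(t,i_1)$ is nonempty for every $t$, and you should say so.

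\textbf{One justification needs repair.} The claim $p_2\circ 0\simeq 0$ for $0\in\sX(x,w)$ does not follow from composition preserving terminal objects in the first variable. Here $0$ sits in the second variable. Writing $0\simeq 0\circ k$ only reduces the problem to $p_2\circ 0_{u,w}\simeq 0$, which is the same problem again.

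Under the stated hypotheses, postcomposition with an arbitrary $1$-morphism need not preserve zero. In the $2$-category of pointed categories with finite products and all functors, $G\circ 0=\const_{G(0)}$, which is nonzero for non-reduced $G$.

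The correct reason is that $p_2\circ-$ is left adjoint to $i_2\circ-$, so it preserves initial objects, and the zero object of a pointed category is initial. With this fix your proposal is complete.
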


\begin{remark} \label{rem: equational-description-terminal}
    There is a similar description of terminal objects in $2$-categories.
    Suppose that $\sX$ is a $2$-category such that all mapping categories in $\sX$ have terminal objects and composition of $1$-morphisms 
    \[
    \circ \colon \sX(y, z) \times \sX(x, y) \to \sX(x,z)
    \]
    preserves terminal objects in the first variable.
    Then an object $x \in \sX$ is terminal in $\sX$ if and only if $\id_x$ is a terminal object in $\End(x)$.
    To see this, note that if $y \in \sX$ is another object and $f \in \sX(y, x)$, then we have
    \[
    f \simeq \id_x \circ f \simeq \ast \circ f \simeq \ast.
    \]
    This means that every object in $\sX(y, x)$ is equivalent to the terminal object, and hence $\sX(y, x) \simeq \ast$.
\end{remark}

Before proving \cref{thm:equational-description-prod}, we describe a number of consequences.

\begin{corollary}\label{cor:local-cart-global-cart-functor}
    Let $\sX$ and $\sY$ be 2-categories satisfying the assumptions of \cref{thm:equational-description-prod} and let $F \colon \sX \to \sY$ be a 2-functor such that for any $x,y \in \sX$, the functor $F_{x,y} \colon \sX(x,y) \to \sY(Fx,Fy)$ preserves finite cartesian products.
    Then $F$ preserves all finite products that exist in $\sX$.
\end{corollary}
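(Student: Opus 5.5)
The plan is to use the characterization of \cref{thm:equational-description-prod} in both directions: once in $\sX$ to extract "equational" data from a product, and once in $\sY$ to certify that the image of that data is again a product. By induction on the number of factors, it suffices to treat binary products and the terminal object.

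Start with the terminal object. Suppose $t \in \sX$ is terminal. By \cref{rem: equational-description-terminal}, $\id_t$ is terminal in $\End(t)$. The functor $F_{t,t}$ preserves the empty product, so $F(\id_t) \simeq \id_{Ft}$ is terminal in $\End(Ft)$. Here we use that $F$ is a (strong) 2-functor, so it preserves identities up to invertible 2-cell. The mapping categories of $\sY$ have terminal objects, and composition in $\sY$ preserves them in the first variable, since these are finite products. Applying \cref{rem: equational-description-terminal} in $\sY$ then shows that $Ft$ is terminal.

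For binary products, let $x \xleftarrow{p_1} w \xrightarrow{p_2} y$ exhibit $w$ as a product in $\sX$. By \cref{thm:equational-description-prod}, there are right adjoints $i_1, i_2$ with invertible counits $\epsilon_k$ such that $(\eta_1,\eta_2)\colon \id_w \Rightarrow (i_1p_1)\times(i_2p_2)$ is invertible. We then verify each ingredient on the $\sY$ side.

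\textbf{Adjunctions.} Any 2-functor preserves adjunctions: it sends the triangle identities to triangle identities, using the compositors $F(g)F(f)\simeq F(gf)$ and unitors. So $Fp_k \dashv Fi_k$, with unit and counit obtained from $F\eta_k$ and $F\epsilon_k$ via these coherence equivalences.

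\textbf{Counits.} Each counit $F(p_k)F(i_k)\Rightarrow \id$ is the composite $F(p_k)F(i_k)\simeq F(p_ki_k)\xrightarrow{F\epsilon_k} F(\id)\simeq \id$. This is invertible because $\epsilon_k$ is.

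\textbf{Unit map.} The 2-morphism $\id_{Fw}\Rightarrow F(i_1)F(p_1)\times F(i_2)F(p_2)$ must be identified, up to the compositors, with
\[
F(\id_w)\xrightarrow{F(\eta_1,\eta_2)} F\bigl((i_1p_1)\times(i_2p_2)\bigr)\longrightarrow F(i_1p_1)\times F(i_2p_2).
\]
The first map is invertible because $(\eta_1,\eta_2)$ is. The second map is invertible by the hypothesis that $F_{w,w}$ preserves binary products. The compatibility holds because the canonical comparison map into a product is determined by its two components, and each component is $F\eta_k$ composed with the projection. Applying \cref{thm:equational-description-prod} in $\sY$ then gives that $Fw$ is the product of $Fx$ and $Fy$ via $Fp_1, Fp_2$.

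The main obstacle is this bookkeeping step: checking that the unit of the transported adjunction, paired into the product, really agrees with $F$ applied to $(\eta_1,\eta_2)$ followed by the comparison map. I expect it to follow from the universal property of products in $\sY(Fw,Fw)$, which reduces the check to each factor separately. On each factor it is just the definition of the transported unit.
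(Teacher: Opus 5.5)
Your proposal is correct and follows the paper's own argument: the paper's proof consists of citing the equational characterizations of \cref{thm:equational-description-prod} and \cref{rem: equational-description-terminal} in both $\sX$ and $\sY$. You supply the bookkeeping it leaves implicit, namely that 2-functors preserve adjunctions and invertible counits, and that the paired unit is $F(\eta_1,\eta_2)$ followed by the comparison map, which is invertible because $F_{w,w}$ preserves products. One minor caveat: your reduction ``by induction'' to binary and nullary products assumes the intermediate products exist in $\sX$, which can fail even when an $n$-fold product exists, so it is cleaner to observe that \cref{thm:equational-description-prod} and its proof hold verbatim for $n$-fold spans and to apply that version directly.
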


\begin{proof}
    This follows directly from the characterization of cartesian products in $\sX$ and $\sY$ given in \cref{thm:equational-description-prod} together with the characterization of terminal objects given in \cref{rem: equational-description-terminal}.
\end{proof}

\begin{corollary}\label{cor:partial-to-mmor-preserves-prod}
    The functor $\partial_* \colon \diff \to \MMor$ preserves finite products.
\end{corollary}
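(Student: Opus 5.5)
We deduce this from \cref{cor:local-cart-global-cart-functor} applied to $F = \partial_* \colon \diff \to \MMor$, which is a strong functor of $2$-categories by \cref{thm:chain-rule-cherry}. Three things need checking: both $\diff$ and $\MMor$ satisfy the hypotheses of \cref{thm:equational-description-prod}, the local functors $\partial_*$ preserve finite cartesian products, and finite products exist in $\diff$.

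For $\diff$, the mapping categories $\Fun^{\ast,\omega}(\sC,\sD)$ are pointed by the constant functor at the zero object. They have finite products computed pointwise, since finite limits commute with filtered colimits in $\sD$, so pointwise products of finitary functors are finitary. Precomposition $F \mapsto F \circ G$ visibly preserves these pointwise products, so composition preserves finite products in the first variable.

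For $\MMor$, the mapping categories are $\BMod_{(\sP,\sO)}(\SSeq(\sA,\sB))$. These are stable, since limits of bimodules are computed in the stable category $\SSeq(\sA,\sB)$; hence they are pointed and have finite products. By \cref{prop: exactness-composition-mor}, the relative composition product preserves finite limits in the first variable, in particular finite products and the terminal object.

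Local preservation follows from \cref{prop: local-exactness-derivatives}: $\partial_*$ locally preserves finite limits, in particular finite products and terminal objects.

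It remains to know that $\diff$ has finite products, so that the corollary applies. The terminal object is the zero category $\ast$. For binary products, $\sC \times \sD$ is again presentable and pointed, and filtered colimits and finite limits are computed componentwise, so $\sC\times\sD$ is differentiable. The projections are reduced and finitary. We must check that $\sC \times \sD$ is the $2$-categorical product in $\diff$, i.e.
\[
\Fun^{\ast,\omega}(\sE,\sC\times\sD)\simeq \Fun^{\ast,\omega}(\sE,\sC)\times\Fun^{\ast,\omega}(\sE,\sD).
\]
This is immediate, since a functor into a product is reduced and finitary if and only if both components are.

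With these in hand, \cref{cor:local-cart-global-cart-functor} yields that $\partial_*$ preserves finite products. Alternatively, one can see it directly from the equational description: the right adjoints $i_1,i_2$ of the projections are the inclusions $x\mapsto(x,0)$ and $y\mapsto(0,y)$. The counits are invertible, and $\id \simeq (i_1p_1)\times(i_2p_2)$. All of this data is preserved by a strong $2$-functor that locally preserves finite products, and by \cref{rem: equational-description-terminal} terminal objects are preserved too, because $\End(\ast)\simeq\ast$ is sent to an object whose identity is terminal.

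The main obstacle I expect is not here but in \cref{thm:equational-description-prod} itself. Within this corollary the only subtle point is the hypothesis on composition in $\MMor$: we need preservation of finite products in the \emph{first} variable of the relative composition product, and this is exactly what \cref{prop: exactness-composition-mor} supplies.
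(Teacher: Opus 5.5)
Your route is the paper's: check the hypotheses of \cref{thm:equational-description-prod} for $\diff$ and $\MMor$, take local preservation of finite products from \cref{prop: local-exactness-derivatives} and first-variable exactness of composition from \cref{prop: exactness-composition-mor}, then apply \cref{cor:local-cart-global-cart-functor}. One justification is wrong, though.

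The bimodule categories $\BMod_{(\sP,\sO)}(\SSeq(\sA,\sB))$ are not stable, because the left action $\sP\circ -$ is not exact. For example, $\MMor((\ast,\unit_\ast),(\sA,\sO))\simeq\Alg_\sO(\sA)$ is typically not stable. That limits are computed in a stable category gives you finite products, but not pointedness. In fact pointedness fails in general. The initial bimodule is the free one on $0$, namely $\sP\circ 0\circ\sO$, and this is $\sP(0)$ concentrated in arity $0$. So, for instance, $\MMor((\ast,\unit_\ast),(\Sp,\mathbf{Com}^u))\simeq\CAlg(\Sp)$ has initial object $\Sph$ and terminal object $0$. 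Hence $\MMor$ as a whole does not satisfy the pointedness hypothesis of \cref{thm:equational-description-prod}. The paper's one-line ``it is clear'' passes over the same point.

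The repair is easy. If the target algebra $\sP$ is positive, then $\sP\circ 0\simeq 0$, so $\sP\circ 0\circ\sO\simeq 0$ for every source algebra $\sO$, and $\MMor(z,(\sB,\sP))$ is pointed.

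In the target $2$-category, \cref{cor:local-cart-global-cart-functor} only uses the ``if'' direction of \cref{thm:equational-description-prod}. That proof applies \cref{lem:recognizing-cartesian-products-in-Cat} to $\sY(z,F(w))$, so pointedness is only needed for the categories $\MMor(z,\partial_*(\sC\times\sD))$. These are pointed, since $\partial_*(\sC\times\sD)=(\Sp(\sC\times\sD),\partial_*\id_{\sC\times\sD})$ carries a positive algebra. For terminal objects, \cref{rem: equational-description-terminal} needs no pointedness: only that bimodule categories have a terminal object ($0$) and that $0\circ_\sP N\simeq 0$.

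With this correction, your argument, including your verification that $\diff$ has finite products, is complete.
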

\begin{proof}
    It is clear that both $\diff$ and $\MMor$ satisfy the assumptions from \cref{thm:equational-description-prod}, except perhaps the condition that composition of $1$-morphisms in $\MMor$ preserves finite cartesian products in the first variable, which follows from \cref{prop: exactness-composition-mor}.
    By \cref{prop: local-exactness-derivatives}, the derivatives functor locally preserves finite cartesian products. 
    The statement now follows from \cref{cor:local-cart-global-cart-functor}.
\end{proof}

We can also use the criterion of \cref{thm:equational-description-prod} to give an explicit description of products in $\MMor$.
First of all, if $\sA$ and $\sB$ are stable presentable categories, then their cartesian product $\sA \times \sB$ is also their product in $\pressymst$, since we have
\[
\SSeq(\sC, \sA \times \sB) \simeq \FunL(\Sym(\sC), \sA \times \sB) \simeq \SSeq(\sC, \sA) \times \SSeq(\sC, \sB).
\]
Since $\pressymst$ has all cartesian products, we obtain a functor of $2$-categories
\begin{equation} \tag{i} \label{eq: cartesian-product-functor-pressymst}
- \times - \colon \pressymst \times \pressymst \to \pressymst.
\end{equation}
This implies that given a pair of objects $(\sA, \sO)$ and $(\sB, \sP)$ in $\MMor$, we obtain a new object $(\sA \times \sB, \sO \bartimes \sP)$, where $\sO \bartimes \sP$ is the algebra obtained by applying the monoidal functor
\[
\SSeq(\sA, \sA) \times \SSeq(\sB, \sB) \to \SSeq(\sA \times \sB, \sA \times \sB)
\]
induced by \cref{eq: cartesian-product-functor-pressymst} to the pair of algebras $(\sO, \sP)$.
We then have:

\begin{proposition}
    The category $\MMor$ has all cartesian products.
    The product of $(\sA, \sO)$ and $(\sB, \sP)$ is given by $(\sA \times \sB, \sO \bartimes \sP)$.
\end{proposition}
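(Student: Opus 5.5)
We verify the criterion of \cref{thm:equational-description-prod} for the span
\[
(\sA, \sO) \xleftarrow{p_1} (\sA \times \sB, \sO \bartimes \sP) \xrightarrow{p_2} (\sB, \sP)
\]
in $\MMor$. The hypotheses of that theorem hold for $\MMor$, as noted in the proof of \cref{cor:partial-to-mmor-preserves-prod}. For the terminal object we use \cref{rem: equational-description-terminal}: the pair $(0, 0)$ is terminal because $\End(0) \simeq \ast$.

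To build $p_1$ and $i_1$, we start in $\pressymst$. There the projection $\pi_1 \colon \sA \times \sB \to \sA$ and the inclusion $\iota_1 = (\id, 0) \colon \sA \to \sA \times \sB$ are symmetric sequences concentrated in arity $1$. Since $\pressymst$ is additive on mapping categories, \cref{ex:cartesian-product-additive-cat} gives
\[
\pi_1 \iota_1 \simeq \unit_\sA, \qquad \pi_2\iota_1 \simeq 0, \qquad \iota_1\pi_1 \oplus \iota_2 \pi_2 \simeq \unit_{\sA \times \sB}.
\]
Moreover, $\iota_1$ is right adjoint to $\pi_1$ in $\pressymst$; this is a biproduct phenomenon and can be checked on mapping categories via the equivalence $\SSeq(\sC, \sA \times \sB) \simeq \SSeq(\sC,\sA)\times\SSeq(\sC,\sB)$.

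Next we pass to $\MMor$. We set $p_1 = \sO \circ \pi_1$, regarded as an $(\sO, \sO \bartimes \sP)$-bimodule, and $i_1 = \iota_1 \circ \sO$, regarded as an $(\sO\bartimes\sP, \sO)$-bimodule. By construction of $\sO \bartimes \sP$, the projection $\pi_1$ intertwines $\sO \bartimes \sP$ with $\sO$, so $\pi_1 \circ (\sO\bartimes\sP) \simeq \sO \circ \pi_1$. This identification supplies the right action on $p_1$, and the analogous statement for $\iota_1$ gives $i_1$ its bimodule structure. In other words, $p_1$ and $i_1$ are the images of $\pi_1$ and $\iota_1$ under a functor comparing algebra maps in $\pressymst$ with bimodules in $\MMor$; a map of algebras $\sO\bartimes\sP \to \sO$ covering $\pi_1$ induces bimodules in both directions. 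Since 2-functors preserve adjunctions, $i_1$ is right adjoint to $p_1$ provided the counit and unit can be transported. We will instead verify the adjunction directly using the relative composition products computed next.

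We then compute the relevant relative composition products. Since $p_1$ is free as a right module (after rewriting), we get
\[
p_1 \circ_{\sO\bartimes\sP} i_1 \simeq \sO \circ \pi_1 \iota_1 \circ \sO \text{ relative to } \sO \simeq \sO.
\]
The counit $\epsilon_1$ is invertible by the first relation above. For $i_1 \circ_\sO p_1$ we obtain the bimodule $\iota_1 \circ \sO \circ \pi_1$. The unit map assembles into
\[
\sO\bartimes\sP \to (\iota_1 \sO \pi_1) \times (\iota_2 \sP \pi_2).
\]
We check that this map is an equivalence on underlying symmetric sequences, using the formula for $\bartimes$. Concretely, $\sO\bartimes\sP$ applied to $(a,b)$ has first component depending only on $a$ through $\sO$, and second component depending only on $b$ through $\sP$. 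This is exactly the statement that the monoidal functor induced by \cref{eq: cartesian-product-functor-pressymst} sends $(\sO,\sP)$ to $\iota_1\sO\pi_1 \oplus \iota_2\sP\pi_2$. Since products in bimodules are computed underlying, condition \ref{item2:equational-description-prod} follows.

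The main obstacle is making the adjunction $p_1 \dashv i_1$ in $\MMor$ precise. This requires the triangle identities, not just the computation of the composites. I would first try to produce the adjunction formally: the map of algebras $\sO\bartimes\sP \to \sO$ over $\pi_1$ induces an induction/restriction adjunction in the Morita category. This is a general fact for Morita categories of the kind constructed in \cite{Blom2024StraighteningEveryFunctor}, namely that restriction bimodules are right adjoint to extension bimodules. Having this, the remaining checks reduce to the explicit arity-wise computations above.
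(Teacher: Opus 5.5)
Your overall plan of verifying the criterion of \cref{thm:equational-description-prod} matches the paper's. Your identifications of the underlying symmetric sequences $p_1\circ_{\sO\bartimes\sP} i_1\simeq\sO$ and $i_1\circ_{\sO}p_1\simeq\iota_1\sO\pi_1$, and of $\sO\bartimes\sP$ with $(\sO\pi_1,\sP\pi_2)$, are fine. The gap is exactly the step you flag: you never construct the adjunction $p_1\dashv i_1$ in $\MMor$, yet both conditions of the criterion are statements about its unit and counit.

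This is not a formality, for three reasons.
\begin{itemize}
\item The unit must be a map of $(\sO\bartimes\sP,\sO\bartimes\sP)$-bimodules out of $\sO\bartimes\sP$. That bimodule is not free, so such a map is homotopy-coherent structure not determined by the underlying map of symmetric sequences you write down. The triangle identities must then also be checked.
\item The ``general fact'' you invoke (extension along an algebra map is left adjoint to restriction) concerns an algebra map over an identity $1$-morphism. Here the monad morphism lies over the nontrivial $\pi_1$, so the right adjoint must also incorporate $\pi_1\dashv\iota_1$ in $\pressymst$ coherently. No argument or reference is given for this combination, and the ``functor comparing algebra maps with bimodules'' to which you would apply preservation of adjunctions is never constructed.
\item The ``analogous statement for $\iota_1$'' is false in general. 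We have $(\sO\bartimes\sP)\circ\iota_1\simeq(\sO,\sP\circ 0)$, and $\sP\circ 0$ has arity-zero term $\sP(0)$. This need not vanish, because objects of $\MMor$ are not required to be positive. You only get a non-invertible mate $(\sO\bartimes\sP)\iota_1\Rightarrow\iota_1\sO$. That still defines the left action on $i_1$, but it shows the adjunction does not come for free from a strong intertwining.
\end{itemize}

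The missing idea is how the paper avoids building this adjunction by hand. The $2$-functor $-\times-\colon\pressymst\times\pressymst\to\pressymst$ locally preserves geometric realizations. The Morita construction is natural in such $2$-functors and preserves products of $2$-categories. Hence one obtains a $2$-functor $\MMor\times\MMor\to\MMor$ sending $((\sA,\sO),(\sB,\sP))$ to $(\sA\times\sB,\sO\bartimes\sP)$.

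The only adjunction that must then be built directly is $p\dashv i$ between $(\sB,\sP)$ and the terminal object $\ast$. Here $\MMor((\sB,\sP),\ast)\simeq\ast$, and $i$ is the zero object of $\sB$ viewed as a left $\sP$-module. Unit, counit and triangle identities are all forced, since every relevant target is terminal.

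Applying the $2$-functor to $(\id,p)\dashv(\id,i)$ yields $p_\sA\dashv i_\sA$ with invertible counit, because $2$-functors preserve adjunctions. The same gives $p_\sB\dashv i_\sB$, and condition (2) is then a short check.

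If you want to keep your direct approach, you need an honest construction of $p_1\dashv i_1$. For instance, you could use the $2$-categorical Yoneda lemma, producing right adjoints to $p_{1*}$ on every $\MMor(z,-)$ that satisfy Beck--Chevalley in $z$.
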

\begin{proof}
    The functor \cref{eq: cartesian-product-functor-pressymst} is easily seen to preserve geometric realizations on mapping categories.
    Now, the construction that sends a $2$-category to its Morita category is natural in $2$-functors that locally preserve geometric realizations, as follows from the description given in \cite[Part II]{Blom2024StraighteningEveryFunctor}.
    It is moreover a direct consequence of the universal property of Morita categories that the construction preserves products of $2$-categories.
    Applying the Morita construction to the functor \cref{eq: cartesian-product-functor-pressymst} therefore gives rise to a functor
    \begin{equation} \tag{ii} \label{eq: cartesian-product-functor-on-mmor}
    - \times - \colon \MMor \times \MMor \to \MMor,
    \end{equation}
    that sends a pair of objects $(\sA, \sO)$ and $(\sB, \sP)$ in $\MMor$ to the object $(\sA \times \sB, \sO \bartimes \sP)$.
    
    We now prove that $(\sA \times \sB, \sO \bartimes \sP)$
    is the product of $(\sA, \sO)$ and $(\sB, \sP)$ in $\MMor$ using the criterion from \cref{thm:equational-description-prod}.
    First observe that $\MMor((\sB, \sP), \ast) \simeq \ast$, so that there is a unique $1$-morphism $p \colon (\sB, \sP) \to \ast$, which is easily seen to have a right adjoint $i \colon \ast \to (\sB, \sP)$ corresponding to the trivial left $\sP$-module on the zero object in $\SSeq(\ast, \sB) \simeq \sB$.
    It is evident that the counit of this adjunction is an equivalence.
    This gives rise to an adjunction $(\id_\sA, p) \dashv (\id_\sA, i)$ in $\MMor \times \MMor$.
    Applying the functor from \cref{eq: cartesian-product-functor-on-mmor} to this adjunction, we obtain an adjunction
    \[
    p_\sA \colon (\sA \times \sB, \sO \bartimes \sP) \rightleftarrows (\sA, \sO) \colon i_\sA
    \]
    in $\MMor$ for which the counit is an equivalence.
    In the same way, we obtain an adjunction $p_\sB \dashv i_\sB$ that projects onto $(\sB, \sP)$.
    The morphisms $p_\sA$ and $p_\sB$ therefore satisfy condition (1) from \cref{thm:equational-description-prod} and (2) is easily verified.
    This proves the claim.
\end{proof}

\begin{example} \label{ex: inclusion-pressymst-mor-preserves-prod}
    The previous proposition also implies that the inclusion $\pressymst \hookrightarrow \MMor$ preserves products.
    Indeed, if $\sA$ and $\sB$ are stable presentable categories, then their images in $\MMor$ are given by the pairs $(\sA, \unit_\sA)$ and $(\sB, \unit_\sB)$, and the algebra $\unit_\sA \bartimes \unit_\sB$ is equivalent to $\unit_{\sA \times \sB}$, since the functor from \cref{eq: cartesian-product-functor-pressymst} preserves identity morphisms.
    Therefore, the cartesian product of $(\sA, \unit_\sA)$ and $(\sB, \unit_\sB)$ is $(\sA \times \sB, \unit_{\sA \times \sB})$.
    This also follows immediately from \cref{cor:local-cart-global-cart-functor}.
\end{example}

\begin{example} \label{ex: inclusion-pressymlsp-pressymst-preserves-prod}
    \cref{cor:local-cart-global-cart-functor} also implies that the inclusion $\pressym_\LSp \hookrightarrow \pressymst$ described in \cref{ssec:localizations-of-spectra} preserves products.
\end{example}

We now turn to the proof of \cref{thm:equational-description-prod}.
We will first prove a version of the result for $\Cat$ and then deduce the theorem by a Yoneda-type argument.

\begin{lemma}\label{lem:recognizing-cartesian-products-in-Cat}
    Let $\cC$ be a pointed category that admits finite cartesian products and let
    \[\cD \xleftarrow{p_1} \cC \xrightarrow{p_2} \cE\]
    be a span of categories.
    Then $p_1$ and $p_2$ exhibit $\cC$ as the cartesian product of $\cD$ and $\cE$ if and only if
    \begin{enumerate}[(1)]
        \item $p_1$ and $p_2$ admit fully faithful right adjoints $i_1$ and $i_2$, such that
        \item for any $c$ in $\cC$, the map $(\eta_1, \eta_2) \colon c \to i_1p_1(c) \times i_2p_2(c)$ induced by the units of $p_1 \dashv i_1$ and $p_2 \dashv i_2$ is an equivalence.
    \end{enumerate}
\end{lemma}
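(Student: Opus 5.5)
We prove the two directions separately; the forward direction is a direct computation in $\cD \times \cE$, and the real work is in the converse.

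\emph{Forward direction.} Suppose $\cC \simeq \cD \times \cE$ with $p_1,p_2$ the projections. Since $\cC$ is pointed, so are $\cD$ and $\cE$: the zero object of $\cC$ projects to objects that are both initial and terminal. Define $i_1(d) = (d, 0)$ and $i_2(e) = (0,e)$. Then
\[
\Map((d',e'),(d,0)) \simeq \Map(d',d) \times \Map(e',0) \simeq \Map(d',d),
\]
so $p_1 \dashv i_1$. The counit $p_1 i_1 \Rightarrow \id$ is the identity, so $i_1$ is fully faithful; the same holds for $i_2$. The unit at $(d,e)$ is $(d,e) \to (d,0)$ followed by $(d,e)\to(0,e)$. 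Since products in $\cD\times\cE$ are computed componentwise, the induced map $(d,e) \to (d,0)\times(0,e) = (d\times 0, 0\times e) \simeq (d,e)$ is the identity. This gives (1) and (2).

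\emph{Converse.} Assume (1) and (2). We show that $(p_1,p_2)\colon \cC \to \cD\times\cE$ is an equivalence by exhibiting an inverse
\[
G(d,e) = i_1(d)\times i_2(e).
\]

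First we compute $(p_1,p_2)\circ G$. Since $p_1$ is a left adjoint it need not preserve products, so we argue with mapping spaces. For $d'\in\cD$, use $p_1 i_1 \simeq \id$ and full faithfulness of $i_1$ to get
\[
\Map(i_1 d', i_1 d \times i_2 e) \simeq \Map(d',d)\times \Map(i_1 d', i_2 e).
\]
The second factor is $\Map(p_2 i_1 d', e)$. Now $p_2 i_1 d' \simeq 0$: indeed $\Map(p_2 i_1 d', e) \simeq \Map(i_1 d', i_2 e)$, and the adjunction $p_1\dashv i_1$ relates this to maps out of $i_1 d'$, which factor through $p_1$. More cleanly, take $c = i_1 d'$ in (2). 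We get $i_1 d' \simeq i_1 p_1 i_1 d' \times i_2 p_2 i_1 d' \simeq i_1 d' \times i_2 p_2 i_1 d'$, where the first projection is the identity. This forces $\Map(x, i_2 p_2 i_1 d')\simeq *$ for all $x$, so $i_2 p_2 i_1 d'$ is terminal, i.e.\ zero. By full faithfulness of $i_2$, $p_2 i_1 d' \simeq 0$.

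Hence $\Map(i_1d', G(d,e)) \simeq \Map(d',d)$, and $i_1 d'$ ranges over enough objects to conclude via Yoneda. Concretely, $\Map(d', p_1 G(d,e))$ should be computed as maps into $p_1$, which we cannot do directly, since $p_1$ has a right adjoint, not a left one. Instead we show $\eta_1$ at $G(d,e)$ gives $i_1 p_1 G(d,e)\simeq i_1 d$. To do this, write $p_1 G(d,e)$ using that $p_1$ preserves the product: this holds because $p_1 i_2 \simeq 0$ by the symmetric argument, and by (2) applied to $G(d,e)$ together with a direct check that the units are the projections. I expect this identification $p_1(i_1 d\times i_2 e)\simeq d$ to be the main obstacle. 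I would try to show it by proving that $i_1 d\times i_2 e$ is, by the mapping space computation above, the unique object whose $\eta$-components are $i_1d$ and $i_2e$.

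For the other composite, condition (2) directly gives $G(p_1c,p_2c)\simeq c$ naturally in $c$. Finally, full faithfulness of $(p_1,p_2)$ follows from
\[
\Map(c,c') \simeq \Map(c, i_1p_1c')\times\Map(c,i_2p_2c') \simeq \Map(p_1c,p_1c')\times\Map(p_2c,p_2c'),
\]
which uses (2) and the two adjunctions. Essential surjectivity then follows from the computation of $(p_1,p_2)\circ G$.
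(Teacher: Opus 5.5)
The forward direction is fine. So is your proof that $(p_1,p_2)$ is fully faithful; this is the same content as the paper's observation that the unit $(\eta_1,\eta_2)$ is invertible. Your argument that $i_2p_2i_1d'$ is a zero object also works, though it silently uses pointedness: the projection $\Map(z,A)\times\Map(z,Y)\to\Map(z,A)$ being an equivalence forces $\Map(z,Y)\simeq *$ only because $\Map(z,A)$ is nonempty.

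The gap is essential surjectivity, which your last sentence takes for granted. You never show $p_1(i_1d\times i_2e)\simeq d$ and $p_2(i_1d\times i_2e)\simeq e$, and you flag this yourself. The route you sketch is circular. Saying that $\eta_1$ at $i_1d\times i_2e$ ``is the projection'' onto $i_1d$ just restates the claim. Nothing you have established says that $p_1$, which is only known to be a left adjoint, preserves this product.

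The missing idea is that under (2), $i_1$ is also \emph{left} adjoint to $p_1$, contrary to your remark that maps into $p_1X$ cannot be computed. For any $X\in\cC$, condition (2) at $X$ gives
\[
\Map_\cC(i_1d',X)\simeq\Map_\cC(i_1d',i_1p_1X)\times\Map_\cC(i_1d',i_2p_2X)\simeq\Map_\cD(d',p_1X)\times\Map_\cE(p_2i_1d',p_2X).
\]
The last factor is $\Map_\cC(i_2p_2i_1d',i_2p_2X)\simeq *$ by full faithfulness of $i_2$, since $i_2p_2i_1d'$ is the zero object. So $\Map_\cC(i_1d',X)\simeq\Map_\cD(d',p_1X)$ naturally in $d'$. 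Taking $X=i_1d\times i_2e$ and comparing with your own computation $\Map_\cC(i_1d',X)\simeq\Map_\cD(d',d)$ gives $p_1X\simeq d$ by Yoneda, and symmetrically $p_2X\simeq e$.

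The paper sidesteps this computation. Your identity $\Map_\cC(c,i_1d\times i_2e)\simeq\Map_\cD(p_1c,d)\times\Map_\cE(p_2c,e)$ already exhibits $G=i_1\bartimes i_2$ as right adjoint to $(p_1,p_2)$, with unit $(\eta_1,\eta_2)$. So $G\epsilon$ is invertible by the triangle identities. Moreover $G$ is conservative: it is the fully faithful $i_1\times i_2$ followed by $\times\colon\cC\times\cC\to\cC$, which is conservative in a pointed category because each factor is a retract of the product via $(\id,0)$. Hence the counit is invertible.
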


\begin{proof}
    For the ``only if'' direction, note that the projections $\cD \times \cE \to \cD$ and $\cD \times \cE \to \cE$ admit right adjoints given by $d \mapsto (d,*)$ and $e \mapsto (*,e)$, which clearly satisfy these conditions.
    For the converse, let us write $i_1 \bartimes i_2$ for the composite $\cD \times \cE \xrightarrow{i_1 \times i_2} \cC \times \cC \xrightarrow{\times} \cC$.
    Observe that the first functor is fully faithful by assumption.
    We leave it as an exercise to the reader to verify that for any pointed category $\cC$ with finite cartesian products, the functor $\times \colon \cC \times \cC \to \cC$ is conservative.
    We conclude that $i_1 \bartimes i_2$ is conservative.
    Now note that we have natural equivalences
    \begin{align*}
    &\Map_{\cD \times \cE}((p_1c,p_2c),(d,e)) \simeq \Map_{\cD}(p_1c,d) \times \Map_\cE(p_2c,e) \\
    &\simeq \Map_\cC(c,i_1d) \times \Map_\cC(c,i_2e) \simeq \Map_\cC(c,i_1d \times i_2e),
    \end{align*}
    hence $i_1 \bartimes i_2$ is right adjoint to $(p_1,p_2) \colon \cC \to \cD \times \cE$.
    Chasing $\id_{(p_1c,p_2c)}$ through these equivalences, it follows that the unit of this adjunction is $(\eta_1,\eta_2)$, which is invertible by assumption.
    Since the right adjoint $i_1 \bartimes i_2$ is conservative, it follows that $(p_1,p_2)$ is an equivalence. 
\end{proof}

\begin{remark}
    The two conditions of \cref{lem:recognizing-cartesian-products-in-Cat} correspond to items \ref{item1:cart-additive} and \ref{item3:cart-additive} of \cref{ex:cartesian-product-additive-cat}.
    In particular, the two conditions of \cref{lem:recognizing-cartesian-products-in-Cat} already imply that $p_2 i_1 \simeq *$ and $p_1 i_2 \simeq *$.
\end{remark}

\begin{remark}
    If one only requires that $\cC$ admits finite cartesian product but not that $\cC$ is pointed, then $\times \colon \cC \times \cC \to \cC$ need not be conservative.
    In this case \cref{lem:recognizing-cartesian-products-in-Cat} still holds if one adds the following assumption:
    \begin{enumerate}[(3)]
        \item for any $d$ in $\cD$ and $e$ in $\cE$, the projections $p_1(\pi_1) \colon p_1(i_1d \times i_2e) \to p_1(i_1d)$ and $p_2(\pi_2) \colon p_2(i_1d \times i_2e) \to p_2(i_2e)$ are equivalences.
    \end{enumerate}
    This condition can be seen as an analogue of item \ref{item2:cart-additive} from \cref{ex:cartesian-product-additive-cat}.
\end{remark}

Recall that in a 2-category $\sX$, given a 1-morphism $f \colon x \to y$ and a 2-morphism
\[\begin{tikzcd}
    y & z
    \arrow[""{name=0, anchor=center, inner sep=0}, "g", curve={height=-12pt}, from=1-1, to=1-2]
    \arrow[""{name=1, anchor=center, inner sep=0}, "h"', curve={height=12pt}, from=1-1, to=1-2]
    \arrow["\alpha"', shorten >=3pt, shorten <=3pt, Rightarrow, from=0, to=1]
\end{tikzcd}\]
their \emph{whiskering} is a 2-morphism $\alpha \star f \colon gf \Rightarrow hf$ which can concretely be constructed as the image of the pair $(\alpha,\id_f)$ under the composition functor
\[\sX(y,z) \times \sX(x,y) \to \sX(x,z).\]

\begin{proof}[Proof of \cref{thm:equational-description-prod}]
    To distinguish cartesian products in $\sX$ and in its mapping categories $\sX(-,-)$, we will denote the latter by $\boxtimes$.
    For the ``if'' direction, let $z$ be any object in $\sX$.
    Postcomposition yields adjunctions
    \[\begin{tikzcd}
        {\sX(z,x)} & {\sX(z,w)} & {\sX(z,y)}
        \arrow[""{name=0, anchor=center, inner sep=0}, "{i_{1*}}"', bend right=12, from=1-1, to=1-2,hook]
        \arrow[""{name=1, anchor=center, inner sep=0}, "{p_{1*}}"', bend right=12, from=1-2, to=1-1]
        \arrow[""{name=2, anchor=center, inner sep=0}, "{p_{2*}}", bend left=12, from=1-2, to=1-3]
        \arrow[""{name=3, anchor=center, inner sep=0}, "{i_{2*}}", bend left=12, from=1-3, to=1-2,hook']
        \arrow["\dashv"{anchor=center, rotate=-90}, draw=none, from=1, to=0]
        \arrow["\dashv"{anchor=center, rotate=-90}, draw=none, from=2, to=3]
    \end{tikzcd}\]
    The units and counits of these adjunctions are obtained by whiskering with the units and counits of $p_1 \dashv i_1$ and $p_2 \dashv i_2$. In particular, the counits are invertible and hence $i_{1*}$ and $i_{2*}$ are fully faithful.
    By \cref{lem:recognizing-cartesian-products-in-Cat}, it follows that $(p_{1*},p_{2*}) \colon \sX(z,w) \to \sX(z,x) \times \sX(z,y)$ is an equivalence if for any $f \colon z \to w$, the 2-morphism
    \[(\eta_1 \star f, \eta_2 \star f) \colon f \Rightarrow i_1p_1f \boxtimes i_2p_2f\]
    is invertible.
    Note that this 2-morphism factors as the composite
    \[\begin{tikzcd}[column sep=large]
        f & {(i_1p_1 \boxtimes i_2p_2) \circ f} & {i_1p_1 f \boxtimes i_2p_2 f}
        \arrow["{(\eta_1, \eta_2) \star f}", "\sim"', Rightarrow, from=1-1, to=1-2]
        \arrow["\sim"',Rightarrow, from=1-2, to=1-3]
    \end{tikzcd}\]
    where the left arrow is an equivalence since $(\eta_1,\eta_2)$ is, and the right arrow is an equivalence since $- \circ f$ preserves cartesian products.
    It follows that $p_1$ and $p_2$ exhibit $w$ as the cartesian product $x \times y$.

    For the converse, let $w \simeq x \times y$ with projections $p_1$ and $p_2$ be given.
    We now construct the right adjoint $i_1$, the right adjoint $i_2$ follows similarly.
    For any $z$ in $\sX$, the projection $p_{1*} \colon \sX(z,x \times y) \to \sX(z,x)$ can be identified with the projection $\sX(z,x) \times \sX(z,y) \to \sX(z,x)$.
    As in \cref{lem:recognizing-cartesian-products-in-Cat}, this admits a right adjoint $R_{1,z}$ given by $f \mapsto (f,*)$.
    This right adjoint is natural since $- \circ g$ preserves terminal objects.
    A straightforward diagram chase\footnote{Alternatively, one could use the 2-categorical Yoneda embedding and the fact that right adjoints in $\Fun(\sX^\op,\Cat)$ are pointwise right adjoints satisfying the Beck--Chevalley condition.} shows that $i_1 \coloneqq R_{1,x}(\id_x)$ is right adjoint to $p_1$ in $\sX$, that $R_{1,z}(f) = i_1 f$ for any $f \colon z \to x$, and that the counit $\epsilon_1 \colon p_1i_1 \Rightarrow \id_x$ is invertible.
    To conclude the proof, observe that the unit $\eta_1$ of this adjunction is given by the unit $\id_w \Rightarrow R_{1,w}(p_{1*}(\id_w)) \simeq i_1p_1$ of $R_{1,w} \dashv p_{1*}$, and similarly for $\eta_2$.
    Then the 2-morphism $(\eta_1,\eta_2) \colon \id_w \Rightarrow (i_1p_1) \boxtimes (i_2p_2)$ is invertible since under the equivalence $\sX(w,w) \simeq \sX(w,x) \times \sX(w,y)$, it gets identified with the invertible 2-morphism $(p_1,p_2) \Rightarrow (p_1 \boxtimes *, * \boxtimes p_2)$.
\end{proof}

\subsection{Functor categories} \label{ssec: cotensors-mmor}

Let $\sC$ be a differentiable category and $K$ a small category.
Then $\Fun(K, \sC)$ is again differentiable.
The purpose of this section is to compute the derivatives of the identity functor in $\Fun(K, \sC)$.
The result can conveniently be phrased in terms of cotensors with small categories.

\begin{definition}
    Let $\sX$ be a $2$-category and suppose we are given $y \in \sX$ and a small category $K$. 
    We say $y$ admits a cotensor with $K$ if there exists an object $y^K$ together with a map $K \to \sX(y^K, y)$ such that for every $x \in \sX$ the composite
    \[
    \sX(x, y^K) \to \Fun(\sX(y^K, y), \sX(x, y)) \to \Fun(K, \sX(x, y))
    \]
    is an equivalence of categories.
    We say that $\sX$ admits cotensors with small categories if for every $y \in \sX$ and small category $K$, the object $y$ admits a cotensor with $K$.
\end{definition}

\begin{remark}
    By the 2-categorical Yoneda lemma, we could have equivalently stated this definition as follows:
    $y \in \sX$ admits a cotensor with $K$ if there exists an object $y^K \in \sX$ and a natural equivalence
    \[
    \sX(-, y^K) \simeq \Fun(K, \sX(-, y)),
    \]
    where both source and target are functors of $2$-categories of the form $\sX \to \Cat$.
\end{remark}

\begin{example}
    The $2$-category $\diff$ admits cotensors with small categories.
    Given a differentiable category $\sC$ and a small category $K$, the cotensor $\sC^K$ is given by the functor category $\Fun(K, \sC)$.
    This is witnessed by the functor $\ev \colon K \to \Fun(\sC^K, \sC)$ that sends $k \in K$ to the evaluation functor $\ev_k \colon \sC^K \to \sC$.
\end{example}

\begin{definition}
    Let $F \colon \sX \to \sY$ be a functor of $2$-categories and suppose that $\sX$ admits cotensors with small categories.
    Then we say that $F$ \emph{preserves cotensors} if for every $x \in \sX$ and small category $K$, the composite
    \[
    K \to \sX(x^K, x) \xrightarrow{F} \sY(F(x^K), F(x))
    \]
    exhibits $F(x^K)$ as a cotensor of $F(x)$ with $K$.
\end{definition}

\begin{remark}
    If $F \colon \sX \to \sY$ preserves cotensors, then for every $x \in \sX$ and small category $K$, the object $F(x)$ admits a cotensor with $K$ and $F(x^K) \simeq F(x)^K$.
\end{remark}

The main result of this section is:

\begin{theorem} \label{thm: derivatives-preserves-cotensors}
    The functor $\partial_* \colon \diff \to \MMor$ preserves cotensors with small categories.
\end{theorem}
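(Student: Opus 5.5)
The plan is to proceed in two stages. First, describe cotensors in $\MMor$ explicitly. Second, check that $\partial_*$ carries the evaluation functors $\ev_k \colon \sC^K \to \sC$ to a cotensor cone. The identification of the operad $\partial_*\id_{\sC^K}$ in the second stage is the step I expect to be hardest.

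\emph{Cotensors in $\pressymst$ and $\MMor$.} For a stable presentable $\sA$, the category $\Fun(K,\sA)$ is the cotensor of $\sA$ with $K$ in $\pressymst$. This follows from
\[
\SSeq(\sB, \Fun(K,\sA)) \simeq \FunL(\Sym\sB, \Fun(K,\sA)) \simeq \Fun(K, \SSeq(\sB,\sA)),
\]
naturally in $\sB$ and compatibly with the composition product. Here the cotensor cone is $k \mapsto \ev_k \in \SSeq(\sA^K,\sA)$, concentrated in arity $1$. The assignment $\sA \mapsto \sA^K$ is therefore a $2$-functor $\pressymst \to \pressymst$, and it locally preserves geometric realizations. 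As in the proof that $\MMor$ has products, it induces a $2$-functor on Morita categories. Given $(\sA,\sO)$, this produces an object $(\sA^K, \sO^K)$, where $\sO^K$ is the image of $\sO$ under the monoidal functor $\SSeq(\sA,\sA) \to \SSeq(\sA^K,\sA^K)$. One then checks the following two facts.
\begin{itemize}
\item A left $\sO^K$-module in $\SSeq(\sB,\sA^K) \simeq \Fun(K,\SSeq(\sB,\sA))$ is the same as a $K$-shaped diagram of left $\sO$-modules, because $\sO^K$ acts pointwise.
\item Consequently, $\BMod_{(\sO^K,\sQ)}(\SSeq(\sB,\sA^K)) \simeq \Fun(K, \BMod_{(\sO,\sQ)}(\SSeq(\sB,\sA)))$.
\end{itemize}
Under this equivalence, the functor is given by relative composition with the bimodules $E_k$, the images of $\ev_k$, which are free as right $\sO^K$-modules. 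Hence $(\sA^K,\sO^K)$ is the cotensor of $(\sA,\sO)$ with $K$.

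\emph{Derivatives of $\sC^K$.} We have $\Sp(\sC^K) \simeq \Fun(K,\Sp\sC) = (\Sp\sC)^K$, and under this the stabilization of $\ev_k$ is $\ev_k$. Each $\ev_k$ preserves all limits and colimits. Hence, by \cref{prop: derivatives-free-left-right-module}, $\partial_*\ev_k$ has two descriptions: it is the free right $\partial_*\id_{\sC^K}$-module on $\ev_k$, and it is the free left $\partial_*\id_\sC$-module on $\ev_k$. The goal is an equivalence of operads $\partial_*\id_{\sC^K} \simeq (\partial_*\id_\sC)^K$ under which $\partial_*\ev_k$ corresponds to $E_k$.

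The main obstacle is producing this as an equivalence of algebras, not just of symmetric sequences. I would use the chain rule: the family of bimodules $\partial_*\ev_k$ yields a comparison $1$-morphism $(\Sp\sC^K, \partial_*\id_{\sC^K}) \to (\Sp\sC, \partial_*\id_\sC)^K$ in $\MMor$, via the cotensor property just established. Its underlying bimodule is $\{\partial_*\ev_k\}_k$, viewed as a $K$-diagram. I would show it is an equivalence in $\MMor$ by exhibiting an inverse. The natural candidate comes from the adjunction data: each $\ev_k$ has left and right adjoints, given by Kan extension along $k$, and these are reduced and finitary. Applying $\partial_*$ to the identity $\id_{\sC^K} \simeq \lim_{k}(\text{right Kan ext}_k \circ \ev_k)$ (an end over $K$) should give the inverse. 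Establishing this would use that $\partial_*$ locally preserves colimits and finite limits (\cref{prop: local-exactness-derivatives}). Since $\sC^K$ is pointwise, I would first try to reduce to an end expressible by finite limits and colimits, for example by writing $\id$ as a colimit of left Kan extensions $\colim \mathrm{Lan}_k\ev_k$, which uses only colimits. A Morita equivalence whose source and target algebras are both strongly positive with equivalent arity-$1$ parts then identifies the operads.

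\emph{Concluding.} With $\partial_*(\sC^K) \simeq \partial_*(\sC)^K$ and $\partial_*\ev_k \simeq E_k$, the composite $K \to \diff(\sC^K,\sC) \to \MMor(\partial_*\sC^K,\partial_*\sC)$ is the cotensor cone. This proves the theorem.
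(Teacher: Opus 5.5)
There is a genuine gap in your second stage, which carries all the weight. Your first stage is close to the paper's argument, but run for the model object $(\sA^K,\sO^K)$ instead of for $(\Sp(\sC^K),\partial_*\id_{\sC^K})$.

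You never actually show that the comparison $1$-morphism $\Phi\colon(\Sp(\sC^K),\partial_*\id_{\sC^K})\to(\Sp\sC,\partial_*\id_\sC)^K$ is invertible.

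\begin{itemize}
\item \emph{The first candidate inverse is unavailable.} The right adjoint $\mathrm{Ran}_k$ of $\ev_k$, given by $x\mapsto(j\mapsto x^{\Map_K(j,k)})$, is built from limits over arbitrary small spaces. So it is not finitary in general and is not a morphism of $\diff$. Moreover, the end expressing $\id_{\sC^K}$ is an infinite limit, while \cref{prop: local-exactness-derivatives} only gives local preservation of finite limits.
\item \emph{The coend fallback is incomplete.} The formula $\id_{\sC^K}\simeq\int^k\mathrm{Lan}_k\ev_k$ does give a candidate $\Psi=\int^k\partial_*\mathrm{Lan}_k\circ E_k$. You get $\Psi\circ\Phi\simeq\id$ from colimits in the first variable and the chain rule. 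But $\Phi\circ\Psi\simeq\id$ requires commuting $\partial_*\ev_j\circ_{\partial_*\id_{\sC^K}}(-)$ past a coend, which is not a sifted colimit. \cref{prop: exactness-composition-mor} only gives sifted colimits in the second variable, and you do not address this.
\item \emph{The final step is false as stated.} Morita equivalent strongly positive operads need not be equivalent. Conjugating by the invertible arity-one $\Sigma\in\SSeq(\Sp,\Sp)$ makes $\Sigma\circ\sO$ an invertible $(\Sigma\sO\Sigma^{-1},\sO)$-bimodule. Here $\Sigma\sO\Sigma^{-1}$ is the operadic suspension of $\sO$, which is not equivalent to $\sO$ for $\sO=\mathbf{Com}$. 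The claim only holds if you also require the bimodule to be the identity in arity one.
\item \emph{That step is also unnecessary.} The theorem does not need an equivalence of operads; an equivalence in the completed $\MMor$ suffices.
\end{itemize}

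The missing idea is that the two-sided freeness of $\partial_*\ev_k$, which you already observe, makes the comparison with $(\sA^K,\sO^K)$ unnecessary. It lets you run your Stage 1 argument directly on $(\Sp(\sC^K),\partial_*\id_{\sC^K})$, without knowing that its left action is pointwise. This is the paper's proof.

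Fix $(\sA,\sO)\in\MMor$ and consider the functor $M\mapsto(k\mapsto\partial_*\ev_k\circ_{\partial_*\id_{\sC^K}}M)$.
\begin{itemize}
\item By right freeness, its underlying functor is $M\mapsto\partial_1\ev\circ M$, which lies over the equivalence $\SSeq(\sA,(\Sp\sC)^K)\simeq\Fun(K,\SSeq(\sA,\Sp\sC))$.
\item By left freeness, it sends the free bimodule on $X$ to the free bimodule on $\partial_1\ev\circ X$.
\end{itemize}
Both forgetful functors are monadic, so it is an equivalence by \cite[Corollary 4.7.3.16]{HA}.

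The same freeness is also what would repair your computation of $\Phi\circ\Psi$. It shows that $\partial_*\ev_j\circ_{\partial_*\id_{\sC^K}}(-)$ preserves all colimits.
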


Before giving the proof, we record a number of consequences.

\begin{corollary} \label{cor: goodwillie-transform-preserves-cotensors}
The Goodwillie transform $\maAlg \colon \diff \to \diff$ preserves cotensors with small categories.    
\end{corollary}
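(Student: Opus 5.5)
Since $\maAlg = \reAlg \circ \partial_*$ and \cref{thm: derivatives-preserves-cotensors} says that $\partial_*$ preserves cotensors, the plan is to show that $\reAlg \colon \MMor \to \Cat$ preserves cotensors, and then check that the resulting cotensor in $\Cat$ is also the cotensor in $\diff$. The first step is formal. $\reAlg$ is the functor corepresented by $(\ast, \unit_\ast)$, and corepresentable functors of $2$-categories preserve all $2$-categorical limits, cotensors included. Concretely, if $(\sA,\sO)^K$ is a cotensor in $\MMor$ with structure map $K \to \MMor((\sA,\sO)^K, (\sA,\sO))$, then by definition the composite
\[
\reAlg((\sA,\sO)^K) = \MMor((\ast,\unit_\ast),(\sA,\sO)^K) \to \Fun(K, \MMor((\ast,\unit_\ast),(\sA,\sO))) = \Fun(K, \reAlg(\sA,\sO))
\]
is an equivalence. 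This composite is induced by applying $\reAlg$ to the maps $K \to \MMor((\sA,\sO)^K,(\sA,\sO))$. Hence $\reAlg$ sends the cotensor to $\Fun(K, \reAlg(\sA,\sO))$, with the structure map $K \to \Fun(\reAlg((\sA,\sO)^K), \reAlg(\sA,\sO))$ given by composing with the $K$-indexed family of $1$-morphisms.

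Applying this to $(\sA,\sO) = \partial_*\sC$, and using $\partial_*(\sC^K) \simeq (\partial_*\sC)^K$ compatibly with the evaluation maps, we get that the map
\[
\maAlg(\Fun(K,\sC)) \to \Fun(K, \maAlg(\sC)),
\]
whose components are $\maAlg(\ev_k)$, is an equivalence of categories.

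It remains to see that this exhibits a cotensor in $\diff$ and not only in $\Cat$. By \cref{prop: alg-O-differentiable}, $\maAlg(\sC)$ is differentiable, and the cotensor of a differentiable category $\sD$ with $K$ in $\diff$ is $\Fun(K,\sD)$ with the evaluation functors. So we must show that for every differentiable $\sE$, the map
\[
\Fun^{\ast,\omega}(\sE, \maAlg(\sC^K)) \to \Fun(K, \Fun^{\ast,\omega}(\sE, \maAlg\sC))
\]
is an equivalence. The equivalence $\maAlg(\sC^K) \simeq \Fun(K,\maAlg\sC)$ identifies $\maAlg(\sC^K)$ with the functor category, and under this identification the structure maps $\maAlg(\ev_k)$ become the evaluations $\ev_k$. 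Once that identification is in place, the claim reduces to the cotensor property of functor categories in $\diff$: reduced finitary functors into $\Fun(K,\sD)$ correspond to $K$-indexed families of reduced finitary functors into $\sD$, because limits and colimits in $\Fun(K,\sD)$ are computed pointwise.

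The main obstacle is the naturality bookkeeping, namely that the equivalence produced by corepresentability is compatible with the structure maps. The first thing to check is that the $K$-indexed family $\reAlg(\partial_*\ev_k) = \maAlg(\ev_k)$ is precisely the family of evaluations under the identification. This should follow directly from how the comparison map is defined, since it is literally postcomposition with $\maAlg(\ev_k)$.
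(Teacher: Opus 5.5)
Your proposal is correct and follows the same route as the paper. You write $\maAlg = \reAlg \circ \partial_*$, use that $\partial_*$ preserves cotensors, and use that the corepresentable functor $\reAlg$ preserves them as well. Your extra check that the resulting cotensor in $\Cat$ is also a cotensor in $\diff$ is left implicit in the paper, and your argument for it is sound. That check rests on $\maAlg(\sC^K) \simeq \Fun(K,\maAlg(\sC))$ holding compatibly with the evaluations, together with the fact that cotensors in $\diff$ are functor categories.
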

\begin{proof}
    By definition, $\maAlg = \reAlg \circ \partial_*$.
    The derivatives functor $\partial_*$ preserves cotensors by the previous theorem.
    The result now follows since corepresentable functors preserve cotensors with small categories and $\reAlg$ is corepresentable.
\end{proof}

\begin{corollary}
    Let $\sC$ be a differentiable category and $K$ a small category.
    Then the Goodwillie transform sends the colimit functor
    \[
    \colim \colon \Fun(K, \sC) \to \sC
    \]
    to the colimit functor
    \[
    \colim \colon \Fun(K, \Alg_{\partial_*{\id_\sC}}) \to \Alg_{\partial_*{\id_\sC}}.
    \]
    The same holds for the limit functor if $K$ is finite.
\end{corollary}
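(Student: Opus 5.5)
The plan is to characterize $\colim \colon \Fun(K,\sC) \to \sC$ by a 2-categorical universal property involving only the cotensor structure and adjunctions, and then transport it along $\maAlg$ using \cref{cor: goodwillie-transform-preserves-cotensors}. First, I will show that the constant diagram functor $c \colon \sC \to \sC^K = \Fun(K,\sC)$ is the 1-morphism corresponding, under the cotensor equivalence $\diff(\sC,\sC^K) \simeq \Fun(K,\diff(\sC,\sC))$, to the constant diagram $K \to \Fun^{\ast,\omega}(\sC,\sC)$ at $\id_\sC$. This is a purely formal statement in any 2-category with cotensors: for every $y$ there is a diagonal 1-morphism $\Delta \colon y \to y^K$, defined as the morphism corresponding to $\const_{\id_y}$. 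It is built only from identities and the cotensor structure, so any functor preserving cotensors sends $\Delta_y$ to $\Delta_{F(y)}$. Applying this to $\maAlg$ gives $\maAlg(c) \simeq c$, the constant diagram functor $\Alg_{\partial_*\id_\sC} \to \Fun(K,\Alg_{\partial_*\id_\sC})$.

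Second, $\colim$ is a left adjoint of $c$ in $\Cat$. However, $c$ is not a reduced finitary functor unless we check this, and $\colim$ need not be finitary in the required sense. In fact both are: $\colim$ preserves filtered colimits and the zero object, and $c$ preserves all colimits. So the adjunction $\colim \dashv c$ lives in the 2-category $\diff$. Since $\maAlg$ is a 2-functor, it preserves adjunctions, which gives $\maAlg(\colim) \dashv \maAlg(c) \simeq c$. Left adjoints are unique, and $\Alg_{\partial_*\id_\sC}$ is presentable by \cref{prop: alg-O-differentiable}, so $c$ has the left adjoint $\colim$. Hence $\maAlg(\colim) \simeq \colim$.

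For the limit functor with $K$ finite, the argument is dual: $\lim$ is right adjoint to $c$. Finite limits commute with filtered colimits in a differentiable category, so $\lim$ is finitary and reduced, and $c$ is as well. The adjunction $c \dashv \lim$ therefore lies in $\diff$, and we conclude in the same way. This is the step where finiteness is needed; it is what allows the adjunction to live in $\diff$ at all.

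The main obstacle is the first step, namely identifying $\maAlg(c)$ with the diagonal. We need the cotensor identification to be compatible with the 2-functor on the specific object $\const_{\id}$. This uses the fact that 2-functors preserve identity 1-morphisms, together with the definition of cotensor preservation: the composite $K \to \diff(\sC^K,\sC) \to \diff(\maAlg\sC^K,\maAlg\sC)$ exhibits the cotensor, and postcomposing $c$ with it yields $\maAlg(\ev_k \circ c) = \maAlg(\id) = \id$ naturally in $k$.
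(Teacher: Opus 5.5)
Your proposal is correct and is essentially the paper's argument: the paper describes the diagonal as the restriction morphism $x \simeq x^{\ast} \to x^K$ along $K \to \ast$, which is exactly your 1-morphism classified by $\const_{\id_x}$. It then uses that cotensor-preserving functors preserve this morphism, that $\colim$ and (for finite $K$) $\lim$ are adjoints of the diagonal lying in $\diff$, and that 2-functors preserve adjunctions. The only difference is that the paper states the first step for an arbitrary functor $\alpha \colon I \to J$, which it reuses afterwards to treat Kan extensions.
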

\begin{proof}
    If $\sX$ is a $2$-category that admits cotensors, and $\alpha \colon I \to J$ is a functor of small categories, then for every $x \in \sX$ there is a restriction morphism $\alpha^* \colon x^J \to x^I$. 
    This morphism is preserved by any functor out of $\sX$ that preserves cotensors.
    For $\sC \in \diff$, one can easily check that the restriction morphism $\alpha^* \colon \sC^J \to \sC^I$ obtained in this way is the usual restriction functor given by precomposition with $\alpha$.

    Applying this to the functor $K \to \ast$, we find that the Goodwillie transform sends the diagonal functor $\Delta \colon \sC \to \Fun(K, \sC)$ to the diagonal functor
    \[
    \Delta \colon \Alg_{\partial_*{\id_\sC}} \to \Fun(K, \Alg_{\partial_*{\id_\sC}}).
    \]
    The colimit and limit functors are the left and right adjoints of $\Delta$ respectively.
    The colimit functor lies in $\diff$ for all small categories $K$.
    For the limit functor, this is in general only true if $K$ is finite; otherwise, it can fail to be finitary.
    The result now follows since any functor of $2$-categories preserves adjunctions.
\end{proof}

\begin{remark}
    The same proof shows that for $f \colon J \to K$, the Goodwillie transform takes the left Kan extension functor $f_! \colon \Fun(J, \sC) \to \Fun(K,\sC)$ to the left Kan extension functor $f_! \colon \Fun(J, \Alg_{\partial_*{\id_\sC}}) \to \Fun(K, \Alg_{\partial_*{\id_\sC}})$.
    Under the further assumption that $J_{k/}$ is finite for every $k$ in $K$, the analogous statement for right Kan extensions follows as well.
\end{remark}

\begin{corollary} \label{cor: Goodwillie-transf-preserves-exact-functors}
    Let $F \colon \sC \to \sD$ be a reduced and finitary functor of differentiable categories and suppose that $K$ is a small category such that $F$ preserves $K$-indexed colimits.
    Then
    \[
    \maAlg(F) \colon \Alg_{\partial_*{\id_{\sC}}} \to \Alg_{\partial_*{\id_{\sD}}}
    \]
    also preserves $K$-indexed colimits.
    The analogous statement for $K$-indexed limits holds if $K$ is finite.
\end{corollary}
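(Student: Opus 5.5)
The idea is to express ``$F$ preserves $K$-indexed colimits'' as a 2-categorical condition that involves only cotensors and the restriction morphism $\Delta \colon \sC \to \sC^K$, and then transport it along $\maAlg$ using \cref{cor: goodwillie-transform-preserves-cotensors} together with the preceding corollary.

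First, the 2-functoriality of cotensors. Since $\sD^K$ is a cotensor in $\diff$, any $F \colon \sC \to \sD$ induces $F^K \colon \sC^K \to \sD^K$. Concretely, $F^K$ is the 1-morphism corresponding, under $\diff(\sC^K, \sD^K) \simeq \Fun(K, \diff(\sC^K, \sD))$, to $k \mapsto F \circ \ev_k$; it is postcomposition with $F$. There are canonical equivalences $\Delta_\sD F \simeq F^K \Delta_\sC$, coming from naturality of the restriction morphism along $K \to \ast$. Let $\colim \dashv \Delta$ be the adjunctions. The mate of this square is the Beck--Chevalley 2-morphism
\[
\colim \circ F^K \Rightarrow F \circ \colim
\]
in $\diff(\sC^K, \sD)$. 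The hypothesis that $F$ preserves $K$-indexed colimits says exactly that this 2-morphism is invertible.

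Now apply the 2-functor $\maAlg \colon \diff \to \diff$. It preserves cotensors, so it sends $\sC^K$ to $\maAlg(\sC)^K$, and by the previous corollary it sends $\Delta$ and $\colim$ to the diagonal and colimit functors. It also preserves adjunctions and their units and counits. Because $\maAlg(F^K)$ corresponds under the cotensor equivalence to $k \mapsto \maAlg(F)\circ \ev_k$, we get $\maAlg(F^K) \simeq \maAlg(F)^K$, compatibly with the square. Any 2-functor preserves mates, since mates are built by whiskering with units and counits. Hence $\maAlg$ sends the Beck--Chevalley 2-morphism for $F$ to the Beck--Chevalley 2-morphism $\colim \circ \maAlg(F)^K \Rightarrow \maAlg(F) \circ \colim$. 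An invertible 2-morphism stays invertible, so $\maAlg(F)$ preserves $K$-indexed colimits.

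For limits with $K$ finite, use the same argument with the mate of the square with respect to $\Delta \dashv \lim$. This requires $\lim$ to be a 1-morphism of $\diff$, i.e.\ finitary, which holds for finite $K$. The previous corollary then identifies $\maAlg(\lim)$ with the limit functor.

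The main obstacle is the bookkeeping step showing that $\maAlg(F^K)$ really is $\maAlg(F)^K$ and that the identification of $\maAlg(\Delta)$ and $\maAlg(\colim)$ is compatible with the square $\Delta F \simeq F^K\Delta$. I would handle this with the Yoneda formulation of cotensors. Both $F^K$ and $\maAlg(F)^K$ are characterised by their composites with the evaluation morphisms $\ev_k$. Since $\maAlg$ preserves the structure map $K \to \diff(\sC^K, \sC)$, the natural transformation $\diff(-,\sC^K)\simeq \Fun(K,\diff(-,\sC))$ is carried to its counterpart, and the required naturality in $F$ follows.
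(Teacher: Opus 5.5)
Your proposal is correct and follows essentially the same route as the paper. You express preservation of $K$-indexed colimits (resp.\ limits, for finite $K$) as invertibility of the Beck--Chevalley mate of the square $\Delta_\sD F \simeq F^K \Delta_\sC$, and transport this along $\maAlg$ using that it preserves cotensors, adjunctions and mates; your extra bookkeeping identifying $\maAlg(F^K)$ with $\maAlg(F)^K$ via the Yoneda description of cotensors just spells out the paper's one-line claim that $\maAlg$ preserves this square.
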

\begin{proof}
    Consider the commutative square
    \[
    \begin{tikzcd}
        \Fun(K, \sC) \ar[r, "F \circ -"] & \Fun(K, \sD) \\
        \sC \ar[u, "\Delta"] \ar[r, "F"] & \sD \ar[u, "\Delta"']
    \end{tikzcd}
    \]
    where the vertical arrows are given by the diagonal functors.
    The left adjoints of these functors are the colimits functors, and the corresponding Beck--Chevalley transformation is the assembly map
    \[
    \colim_K \circ F \Rightarrow F \circ \colim_K,
    \]
    so that $F$ preserves $K$-indexed colimits if and only if this Beck--Chevalley transformation is an equivalence.
    Similarly, the functor $F$ preserves $K$-indexed limits if and only if the Beck--Chevalley transformation corresponding to the right adjoints of the diagonal functors is an equivalence.
    
    Since the Goodwillie transform preserves cotensors, it preserves the above square.
    The result for colimits now follows immediately from the fact that every functor of $2$-categories preserves Beck--Chevalley transformations.
    The result for limits follows in the same way, where, as in the previous proof, we take care to note that the limit functor in general only defines a morphism in $\diff$ if $K$ is finite.
\end{proof}

We now turn to the proof of \cref{thm: derivatives-preserves-cotensors}.
We will need the following lemma.

\begin{lemma}
    Let $\sA$ be a stable presentable category and $K$ a small category.
    Then $\sA$ admits a cotensor with $K$ in $\pressymst$, given by the functor category $\Fun(K, \sA)$.
\end{lemma}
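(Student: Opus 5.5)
We need, for every stable presentable $\sB$, an equivalence
\[
\SSeq(\sB, \Fun(K,\sA)) \simeq \Fun(K, \SSeq(\sB,\sA)),
\]
natural in $\sB \in \pressymst$, i.e.\ compatible with precomposition by symmetric sequences $\sB' \to \sB$. The witnessing map $K \to \SSeq(\Fun(K,\sA),\sA)$ sends $k$ to the evaluation functor $\ev_k \colon \Fun(K,\sA) \to \sA$, regarded as a symmetric sequence concentrated in arity $1$. Since $\pressymst$ is the full subcategory of $\coCAlg(\preslst)$ on the cofree coalgebras $\Sym\sB$, the mapping category is
\[
\SSeq(\sB,\sC) = \FunL(\Sym\sB,\sC) \simeq \Map_{\coCAlg}(\Sym\sB,\Sym\sC).
\]
This lets us reduce the whole statement to cotensors in $\preslst$.

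The key step is that $\Fun(K,\sA)$ is the cotensor of $\sA$ with $K$ in $\preslst$. Indeed, $\Fun(K,\sA) \simeq \Fun(K^{\op},\Sp)\otimes \sA$ is stable presentable, and for any presentable $\sD$ there is a natural equivalence
\[
\FunL(\sD, \Fun(K,\sA)) \simeq \Fun(K, \FunL(\sD,\sA)),
\]
because colimits in $\Fun(K,\sA)$ are computed pointwise. Taking $\sD = \Sym\sB$ gives the equivalence
\[
\SSeq(\sB,\Fun(K,\sA)) \simeq \Fun(K,\SSeq(\sB,\sA)).
\]
Under it, a family $(F^k)_{k}$ corresponds to the unique symmetric sequence whose postcomposition with $\ev_k$ is $F^k$.

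It remains to check that this identification is the one induced by the maps $\ev_k$ through composition in $\pressymst$. Composing a symmetric sequence $G \in \SSeq(\sB,\Fun(K,\sA))$ with an arity-one sequence $\ev_k$ should simply postcompose the functor $\Sym\sB \to \Fun(K,\sA)$ with $\ev_k$. To see this, note that arity-one symmetric sequences $\sC \to \sC'$ correspond to the coalgebra maps $\Sym(f)\colon \Sym\sC \to \Sym\sC'$ for colimit-preserving $f$. Composing a coalgebra map $\Sym\sB \to \Sym\sC$ with $\Sym(f)$ and then projecting to $\sC'$ gives $f$ composed with the projection to $\sC$. This is the cofree universal property, i.e.\ naturality of the counit $\Sym\sC \to \sC$.

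Naturality in $\sB$ is automatic, since precomposition acts on the source $\Sym\sB$ while the evaluations act on the target. The 2-categorical Yoneda formulation in the preceding remark then gives the cotensor. The main point needing care is identifying arity-one sequences with $\Sym$ of the underlying functor, but this is routine from the cofree description.
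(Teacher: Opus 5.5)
Your proposal is correct and matches the paper's proof, which consists of the single equivalence $\SSeq(\sB,\Fun(K,\sA)) = \FunL(\Sym\sB,\Fun(K,\sA)) \simeq \Fun(K,\SSeq(\sB,\sA))$, obtained because $\Fun(K,\sA)$ is the cotensor in $\presl$. Your extra check, via naturality of the counit $\Sym\sC\to\sC$, that composing with the arity-one sequences $\ev_k$ is just postcomposition spells out what the paper only records without proof in the following remark. One harmless slip: $\Fun(K,\sA)\simeq\Fun(K,\Sp)\otimes\sA$, not $\Fun(K^{\op},\Sp)\otimes\sA$, but nothing depends on this since you only need $\Fun(K,\sA)$ to be stable presentable.
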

\begin{proof}
    This follows from the natural equivalence
    \begin{align*}
    \SSeq(\sB, \Fun(K, \sA)) &= \FunL(\Sym\sB, \Fun(K, \sA)) \\ 
    &\simeq \Fun(K, \FunL(\Sym\sB, \sA)) = \Fun(K, \SSeq(\sB, \sA)).
    \end{align*}
    Here we use that $\Fun(K, \sA)$ is the cotensor of $\sA$ with $K$ in $\presl$.
\end{proof}

\begin{remark}\label{remark:cotensors-pressymst}
    The cotensoring from the previous lemma is exhibited by the functor 
    \[
    \ev \colon K \to \SSeq(\sA^K, \sA)
    \]
    that sends $k \in K$ to the evaluation functor $\ev_k \colon \sA^K \to \sA$ considered as a symmetric sequence concentrated in arity $1$.
\end{remark}

\begin{proof}[Proof of \cref{thm: derivatives-preserves-cotensors}]
    Let $\sC$ be a differentiable category and let $K$ be a small category.
    Observe that there is an equivalence $\Sp(\sC^K) \simeq (\Sp\sC)^K$.
    We need to show that the composite
    \[
    K \xrightarrow{\ev} \Fun(\sC^K, \sC) \xrightarrow{\partial_*} \BMod_{(\partial_*{\id_\sC}, \partial_*{\id_{\sC^K}})}(\SSeq((\Sp\sC)^K, \Sp\sC))
    \]
    exhibits the pair $((\Sp\sC)^K, \partial_*{\id_{\sC^K}})$ as the cotensor of $(\Sp\sC, \partial_*{\id_\sC})$ with $K$ in the Morita category.
    We can consider this composite as an object
    \[
    \partial_*{\ev} \in \BMod_{(\partial_*{\id_\sC}, \partial_*{\id_{\sC^K}})}(\Fun(K, \SSeq((\Sp\sC)^K, \Sp\sC))),
    \]
    where $\Fun(K, \SSeq((\Sp\sC)^K, \Sp\sC))$ has the pointwise left and right tensoring by the monoidal categories $\SSeq((\Sp\sC)^K, (\Sp\sC)^K)$ and $\SSeq(\Sp\sC, \Sp\sC)$ respectively.
    We claim that $\partial_*{\ev}$ is free as a left $\partial_*{\id_{\sC}}$-module and free as a right $\partial_*{\id_{\sC^K}}$-module.
    (Note that this does not imply that it is free as a bimodule.)
    To see this, first note that $\partial_1{\ev}$ is the evaluation functor $K \to \SSeq((\Sp\sC)^K, \Sp\sC)$, which is concentrated in arity $1$.
    The inclusion $\partial_1{\ev} \to \partial_*{\ev}$ induces a map of left $\partial_*{\id_{\sC}}$-modules
    \[
    \partial_*{\id_{\sC}} \circ \partial_1(\ev) \to \partial_*(\ev).
    \]
    This is an equivalence if and only if it is one after evaluation in each $k \in K$.
    But for fixed $k \in K$, the evaluation functor $\ev_k \colon \sC^K \to \sC$ preserves colimits, so that the map of symmetric sequences $\partial_1{\ev_k} \to \partial_*{\ev_k}$ induces an equivalence of left $\partial_*{\id_\sC}$-modules 
    \[
    \partial_*{\id_{\sC}} \circ \partial_1{\ev_k} \simeq \partial_*{\ev_k}
    \]
    by \cref{prop: derivatives-free-left-right-module}.
    The statement about the right module structure is proved in the same way, using that $\ev_k$ preserves limits.

    Now let $(\sA, \sO) \in \MMor$.
    In other words, $\sA$ is a stable presentable category and $\sO \in \Alg(\SSeq(\sA, \sA))$. 
    To complete the proof, it suffices to show that the composite
    \[
    \begin{tikzcd}
        \BMod_{(\partial_*{\id_{\sC^K}}, \sO)}(\SSeq(\sA, (\Sp\sC)^K)) \ar[d] \\
        \Fun\left(\BMod_{(\partial_*{\id_\sC}, \partial_*{\id_{\sC^K}})}(\SSeq((\Sp\sC)^K, \Sp\sC)), \BMod_{(\partial_*{\id_\sC}, \sO)}(\SSeq(\sA, \Sp\sC))\right) \ar[d] \\
        \Fun(K, \BMod_{(\partial_*{\id_\sC}, \sO)}(\SSeq(\sA, \Sp\sC)))
    \end{tikzcd}
    \]
    is an equivalence of categories.
    It sends a $(\partial_*{\id_{\sC^K}}, \sO)$-bimodule $M$ to a bimodule with underlying object
    \[
    \partial_*{\ev} \circ_{\partial_*{\id_{\sC^K}}} M \simeq \partial_1{\ev} \circ \partial_*{\id_{\sC^K}} \circ_{\partial_*{\id_{\sC^K}}} M \simeq \partial_1{\ev} \circ M,
    \]
    where we use that $\partial_*{\ev}$ is free as a right $\partial_*{\id_{\sC^K}}$-module.
    In other words, we have a commutative square
    \[
    \begin{tikzcd}
        \BMod_{(\partial_*{\id_{\sC^K}}, \sO)}(\SSeq(\sA, (\Sp\sC)^K)) \ar[r] \ar[d, "\forget"] & \Fun(K, \BMod_{(\partial_*{\id_\sC}, \sO)}(\SSeq(\sA, \Sp\sC))) \ar[d, "\forget"] \\
        \SSeq(\sA, (\Sp\sC)^K) \ar[r] & \Fun(K, \SSeq(\sA, \Sp\sC)), 
    \end{tikzcd}
    \]
    where the lower horizontal map is an equivalence since $\partial_1{\ev}$ exhibits $(\Sp\sC)^K$ as the cotensor of $\Sp\sC$ with $K$ in $\pressymst$ by \cref{remark:cotensors-pressymst}.
    Both vertical maps are monadic right adjoints, so in order to show that the top horizontal functor is an equivalence, it suffices to show that it preserves free objects by \cite[Corollary 4.7.3.16]{HA}.
    But this functor sends the free bimodule
    \[
    \partial_*{\id_{\sC^K}} \circ X \circ \sO
    \]
    to
    \[
    \partial_*{\ev} \circ_{\partial_*\id_{\sC^K}} \partial_*{\id_{\sC^K}} \circ X \circ \sO \simeq \partial_*{\ev} \circ X \circ \sO,
    \]
    which is the free bimodule on $\partial_1{\ev} \circ X$, since $\partial_*{\ev}$ is the free left module on $\partial_1{\ev}$.
    This completes the proof.
\end{proof}

\subsection{Pullbacks} \label{ssec: pullbacks-mmor}

The goal of this section is to prove that the derivatives functor preserves certain pullbacks of categories.
The precise statement is as follows.

\begin{proposition} \label{prop: pullbacks-preserved-by-derivatives}
    Let
    \[
    \begin{tikzcd}
        \sA \ar[r, "\bar{p}^*", hook] \ar[d, "\bar{q}^*"']  \ar[dr, phantom, "\lrcorner", very near start] & \sC \ar[d, "q^*"] \\
        \sB \ar[r, "p^*"', hook] & \sD
    \end{tikzcd}
    \]
    be a pullback square of categories where $\sB$, $\sC$, and $\sD$ are differentiable.
    Assume that
    \begin{enumerate}[\upshape{(}\roman*\upshape{)}]
        \item $p^*$ and $q^*$ are reduced and finitary,
        \item $p^*$ and $q^*$ both admit a left adjoint, and
        \item $p^*$ is fully faithful.
    \end{enumerate}
    Then $\sA$ is differentiable, $\bar{p}^*$ and $\bar{q}^*$ are reduced and finitary, and the square defines a pullback square in $\diff$ which is preserved by $\partial_* \colon \diff \to \MMor$.
\end{proposition}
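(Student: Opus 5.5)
The proof has two parts. First we show the square lives in $\diff$ and is a pullback there. Then we show that $\partial_*$ carries it to a pullback in $\MMor$, and for this we use the same ``equational'' strategy as for products (\cref{thm:equational-description-prod}).

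\emph{Step 1: the square is a pullback in $\diff$.} Since $p^*$ is fully faithful, $\sA$ is the full subcategory of $\sC$ on those $c$ with $q^*c$ in the essential image of $p^*$. By (ii), $p^*$ has a left adjoint $p_!$ with invertible counit, so this essential image is the full subcategory of $p^*p_!$-local objects of $\sD$. Because $p^*$ and $q^*$ are finitary right adjoints, this image is closed under limits and filtered colimits in $\sD$. Hence $\sA$ is closed under limits and filtered colimits in $\sC$, and it is presentable as a pullback of presentable categories along accessible right adjoints. Consequently $\sA$ is differentiable, and $\bar p^*$ and $\bar q^*$ are reduced and finitary. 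The inclusion $\bar p^*$ has a left adjoint $\bar p_!$, namely the localization onto $\sA$; this uses that $\sA$ is an accessible, limit-closed subcategory. For any $\sE\in\diff$, the square $\Fun^{\ast,\omega}(\sE,\sA)\simeq \Fun^{\ast,\omega}(\sE,\sC)\times_{\Fun^{\ast,\omega}(\sE,\sD)}\Fun^{\ast,\omega}(\sE,\sB)$ is a pullback. This holds because reducedness and finitariness into $\sA$ are tested after the inclusion. So the square is a pullback in $\diff$.

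\emph{Step 2: a local criterion for such pullbacks.} Following the pattern of \cref{lem:recognizing-cartesian-products-in-Cat}, I would show the following. In a $2$-category whose mapping categories have finite limits, and whose composition preserves finite limits in the first variable, a square of this shape is a pullback if:
\begin{itemize}
\item $p$, $q$ and $\bar p$ have left adjoints in the $2$-category;
\item $p$ is fully faithful, meaning its counit is invertible;
\item the Beck--Chevalley map $\bar q\,\bar p_! \Rightarrow p_!\, q$ is invertible;
\item the unit-type $2$-morphism $\id_{\sA}\Rightarrow \bar p\,\bar p_!$ exhibits $\sA$ as the idempotent localization.
\end{itemize}
The proof goes through the Yoneda argument used for \cref{thm:equational-description-prod}. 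For each $z$, postcomposition yields adjunctions on mapping categories, and one identifies $\sX(z,\sA)$ with the full subcategory of $\sX(z,\sC)$ on those $f$ for which $q f$ is local. This condition is checked by the invertibility of a unit, and that invertibility is exactly what the local data above provide. All these conditions are expressed through adjunctions and invertibility of $2$-morphisms, and every $2$-functor preserves adjunctions and Beck--Chevalley maps. So a $2$-functor that locally preserves finite limits will preserve any pullback satisfying the criterion, by the same reasoning as in \cref{cor:local-cart-global-cart-functor}.

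\emph{Step 3: apply to $\partial_*$.} Both $\diff$ and $\MMor$ satisfy the hypotheses of Step 2, by \cref{prop: exactness-composition-mor}. The functor $\partial_*$ locally preserves finite limits by \cref{prop: local-exactness-derivatives}. It preserves the adjunctions $p_!\dashv p^*$, $q_!\dashv q^*$ and $\bar p_!\dashv\bar p^*$, provided the left adjoints are themselves reduced and finitary. They are: left adjoints preserve colimits, and reducedness holds because the right adjoints are reduced. The main obstacle is Step 2, namely the correct $2$-categorical characterization of such pullbacks together with checking its hypotheses in $\diff$. In particular, we must verify the Beck--Chevalley condition for $\bar p_!$ against $p_!$ in $\diff$. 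I would verify it concretely: for $c\in\sC$, the localization $\bar p_! c$ should be computed by the pullback $c\times_{q^*{}\text{-side}}$ of $q^*c\to p^*p_!q^*c$ along $\sC$. This is exactly where fully faithfulness of $p^*$ enters. If this Beck--Chevalley identification fails in general, the fallback is to prove directly, on mapping categories $\MMor((\sE,\sQ),-)$, that the image of the square is a pullback by the monadicity argument from the proof of \cref{thm: derivatives-preserves-cotensors}.
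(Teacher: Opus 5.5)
Your Step 1 is fine and is essentially the paper's argument. The gap is in Steps 2 and 3.

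First, the criterion of Step 2 is not sufficient as stated. Take $\sB=\sD=\ast$ and let $\bar p^*\colon\sA\hookrightarrow\sC$ be any proper reflective inclusion, e.g.\ $\Sp_{\le 0}\subset\Sp$. All four of your conditions hold, since the Beck--Chevalley map is a map between functors to $\ast$. But the pullback is $\sC$, not $\sA$. In your Yoneda argument, Beck--Chevalley only tells you that $q$ inverts the unit $f\Rightarrow\bar p\bar p_!f$ when $qf$ is local. That does not make the unit invertible unless $q$ is conservative. The condition you actually need (``$f$ is $\bar p\bar p_!$-local iff $qf$ is $pp_!$-local'') is not an invertibility statement that a $2$-functor is known to preserve.

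Second, and more seriously, the Beck--Chevalley map between $p_!q^*$ and $\bar q^*\bar p_!$ is not invertible in general for squares satisfying (i)--(iii). So the verification you propose cannot succeed. Take $\sC=\LMod_{KU}(\Sp)$, $q^*$ the forgetful functor to $\sD=\Sp$, $\sB=\Sp_{\le 0}$ and $p^*$ the inclusion. Bott periodicity forces $\sA=0$, so $\bar q^*\bar p_!=0$, while $p_!q^*(KU)=\tau_{\le 0}KU\neq 0$. The localization onto $\sA$ cannot be obtained by localizing $q^*c$ once, because the result must again be compatible with the monad $q^*q_!$. Your fallback is also not directly available. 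The cotensor argument used that $\partial_*\ev$ is free as a left module, which relies on $\ev_k$ preserving colimits; $p^*$ and $q^*$ are only right adjoints.

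The missing idea is to write the localization onto $\sA$ explicitly in terms of the given adjunctions:
\[
\bar p^*\bar p_!\simeq\colim(\id\to f\to f^2\to\cdots),
\]
where $f$ is the pushout of $q_!p^*p_!q^*\leftarrow q_!q^*\rightarrow\id$. The left map is $q_!\nu q^*$ with $\nu$ the unit of $p_!\dashv p^*$, and the right map is the counit of $q_!\dashv q^*$. One shows two things. First, $q^*f^\infty(c)$ lies in the image of $p^*$, because $q^*c\to q^*f(c)$ factors through $p^*p_!q^*c$. Second, $c\to f(c)$ is inverted by $\Map(-,\bar p^*y)$ for $y\in\sA$; this is where full faithfulness of $p^*$ enters.

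This formula involves only adjunction data, pushouts and sequential colimits in mapping categories. So the paper tests the pullback after composing with corepresentables, $F=\MMor(x,-)\circ\partial_*$. This $F$ locally preserves colimits (by local colimit-preservation of $\partial_*$ and exactness of the relative composition product), and its values preserve filtered colimits. Hence $F$ carries the formula to the one computing the localization onto the actual pullback $\sE$ of $F(p^*)$ and $F(q^*)$. The adjunction $F(\bar p_!)\dashv F(\bar p^*)$ and the localization onto $\sE$ therefore have the same unit. It follows that the already fully faithful comparison $F(\sA)\to\sE$ is essentially surjective. Note that the exactness property of $\partial_*$ doing the work here is local preservation of colimits, not of finite limits.
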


Before proving this proposition, we give a couple of easy consequences.

\begin{corollary}
    The Goodwillie transform $\maAlg \colon \diff \to \diff$ preserves the type of pullback squares described in the previous proposition.
\end{corollary}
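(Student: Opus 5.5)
The plan is to write $\maAlg = \reAlg \circ \partial_*$ and reduce to the corresponding property of $\reAlg \colon \MMor \to \Cat$, as was done for cotensors in \cref{cor: goodwillie-transform-preserves-cotensors}. By \cref{prop: pullbacks-preserved-by-derivatives}, a square of the given type is a pullback in $\diff$ whose image under $\partial_*$ is a pullback in $\MMor$. Since $\reAlg$ is corepresented by $(\ast, \unit_\ast)$, it sends pullbacks in $\MMor$ to pullbacks in $\Cat$; this is just the defining property of a $2$-categorical pullback, evaluated on the mapping categories out of $(\ast, \unit_\ast)$. Hence $\maAlg$ sends the square to a pullback square in $\Cat$.

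It then remains to see that this is also a pullback in $\diff$, and, if the statement is read strictly, that it is again a square of the same type. By \cref{prop: alg-O-differentiable}, all four corners are differentiable and all maps are reduced and finitary. The pullback in $\Cat$ of a cospan in $\diff$ whose corners are all differentiable is then a pullback in $\diff$. Indeed, a reduced finitary functor into the pullback is exactly a compatible pair of reduced finitary functors, because zero objects and filtered colimits in the pullback are detected componentwise. I would also check this directly with the same mapping-category argument.

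For the conditions (i)–(iii):
\begin{itemize}
\item Condition (i) is \cref{prop: goodwillie-transf-reduced-sifted-colim-pres}.
\item For condition (ii), $2$-functors preserve adjunctions. The left adjoints $p_!$ and $q_!$ preserve all colimits and are reduced, so they lie in $\diff$, and applying $\maAlg$ gives left adjoints of $\maAlg(p^*)$ and $\maAlg(q^*)$.
\item For condition (iii), full faithfulness of $p^*$ is equivalent to invertibility of the counit $p_! p^* \Rightarrow \id$. This is a $2$-morphism in $\diff$, so it is preserved by $\maAlg$, and hence $\maAlg(p^*)$ is fully faithful.
\end{itemize}

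The main obstacle is bookkeeping rather than mathematics: making sure that pullbacks in $\MMor$ are detected by corepresentables, i.e.\ that the pullback notion used in \cref{prop: pullbacks-preserved-by-derivatives} is the $2$-categorical one. I expect this to hold by definition.
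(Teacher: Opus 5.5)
Your proposal is correct and follows the paper's argument. The paper's proof is exactly your first paragraph: $\maAlg = \reAlg \circ \partial_*$, combined with \cref{prop: pullbacks-preserved-by-derivatives} and the fact that the corepresentable functor $\reAlg$ preserves all pullbacks. Your further checks, that the image square is again a pullback in $\diff$ and satisfies conditions (i)--(iii), are sound bookkeeping that the paper leaves implicit.
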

\begin{proof}
    This immediately follows from the definition of the Goodwillie transform as $\maAlg = \reAlg \circ \partial_*$ and the fact that $\reAlg$ is a corepresentable functor and hence preserves all pullbacks.
\end{proof}

\begin{corollary}
    Let $G \colon \sD \to \sC$ be a conservative finitary right adjoint between differentiable categories.
    Then $\maAlg(G)$ is conservative as well.
\end{corollary}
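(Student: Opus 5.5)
We will deduce this from the pullback preservation of \cref{prop: pullbacks-preserved-by-derivatives}, applied to a pullback square that detects conservativity. The key observation is that a functor $G \colon \sD \to \sC$ is conservative if and only if the square
\[
\begin{tikzcd}
    \sD^{\simeq\text{-}\Ar} \ar[r, hook] \ar[d] & \Fun(\Delta^1, \sD) \ar[d, "G \circ -"] \\
    \Fun(\Delta^1,\sC)^{\mathrm{iso}} \ar[r, hook] & \Fun(\Delta^1, \sC)
\end{tikzcd}
\]
is a pullback. Here the superscripts denote the full subcategories of arrows that are equivalences. Equivalently, the full subcategory of invertible arrows in $\sC^{\Delta^1}$ is the image of the diagonal $\sC \to \sC^{\Delta^1}$. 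So the square we will use is
\[
\begin{tikzcd}
    \sD \ar[r, "\Delta"] \ar[d, "G"'] & \sD^{\Delta^1} \ar[d, "G \circ -"] \\
    \sC \ar[r, "\Delta"'] & \sC^{\Delta^1},
\end{tikzcd}
\]
and $G$ is conservative exactly when this square is a pullback of categories.

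The next step is to check that this square satisfies the hypotheses of \cref{prop: pullbacks-preserved-by-derivatives}, with $p^* = \Delta \colon \sC \to \sC^{\Delta^1}$ and $q^* = G \circ -$. The diagonal $\Delta$ is fully faithful, reduced and finitary, and it has a left adjoint, namely $\colim_{\Delta^1} = \ev_1$. The functor $G\circ -$ is reduced and finitary because $G$ is a finitary right adjoint, hence reduced. It has a left adjoint given by postcomposition with the left adjoint of $G$. So the proposition applies, and $\maAlg$ preserves this pullback square by the corollary above.

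Finally, we identify the image square. By \cref{thm: derivatives-preserves-cotensors} and \cref{cor: goodwillie-transform-preserves-cotensors}, the Goodwillie transform sends $\sC^{\Delta^1}$ to $\maAlg(\sC)^{\Delta^1}$. As in the proof of the colimit corollary, the restriction along $\Delta^1 \to \ast$ is sent to the diagonal, and $G\circ -$ is sent to $\maAlg(G)\circ -$. The last point follows from naturality of the cotensor in the base object, which holds for any functor of $2$-categories preserving cotensors. We therefore obtain a pullback square
\[
\begin{tikzcd}
    \maAlg(\sD) \ar[r, "\Delta"] \ar[d, "\maAlg(G)"'] & \maAlg(\sD)^{\Delta^1} \ar[d] \\
    \maAlg(\sC) \ar[r, "\Delta"'] & \maAlg(\sC)^{\Delta^1}.
\end{tikzcd}
\]

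One subtlety is that the proposition gives a pullback in $\diff$, while we need a pullback in $\Cat$. However, the proposition also asserts that the image square is a pullback of underlying categories, since $\maAlg$ lands in $\diff$ and the pullback in $\diff$ is computed in $\Cat$. Being a pullback in $\Cat$ says precisely that an arrow in $\maAlg(\sD)$ is invertible whenever its image under $\maAlg(G)$ is. Hence $\maAlg(G)$ is conservative.

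The main obstacle is making sure the identifications of the image square are compatible, in particular that $\maAlg(G\circ -)$ is indeed $\maAlg(G) \circ -$ under the cotensor equivalences. This is handled by the naturality of cotensors noted above.
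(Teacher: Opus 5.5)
Your proof is correct and is essentially the paper's argument. The paper likewise encodes conservativity of $G$ as the pullback square formed by the constant-arrow inclusions $\sD \hookrightarrow \Ar(\sD)$, $\sC \hookrightarrow \Ar(\sC)$ and $G$, applies \cref{prop: pullbacks-preserved-by-derivatives} through its corollary for $\maAlg$, and uses preservation of cotensors to identify the image square with the analogous square for $\maAlg(G)$. One small correction: the image square is a pullback in $\Cat$ not because pullbacks in $\diff$ are computed in $\Cat$, but because $\maAlg = \reAlg \circ \partial_*$ with $\reAlg$ corepresentable, so it sends the pullback in $\MMor$ given by the proposition to a pullback in $\Cat$.
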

\begin{proof}
    The functor $G$ being conservative is equivalent to the square
    \[
    \begin{tikzcd}
        \sD \ar[r, hook, "\mathrm{cst}"] \ar[d, "G"'] & \Ar(\sD) \ar[d, "G"] \\
        \sC \ar[r, "\mathrm{cst}"', hook] & \Ar(\sC)
    \end{tikzcd}
    \]
    being a pullback.
    Note that this square satisfies all the conditions of \cref{prop: pullbacks-preserved-by-derivatives}, so that it is sent by $\maAlg$ to a pullback square.
    Since $\maAlg$ preserves cotensors by \cref{cor: goodwillie-transform-preserves-cotensors}, the square is sent to the analogous square with $\sC$ and $\sD$ replaced by $\maAlg(\sC)$ and $\maAlg(\sD)$, and $G$ replaced by $\maAlg(G)$.
    This implies $\maAlg(G)$ is conservative.
\end{proof}

\begin{corollary} \label{cor: monadic-adjunction-goodwillie-transf}
    Let $F \colon \sC \rightleftarrows \sD \colon G$ be an adjunction between differentiable categories such that $G$ is finitary and conservative.
    Then the adjunction
    \[
    \maAlg(F) \colon \Alg_{\partial_*{\id_\sC}} \rightleftarrows \Alg_{\partial_*{\id_\sD}} \colon \maAlg(G)
    \]
    is monadic.
\end{corollary}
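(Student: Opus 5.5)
We apply the Barr--Beck--Lurie monadicity theorem to $\maAlg(F) \dashv \maAlg(G)$. We must check three things: that $\maAlg$ produces an adjunction, that $\maAlg(G)$ is conservative, and that $\maAlg(G)$ preserves $\maAlg(G)$-split geometric realizations.

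\emph{The adjunction.} The left adjoint $F$ preserves colimits and, being a left adjoint between pointed categories, is reduced. In particular it is finitary, so $F$ is a $1$-morphism of $\diff$. The right adjoint $G$ is finitary by assumption and reduced because it preserves the terminal object. Hence $F \dashv G$ is an adjunction in the $2$-category $\diff$. Since any functor of $2$-categories preserves adjunctions, $\maAlg(F) \dashv \maAlg(G)$ is an adjunction between $\Alg_{\partial_*{\id_\sC}}$ and $\Alg_{\partial_*{\id_\sD}}$.

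\emph{Conservativity.} The previous corollary applies directly to $G$, a conservative finitary right adjoint between differentiable categories. It shows that $\maAlg(G)$ is conservative.

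\emph{Geometric realizations.} By \cref{prop: goodwillie-transf-reduced-sifted-colim-pres}, the functor $\maAlg(G)$ preserves all sifted colimits, in particular all geometric realizations. This is stronger than preserving $\maAlg(G)$-split geometric realizations. The source category $\Alg_{\partial_*{\id_\sD}}$ admits all geometric realizations because it is presentable by \cref{prop: alg-O-differentiable}.

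With all three hypotheses verified, Barr--Beck--Lurie \cite[Theorem 4.7.3.5]{HA} gives monadicity.

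The only step with real content is conservativity, and it is supplied by the previous corollary. That corollary rests on the pullback-preservation result applied to the square comparing $\sD$ and $\Ar(\sD)$, together with preservation of cotensors. The remaining steps just assemble earlier results; the one point needing care is that $F$ and $G$ really are reduced and finitary, so that the adjunction lives in $\diff$ and $\maAlg$ can be applied to it.
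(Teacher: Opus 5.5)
Your proposal is correct and matches the paper's argument: conservativity of $\maAlg(G)$ from the preceding corollary, preservation of sifted colimits from \cref{prop: goodwillie-transf-reduced-sifted-colim-pres}, and then Barr--Beck--Lurie \cite[Theorem 4.7.3.5]{HA}. You additionally check that $F$ and $G$ are reduced and finitary, so that the adjunction lives in $\diff$ and $\maAlg$ can be applied to it; the paper leaves this implicit.
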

\begin{proof}
    By the previous corollary, $\maAlg(G)$ is conservative.
    Also note that $\maAlg(G)$ preserves sifted colimits, since this holds for any functor in the essential image of $\maAlg$ by \cref{prop: goodwillie-transf-reduced-sifted-colim-pres}.
    Therefore, $\maAlg(F) \dashv \maAlg(G)$ is a monadic adjunction by the Barr--Beck--Lurie theorem \cite[Theorem 4.7.3.5]{HA}.
\end{proof}

\begin{remark}
    This last corollary in particular implies that any monadic adjunction with finitary right adjoint is sent to a monadic adjunction by $\maAlg$.
    This is a generalization of \cite[Corollary 6.2.2.16]{HA}, where the analogous result for first derivatives is proved.
\end{remark}

We now turn to the proof of \cref{prop: pullbacks-preserved-by-derivatives}.
We will deduce it from the following more general statement.

\begin{proposition} \label{prop: 2-cat-pullback-pres-general}
    Let
    \begin{equation}\label{eq: pullback-square-cats}
    \begin{tikzcd}
        \sA \ar[r, "\bar{p}^*", hook] \ar[d, "\bar{q}^*"']  \ar[dr, phantom, "\lrcorner", very near start] & \sC \ar[d, "q^*"] \\
        \sB \ar[r, "p^*"', hook] & \sD
    \end{tikzcd}
    \end{equation}
    be a pullback square of categories that admit pushouts and sequential colimits, such that
    \begin{enumerate}[\upshape{(}\roman*\upshape{)}]
        \item $p^*$ and $q^*$ preserve sequential colimits,
        \item $p^*$ and $q^*$ both admit a left adjoint, and
        \item $p^*$ is fully faithful.
    \end{enumerate}
    Let $F \colon \Cat \to \Cat$ be a functor of $2$-categories that locally preserves pushouts and sequential colimits, and assume that $F(\sA)$, $F(\sB)$, $F(\sC)$ and $F(\sD)$ again admit pushouts and sequential colimits and that $F(p^*)$ and $F(q^*)$ again preserve sequential colimits.
    Then $F$ preserves the pullback square.
\end{proposition}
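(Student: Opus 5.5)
The plan is to give an intrinsic description of the square \eqref{eq: pullback-square-cats} using only adjunctions, pushouts and sequential colimits in the mapping categories $\Fun(-,-)$. Any $2$-functor preserves adjunctions and (co)unit invertibility. By hypothesis $F$ also locally preserves pushouts and sequential colimits. So the same description will then hold for the image square, and we conclude from the description that the image is a pullback.

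Write $p_! \dashv p^*$ and $q_! \dashv q^*$. Since $p^*$ is fully faithful, $M = p^*p_!$ is an idempotent monad on $\sD$ whose local objects form the essential image of $p^*$. The pullback $\sA$ is then the full subcategory of $\sC$ on those $c$ for which the unit $q^*c \to Mq^*c$ is an equivalence. Define an endofunctor $T$ of $\sC$ as the pushout in $\Fun(\sC,\sC)$
\[
T = \id_\sC \sqcup_{q_!q^*} q_!Mq^*,
\]
formed along the counit $q_!q^* \Rightarrow \id_\sC$ and along $q_!(\text{unit of } M)q^*$. There is a natural map $\id \Rightarrow T$. Put $L = \colim_n T^n$, a sequential colimit in $\Fun(\sC,\sC)$. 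The first step is to check the following in $\Cat$:
\begin{enumerate}[(a)]
\item $Lc \in \sA$ for every $c$;
\item $c \to Lc$ is an equivalence when $c \in \sA$, since then $q^*c \simeq Mq^*c$ and the pushout degenerates;
\item for $a \in \sA$, the map $\Map(Lc,a) \to \Map(c,a)$ is an equivalence, because each step $c \to Tc$ is a pushout of $q_!$ applied to an $M$-local equivalence, and such maps are inverted by maps into $\sA$.
\end{enumerate}
Together these show that $\bar p^*$ has a left adjoint $\bar p_!$ with $\bar p^*\bar p_! \simeq L$ and invertible counit. They also show that $\sA$ is exactly the essential image of $L$, equivalently the objects $c$ with $c \simeq Lc$.

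The main obstacle is (a), and this is where the hypotheses that $p^*$ and $q^*$ preserve sequential colimits enter. I would first construct, for each $n$, a factorization
\[
q^*T^n c \to Mq^*T^n c \to q^*T^{n+1}c.
\]
Here the second map is adjoint to the leg $q_!Mq^*T^nc \to T^{n+1}c$ of the pushout, using $\id \to q^*q_!$. Granting this, the two interleaved sequences have the same colimit, so
\[
q^*Lc \simeq \colim_n q^*T^nc \simeq \colim_n p^*p_!q^*T^nc \simeq p^*\bigl(\colim_n p_!q^*T^nc\bigr),
\]
which lies in the image of $p^*$. The delicate point is verifying that the composites $q^*T^nc \to Mq^*T^nc \to q^*T^{n+1}c$ agree with the transition maps, and that the maps between the $M$-terms are compatible. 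This is a triangle-identity check. I will phrase it entirely as identities between $2$-morphisms built from units, counits, pushout legs and colimit legs, so that $F$ transports it verbatim.

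Finally, apply $F$. The adjunctions $F(p_!) \dashv F(p^*)$ and $F(q_!) \dashv F(q^*)$ persist, $F(p^*)$ remains fully faithful, and $F(M)$ is idempotent. Since $F$ locally preserves pushouts and sequential colimits and is compatible with composition, $F(T)$ and $F(L)$ are given by the same formulas in $F(\sC)$. The hypotheses that $F(\sA),\dots,F(\sD)$ have pushouts and sequential colimits and that $F(p^*)$, $F(q^*)$ preserve sequential colimits let the argument for (a)--(c) run again in $F(\sC)$. It follows that $F(\bar p^*)$ is fully faithful with essential image $\{c : F(q^*)c \in \operatorname{im} F(p^*)\}$. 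This is precisely the pullback of $F(p^*)$ along $F(q^*)$, and $F(\bar q^*)$ is the induced map, so the square stays a pullback.
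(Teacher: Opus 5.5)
Your proposal is correct and is essentially the paper's proof: your $T$ and $L=\colim_n T^n$ are exactly the paper's $f$ and $f^\infty$, and your steps (a) and (c) are proved the same way there (the factorization through $p^*p_!q^*$ via the triangle identity plus cofinality, and the pullback-of-mapping-spaces argument showing that each step is inverted by maps into $\sA$). Your final step also matches the paper's: it reruns the construction in $F(\sC)$ to obtain a reflection onto the pullback of $F(p^*)$ and $F(q^*)$ with unit $\id \to \colim_n F(T)^n \simeq F(L)$, and compares this unit with that of $F(\bar p_!) \dashv F(\bar p^*)$; your extra step (b) only lets you read off the essential image directly instead of passing through the left adjoint.
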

\begin{proof}
    Write $p_!$ and $q_!$ for the left adjoints of $p^*$ and $q^*$ respectively.
    Since $p^*$ is fully faithful, the same holds for $\bar{p}^*$, and its essential image consists of those $x \in \sC$ such that $q^*(x)$ lies in the image of $p^*$.
    Let $x \in \sC$ and consider the object $f(x)$ defined by the pushout square
    \[
    \begin{tikzcd}
        q_! q^*(x) \ar[r, "\epsilon"] \ar[d, "q_! \nu"'] \ar[dr, phantom, "\ulcorner", very near end] & x \ar[d] \\
        q_! p^* p_! q^*(x) \ar[r] & f(x),
    \end{tikzcd}
    \]
    where $\epsilon$ is the counit of the adjunction $q_! \dashv q^*$ and $\nu$ is the unit of the adjunction $p_! \dashv p^*$.
    Define 
    \[
    f^\infty(x) \coloneqq \colim(x \to f(x) \to f^2(x) \to \cdots).
    \]
    We claim that $f^\infty(x)$ lies in the image of $\bar{p}^*$, or in other words, that $q^*f^\infty(x)$ lies in the image of $p^*$.
    Consider the commutative diagram
    \[
    \begin{tikzcd}
        q^*(x) \ar[r, "\eta"] \ar[d, "\nu"'] & q^* q_! q^*(x) \ar[r, "q^* \epsilon"] \ar[d, "q^* q_! \nu"'] & q^*(x) \ar[d] \\
        p^* p_!q^*(x) \ar[r, "\eta"'] & q^* q_! p^* p_! q^*(x) \ar[r] & q^*f(x),
    \end{tikzcd}
    \]
    where the right-hand square is obtained by applying $q^*$ to the pushout square defining $f(x)$ and $\eta$ is the unit of the adjunction $q_! \dashv q^*$.
    The top horizontal composite is equivalent to the identity by the triangle identities, so that the canonical map $q^*(x) \to q^*f(x)$ factors through $p^*p_!q^*(x)$.
    This leads to the factorization
    \[
    \begin{tikzcd}[sep = small]
        q^*(x) \ar[rr] \ar[dr] & & q^*f(x) \ar[rr] \ar[dr] && q^*f^2(x) \ar[rr] \ar[dr] && \cdots \\
        & p^*p_!q^*(x) \ar[ur] & & p^*p_!q^*f(x) \ar[ur] & & \cdots
    \end{tikzcd}
    \]
    A standard cofinality argument together with the fact that $q^*$ preserves sequential colimits now yields
    \[
    q^*f^\infty(x) \simeq \colim_n q^*f^n(x) \simeq \colim_n p^*p_!q^*f^n(x),
    \]
    which lies in the essential image of $p^*$, as this is closed under sequential colimits.

    Now suppose that $y \in \sA$.
    We claim that the canonical map $x \to f^\infty(x)$ induces an equivalence
    \[
    \Map_\sC(f^\infty(x), \bar{p}^*(y)) \xrightarrow{\sim} \Map_\sC(x, \bar{p}^*(y)).
    \]
    Since we have an equivalence
    \[
    \Map_\sC(f^\infty(x), \bar{p}^*(y)) \simeq \lim_n \Map_\sC(f^n(x), \bar{p}^*(y)),
    \]
    it suffices to show that for any $z \in \sC$, the map $z \to f(z)$ is sent to an equivalence by $\Map(-, \bar{p}^*(y))$.
    By definition of $f(z)$, we have a pullback square
    \[
    \begin{tikzcd}
        \Map_\sC(f(z), \bar{p}^*(y)) \ar[r] \ar[d] \ar[dr, phantom, "\lrcorner", very near start] & \Map_\sC(z, \bar{p}^*(y)) \ar[d] \\
        \Map_\sC(q_! p^* p_! q^*(z), \bar{p}^*(y)) \ar[r, "- \circ q_!\nu"] & \Map_\sC(q_!q^*(z), \bar{p}^*(y))
    \end{tikzcd}
    \]
    By adjunction and the fact that $q^* \bar{p}^* \simeq p^* \bar{q}^*$, the bottom arrow is equivalent to
    \[
    \Map_\sD(p^*p_!q^*(z), p^* \bar{q}^*(y)) \xrightarrow{- \circ \nu} \Map(q^*(z), p^* \bar{q}^*(y)),
    \]
    which is an equivalence since $p_! \dashv p^*$ is a reflective localization.
    It follows that the top arrow in the pullback square is an equivalence as well, which proves the claim.

    The upshot is that $\bar{p}^*$ admits a left adjoint $\bar{p}_!$ with a natural equivalence
    \begin{equation} \label{eq: right-left-adjoint-filtered-colim}
    \bar{p}^*\bar{p}_! \simeq \colim ( \id_\sC \rightarrow f \rightarrow f^2 \rightarrow \cdots),
    \end{equation}
    such that the unit is given by the canonical map from $\id_\sC$ to this colimit and $f$ is defined as
    \begin{equation}\label{eq:f-is-pushout-unit-counit}
        f \simeq \colim ( q_! p^* p_! q^* \xleftarrow{q_!\nu q^*} q_! q^* \xrightarrow{\phantom{q_!} \epsilon \phantom{q}} \id_\sC).
    \end{equation}

    Now apply $F$ to the pullback square \cref{eq: pullback-square-cats} to obtain
    \[
    \begin{tikzcd}[sep = large]
        F(\sA) \ar[r, "F(\bar{p}^*)", hook] \ar[d, "F(\bar{q}^*)"'] & F(\sC) \ar[d, "F(q^*)"] \\
        F(\sB) \ar[r, "F(p^*)"', hook] & F(\sD).
    \end{tikzcd}
    \]
    This square again satisfies the three properties from the statement of the proposition: property (i) holds by assumption, and properties (ii) and (iii) of the functors $p^*$ and $q^*$ are preserved by any functor of $2$-categories, since these preserve adjunctions and reflective localizations.
    For the same reason, $F(\bar{p}^*)$ is a fully faithful functor with left adjoint $F(\bar{p}_!)$.
    Write $\sE$ for the pullback of the functors $F(p^*)$ and $F(q^*)$.
    The square above induces a commutative triangle
    \[
    \begin{tikzcd}
        F(\sA) \ar[rr, "F(\bar{p}^*)", hook] \ar[dr, "i^*"', hook] & & F(\sC). \\
        & \sE \ar[ur, "j^*"', hook]
    \end{tikzcd}
    \]
    Note that $j^*$ is fully faithful since it is the pullback of the fully faithful functor $F(p^*)$ and $i^*$ is fully faithful by 2-out-of-3.
    We want to show that $i^*$ is essentially surjective.
    
    Since $F(p^*)$ and $F(q^*)$ satisfy properties (i), (ii), and (iii) from the statement of the proposition, repeating the argument above shows that $j^*$ has a left adjoint $j_!$, such that the unit $\id \to j^* j_!$ is given by the canonical map
    $\id \to \colim_n g^n$
    where $g$ is defined as the pushout
    \[
    \colim ( F( q_! p^* p_! q^*) \xleftarrow{F(q_!\nu q^*)} F(q_! q^*) \xrightarrow{\phantom{q_!} F(\epsilon) \phantom{q}} \id).
    \]
    The fact that $F$ locally preserves pushouts and sequential colimits implies that $\colim_n g^n \simeq F(\colim_n f^n)$, so that the adjunctions $F(\bar{p}_!) \dashv F(\bar{p}^*)$ and $j_! \dashv j^*$ have the same unit. 
    From this it follows immediately that $i^*$ is essentially surjective.
\end{proof}

\begin{remark} \label{rem: change-domain-pullback-preservation}
    The proof we just gave shows that the proposition still holds if we replace the source of $F \colon \Cat \to \Cat$ by a locally full subcategory $\sE \subseteq \Cat$, as long as the inclusion preserves the pullback square in question and locally preserves pushouts and sequential colimits.
\end{remark}

\begin{proof}[Proof of \cref{prop: pullbacks-preserved-by-derivatives}]
    Since $p^*$ and $q^*$ preserve limits and filtered colimits it follows that $\sA$ is presentable, pointed, and $\bar{p}^*$ and $\bar{q}^*$ preserve and jointly reflect limits and filtered colimits.
    This last point implies that filtered colimits and finite limits commute in $\sA$, since this holds in $\sB$, $\sC$, and $\sD$.
    Hence $\sA$ is differentiable and $\bar{p}^*$ and $\bar{q}^*$ are reduced and finitary.
    Moreover, the square defines a pullback square in the $2$-category $\diff$.

    In order to prove that $\partial_*$ preserves the pullback square, it suffices to show this holds after postcomposition with any corepresentable functor.
    So let $x \in \MMor$ and write
    \[
    F = \MMor(x, -) \circ \partial_* \colon \diff \to \Cat.
    \]
    Note that $F$ satisfies the assumptions from \cref{prop: 2-cat-pullback-pres-general}:
    \begin{itemize}
        \item[(a)] Every functor in the image of $F$ preserves filtered colimits, as follows from \cref{prop: exactness-composition-mor};
        \item[(b)] $F$ locally preserves small colimits: this holds since it is true for the functors $\partial_*$ and $\MMor(x, -)$ separately by \cref{prop: local-exactness-derivatives,prop: exactness-composition-mor}.
    \end{itemize}
    Applying the previous proposition and remark to $F$, the result immediately follows.
\end{proof}

\section{The product rule}\label{sec:prod-rule}

The goal of this section is to establish a \emph{product rule} for the Goodwillie derivatives of a functor $F \colon \cC \to \cD$, which is compatible with the $(\partial_* \id_\cD, \partial_* \id_\cC)$-bimodule structure on $\partial_* F$ obtained through the chain rule (see \cref{remark:bimodule-structure-on-partialF}).
To describe it, let us for simplicity assume that $\Sp \cC = \Sp$ and $\cD = \Sp$.\footnote{The actual results are proved in the more general situation where $\Sp \cC = \cD = \LSp$ is a stable Bousfield localization of $\Sp$ in the sense of \cref{ssec:localizations-of-spectra}.}
We then obtain a Day convolution symmetric monoidal structure  $\circledast$ on $\SSeq(\Sp \cC,\Sp) \simeq \Fun(\Fin^\simeq,\Sp)$.
The Day convolution $\circledast$ lifts to a symmetric monoidal structure on $\RMod_{\partial_* {\id_\cC}}(\SSeq(\Sp,\Sp))$ such that both the free and forgetful functors
\[\mathrm{free} : (\SSeq(\Sp,\Sp),\circledast) \rightleftarrows {(\RMod_{\partial_* {\id_\cC}}(\SSeq(\Sp,\Sp)), \circledast)} : \mathrm{fgt}\]
are strong symmetric monoidal.

The main result of this section is then as follows:

\begin{theorem}\label{thm:product-rule-to-spectra}
    Let $\cC$ be a differentiable category such that $\Sp \cC \simeq \Sp$.
    Then the Goodwillie derivatives functor
    \[\partial_* \colon \Fun^{\omega}(\sC, \Sp) \to \RMod_{\partial_* {\id_\cC}}(\SSeq(\Sp,\Sp)) \]
    is strong symmetric monoidal with respect to the pointwise tensor product on $\Fun^{\omega}(\sC, \Sp)$ and the Day convolution $\circledast$ on $\RMod_{\partial_* {\id_\cC}}(\SSeq(\Sp,\Sp))$.
\end{theorem}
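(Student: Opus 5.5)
The plan is to follow the strategy advertised in the introduction: deduce the product rule from the fact that $\partial_* \colon \diff \to \MMor$ preserves finite products (\cref{cor:partial-to-mmor-preserves-prod}).

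First I treat the reduced, nonunital case.
The category $\Sp$ with its tensor product is a commutative monoid in $\diff$ under the cartesian product. To be precise, the tensor product $\otimes \colon \Sp \times \Sp \to \Sp$ is reduced and finitary, so it makes $\Sp$ a nonunital commutative algebra object in $\diff$; the unit functor $\ast \to \Sp$ picking out $\Sph$ is not reduced, which is why unitality is deferred. Applying $\partial_*$, which preserves finite products, yields a nonunital commutative algebra structure on $\partial_*\Sp = (\Sp, \unit_\Sp)$ in $\MMor$. Its multiplication is $\partial_*(\otimes) \in \SSeq(\Sp \times \Sp, \Sp)$, i.e.\ the bilinear functor concentrated in arity $2$ in the mixed variable. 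Applying the corepresentable functor $\MMor((\Sp\sC, \partial_*\id_\sC), -)$, which preserves products and hence commutative monoids, gives a nonunital symmetric monoidal structure on $\RMod_{\partial_*\id_\sC}(\SSeq(\Sp\sC, \Sp))$. On the other side, $\Fun^{\ast,\omega}(\sC, -)$ sends $\Sp$ to the pointwise tensor product on $\Fun^{\ast,\omega}(\sC,\Sp)$. The derivatives functor is the component at $\sC$ of a natural transformation $\diff(\sC, -) \Rightarrow \MMor(\partial_*\sC, \partial_* -)$ of product-preserving functors, so it is nonunital strong symmetric monoidal.

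The second step is to identify the induced structure on $\RMod_{\partial_*\id_\sC}(\SSeq(\Sp,\Sp))$ with the Day convolution $\circledast$. Concretely, $M \otimes N$ is $\partial_*(\otimes) \circ (M, N)$, the composition product of the arity-$2$ symmetric sequence with the pair $(M,N) \in \SSeq(\Sp, \Sp\times\Sp)$. By the composition product formula this is $\bigoplus \Ind(M_i \otimes N_j)$, which is the Day convolution, with the right module structure acting diagonally via the distributivity $(S\circledast T)\circ U \simeq (S\circ U)\circledast(T\circ U)$. To make this canonical at the level of symmetric monoidal structures, I would compare with \cref{def:convolution-product-sseq}: both structures lift the Day convolution on $\SSeq$ along the forgetful functor, and both are compatible with free modules. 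Using \cref{prop:day-convolution-is-indeed-lift} and checking agreement on free modules and sifted colimits should then identify them. I expect this identification of two symmetric monoidal structures, not just of tensor products, to be the main obstacle. The first thing I would try is to show that both are the unique lift for which $\mathrm{free}$ is symmetric monoidal and $\otimes$ preserves sifted colimits in each variable.

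Finally I pass to the unital, unreduced statement. An arbitrary finitary $F$ splits as $F \simeq \red F \oplus \const_{F(\ast)}$, and by \cref{lem:partial-vs-reduction} and \cref{def:definition-partial0} the derivatives $\partial_*F$ are $\partial_*\red F$ with $F(\ast)$ added in arity $0$. I would extend the monoidal structure by hand: $\const_\Sph$ goes to the unit $\Sph$ in arity $0$, and for products of reduced and constant functors the formula $(F\otimes G)$ decomposes bilinearly, matching the arity-$0$ part of Day convolution. Alternatively, I could redo the first step with the pointed category $\Sp$ replaced by an augmented version that makes the unit reduced, for instance working with $\Fun^\omega(\sC,\Sp)$ as reduced functors out of $\sC_+$-type constructions. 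In either case the coherence for the unit is a formal extension once the nonunital structure is in place.
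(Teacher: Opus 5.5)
\paragraph{Overall.} Your first step is the paper's general product rule (\cref{thm:General-prod-rule} together with \cref{lem:naturality-pointwise-tensor}), and your description of $\partial_*(\otimes)$ is \cref{lem:hessian}. The genuine gap is in your second step, which carries the real content of the theorem.

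\paragraph{The gap in the second step.} The formula $\partial_*(\otimes)\circ(M,N)\simeq\bigoplus\Ind(M_i\otimes N_j)$ identifies tensor products of objects. It does not identify symmetric monoidal structures, and the uniqueness principle you propose does not close this.

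First, it presupposes that the derived structure on $\SSeq(\Sp,\Sp)=\MMor((\Sp,\unit_\Sp),(\Sp,\unit_\Sp))$ is Day convolution as a symmetric monoidal structure. That is just the case $\cO=\unit_\Sp$ of the problem, and you have not proved it.

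Second, requiring $\free$ to be strong symmetric monoidal and $\otimes$ to preserve sifted colimits does not determine the lift. It pins down $\otimes$ on free modules and on maps of the form $\free(f)$. A general map $\free(X)\to\free(Y)$ is a map $X\to Y\circ\cO$, and it need not have this form: the face map $M\circ\mu$ of the bar resolution is a counit, not $\free$ of anything. A lift along the monadic functor $\fgt$ is extra structure on the monad $-\circ\cO$, roughly an opmonoidal structure compatible with the multiplication. Nothing in your proposal shows that this structure is unique.

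\paragraph{The missing idea.} Compare one level up, on the object $(\LSp,\unit_\LSp)$ of $\pressymst\subseteq\MMor$, rather than on the mapping category $\RMod_\cO$. Both structures are pointwise structures induced by nonunital commutative monoid structures on this one object. The derived structure is so by construction. Day convolution is so by definition (\cref{prop:Day-convolution-as-pointwise-tensor}, \cref{def:convolution-product-sseq}). It therefore suffices to show that the two monoids agree. This handles every $\cO$ at once and is automatically compatible with the naturality that makes $\partial_*$ strong monoidal.

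The required rigidity is \cref{prop:uniqueness-monoid-LSp}: a nonunital commutative monoid on $\LSp$ in $\pressymstc$ with the ``Hessian'' multiplication is unique. Its proof runs as follows.
\begin{itemize}
\item Use $\partial_*\circ\Lambda\simeq\id$ (\cref{prop: app-derivatives-lambda}) to transport the question to $\CAlg^{\mathrm{nu}}(\preslst)$, where the multiplication becomes the usual $\otimes$.
\item There $L\Sph$ is a quasi-unit, so the structure is unital by Lurie's quasi-unitality result.
\item It is then unique because $\LSp$ is idempotent in $\preslst$.
\end{itemize}
Finally, one checks via Yoneda that the multiplication of the Day monoid is the Hessian (\cref{cor:Day-monoid-equals-derived-monoid}).

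\paragraph{The unital step.} Your direction is right, but the coherences are not obtained ``by hand''. The paper applies the unitalization left adjoint $\CAlg^{\mathrm{nu}}(\presl_\LSp)\to\CAlg(\presl_\LSp)$, $\cA\mapsto\cA\oplus\LSp$. Before doing so it shows that both sides are symmetric monoidal unitalizations of their reduced or positive parts (\cref{lem:nonreduced-decomposition-fun,lem:decomposition-rmod}). This uses presentability of $\Fun^{\ast,\omega}(\cC,\LSp)$ and the condition $\cO(0)=0$.
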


This is proved as \cref{thm:convolution-product-rule-unital} below.

\begin{remark}
    In this theorem, we work with the category $\Fun^{\omega}(\sC,\Sp)$ of \emph{all} finitary functors, not just the reduced ones, with the convention that $\partial_0 F = F(*)$ (cf.\ \cref{ssec:unreduced-functors}).
    One reason to consider unreduced functors is that the unit $\const_\Sph$ of $\Fun^{\omega}(\sC,\Sp)$ is not reduced, hence $\Fun^{*,\omega}(\sC,\Sp)$ is merely a \emph{nonunital} symmetric monoidal category.
    We describe how to refine $\partial_* \colon \Fun^{\ast,\omega}(\sC,\Sp) \to \RMod_{\partial_*{\id_\cC}}(\SSeq(\Sp, \Sp))$ to a functor $\Fun^{\omega}(\sC,\Sp) \to \RMod_{\partial_*{\id_\cC}}(\SSeq(\Sp, \Sp))$ in \cref{ssec:unital-prod-rule} below.
\end{remark}

The strategy of the proof is surprisingly similar to a classical calculus argument that derives the product rule from the chain rule: writing $\mu$ for the multiplication map $\bbR \times \bbR \to \bbR$, we see that
\[(f \cdot g)'(x) = \frac{\partial \mu}{\partial f} \frac{\mathrm{d}f}{\mathrm{d}x} + \frac{\partial \mu}{\partial g} \frac{\mathrm{d}g}{\mathrm{d}x} = g(x)f'(x) + f(x)g'(x)\]
by the multivariable chain rule.
In other words, one obtains the product rule from the chain rule by deriving the multiplicative structure on $\bbR$.
We will similarly prove \cref{thm:product-rule-to-spectra} by deriving the tensor product of $\Sp$.

In fact, this strategy not only works for $\Sp$, but for any (nonunital) commutative monoid in $\Diff$.
More precisely, we obtain the following (proved as \cref{thm:General-prod-rule} below):

\begin{theorem}\label{thm:General-prod-rule-intro}
    Let $\cC$ be a differentiable category and $\cD$ a (nonunital) commutative monoid in $\Diff$.
    Then there exists a (nonunital) symmetric monoidal structure on $\BMod_{(\partial_* {\id_{\cD}}, \partial_* {\id_\cC})}$ such that the derivatives functor
    \[\partial_* \colon \Fun^{\ast,\omega}(\cC,\cD) \to \BMod_{(\partial_* {\id_{\cD}}, \partial_* {\id_\cC})}\]
    is (nonunitally) strong symmetric monoidal with respect to the pointwise symmetric monoidal structure on $\Fun^{\ast,\omega}(\cC,\cD)$.
\end{theorem}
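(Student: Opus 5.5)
The plan is to derive the tensor product on $\cD$, in analogy with the classical calculus argument sketched above. Since $\cD$ is a (nonunital) commutative monoid in $\Diff$ with respect to the cartesian product, and $\partial_* \colon \Diff \to \MMor$ preserves finite products (\cref{cor:partial-to-mmor-preserves-prod}), the image $\partial_*(\cD) = (\Sp\cD, \partial_*{\id_\cD})$ inherits the structure of a (nonunital) commutative monoid in $\MMor$. Concretely, a commutative monoid in a $2$-category with finite products is a product-preserving functor out of the (nonunital) Lawvere-type $2$-category of finite sets, i.e.\ a functor $\Fin^{\mathrm{op}}_{(\geq 1)} \to \sX$ satisfying the Segal condition. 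Composition with $\partial_*$ preserves the Segal condition by the product preservation result.

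Next, we apply corepresentable functors. The functor $\MMor(\partial_*\cC, -) \colon \MMor \to \Cat$ preserves products, hence sends commutative monoids to commutative monoids in $\Cat$, that is, to (nonunital) symmetric monoidal categories. This yields the desired symmetric monoidal structure on
\[
\MMor(\partial_*\cC, \partial_*\cD) \simeq \BMod_{(\partial_*{\id_\cD}, \partial_*{\id_\cC})}.
\]
The tensor product of $M$ and $N$ is $\partial_*(\otimes) \circ_{\partial_*\id_{\cD\times\cD}} (M \times N)$, where $M \times N$ is formed using the product structure of \cref{ssec:products-MMor}. Similarly, $\Diff(\cC, -)$ sends $\cD$ to $\Fun^{\ast,\omega}(\cC,\cD)$ with its pointwise structure: the pointwise tensor is $\otimes \circ (F,G)$, and this is exactly the image of the monoid structure under the corepresentable functor.

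The derivatives functor then yields a natural transformation of product-preserving functors $\Diff(\cC, -) \Rightarrow \MMor(\partial_*\cC, \partial_*(-))$ of $2$-functors $\Diff \to \Cat$, given by the mapping-category components of $\partial_*$. Because $\partial_*$ is a strong $2$-functor, this transformation is natural in $1$-morphisms up to coherent equivalence, so it is a strong (pseudo)natural transformation. It is also compatible with products: on $\cD \times \cD$ it is identified with $\partial_* \times \partial_*$, via the identification $\partial_*(\cD\times\cD) \simeq \partial_*\cD \times \partial_*\cD$. Evaluating on the commutative monoid $\cD$, a product-compatible pseudonatural transformation between product-preserving functors sends commutative monoids to strong symmetric monoidal functors. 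This gives the strong symmetric monoidal structure on $\partial_*$.

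The main obstacle is the coherence in this last step. One must check that the transformation built from $\partial_*$ on mapping categories really is a natural transformation of product-preserving functors $\Diff \to \Cat$, rather than merely lax. The cleanest route I would try first is the following. Consider the composite $2$-functor $\MMor(\partial_*\cC,-)\circ\partial_*$, and note that the action of $\partial_*$ on hom-categories gives a $2$-natural map from $\Diff(\cC,-)$ by the $2$-categorical Yoneda lemma: it corresponds to the element $\partial_*{\id_\cC} = \id_{\partial_*\cC} \in \MMor(\partial_*\cC,\partial_*\cC)$. The Yoneda correspondence then makes the transformation automatically strong. Compatibility with products is then a pointwise condition, since the comparison map for $\cD\times\cD$ is an equivalence by \cref{cor:partial-to-mmor-preserves-prod}. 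Strongness, rather than mere laxness, of the resulting monoidal structure follows because the transformation's naturality squares are invertible.
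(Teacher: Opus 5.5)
Your proposal is correct and is essentially the paper's argument: $\partial_*$ preserves finite products (\cref{cor:partial-to-mmor-preserves-prod}), so $(\Sp\cD,\partial_*\id_\cD)$ is a commutative monoid in $\MMor$, the hom-functors induce the pointwise structures (\cref{lem:pointwise-tensor-prod}), and the naturality of $\partial_*$ on mapping categories makes it strong monoidal (\cref{lem:naturality-pointwise-tensor}). The only deviation is that you obtain this naturality from the $2$-categorical Yoneda lemma applied to $\id_{\partial_*\cC}$, with the source $\cC$ fixed, rather than from the paper's map of twisted-arrow Segal copresheaves; this is a valid shortcut here. One small slip in your aside: commutative monoids are Segal functors out of $\Fin_*$, not product-preserving functors out of $\Fin^{\op}$, which would amount to just objects.
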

For example, $\Spc_*$ with its smash product defines a nonunital commutative monoid in $\Diff$, hence the functor
\[\partial_* \colon \Fun^{\ast, \omega}(\sC, \Spc_*) \to \BMod_{(\partial_* {\id_{\Spc_*}}, \partial_* {\id_\cC})}(\SSeq(\Sp,\Sp)) \]
refines to a nonunital strong symmetric monoidal functor with respect to the pointwise smash product on $\Fun^{\ast, \omega}(\cC,\Spc_*)$.
However, the (nonunital) symmetric monoidal structure that we obtain on the target $\BMod_{(\partial_* {\id_{\Spc_*}}, \partial_* {\id_\cC})}$ does not admit a straightforward description in terms of Day convolution.
In particular, it is not simply a lift of the Day convolution monoidal structure on $\SSeq(\Sp,\Sp)$ to $\BMod_{(\partial_* {\id_{\Spc_*}}, \partial_* {\id_\cC})}$ and it fundamentally depends on extra structure of the operad $\partial_* \id_{\Spc_*}$.
Moreover, unlike \cref{thm:product-rule-to-spectra} there is no obvious way to extend this to a \emph{unital} strong symmetric monoidal functor, since there is no analogue of $\partial_0 F$ for functors $F \colon \cC \to \Spc_*$.

A brief outline of this section is as follows.
We start by discussing monoids in general 2-categories and the ``pointwise'' monoidal structure they induce on mapping categories.
From this we directly deduce the general product rule, \cref{thm:General-prod-rule-intro}.
The resulting symmetric monoidal structure on $\BMod_{(\partial_* {\id_{\Spc_*}}, \partial_* {\id_\cC})}$ is generally quite hard to describe, so in \cref{ssec:conv-on-right-modules,ssec:comparing-convolution-prod-rule} we relate it to Day convolution and prove the nonunital version of \cref{thm:product-rule-to-spectra}.
The main idea is to show that the Day convolution symmetric monoidal structure on $\SSeq(\Sp)$ is, in a certain precise sense, uniquely determined by its multiplication functor (see \cref{prop:uniqueness-monoid-LSp}).
We then describe how to upgrade the nonunital version of \cref{thm:product-rule-to-spectra} to the unital version in \cref{ssec:unital-prod-rule}.
We conclude with \cref{ssec:coend-operad-suspension}, which uses these results to relate the operad $\partial_* \id_\cC$ to the coendomorphism operad of $\Sigma^\infty_\cC$ in $\Fun^\omega(\cC,\Sp)$, proving \cref{introcor: product-rule-koszul-duality}.

\subsection{Monoids and pointwise tensor products}

Throughout this subsection, we will write $\cE^\otimes \to \Fin_*$ to either denote the $E_n$-operad or the nonunital $E_n$-operad (for some $1 \leq n \leq \infty$).
The results of this subsection and the next one have analogues for arbitrary $\infty$-operads $\cO^\otimes \to \Fin_*$ in the sense of \cite[\S 2.1.1]{HA}, but for ease of exposition we will focus only on these cases.

\begin{definition}
    Let $\sX$ be a 2-category.
    An \emph{$\cE$-monoid} in $\sX$ is defined as a functor $F \colon \cE^\otimes \to \sX$ such that for every $n$, the Segal map
    \[F(n) \to F(1) \times \cdots \times F(1)\]
    is an equivalence (cf.\ \cite[Definition 2.4.2.1]{HA}).
    We write $\Mon_\cE(\sX)$ for the full sub-2-category of $\Funtwo(\cE^\otimes,\sX)$ spanned by the $\cE$-monoids.
    If $\cE$ is the (nonunital) $E_\infty$-operad, then we call an $\cE$-monoid a (nonunital) commutative monoid, and we write $\CMon(\sX)$ and $\CMon^\mathrm{nu}(\sX)$ for their categories.
\end{definition}

\begin{example}
    An $E_n$-monoid in $\Cat$ is an $E_n$-monoidal category in the sense of \cite[Definition 2.1.2.13]{HA}.
\end{example}

\begin{example}\label{exa:differentable-P-monoids}
    We will call an $\cE$-monoid in $\Diff$ a \emph{differentiable $\cE$-monoidal category}.
    Concretely, these are $\cE$-monoidal categories $\cC$ such that
    \begin{enumerate}
        \item the underlying category $\cC$ is differentiable,
        \item the tensor product $\otimes \colon \cC \times \cC \to \cC$ is reduced and finitary, and
        \item (in the unital case) the unit of $\cC$ is the zero object.
    \end{enumerate}
\end{example}

\begin{example}
    By the previous example, it follows that if $\cC$ is a presentably $E_n$-monoidal category whose underlying category is differentiable, then $\cC$ is a differentiable \emph{nonunital} $E_n$-monoidal category.
    However, note that $\cC$ is almost never a unital $E_n$-monoid in $\Diff$. Namely, this would imply that the unit of $\cC$ is the zero object, hence that $c \simeq c \otimes \varnothing \simeq \varnothing$ for any $c$ in $\cC$. This can only happen if $\cC = *$.
\end{example}

\begin{lemma}\label{lem:pointwise-tensor-prod}
    Let $\sX$ be a 2-category and $A$ an $\cE$-monoid in $\sX$.
    Then the representable functor $\sX(-,A) \colon \sX^\op \to \Cat$ refines to a functor $\sX(-,A) \colon \sX^\op \to \Mon_\cE(\Cat)$ such that
    \[\begin{tikzcd}[ampersand replacement=\&]
        \& {\Mon_\cE(\Cat)} \\
        {\sX^\op} \& \Cat
        \arrow[from=1-2, to=2-2]
        \arrow[from=2-1, to=1-2, "{\sX(-,A)}"]
        \arrow[from=2-1, to=2-2, "{\sX(-,A)}"']
    \end{tikzcd}\]
    commutes.
\end{lemma}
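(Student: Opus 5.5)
The idea is to obtain the monoidal refinement by postcomposing an $\cE$-monoid with a product-preserving functor. An $\cE$-monoid $A$ in $\sX$ is a functor $A \colon \cE^\otimes \to \sX$ satisfying the Segal condition. Consider the $2$-categorical Yoneda embedding, or more concretely the functor
\[
\sX \to \Fun_2(\sX^\op, \Cat), \qquad y \mapsto \sX(-,y).
\]
This functor preserves all products that exist in $\sX$, since $\sX(z, y_1 \times \cdots \times y_n) \simeq \sX(z,y_1) \times \cdots \times \sX(z,y_n)$ naturally in $z$; this is simply the definition of a product in a $2$-category. Composing with $A$ gives a functor $\cE^\otimes \to \Fun_2(\sX^\op,\Cat)$ that still satisfies the Segal condition. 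Since limits in a functor $2$-category are computed pointwise, it is therefore an $\cE$-monoid in $\Fun_2(\sX^\op,\Cat)$.

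Next we transpose. There are equivalences
\[
\Mon_\cE(\Fun_2(\sX^\op,\Cat)) \subseteq \Fun_2(\cE^\otimes, \Fun_2(\sX^\op,\Cat)) \simeq \Fun_2(\sX^\op, \Fun_2(\cE^\otimes,\Cat)).
\]
The Segal condition is pointwise in $\sX^\op$, so under this equivalence the object lands in $\Fun_2(\sX^\op, \Mon_\cE(\Cat))$. This gives the desired functor $\sX(-,A) \colon \sX^\op \to \Mon_\cE(\Cat)$.

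Commutativity of the triangle is checked by composing with the forgetful functor $\Mon_\cE(\Cat) \to \Cat$, which is evaluation at $\langle 1\rangle \in \cE^\otimes$. Under the transposition, this corresponds to evaluating $A$ at $\langle 1 \rangle$ and then applying the Yoneda functor, which returns $\sX(-, A(\langle 1\rangle)) = \sX(-,A)$.

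The only step I expect to require care is the currying equivalence for functor $2$-categories, $\Fun_2(\cE^\otimes,\Fun_2(\sX^\op,\Cat)) \simeq \Fun_2(\sX^\op,\Fun_2(\cE^\otimes,\Cat))$, together with the fact that the representable functor $y \mapsto \sX(-,y)$ is itself a functor of $2$-categories. Both are standard, and I would cite the $2$-categorical Yoneda lemma from \cite[Appendix A]{blansblom2025chainrulegoodwilliecalculus}. Here $\cE^\otimes$ is just a $1$-category viewed as a $2$-category, so no lax issues arise.

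Finally, it is worth noting the concrete description. For $x \in \sX$, the monoidal structure on $\sX(x,A)$ sends $(f,g)$ to the composite
\[
x \xrightarrow{(f,g)} A \times A \xrightarrow{\mu} A,
\]
that is, the pointwise tensor product. Precomposition with a $1$-morphism $h \colon x' \to x$ is strong monoidal, because $(f,g)\circ h \simeq (fh,gh)$.
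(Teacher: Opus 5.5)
Your argument is correct and is essentially the paper's. The paper postcomposes with the hom-bifunctor $\sX(-,-)\colon \sX^\op\times\sX\to\Cat$ (obtained by straightening Nuiten's twisted arrow construction) to get a functor $\sX^\op\times\Mon_\cE(\sX)\to\Mon_\cE(\Cat)$, and then plugs in $A$. That is just the uncurried form of your Yoneda-then-transpose construction, resting on the same input: representables preserve cartesian products in the second variable. The only point the paper treats more carefully is that the comparison map in the definition of a $2$-categorical product agrees with the one induced by the functoriality of $\sX(-,-)$, which you take as definitional.
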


\begin{proof}
    The hom-functor $\sX(-,-) \colon \sX^\op \times \sX \to \Cat$ is the 2-functor obtained by straightening the Segal copresheaf $\Tw(\sX) \to \sX^\op \times \sX$ from \cite[Example 4.36]{Nuiten2023StraighteningSegalSpaces}.
    Now form the composite
    \[\sX^\op \times \Funtwo(\cE^\otimes,\sX) \xrightarrow{\mathrm{\mathrm{const}} \times \id} \Funtwo(\cE^\otimes,\sX^\op \times \sX) \xrightarrow{\sX(-,-)} \Funtwo(\cE^\otimes,\Cat).\]
    It follows by unwinding the functoriality of $\sX(-,-)$ in the second variable (cf.\ \cite[Proposition A.2.31]{blansblom2025chainrulegoodwilliecalculus}) and the definition of cartesian products in $2$-categories (see e.g.\ \cite[Definition A.2.19]{blansblom2025chainrulegoodwilliecalculus}) that $\sX(-,-)$ preserves cartesian products in the second variable.
    The composite defined above thus restricts to a functor $\sX^\op \times \Mon_\cE(\sX) \to \Mon_\cE(\Cat)$.
    The result follows by inserting $A$ in the second variable.
\end{proof}

\begin{definition}\label{def:pointwise-tensor}
    In the situation of \cref{lem:pointwise-tensor-prod}, we refer to the $\cE$-monoidal structure on $\sX(x,A)$ as the \emph{pointwise $\cE$-monoidal structure}.
\end{definition}

The pointwise monoidal structure on $\sX(x,A)$ satisfies the following naturality property:

\begin{lemma}\label{lem:naturality-pointwise-tensor}
    Let $F \colon \sX \to \sY$ be a functor between 2-categories that preserves finite cartesian products and let $A$ be an $\cE$-monoid in $\sX$.
    Then the induced functor
    \[F \colon \sX(x,A) \to \sY(F(x), F(A))\]
    is strong $\cE$-monoidal with respect to the pointwise $\cE$-monoidal structures, naturally in $x$.
\end{lemma}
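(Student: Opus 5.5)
We prove this by revisiting the construction in the proof of \cref{lem:pointwise-tensor-prod}. There the pointwise structure on $\sX(x,A)$ is obtained by composing
\[\sX^\op \times \Funtwo(\cE^\otimes,\sX) \xrightarrow{\const \times \id} \Funtwo(\cE^\otimes,\sX^\op \times \sX) \xrightarrow{\sX(-,-)} \Funtwo(\cE^\otimes,\Cat)\]
and then restricting to monoids. The goal is to compare this composite for $\sX$ with the analogous one for $\sY$ along $F$.

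The starting point is that $F$ acts on hom-categories. It induces a natural transformation of functors $\sX^\op \times \sX \to \Cat$,
\[\phi \colon \sX(-,-) \Rightarrow \sY(F-,F-),\]
given by the action of $F$ on mapping categories. One way to construct $\phi$ concretely is from the induced map of twisted arrow Segal copresheaves $\Tw(\sX) \to \Tw(\sY)$ lying over $F^\op \times F$, followed by straightening.

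Next we whisker $\phi$ with the diagram
\[\const(x) \times A \colon \cE^\otimes \to \sX^\op \times \sX.\]
This yields a natural transformation of functors $\cE^\otimes \to \Cat$,
\[\sX(x, A(-)) \Rightarrow \sY(Fx, FA(-)).\]
Since $F$ preserves finite cartesian products, $F \circ A$ is again an $\cE$-monoid in $\sY$, so the target is the pointwise $\cE$-monoidal structure on $\sY(Fx,FA)$ from \cref{lem:pointwise-tensor-prod}. A natural transformation between $\cE$-monoids in $\Cat$, that is, a morphism in $\Mon_\cE(\Cat) \subseteq \Funtwo(\cE^\otimes,\Cat)$, is exactly a strong $\cE$-monoidal functor. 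Its component at $\langle 1\rangle$ is $F_{x,A}$.

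Naturality in $x$ follows by keeping $x$ as a variable rather than fixing it. Concretely, we whisker $\phi$ with
\[\sX^\op \times \cE^\otimes \xrightarrow{\id \times A} \sX^\op \times \sX.\]
This gives a transformation of functors $\sX^\op \to \Funtwo(\cE^\otimes,\Cat)$, from $\sX(-,A)$ to $\sY(F-,FA)$. Both lie in $\Mon_\cE(\Cat)$ pointwise, so the transformation factors through it.

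The main obstacle is to verify two points.
\begin{itemize}
\item The component at each $\langle n\rangle$ is compatible with the Segal maps. This amounts to the square relating $\sX(x,A^n) \simeq \sX(x,A)^n$ to $\sY(Fx,(FA)^n) \simeq \sY(Fx,FA)^n$ commuting. It does, because the Segal equivalences for hom-categories come from postcomposition with the projections, and $F$ preserves these projections. The preservation of cartesian products by $F$ is exactly what guarantees that the target is a monoid at all, and hence that this check makes sense.
\item The transformation $\phi$ exists as a genuinely coherent $(\infty,2)$-natural transformation, and not merely objectwise. I would establish this via the functoriality of the twisted arrow construction in the cited references \cite{Nuiten2023StraighteningSegalSpaces} and \cite[Appendix A]{blansblom2025chainrulegoodwilliecalculus}.
\end{itemize}
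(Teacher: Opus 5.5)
Your proposal is correct and follows essentially the same route as the paper. You build the transformation $\sX(-,-) \Rightarrow \sY(F-,F-)$ from the map of twisted arrow Segal copresheaves $\Tw(\sX) \to \Tw(\sY)$ over $F^\op \times F$, and then run it through the construction of \cref{lem:pointwise-tensor-prod}, using that $F$ preserves products so both sides land in $\Mon_\cE(\Cat)$. Your first ``obstacle'' (compatibility with the Segal maps) needs no separate check: $\Mon_\cE(\Cat)$ is a full sub-2-category of $\Funtwo(\cE^\otimes,\Cat)$, so any natural transformation between monoids is already a morphism of monoids.
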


\begin{proof}
    Since the projection $\Tw(\sX) \to \sX^\op \times \sX$ in \cite[Example 4.36]{Nuiten2023StraighteningSegalSpaces} is given by restriction along $[n]^\op \hookrightarrow [n]^\op \star [n] \hookleftarrow [n]$, it follows that
    \[\begin{tikzcd}[ampersand replacement=\&]
        {\Tw(\sX)} \& {\Tw(\sY)} \\
        {\sX^\op \times \sX} \& {\sY^\op \times \sY}
        \arrow[from=1-1, to=1-2, "{\Tw(F)}"]
        \arrow[from=1-1, to=2-1]
        \arrow[from=1-2, to=2-2]
        \arrow[from=2-1, to=2-2, "F^\op \times F"]
    \end{tikzcd}\]
    commutes.
    It is easily verified that this is a map of Segal copresheaves, hence by \cite[Corollary 4.33]{Nuiten2023StraighteningSegalSpaces} we obtain a natural transformation $\sX(-,-) \Rightarrow \sY(F(-),F(-))$.
   Since $F$ preserves finite cartesian products, both the source and target of this natural transformation preserve cartesian products in the second variable.
   It follows by the construction from \cref{lem:pointwise-tensor-prod} that we obtain a natural transformation
   \[\sX(-,A) \Rightarrow \sY(F(-),F(A))\]
   of functors $\sX^\op \to \Mon_\cE(\Cat)$.
\end{proof}

\subsection{The general product rule}\label{ssec:general-product-rule}

Our general product rule follows directly from the previous observations about $\cE$-monoids.

\begin{theorem}\label{thm:General-prod-rule}
    Let $\cC$ be a differentiable category and $\cD$ a differentiable $\cE$-monoidal category.
    Then there exists an $\cE$-monoidal structure on $\BMod_{(\partial_* {\id_{\cD}}, \partial_* {\id_\cC})}$ such that the derivatives functor
    \[\partial_* \colon \Fun^{\ast,\omega}(\cC,\cD) \to \BMod_{(\partial_* {\id_{\cD}}, \partial_* {\id_\cC})}\]
    is strong $\cE$-monoidal with respect to the pointwise $\cE$-monoidal structure on $\Fun^{\ast,\omega}(\cC,\cD)$.
\end{theorem}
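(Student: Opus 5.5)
The plan is to combine \cref{lem:pointwise-tensor-prod} and \cref{lem:naturality-pointwise-tensor} with the fact that $\partial_* \colon \diff \to \MMor$ preserves finite products (\cref{cor:partial-to-mmor-preserves-prod}). Concretely, regard $\cD$ as an $\cE$-monoid in the $2$-category $\diff$, as in \cref{exa:differentable-P-monoids}. By \cref{lem:pointwise-tensor-prod} applied to $\sX = \diff$ and $A = \cD$, the mapping category $\diff(\cC, \cD) = \Fun^{\ast,\omega}(\cC,\cD)$ acquires a pointwise $\cE$-monoidal structure. The first small check is that this abstract structure agrees with the usual pointwise tensor product, i.e.\ that the multiplication is $(F,G) \mapsto \otimes \circ (F,G)$. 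This holds because products in $\diff$ are cartesian products of categories, so the $2$-functor $\diff(\cC,-)$ sends the Segal maps of $\cD$ to the evident equivalences $\Fun^{\ast,\omega}(\cC,\cD^{\times n}) \simeq \Fun^{\ast,\omega}(\cC,\cD)^{\times n}$.

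Next, since $\partial_*$ preserves finite cartesian products, postcomposing the monoid $\cE^\otimes \to \diff$ with $\partial_*$ yields a functor $\cE^\otimes \to \MMor$ whose Segal maps are still equivalences. In other words, $\partial_*\cD = (\Sp\cD, \partial_*\id_\cD)$ is an $\cE$-monoid in $\MMor$. Applying \cref{lem:pointwise-tensor-prod} again, now with $\sX = \MMor$, equips
\[
\MMor\bigl((\Sp\cC,\partial_*\id_\cC),(\Sp\cD,\partial_*\id_\cD)\bigr) \simeq \BMod_{(\partial_*\id_\cD, \partial_*\id_\cC)}(\SSeq(\Sp\cC,\Sp\cD))
\]
with a pointwise $\cE$-monoidal structure. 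This is the structure claimed to exist in the theorem.

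Finally, \cref{lem:naturality-pointwise-tensor} applied to $F = \partial_*$ (a strong $2$-functor by \cref{thm:chain-rule-cherry}, which preserves finite products) shows that the functor $\partial_* \colon \Fun^{\ast,\omega}(\cC,\cD) \to \BMod_{(\partial_*\id_\cD,\partial_*\id_\cC)}$ is strong $\cE$-monoidal. We also need the functor induced on mapping categories to be precisely the derivatives functor, which is part of \cref{thm:chain-rule-cherry}.

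The only genuine subtlety is in the unital case: there the monoid also includes a unit $\ast \to \cD$, i.e.\ the image of $\langle 0\rangle$. We need $\partial_*$ to preserve the terminal object, which is covered by \cref{cor:partial-to-mmor-preserves-prod} (finite products include the empty one) via \cref{rem: equational-description-terminal}. Beyond that, the main point to verify is the identification of the abstract pointwise structure on $\diff(\cC,\cD)$ with the usual pointwise tensor product, and I expect it to be a routine unwinding.
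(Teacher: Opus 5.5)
Your proposal is correct and is essentially the paper's own proof. Because $\partial_* \colon \diff \to \MMor$ preserves finite products (\cref{cor:partial-to-mmor-preserves-prod}), it carries $\cD$ to an $\cE$-monoid $(\Sp\cD,\partial_*\id_\cD)$ in $\MMor$; \cref{lem:pointwise-tensor-prod} then puts a pointwise $\cE$-monoidal structure on the mapping category of bimodules, and \cref{lem:naturality-pointwise-tensor} makes $\partial_*$ strong $\cE$-monoidal.
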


\begin{proof}
    The functor $\partial_* \colon \Diff \to \MMor$ preserves cartesian products by \cref{cor:partial-to-mmor-preserves-prod}.
    In particular, it takes $\cD$ to an $\cE$-monoid $(\Sp\cD, \partial_* \id_\cD)$ in $\MMor$.
    By \cref{lem:pointwise-tensor-prod}, we obtain an $\cE$-monoidal structure on
    \[\BMod_{(\partial_* \id_\cD, \cO)}(\SSeq(\cA,\Sp\cD)) \simeq \MMor((\sA,\cO), (\Sp\cD,\partial_* \id_\cD))\]
    for any stable presentable category $\sA$ and any $\cO \in \Alg(\SSeq(\cA,\cA))$.
    The desired result follows directly from \cref{lem:naturality-pointwise-tensor}.
\end{proof}

We will now unwind what the multiplication map of the resulting $\cE$-monoidal structure on $\BMod_{(\partial_* {\id_{\cD}}, \partial_* {\id_\cC})}$ looks like.
Let $\otimes \colon \cD \times \cD \to \cD$ be the multiplication map of $\cD$.
Applying $\partial_*$, we obtain a $(\partial_* \id_\cD, \partial_*\id_\cD \bartimes \partial_* \id_\cD)$-bimodule structure on $\partial_*(\otimes)$.
The multiplication map of the $\cE$-monoidal structure on $\BMod_{(\partial_* {\id_{\cD}}, \partial_* {\id_\cC})}$ is then given by the relative composition product
\[\begin{tikzcd}[ampersand replacement=\&, row sep = scriptsize]
	{\BMod_{(\partial_* {\id_{\cD}}, \partial_* {\id_\cC})} \times \BMod_{(\partial_* {\id_{\cD}}, \partial_* {\id_\cC})}} \\
	{\BMod_{(\partial_* {\id_{\cD}} \bartimes \partial_*{\id_{\cD}}, \partial_* {\id_\cC})}} \\
	{\BMod_{(\partial_* {\id_{\cD}}, \partial_* {\id_\cC})}.}
	\arrow["\simeq", from=1-1, to=2-1]
	\arrow["{\partial_*(\otimes) \circ_{\partial_* {\id_{\cD}} \bartimes \partial_*{\id_{\cD}}} (-) }", from=2-1, to=3-1]
\end{tikzcd}\]
If we further assume that $\cD$ is stable and that the tensor product preserves colimits in each variable separately, then this construction can be described rather explicitly.
To this end, let us briefly recall what the derivatives of functors in two variables look like.
Given a functor $F \colon \cA \times \cB \to \cE$ with $\cA$, $\cB$ and $\cE$ differentiable, its derivatives form a symmetric sequence
\[\partial_* F \colon \Sym(\Sp\cA \oplus \Sp\cB) \to \Sp\cE.\]
Observe that
\[\Sym(\Sp\cA \oplus \Sp\cB) \simeq \bigoplus_{n \geq 0} (\Sp\cA \oplus \Sp\cB)^{\otimes n}_{h\Sigma_n} \simeq \bigoplus_{m,n \geq 0} (\Sp\cA)^{\otimes m}_{h\Sigma_m} \otimes (\Sp\cB)^{\otimes n}_{h\Sigma_n},\]
(cf.\ \cite[Proposition 3.2.4.7]{HA}).
In other words, its derivatives can be thought of as a \emph{bisymmetric sequence}:
a collection of functors
\[\partial_{m,n} F \colon (\Sp\cA)^{\times m}_{h\Sigma_m} \times (\Sp\cB)^{\times n}_{h\Sigma_n} \to \Sp\cE \qquad \text{for } m,n \geq 0 \]
that preserve colimits in each variable separately.
We will say that $\partial_{m,n} F$ is the derivative of $F$ in \emph{bidegree} $(m,n)$.
Note that the ordinary $n$th derivative of $F$ is recovered by $\partial_n F = \bigoplus_{i+j = n} \Ind_{\Sigma_i \times \Sigma_j}^{\Sigma_n} \partial_{i,j} F$.
The $(m,n)$-th derivative $\partial_{m,n} F$ of $F$ can be computed by the composite
\begin{align*}\Fun^\omega(\cA \times \cB, \cE) &\xrightarrow{\hspace{1em}\mathclap{\simeq}\hspace{1em}} \Fun^\omega(\cA, \Fun^\omega(\cB,\cE)) \\\
&\xrightarrow{\hspace{1em}\mathclap{(\partial_n)_*}\hspace{1em}} \Fun^\omega(\cA,\FunL((\Sp\cB)^{\otimes n}_{h\Sigma_n}, \Sp\cE)) \\
&\xrightarrow{\hspace{1em}\mathclap{\partial_m}\hspace{1em}} \FunL((\Sp\cA)^{\otimes m}_{h\Sigma_m}, \FunL((\Sp\cB)^{\otimes n}_{h\Sigma_n}, \Sp\cE)) \\
&\xrightarrow{\hspace{1em}\mathclap{\simeq}\hspace{1em}} \FunL((\Sp\cA)^{\otimes m}_{h\Sigma_m} \otimes (\Sp\cB)^{\otimes n}_{h\Sigma_n}, \Sp\cE).\end{align*}
This can be deduced from the formula for the cross-effects and the fact that
\[(\partial_{m,n}F)(a_1,\ldots,a_m,b_1,\ldots,b_n) \simeq (\partial_{m+n}F)((a_1,0),\ldots,(a_m,0),(0,b_1),\ldots,(0,b_n)).\]
If $\cD$ is stable and $\otimes \colon \cD \times \cD \to \cD$ preserves colimits in each variable, it thus follows that $\partial_{*,*}(\otimes)$ is concentrated in bidegree $(1,1)$, where it is given by the tensor product $\otimes \colon \cD \times \cD \to \cD$ itself.
Applying this to a stable presentable localization $\LSp$ of $\Sp$, we obtain the following:

\begin{lemma}\label{lem:hessian}
    Let $\LSp$ be a localization of $\Sp$ in the sense of \cref{def:localization-of-spectra}.
    Then the derivatives symmetric sequence $\partial_* \mu_2$ of the multiplication map $\mu_2 \colon \LSp \times \LSp \to \LSp$ is concentrated in arity 2, where it is the projection
    \begin{align*}
    ({\color{cbblue} \LSp} \oplus {\color{cborange} \LSp})^{\otimes 2}_{h\Sigma_2} &\simeq ({\color{cbblue} \LSp} \otimes {\color{cbblue} \LSp} \oplus {\color{cbblue} \LSp} \otimes {\color{cborange} \LSp} \oplus {\color{cborange} \LSp} \otimes {\color{cbblue} \LSp} \oplus {\color{cborange} \LSp} \otimes {\color{cborange} \LSp})_{h\Sigma_2} \\
    &\to ({\color{cbblue} \LSp} \otimes {\color{cborange} \LSp} \oplus {\color{cborange} \LSp} \otimes {\color{cbblue} \LSp})_{h\Sigma_2} \simeq (\LSp \oplus \LSp)_{h\Sigma_2} \simeq \LSp.
    \end{align*}
    Here the colours indicate onto which summands we are projecting. \qed
\end{lemma}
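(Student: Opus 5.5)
We deduce \cref{lem:hessian} from the bisymmetric description of derivatives of two-variable functors given just above, specialised to $\cA = \cB = \cE = \LSp$ and $F = \mu_2$.

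First, we compute the bisymmetric derivatives $\partial_{m,n}\mu_2$ by the iterated formula: curry $\mu_2$ to the functor $\LSp \to \Fun^\omega(\LSp,\LSp)$ given by $a \mapsto a \otimes -$. Since $\LSp$ is stable and $\otimes$ preserves colimits in each variable, for fixed $a$ the functor $a \otimes -$ is exact. It is therefore $1$-homogeneous, and by \cref{def:definition-partial0} its derivatives are concentrated in arity $1$; note that $\partial_0$ vanishes because $a \otimes 0 \simeq 0$. So $(\partial_n)_*$ kills everything except $n = 1$, where it yields the functor $a \mapsto (a \otimes -) \in \FunL(\LSp,\LSp)$. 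This functor is again exact in $a$, so applying $\partial_m$ leaves only $m = 1$. Hence $\partial_{*,*}\mu_2$ is concentrated in bidegree $(1,1)$, where it is the colimit-preserving functor $\LSp \otimes \LSp \to \LSp$ induced by $\mu_2$. In the stable setting this is just the multiplication of the idempotent algebra $\LSp$, which is an equivalence.

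Second, we translate back to the ordinary symmetric sequence. By the formula $\partial_n F = \bigoplus_{i+j=n}\Ind_{\Sigma_i\times\Sigma_j}^{\Sigma_n}\partial_{i,j}F$, the sequence $\partial_*\mu_2$ is concentrated in arity $2$. There it is $\Ind_{\Sigma_1\times\Sigma_1}^{\Sigma_2}$ of the $(1,1)$ term. Under the decomposition of $\Sym(\LSp\oplus\LSp)$ in arity $2$ as $(\LSp\oplus\LSp)^{\otimes 2}_{h\Sigma_2}$, this functor vanishes on the summands $\LSp\otimes\LSp$ coming from bidegrees $(2,0)$ and $(0,2)$. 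On the mixed part $(\LSp\otimes\LSp\oplus\LSp\otimes\LSp)_{h\Sigma_2}$, whose two summands are swapped freely by $\Sigma_2$, it is identified with a single copy $\LSp\otimes\LSp \simeq \LSp$. This is exactly the displayed projection.

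The only point needing care is the identification of the mixed orbits $(X\oplus \sigma X)_{h\Sigma_2}\simeq X$ together with the compatibility of $\partial_{m,n}$ with the splitting of $\Sym(\LSp\oplus\LSp)$ (via \cite[Proposition 3.2.4.7]{HA}). We expect this bookkeeping to be the main, though mild, obstacle. Everything else is immediate from the linearity of $\mu_2$ in each variable, which is why the paper marks the lemma with \qed.
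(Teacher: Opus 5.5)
Your proposal is correct and follows the paper's own argument. The paper likewise uses the iterated formula for the bisymmetric derivatives $\partial_{m,n}$ to see that, for a stable target whose tensor product preserves colimits in each variable, $\partial_{*,*}(\otimes)$ is concentrated in bidegree $(1,1)$ where it is $\otimes$ itself, and then specializes to $\LSp$ (with $\LSp\otimes\LSp\simeq\LSp$) to read off the displayed projection.
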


\begin{remark}
    Observe the analogy with the Hessian matrix of the multiplication map $\bbR^2 \to \bbR$:
    \[\begin{pmatrix}
        \dfrac{\partial^2 xy}{\partial x^2} & \dfrac{\partial^2 xy}{\partial x \partial y} \\[1em]
        \dfrac{\partial^2 xy}{\partial y \partial x} & \dfrac{\partial^2 xy}{\partial y^2}
    \end{pmatrix}
    =
    \begin{pmatrix}
        0 & 1 \\
        1 & 0
    \end{pmatrix}.\]
\end{remark}

\subsection{The convolution product on right modules}\label{ssec:conv-on-right-modules}

The goal of the next two sections is to give a more familiar description of the monoidal structure from \cref{thm:General-prod-rule} in terms of Day convolution, at least when the target $\cD$ is a localization $\LSp$ of the category $\Sp$ of spectra in the sense of \cref{def:localization-of-spectra}.
Recall from \cref{ssec:localizations-of-spectra} that for any small symmetric monoidal category $\cC$ and any $\cA \in \CAlg(\presl_\LSp)$, there is a natural equivalence
\[\Fun^{\mathrm{L},\otimes}(\cP_\LSp(\cC), \cA) \simeq \Fun^\otimes(\cC,\cA),\]
where $\cP_\LSp(\cC)$ is endowed with its Day convolution symmetric monoidal structure $\circledast$.
Applying this to $\Sym^\times \cC$, the free symmetric monoidal category on a small category $\cC$, it follows by comparing universal properties that $\cP_{\LSp}(\Sym^\times \cC) \simeq \Sym\cP_\LSp(\cC)$.
In particular, we obtain equivalences
\begin{equation}\label{eq:SSeq-without-L}\begin{split}
    \SSeq(\cP_{\LSp}(\cC), \LSp) &\simeq \FunL(\Sym\cP_\LSp(\cC),\LSp) \\ 
    &\simeq \Fun(\Sym^\times \cC, \LSp) \simeq \cP_\LSp(\Sym^\times \cC^\op).\end{split}\end{equation}
The right-hand side comes equipped with the Day convolution symmetric monoidal structure.
In particular, in the case $\sC = \ast$, this is the Day convolution on $\SSeq(\LSp)$ from \cref{ssec:coend-operads-and-Koszul-duality}.

\begin{proposition}\label{prop:Day-convolution-as-pointwise-tensor}
    There exists a commutative monoid structure on $\LSp$ in $\pressymst$ such that for any small category $\cC$, the pointwise symmetric monoidal structure on
    \[\SSeq(\cP_{\LSp}(\cC),\LSp)\]
    in the sense of \cref{def:pointwise-tensor} agrees with the Day convolution symmetric monoidal structure under the string of equivalences \cref{eq:SSeq-without-L}.
\end{proposition}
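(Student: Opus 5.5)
The plan is to build the commutative monoid structure on $\LSp$ through the Yoneda lemma. By \cref{lem:pointwise-tensor-prod}, a commutative monoid $A$ in a $2$-category produces a lift of the representable functor $\sX(-,A)$ to $\CMon(\Cat)$. Conversely, I expect that a lift of $\sX(-,A) \colon \sX^\op \to \Cat$ to a functor $\sX^\op \to \CMon(\Cat)$ determines a commutative monoid structure on $A$ whose pointwise structure recovers the lift. The reason is that the $2$-categorical Yoneda embedding is fully faithful and preserves products, and products of representables are represented by products in $\sX$.

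I would work in $\pressym_\LSp$ rather than $\pressymst$. By \cref{lem:inclusion-pressymlsp-into-pressymst} and \cref{ex: inclusion-pressymlsp-pressymst-preserves-prod}, the inclusion $\pressym_\LSp \hookrightarrow \pressymst$ is fully faithful and preserves products. So a commutative monoid structure on $\LSp$ in $\pressym_\LSp$ transports to one in $\pressymst$ with the same pointwise structures, by \cref{lem:naturality-pointwise-tensor}.

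\textbf{Step 1 (lift the represented functor).} The functor to lift is $\cA \mapsto \SSeq(\cA,\LSp) = \FunL(\Sym\cA,\LSp)$ on $\pressym_\LSp$. Here $\Sym\cA$ is a commutative coalgebra in $\presl_\LSp$ and $\LSp$ is a commutative algebra. The internal hom $\FunL(-,-)$ is lax symmetric monoidal, so $\FunL(\Sym\cA,\LSp)$ inherits a convolution symmetric monoidal structure, with product
\[ F \star G = \mu \circ (F\otimes G)\circ \Delta. \]
A $1$-morphism in $\pressym_\LSp$ is by definition a coalgebra map $\Sym\cB \to \Sym\cA$, so precomposition is strong monoidal for $\star$. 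The $2$-functoriality should follow by applying the lax monoidal functor $\FunL(-,\LSp)\colon (\presl_\LSp)^\op \to \presl_\LSp$ to the $2$-category $\coCAlg(\presl_\LSp)$. The result is a $2$-functor $\coCAlg(\presl_\LSp)^\op \to \CAlg(\presl_\LSp) \to \CMon(\Cat)$, which I then restrict to the cofree objects.

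\textbf{Step 2 (Yoneda).} Next I verify that this lift, composed with the forgetful functor, is the representable functor $\pressym_\LSp(-,\LSp)$. I would then invoke the Yoneda argument sketched above to obtain a commutative monoid structure on $\LSp$ whose pointwise structure is $\star$ by construction. This is the step I expect to be the main obstacle. It requires making precise that $\CMon$ of the functor $2$-category $\Fun_2(\sX^\op,\Cat)$ agrees with $2$-functors $\sX^\op \to \CMon(\Cat)$, and that commutative monoids among representables correspond to commutative monoids in $\sX$. I would argue this via the product-preserving fully faithful Yoneda embedding, together with the fact that the Segal conditions are checked objectwise.

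\textbf{Step 3 (identify $\star$ with Day convolution).} Take $\cA = \cP_\LSp(\cC)$, so that $\Sym\cA \simeq \cP_\LSp(\Sym^\times\cC)$. Both $\star$ and Day convolution on $\cP_\LSp(\Sym^\times\cC^\op)$ preserve colimits separately in each variable, so it suffices to compare them on representables. The key check is that the comultiplication of $\Sym\cP_\LSp(\cC)$ is the left Kan extension of the map $\Sym^\times\cC \to \Sym^\times\cC \times \Sym^\times\cC$ dual to disjoint union, i.e. $\{c_i\}_{i\in I} \mapsto \coprod_{I = J\sqcup K}$ of the pairs of restrictions. Feeding this into $F \star G$ recovers the standard formula
\[ (F\circledast G)(\{c_i\}_{i\in I}) \simeq \bigoplus_{I = J \sqcup K} F(\{c_j\}_{j\in J}) \otimes G(\{c_k\}_{k\in K}). \]
To turn this into an equivalence of symmetric monoidal structures rather than of formulas, I would use the universal property of Day convolution: a symmetric monoidal structure on the functor $\Sym^\times\cC \to \Sym\cP_\LSp(\cC)$ that is compatible with the coalgebra structure.
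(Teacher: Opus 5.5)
Steps 1 and 2 are a reasonable set-up. Step 2 is the same Yoneda argument the paper invokes, citing Hinich, only on the representable rather than the corepresentable side. The gap is in Step 3, which carries the actual content of the proposition.

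\textbf{Why Step 3 does not go through as written.} Matching $(F\star G)(\{c_i\}_{i\in I})\simeq\bigoplus_{I=J\sqcup K}F(\{c_j\}_{j\in J})\otimes G(\{c_k\}_{k\in K})$ with the Day convolution formula identifies the binary products at best as bifunctors. The statement asks for an equivalence of symmetric monoidal structures, including all associativity and symmetry coherences. Your appeal to the universal property is not set up correctly.
\begin{itemize}
\item The functor $\Sym^\times\cC\to\Sym\cP_\LSp(\cC)$ is symmetric monoidal for the \emph{algebra} structure of $\Sym\cP_\LSp(\cC)$. That structure plays no role in $\star$.
\item What the universal property of Day convolution on $\cP_\LSp(\Sym^\times\cC^\op)$ actually needs is a symmetric monoidal functor $\Sym^\times\cC^\op\to(\Fun(\Sym^\times\cC,\LSp),\star)$ refining the Yoneda embedding, together with a proof that the induced symmetric monoidal left adjoint is an equivalence.
\item Doing this requires knowing the comultiplication of $\Sym\cP_\LSp(\cC)$ coherently, including how its cocommutativity matches the $\Sigma_n$-actions in $\Sym^\times\cC$. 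Your ``key check'' that this comultiplication is restriction along $\sqcup$ is asserted, not proved.
\item That comultiplication is also not induced by a functor $\Sym^\times\cC\to\Sym^\times\cC\times\Sym^\times\cC$; it lands in sums of representables.
\end{itemize}

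\textbf{The missing idea is duality.}
\begin{itemize}
\item $\cP_\LSp(\cC)$ is dualizable in $\presl_\LSp$ with dual $\cP_\LSp(\cC^\op)$.
\item By \cref{lem:Sym-dualizable}, $(-)^\vee$ exchanges cofree commutative coalgebras and free commutative algebras on dualizable objects.
\item On dualizable objects $\FunL(-,\LSp)=(-)^\vee$, and the dual of a coalgebra carries exactly your convolution product.
\item Hence $(\FunL(\Sym\cP_\LSp(\cC),\LSp),\star)\simeq\Sym\cP_\LSp(\cC^\op)\simeq\cP_\LSp(\Sym^\times\cC^\op)$ as commutative algebras in $\presl_\LSp$. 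The right-hand side is Day convolution by the universal property of $\cP_\LSp$, so no formula comparison is needed.
\end{itemize}

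The paper applies this duality at the outset. On dualizable objects, which contain $\LSp$ and are closed under products, the representable $\pressym_\LSp(-,\LSp)$ becomes the corepresentable $\FunL_{\CAlg}(\Sym\LSp,-)$ on $\CAlg(\presl_\LSp)$. That functor is simply the forgetful functor, so its lift to $\CMon(\widehat{\Cat})$ is the tautological inclusion. The $2$-categorical Yoneda lemma turns this lift into a comonoid structure on $\Sym\LSp$ for the coproduct in $\CAlg(\presl_\LSp)$, i.e.\ the desired monoid on $\LSp$. Its value at $\Sym\cP_\LSp(\cC^\op)$ is Day convolution by construction. This route makes both your Step 1 (building $\star$ $2$-functorially) and your Step 3 unnecessary.
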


\begin{proof}
    Observe that by replacing $\sX$ with $\sX^\op$ in \cref{lem:pointwise-tensor-prod}, it follows that if $Q$ is a commutative coalgebra in the cocartesian symmetric monoidal structure on $\sX^{\amalg}$, then $\sX(Q,x)$ obtains a symmetric monoidal structure for any $x$ in $\sX$.
    Let us call this the \emph{induced symmetric monoidal structure} throughout this proof.
    Also note that the fully faithful inclusion $\pressym_\LSp \hookrightarrow \pressymst$ from \cref{lem:inclusion-pressymlsp-into-pressymst} preserves finite cartesian products by \cref{ex: inclusion-pressymlsp-pressymst-preserves-prod}, so it suffices to exhibit a commutative monoid structure on $\LSp$ in $\pressym_{\LSp}$.
    
    Since $- \otimes \LSp \colon \presl \to \presl_\LSp$ is strong monoidal, it follows that $\cP_{\LSp}(\cC)$ is dualizable in $\presl_{\LSp}$ for any $\cC$ in $\Cat$, with $\cP_{\LSp}(\cC)^\vee \simeq \cP_\LSp(\cC^\op)$.
    Moreover, we have the commutative square
    \[\begin{tikzcd}[ampersand replacement=\&]
        {\coCAlg(\presl_{\LSp,\mathrm{dual}})} \& {\CAlg(\presl_{\LSp,\mathrm{dual}})^\op} \\
        {\presl_{\LSp,\mathrm{dual}}} \& {(\presl_{\LSp,\mathrm{dual}})^\op}
        \arrow["{(-)^\vee}", "\sim"', from=1-1, to=1-2]
        \arrow[""{name=0, anchor=center, inner sep=0}, "\fgt", shift left=2, from=1-1, to=2-1]
        \arrow[""{name=1, anchor=center, inner sep=0}, "\fgt", shift left=2, from=1-2, to=2-2]
        \arrow[""{name=2, anchor=center, inner sep=0}, "\Sym", shift left=2, from=2-1, to=1-1]
        \arrow["{(-)^\vee}", "\sim"', from=2-1, to=2-2]
        \arrow[""{name=3, anchor=center, inner sep=0}, "\Sym", shift left=2, from=2-2, to=1-2]
        \arrow["\dashv"{anchor=center, rotate=180}, draw=none, from=2, to=0]
        \arrow["\dashv"{anchor=center, rotate=180}, draw=none, from=3, to=1]
    \end{tikzcd}\]
    by \cref{lem:Sym-dualizable}.
    It therefore suffices to show that $\Sym \LSp$ has the structure of a coalgebra in $\CAlg(\presl_{\LSp})$ with respect to the cocartesian symmetric monoidal structure, such that the induced symmetric monoidal structure on
    \[\FunL_{\CAlg}(\Sym \LSp, \cP_\LSp(\Sym^\times \cC)^\vee) \simeq \FunL(\LSp, \cP_\LSp(\Sym^\times \cC^\op)) \simeq \cP_\LSp(\Sym^\times \cC^\op) \]
    agrees with the Day convolution symmetric monoidal structure on $\cP_{\LSp}(\Sym^\times \cC^\op)$.
    By the 2-categorical Yoneda lemma \cite{Hinich2020YonedaLemmaEnriched}, this is equivalent to showing that the functor $\FunL_{\CAlg}(\Sym \LSp, -) \colon \CAlg(\presl_{\LSp}) \to \widehat{\Cat}$ can be lifted to a functor $\CAlg(\presl_{\LSp}) \to \CMon(\widehat{\Cat})$ such that it takes $\cP_{\LSp}(\Sym^\times \cC^\op)$ to the Day convolution symmetric monoidal structure on $\FunL_{\CAlg}(\Sym \LSp,\cP_{\LSp}(\Sym^\times \cC^\op)) \simeq \cP_{\LSp}(\Sym^\times \cC^\op)$.
    Since \[\FunL_{\CAlg}(\Sym \LSp, -) \colon \CAlg(\presl_{\LSp}) \to \Cat\] is simply the forgetful functor by \cite[Proposition 2.2.15]{blansblom2025chainrulegoodwilliecalculus}, it follows that such a lift is given by the inclusion $\CAlg(\presl_{\LSp}) \hookrightarrow \CMon(\widehat{\Cat})$.
\end{proof}

We will now upgrade this Day convolution monoidal structure to one on right modules.
We do this by lifting the corresponding monoid structure on $\LSp$ to $\MMor$:

\begin{construction}\label{constr:LSp}
    By \cref{ex: inclusion-pressymst-mor-preserves-prod}, the inclusion $\pressymst \hookrightarrow \MMor$ preserves cartesian products.
    It follows that $\LSp$, with its commutative monoid structure from \cref{prop:Day-convolution-as-pointwise-tensor}, is taken to a commutative monoid $(\LSp,\unit_{\LSp})$ in $\MMor$ by the inclusion $\pressymst \hookrightarrow \MMor$.
\end{construction}

\begin{definition}\label{def:convolution-product-sseq}
    Let $\cO$ be an $\LSp$-enriched operad, i.e. an algebra in $\SSeq(\LSp,\LSp)$.
    Then the \emph{Day convolution symmetric monoidal structure} on $\RMod_\cO(\SSeq(\LSp,\LSp))$ is defined as the pointwise symmetric monoidal structure (see \cref{def:pointwise-tensor}) on
    \[\RMod_\cO(\SSeq(\LSp,\LSp)) \simeq \MMor((\LSp,\cO),(\LSp,\unit_{\LSp}))\]
    with respect to the commutative monoid structure on $(\LSp,\unit_{\LSp})$ in $\MMor$ from \cref{constr:LSp}.
    We will write $\circledast$ for the corresponding tensor product on $\RMod_\cO(\SSeq(\LSp,\LSp))$.
\end{definition}

Let us conclude by observing that this monoidal structure is a lift of the Day convolution monoidal structure on $\SSeq(\LSp,\LSp) \simeq \Fun(\Fin^\simeq,\LSp)$, justifying its name:

\begin{proposition}\label{prop:day-convolution-is-indeed-lift}
    Let $\cO$ be an $\LSp$-enriched operad.
    Then both the free and forgetful functors
    \[\free : \SSeq(\LSp,\LSp) \rightleftarrows \RMod_\cO(\SSeq(\LSp, \LSp)) : \fgt\]
    are strong symmetric monoidal with respect to the Day convolution symmetric monoidal structures.
\end{proposition}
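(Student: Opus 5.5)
The plan is to realize both the free and forgetful functors as images of 1-morphisms in $\MMor$ under the pointwise construction, and then invoke \cref{lem:naturality-pointwise-tensor}, or more precisely the naturality in the source variable built into \cref{lem:pointwise-tensor-prod}.

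The pointwise structure on $\MMor(x,(\LSp,\unit_\LSp))$ is functorial in $x$ via the refined functor $\MMor(-,(\LSp,\unit_\LSp)) \colon \MMor^\op \to \CMon(\Cat)$ of \cref{lem:pointwise-tensor-prod}. Precomposition with any 1-morphism $g \colon x \to x'$ is therefore strong symmetric monoidal $\MMor(x',(\LSp,\unit_\LSp)) \to \MMor(x,(\LSp,\unit_\LSp))$. Taking $x = (\LSp,\cO)$ and $x' = (\LSp,\unit_\LSp)$, the mapping categories are $\RMod_\cO(\SSeq(\LSp,\LSp))$ and $\BMod_{(\unit,\unit)} \simeq \SSeq(\LSp,\LSp)$.

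For the two 1-morphisms I will use the following.
\begin{itemize}
\item $\cO$, viewed as a $(\unit_\LSp,\cO)$-bimodule, gives $(\LSp,\cO) \to (\LSp,\unit_\LSp)$. Precomposition with it is $M \mapsto M \circ_{\unit} \cO = M \circ \cO$, which is the free right module functor.
\item $\cO$, viewed as an $(\cO,\unit_\LSp)$-bimodule, gives $(\LSp,\unit_\LSp) \to (\LSp,\cO)$. Precomposition with it sends a right $\cO$-module $N$ to $N \circ_\cO \cO \simeq N$, regarded as a $(\unit,\unit)$-bimodule. This is the forgetful functor.
\end{itemize}
Both functors are thus strong symmetric monoidal for the pointwise structures.

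It remains to identify the pointwise structure on $\SSeq(\LSp,\LSp) = \MMor((\LSp,\unit),(\LSp,\unit))$ with Day convolution. Since the inclusion $\pressymst \hookrightarrow \MMor$ is fully faithful on these objects and preserves products (\cref{ex: inclusion-pressymst-mor-preserves-prod}), \cref{lem:naturality-pointwise-tensor} shows that this structure agrees with the pointwise structure on $\pressymst(\LSp,\LSp)$. By \cref{prop:Day-convolution-as-pointwise-tensor} with $\cC = \ast$ (so $\cP_\LSp(\ast) = \LSp$), that structure is Day convolution under \cref{eq:SSeq-without-L}. On $\RMod_\cO$ the structure is Day convolution by definition (\cref{def:convolution-product-sseq}).

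I expect the main obstacle to be the bookkeeping in the second step. One must check that precomposition in $\MMor$ with these two bimodules really is the free and forgetful functors, including their module structures and not just the underlying objects. This needs the identifications $M \circ_{\unit} \cO \simeq M \circ \cO$ and $N \circ_\cO \cO \simeq N$ as bimodules, which are the standard unitality of the relative composition product in the Morita category. One should also confirm that the equivalence $\SSeq(\LSp,\LSp) \simeq \Fun(\Fin^\simeq,\LSp)$ of \cref{remark:SSeq-vs-FunSSeq} matches the string \cref{eq:SSeq-without-L} at $\cC = \ast$, so that the Day convolution referred to is the one given by the formula in \cref{ssec:coend-operads-and-Koszul-duality}.
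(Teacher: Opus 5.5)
Your proposal is correct and is essentially the paper's proof. The paper likewise identifies $\free$ and $\fgt$ with precomposition in $\MMor$ by $\cO$, regarded as a $(\unit_\LSp,\cO)$-bimodule resp.\ an $(\cO,\unit_\LSp)$-bimodule (that is, $-\circ\cO$ and $-\circ_\cO\cO$). It cites \cite[Proposition 4.6.2.17]{HA} for the module-level bookkeeping you flag, and concludes from the functoriality of $\MMor(-,(\LSp,\unit_\LSp))$ in \cref{lem:pointwise-tensor-prod}. Your extra identification of the pointwise structure on $\SSeq(\LSp,\LSp)$ with Day convolution, via \cref{ex: inclusion-pressymst-mor-preserves-prod} and \cref{prop:Day-convolution-as-pointwise-tensor}, is correct and is a detail the paper leaves implicit.
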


\begin{proof}
    It follows from \cite[Proposition 4.6.2.17]{HA} that under the equivalences 
    \[\SSeq(\LSp,\LSp) \simeq \MMor((\LSp,\unit_\LSp),(\LSp,\unit_\LSp)), \text{ and}\]
    \[\RMod_\cO(\SSeq(\LSp,\LSp)) \simeq \MMor((\LSp,\cO),(\LSp,\unit_\LSp)),\]
    the functors $\free$ and $\fgt$ are given by $- \circ \cO$ and $- \circ_\cO \cO$, respectively.
    The result therefore follows from the functoriality of $\MMor(-, (\LSp, \unit_\LSp))$ in the first variable established in \cref{lem:pointwise-tensor-prod}.
\end{proof}

It follows that for two right $\cO$-modules $A$ and $B$, the underlying symmetric sequence of $A \circledast B$ is given by
\begin{equation}\label{eq:Day-convolution-formula}
    (A \circledast B)_{n} = \bigoplus_{i+j=n} \Ind^{\Sigma_n}_{\Sigma_i \times \Sigma_j} A_i \otimes B_j,
\end{equation}
see also \cref{ssec:coend-operads-and-Koszul-duality}.
Moreover, the right action map of $\cO$ on $A \circledast B$ is given by
\[(A \circledast B) \circ \cO \simeq (A \circ O) \circledast (B \circ O) \xrightarrow{\mathrm{act} \circledast \mathrm{act}} A \circledast B.\]

\subsection{Comparing the convolution and the derived monoidal structure}\label{ssec:comparing-convolution-prod-rule}

Let $\cC$ be a differentiable category such that $\Sp \cC = \LSp$.
In \cref{thm:General-prod-rule}, we constructed a symmetric monoidal structure on $\RMod_{\partial_* \id_\cC}(\SSeq(\LSp ,\LSp))$ such that
\[\partial_* \colon \Fun^{\ast,\omega}(\cC,\LSp) \to \RMod_{\partial_* {\id_\cC}}(\SSeq(\LSp,\LSp))\]
defines a (nonunital) strong symmetric monoidal functor; in other words, a \emph{product rule} for the Goodwillie derivatives of functors to $\LSp$.
In \cref{def:convolution-product-sseq}, we constructed the more explicit Day convolution symmetric monoidal structure on $\RMod_{\partial_* \id_\cC}(\SSeq(\LSp, \LSp))$.
We will now show that these symmetric monoidal structures agree.

Both of these (nonunital) symmetric monoidal structures arose by applying \cref{lem:pointwise-tensor-prod} to a (nonunital) commutative monoid structure on $(\LSp,\unit_{\LSp})$ in $\MMor$.
The result therefore follows if we can prove that these nonunital commutative monoid structures agree.
Observe that since $(\LSp,\unit_{\LSp})$ lies in the full sub-2-category $\pressymst \hookrightarrow \MMor$, it suffices to prove this in $\pressymst$.
We will deduce this from the following uniqueness statement:

\begin{proposition}\label{prop:uniqueness-monoid-LSp}
    Let $\LSp$ be a localization of $\Sp$ in the sense of \cref{def:localization-of-spectra}.
    Then there exists a unique nonunital commutative monoid structure on $\LSp$ in $\pressymst$ such that the multiplication map is a symmetric sequence concentrated in arity 2, where it is given by the projection
    \begin{equation}\label{eq:hessian-multiplication}\begin{split}
    ({\color{cbblue} \LSp} \oplus {\color{cborange} \LSp})^{\otimes 2}_{h\Sigma_2} &\simeq ({\color{cbblue} \LSp} \otimes {\color{cbblue} \LSp} \oplus {\color{cbblue} \LSp} \otimes {\color{cborange} \LSp} \oplus {\color{cborange} \LSp} \otimes {\color{cbblue} \LSp} \oplus {\color{cborange} \LSp} \otimes {\color{cborange} \LSp})_{h\Sigma_2} \\
    &\to ({\color{cbblue} \LSp} \otimes {\color{cborange} \LSp} \oplus {\color{cborange} \LSp} \otimes {\color{cbblue} \LSp})_{h\Sigma_2} \simeq (\LSp \oplus \LSp)_{h\Sigma_2} \simeq \LSp.
    \end{split}\end{equation}
\end{proposition}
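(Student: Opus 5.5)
The plan is to prove existence and uniqueness separately. For uniqueness, I would transport the question along $\Lambda$ to a question about ordinary nonunital symmetric monoidal structures on $\LSp$, and then go back using \cref{prop: lambda-diff-is-id}.

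\emph{Existence.} The tensor product makes $\LSp$ a differentiable nonunital commutative monoid in $\diffsp$: the tensor product preserves colimits in each variable, so it is reduced and finitary. The restriction of $\partial_*$ to $\diffsp$ is a strong functor into $\pressymstc \subseteq \pressymst$. Since $\partial_* \colon \diff \to \MMor$ preserves finite products (\cref{cor:partial-to-mmor-preserves-prod}) and $\pressymst \hookrightarrow \MMor$ is a full sub-$2$-category, the functor $\partial_*$ sends this monoid to a nonunital commutative monoid structure on $\partial_*{\id_\LSp} \simeq \LSp$ in $\pressymst$. By \cref{lem:hessian}, its multiplication is exactly \cref{eq:hessian-multiplication}.

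\emph{Uniqueness, step 1: reduce to $\diffsp$.} A functor from the $1$-category $\cE^\otimes$ into a $2$-category uses only invertible $2$-cells. So a nonunital commutative monoid structure $M$ on $\LSp$ is a diagram in the underlying $(\infty,1)$-category. Its multiplication and all structure maps are positive symmetric sequences, since they are composites and products of the multiplication. Hence $M$ lives in $\pressymstc$. The functor $\Lambda$ is corepresented by $\ast$, so it preserves products, and $\Lambda(M)$ is a nonunital commutative monoid in $\diffsp$, i.e.\ a nonunital symmetric monoidal structure on $\Lambda(\LSp) \simeq \LSp$. By the formula for $\Lambda_F$ in \cref{ex: sseq-arity-0}, applied to \cref{eq:hessian-multiplication}, its multiplication is
\[
(x,y) \mapsto \bigl((x \oplus y)^{\otimes 2}\bigr)_{h\Sigma_2} \to x \otimes y,
\]
so the underlying bifunctor is $\otimes$. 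By \cref{prop: lambda-diff-is-id}, $M \simeq \partial_*\Lambda(M)$ naturally in $M$. It therefore suffices to show that the space of nonunital commutative monoid structures on $\LSp$ in $\diffsp$ with multiplication equivalent to $\otimes$ is contractible, or at least connected, after which we apply $\partial_*$.

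\emph{Uniqueness, step 2: structures on $\LSp$.} A nonunital commutative monoid in $\diffsp$ whose multiplication preserves colimits separately in each variable is a nonunital commutative algebra in $\presl_\LSp$ with underlying object $\LSp$. Since $\LSp$ is an idempotent algebra, $\LSp^{\otimes n} \simeq \LSp$ and $\FunL(\LSp^{\otimes n},\LSp) \simeq \LSp$. Along these identifications, the structure corresponds to a nonunital $E_\infty$-algebra structure on the object $L\Sph$ in $\LSp$, whose multiplication $L\Sph \otimes L\Sph \to L\Sph$ is an equivalence because it corresponds to $\otimes$. I would then invoke Lurie's results on idempotent and quasi-unital algebras (\cite[\S 4.8.2, \S 5.4.3]{HA}): a nonunital commutative algebra whose multiplication is invertible admits an essentially unique unital structure, and it is then idempotent. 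The space of idempotent structures on $L\Sph$ compatible with the unit $L\Sph$ is contractible, so all such nonunital structures are equivalent.

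The main obstacle I expect is this last step, together with its bookkeeping. The difficulty is not in showing that the multiplication is an equivalence, but in making precise the claim that ``nonunital $E_\infty$-structures with invertible multiplication form a contractible space.'' If a direct citation is not available, I would fall back on the argument that the forgetful map from unital to quasi-unital algebras is an equivalence (\cite[Theorem 5.4.3.5]{HA}), combined with contractibility of the space of idempotent algebra structures (\cite[Proposition 4.8.2.9]{HA}). One point to check in step 1 is that $\Lambda$ and $\partial_*$ are genuinely inverse on the relevant spaces of monoid structures, and not merely on objects. This holds because the equivalence in \cref{prop: lambda-diff-is-id} is an equivalence of $2$-functors, hence induces equivalences on $\Mon_\cE$.
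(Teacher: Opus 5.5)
Your overall route is the paper's: existence from \cref{lem:hessian} via product preservation of $\partial_*$, and uniqueness by transporting along $\Lambda$ and using \cref{prop: lambda-diff-is-id}. This reduces the question to nonunital symmetric monoidal structures on $\LSp$ with multiplication $\otimes$, which lift to nonunital algebras in $\presl_\LSp$ (the paper uses $\preslst$).

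\textbf{The gap.} The problem is the translation in step 2. Under $\FunL(\LSp^{\otimes n},\LSp)\simeq\LSp$, the $n$-ary structure maps of a nonunital algebra on $\LSp$ in $\presl_\LSp$ become \emph{objects} of $\LSp$; for instance, the multiplication $\otimes$ becomes the object $L\Sph$. The coherence data are then equivalences between such objects. The relevant mapping spaces are $\LSp^{\simeq}$, not $\Map_{\LSp}(L\Sph,L\Sph)$. So this is not a nonunital $E_\infty$-algebra structure on $L\Sph$ in $\LSp$, and there is no morphism $L\Sph\otimes L\Sph\to L\Sph$ in the data whose invertibility you could use. Proving uniqueness of nonunital $E_\infty$-ring structures on $L\Sph$ would not give the statement. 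The general principle you quote is also false: invertible multiplication does not imply quasi-unitality. The empty set in $(\mathrm{Set},\times)$ is a counterexample, since it has invertible multiplication but no map from the point.

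\textbf{The fix.} Skip the translation and argue directly with $\LSp$ as an object of $\CAlg^{\mathrm{nu}}(\presl_\LSp)$, or of $\CAlg^{\mathrm{nu}}(\preslst)$ as the paper does.
\begin{enumerate}
\item The map picking out $L\Sph$ is a quasi-unit. This map is $\id_{\LSp}$ in the first setting and $L\colon\Sp\to\LSp$ in the second. Being a quasi-unit depends only on the multiplication, which is $\otimes$.
\item By \cite[Corollary 5.4.4.7]{HA}, the nonunital structure therefore comes from a unital one with unit $L\Sph$.
\item $\LSp$ is the unit of $\presl_\LSp$, respectively an idempotent object of $\preslst$. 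So \cite[Proposition 4.8.2.9]{HA} makes that unital structure unique.
\end{enumerate}
This is what your fallback amounts to once applied at the correct categorical level, and it is exactly the paper's proof.
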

Here the colours indicate onto which summands we are projecting.

\begin{proof}
    Observe that all structure maps of the nonunital commutative monoid structure on $\LSp$ must have arity at least 1, so we may replace $\pressymst$ with $\pressymstc$ without loss of generality.

    The monoid structure exists by \cref{lem:hessian}.
    For the uniqueness, let us write $M$ for the symmetric sequence $\Sym(\LSp \oplus \LSp) \to \LSp$ from the statement.
    Then $\Lambda_M \colon \LSp \times \LSp \to \LSp$ is the functor
    \[(X,Y) \mapsto M((X,Y),(X,Y))_{h\Sigma_2} \simeq X \otimes Y,\]
    i.e.\ the tensor product $\otimes \colon \LSp \times \LSp \to \LSp$.
    Since the composite
    \[
    \begin{tikzcd}
        \pressymstc \ar[r, "\Lambda"] & \diffsp \ar[r, "\partial_*"] & \pressymstc
    \end{tikzcd}
    \]
    is equivalent to the identity by \cref{prop: app-derivatives-lambda}, it suffices to prove that there is a unique nonunital symmetric monoidal structure on $\LSp$ whose multiplication map $\LSp \times \LSp \to \LSp$ is equivalent to the usual tensor product on $\LSp$.
    This follows by combining the following statements:
    \begin{enumerate}[(i)]
        \item The given nonunital symmetric monoidal structure on $\LSp$ lifts to an object in $\CAlg^\mathrm{nu}(\preslst)$. This follows since the multiplication map $\LSp \times \LSp \to \LSp$ preserves colimits in each variable separately.
        \item By \cite[Corollary 5.4.4.7]{HA}, the forgetful functor $\CAlg(\preslst) \to \CAlg^\mathrm{nu}(\preslst)$ is an inclusion, with image the quasi-unital commutative algebras and quasi-unital morphisms.
        \item $L\Sph$ is a quasi-unit for the given nonunital symmetric monoidal structure on $\LSp$, since the definition of a quasi-unit in \cite[Remark 5.4.4.4]{HA} only depends on the multiplication map $\LSp \otimes \LSp \to \LSp$.
        \item Since $\LSp$ is idempotent in $\preslst$, by \cite[Proposition 4.8.2.9]{HA} there is a unique presentably symmetric monoidal structure on $\LSp$ with unit $L\Sph$. \qedhere
    \end{enumerate}
\end{proof}

\begin{remark}
    This proof moreover shows that if $\LSp$ is equipped with such a monoid structure in $\pressymst$, then its space of automorphisms in $\CMon^\mathrm{nu}(\pressymst)$ is contractible.
\end{remark}

\begin{corollary}\label{cor:Day-monoid-equals-derived-monoid}
    The nonunital commutative monoid structure on $\LSp$ in $\MMor$ constructed in \cref{thm:General-prod-rule} agrees with the commutative monoid structure on $\LSp$ constructed in \cref{prop:Day-convolution-as-pointwise-tensor} (after forgetting the unit).
\end{corollary}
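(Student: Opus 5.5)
The plan is to reduce the corollary directly to the uniqueness statement \cref{prop:uniqueness-monoid-LSp}. Both monoid structures live on the object $(\LSp,\unit_{\LSp})$, which lies in the full sub-2-category $\pressymst \hookrightarrow \MMor$. Since this inclusion is fully faithful on mapping categories and preserves cartesian products (\cref{ex: inclusion-pressymst-mor-preserves-prod}), nonunital commutative monoids in $\MMor$ whose underlying object lies in $\pressymst$ are the same as nonunital commutative monoids in $\pressymst$. It therefore suffices to compare the two structures as nonunital commutative monoids on $\LSp$ in $\pressymst$.

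On the derived side, the structure from \cref{thm:General-prod-rule} is obtained by applying $\partial_*$ to the nonunital commutative monoid $\LSp \in \Diff$, that is, $\LSp$ with its smash product. This uses that $\partial_*$ preserves products (\cref{cor:partial-to-mmor-preserves-prod}). Since $\LSp$ is stable, the derivatives functor restricted to $\diffsp$ factors through $\pressymst$, so this is a nonunital commutative monoid in $\pressymst$. Its multiplication map is $\partial_*\mu_2$. By \cref{lem:hessian}, $\partial_*\mu_2$ is concentrated in arity $2$ and is given by the projection \cref{eq:hessian-multiplication}.

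On the Day convolution side, forget the unit of the commutative monoid from \cref{prop:Day-convolution-as-pointwise-tensor}. The step requiring actual work is to show that its multiplication map is also the symmetric sequence \cref{eq:hessian-multiplication}. I would extract the multiplication from the pointwise structure via the Yoneda argument used in that proof. The multiplication $m \in \SSeq(\LSp\times\LSp,\LSp)$ is the image of the pair of projections under the pointwise tensor product on $\SSeq(\LSp\times\LSp,\LSp)$. Since $\LSp\times\LSp \simeq \cP_\LSp(\ast\amalg\ast)$, this pointwise tensor product is Day convolution on $\cP_\LSp(\Sym^\times(\ast\amalg\ast)^{\op})$, i.e.\ on bisymmetric sequences. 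The two projections correspond to the representables concentrated in bidegrees $(1,0)$ and $(0,1)$. Their Day convolution is concentrated in bidegree $(1,1)$ with value $L\Sph$, which is exactly the projection \cref{eq:hessian-multiplication}. Equivalently, one can compute $\Lambda_m(X,Y)$ via \cref{prop: lambda-diff-is-id} and recognize $X\otimes Y$.

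Once both multiplication maps are identified with \cref{eq:hessian-multiplication}, \cref{prop:uniqueness-monoid-LSp} gives an equivalence of the two nonunital commutative monoids in $\pressymst$. This equivalence transfers to $\MMor$ by the first step. I expect the only real obstacle to be the bookkeeping in the third step: matching the pointwise structure on $\SSeq(\LSp\times\LSp,\LSp)$ with bisymmetric Day convolution through \cref{eq:SSeq-without-L}, applied with $\cC = \ast\amalg\ast$.
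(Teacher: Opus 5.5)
Your proposal is correct and follows essentially the same route as the paper: you reduce via \cref{prop:uniqueness-monoid-LSp} to identifying the two multiplication maps, use \cref{lem:hessian} on the derived side, and on the Day convolution side extract the multiplication by Yoneda as the image of the identity (the pair of projections). Under \cref{prop:Day-convolution-as-pointwise-tensor} this becomes the Day convolution $L\Sph(1,0)\circledast L\Sph(0,1)\simeq L\Sph(1,1)$ of bisymmetric sequences. Your explicit reduction from $\MMor$ to $\pressymst$ is the same observation the paper makes just before \cref{prop:uniqueness-monoid-LSp}.
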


\begin{proof}
    Throughout this proof, we call a functor $\Fin^\simeq \times \Fin^\simeq \to \LSp$ a \emph{bisymmetric sequence}.
    Given $m,n \geq 0$ and $X \in \Fun(B\Sigma_m \times B\Sigma_n, \LSp)$, we write $X(m,n)$ for the bisymmetric sequence concentrated in bidegree $(m,n)$ with value $X$.

    By \cref{prop:uniqueness-monoid-LSp}, it suffices to prove that the multiplication map $\mu$ of the monoid structure on $\LSp$ from \cref{prop:Day-convolution-as-pointwise-tensor} agrees with \cref{eq:hessian-multiplication}.
    Since this monoid structure was constructed using the Yoneda lemma, it follows that its multiplication map $\mu$ is the image of the identity under
    \[\SSeq(\LSp \oplus \LSp, \LSp \oplus \LSp) \xrightarrow{\mu \circ -} \SSeq(\LSp \oplus \LSp, \LSp).\]
    Identifying $\LSp \oplus \LSp$ with $\cP_\LSp(* \sqcup *)$ and observing that $\Sym^\times(* \sqcup *) \simeq \Fin^\simeq \times \Fin^\simeq$, it follows from \cref{prop:Day-convolution-as-pointwise-tensor} that we may identify this functor with the Day convolution product
    \[\circledast \colon \cP_{\LSp}(\Fin^\simeq \times \Fin^\simeq)^{\times 2} \to \cP_{\LSp}(\Fin^\simeq \times \Fin^\simeq).\]
    Unwinding the string of equivalences \cref{eq:SSeq-without-L}, it follows that the identity of $\LSp \oplus \LSp$ corresponds to the pair of bisymmetric sequences $(L\Sph(1,0), L\Sph(0,1))$.
    The Day convolution product of these bisymmetric sequences is $L\Sph(1,1)$. Unwinding the string of equivalences \cref{eq:SSeq-without-L} again, it follows that this bisymmetric sequence corresponds to the functor symmetric sequence $\Sym(\LSp \oplus \LSp) \to \LSp$ from \cref{prop:uniqueness-monoid-LSp}, which was what we needed to show.
\end{proof}

\begin{corollary}\label{cor:convolution-prod-rule-nonunital-2}
    Let $\cC$ be a differentiable category such that $\Sp \cC \simeq \LSp$.
    Then the Goodwillie derivatives form a nonunital strong symmetric monoidal functor
    \[\partial_* \colon (\Fun^{\ast,\omega}(\cC, \LSp), \otimes_{\mathrm{pt}}) \to (\RMod_{\partial_* {\id_\cC}}(\SSeq(\LSp,\LSp)), \circledast),\]
    where the target is equipped with the Day convolution symmetric monoidal structure from \cref{def:convolution-product-sseq}.
\end{corollary}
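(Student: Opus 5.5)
This is essentially an assembly of \cref{thm:General-prod-rule} and \cref{cor:Day-monoid-equals-derived-monoid}. The plan is to take $\cD = \LSp$ with its tensor product, viewed as a differentiable nonunital commutative monoid in $\Diff$. Its underlying category is stable presentable, hence differentiable. The tensor product $\LSp \times \LSp \to \LSp$ is reduced and finitary because it preserves colimits in each variable; this is the nonunital case of \cref{exa:differentable-P-monoids}. Since $\Sp\cC \simeq \LSp$ and $\LSp$ is stable, we have $\partial_*\id_\LSp \simeq \unit_\LSp$. Therefore
\[
\BMod_{(\partial_*\id_\LSp,\partial_*\id_\cC)}(\SSeq(\LSp,\LSp)) \simeq \RMod_{\partial_*\id_\cC}(\SSeq(\LSp,\LSp)).
\]
\cref{thm:General-prod-rule} then makes $\partial_*\colon \Fun^{\ast,\omega}(\cC,\LSp)\to \RMod_{\partial_*\id_\cC}(\SSeq(\LSp,\LSp))$ a nonunital strong symmetric monoidal functor. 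Here the source carries the pointwise tensor product, and the target carries the pointwise structure of \cref{def:pointwise-tensor} coming from the nonunital commutative monoid $\partial_*(\LSp,\otimes) = (\LSp,\unit_\LSp)$ in $\MMor$.

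Two identifications remain. First, the pointwise structure on $\Fun^{\ast,\omega}(\cC,\LSp) = \Diff(\cC,\LSp)$ induced by the monoid $(\LSp,\otimes)$ is the usual pointwise tensor product $\otimes_{\mathrm{pt}}$. This holds because the construction in \cref{lem:pointwise-tensor-prod} postcomposes with the monoid structure maps, and products in $\Diff$ are cartesian products of categories. Second, the target structure must agree with the Day convolution of \cref{def:convolution-product-sseq}. Both structures arise from \cref{lem:pointwise-tensor-prod} applied to nonunital commutative monoid structures on $(\LSp,\unit_\LSp)$ in $\MMor$. By \cref{cor:Day-monoid-equals-derived-monoid}, the monoid obtained by differentiating $\otimes$ agrees, after forgetting the unit, with the image under $\pressymst\hookrightarrow\MMor$ of the monoid from \cref{prop:Day-convolution-as-pointwise-tensor}, which is the one used in \cref{constr:LSp}. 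Since equivalent monoids induce equivalent pointwise structures, naturally in the source object $(\LSp,\partial_*\id_\cC)$, the two symmetric monoidal structures on $\RMod_{\partial_*\id_\cC}$ coincide.

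The main care point is this last transfer. The equivalence of monoids in \cref{cor:Day-monoid-equals-derived-monoid} is established in $\pressymst$ (via $\pressymstc$), while it is needed in $\MMor$. The inclusion $\pressymst\hookrightarrow\MMor$ preserves products by \cref{ex: inclusion-pressymst-mor-preserves-prod}, so it carries the equivalence of nonunital commutative monoids to an equivalence in $\CMon^{\mathrm{nu}}(\MMor)$. By the identification $\partial_*\id_\LSp\simeq\unit_\LSp$, the monoid $\partial_*(\LSp,\otimes)$ lies in the image of this inclusion, as already used in the corollary. Composing the strong monoidal functor from \cref{thm:General-prod-rule} with the resulting symmetric monoidal equivalence of targets gives the claimed nonunital strong symmetric monoidal functor.
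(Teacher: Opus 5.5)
Your proposal is correct and follows the paper's proof. The paper likewise applies \cref{thm:General-prod-rule} to $(\LSp,\otimes)$ and uses \cref{cor:Day-monoid-equals-derived-monoid} to see that this structure and the Day convolution of \cref{def:convolution-product-sseq} come from the same nonunital commutative monoid on $(\LSp,\unit_\LSp)$ in $\pressymst \subset \MMor$, so they agree. Your extra checks (that $\otimes$ is reduced and finitary, that $\BMod_{(\unit_\LSp,\partial_*\id_\cC)} \simeq \RMod_{\partial_*\id_\cC}$, and the transfer along the product-preserving inclusion into $\MMor$) only spell out what the paper leaves implicit.
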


\begin{proof}
    By \cref{cor:Day-monoid-equals-derived-monoid}, the monoidal structure from \cref{thm:General-prod-rule} and \cref{def:convolution-product-sseq} are induced by the same nonunital monoid structure on $\LSp$ in $\pressymst \subset \MMor$, hence they agree.
\end{proof}

\begin{remark}
    This proof also shows that the nonunital commutative monoid structure on $(\LSp,\unit_{\LSp})$ in $\MMor$ is quasi-unital.
    We expect this to be false if $\LSp$ is replaced by an unstable presentably symmetric monoidal category, such as $(\Spc_*,\wedge)$.
    If this were the case, then one could use this to construct a unit for the smash product on the category $\Alg_{\Lie}(\Sp)$ of spectral Lie algebras.
    We suspect that such a unit does not exist; see also the related Remark 4.10 of \cite{blansheuts2026characterizationspectrallieoperad}.
\end{remark}

\subsection{A unital version of the product rule}\label{ssec:unital-prod-rule}

Given $F \colon \cC \to \cD$ in $\Diff$ with $\cD$ stable, the Goodwillie derivatives $\partial_* F$ form a right $\partial_* \id_\cC$-module in $\SSeq(\Sp \cC, \cD)$ that is concentrated in degrees $\geq 1$.
Since $\cD$ is stable, one could reasonably extend the Goodwillie derivatives by declaring that $\partial_0 F = F(*)$, where one allows $F$ to be unreduced.
We will now show that one can indeed do this, and that the product rule of \cref{cor:convolution-prod-rule-nonunital-2} upgrades to a \emph{unital} strong symmetric monoidal functor in this case.

\begin{theorem}\label{thm:convolution-product-rule-unital}
    Let $\cC$ be a differentiable category such that $\Sp \cC \simeq \LSp$.
    Then the Goodwillie derivatives refine to a \emph{unital} strong symmetric monoidal functor
    \begin{align*}
    \partial_* \colon (\Fun^{\omega}(\cC, \LSp), \otimes_{\mathrm{pt}}) &\to (\RMod_{\partial_* {\id_\cC}}(\SSeq(\LSp,\LSp)), \circledast) \\
    F &\mapsto \{\partial_n F\}_{n \geq 0}.
    \end{align*}
\end{theorem}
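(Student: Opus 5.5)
The plan is to bootstrap from the nonunital result of \cref{cor:convolution-prod-rule-nonunital-2} by splitting unreduced functors into their reduced part and a constant part. Every finitary $F \colon \cC \to \LSp$ sits in a split fiber sequence $\red F \to F \to \const_{F(*)}$. The splitting is given by the map $\const_{F(*)} \to F$ induced by $* \to x$, which is a section because $\LSp$ is stable. Hence $F \simeq \red F \oplus \const_{F(*)}$, and this gives an equivalence of categories
\[
\Fun^{\omega}(\cC, \LSp) \simeq \Fun^{\ast,\omega}(\cC,\LSp) \times \LSp, \qquad F \mapsto (\red F, F(*)).
\]
Under this equivalence, the pointwise tensor product becomes the ``square-zero-type'' extension structure:
\[
(F_0, a)\otimes(G_0,b) = (F_0\otimes G_0 \oplus F_0\otimes b \oplus a\otimes G_0,\; a\otimes b).
\]
The unit is $(0,L\Sph)$. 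Abstractly, $\Fun^{\omega}(\cC,\LSp)$ is the unitalization of the nonunital symmetric monoidal category $\Fun^{\ast,\omega}(\cC,\LSp)$, which is an $\LSp$-module via $a \mapsto \const_a \otimes -$. The first step is to make this precise as an equivalence of symmetric monoidal categories. I would use the fact that restricting along $\LSp \to \Fun^\omega(\cC,\LSp)$, $a \mapsto \const_a$, which is strong symmetric monoidal, exhibits the target as a unital commutative algebra in $\preslst$ (or its small analogue) under $\LSp$. Its augmentation is evaluation at $*$, and the augmentation ideal is $\Fun^{\ast,\omega}(\cC,\LSp)$.

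The second step treats the target in the same way. Let $\SSeq_{\geq 1}$ denote the positive part. Then $\RMod_{\partial_*\id_\cC}(\SSeq(\LSp,\LSp))$ splits as $\RMod_{\partial_*\id_\cC}(\SSeq_{\geq1}) \times \LSp$. The arity-$0$ part is a trivial module, because $\partial_*\id_\cC$ is positive, and the formula $(M \circ \cO)_0 = M_0$ holds for positive $\cO$. By \eqref{eq:Day-convolution-formula}, Day convolution restricted to arity $0$ is the tensor product in $\LSp$. The arity-$0$ projection is a strong monoidal augmentation, and the inclusion of $\LSp$ in arity $0$ is a strong monoidal unit map by \cref{prop:day-convolution-is-indeed-lift}. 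So the target is also the unitalization of its nonunital ideal $\RMod_{\partial_*\id_\cC}(\SSeq_{\geq 1})$, which carries the restricted $\circledast$.

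The third step defines $\partial_*F$ as $(\partial_*\red F, F(*))$. This agrees with \cref{def:definition-partial0} and \cref{lem:partial-vs-reduction}. It is then the unitalization of the nonunital strong monoidal functor from \cref{cor:convolution-prod-rule-nonunital-2}, provided that functor is $\LSp$-linear in a symmetric monoidal way. Concretely, this requires $\partial_*(\const_a\otimes F_0) \simeq a\otimes \partial_* F_0$ compatibly with the monoidal structures. I would obtain this by applying the nonunital product rule with $\const_a$ viewed in $\Fun^{\omega}$ only formally; better, one can note that $\partial_*$ is exact and $\LSp$-linear on $\Fun^{\ast,\omega}(\cC,\LSp)$, since $\LSp$ is generated under colimits by $L\Sph$ and $\partial_*$ preserves colimits by \cref{prop: local-exactness-derivatives}. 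Since unitalization (the left adjoint from nonunital to unital augmented commutative algebras in $\LSp$-modules) is functorial, we obtain a unital strong symmetric monoidal functor.

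The main obstacle is the coherence in the last step. One must check that the $\LSp$-module structure used for the unitalization is compatible, as symmetric monoidal data, with the nonunital monoidal structure of \cref{cor:convolution-prod-rule-nonunital-2}. Equivalently, the mixed terms $\partial_*(\const_a\otimes F_0)$ must match $a\circledast\partial_*F_0$ coherently. My first attempt would avoid this as follows. Note that $\LSp$ is idempotent, so every presentable stable $\LSp$-local category has a unique $\LSp$-module structure, and every colimit-preserving functor between such categories is uniquely $\LSp$-linear. Then apply the equivalence of \cite[Corollary 5.4.4.7]{HA}, between quasi-unital nonunital algebras and unital ones, to the augmented unitalizations. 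This reduces all coherence to the nonunital statement already proved.
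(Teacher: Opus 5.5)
Your proposal is correct and follows the paper's proof. The paper likewise identifies $\Fun^{\omega}(\cC,\LSp)$ and $\RMod_{\partial_*\id_\cC}(\SSeq(\LSp,\LSp))$ with the unitalizations in $\CAlg^{\mathrm{nu}}(\presl_\LSp)$ of $\Fun^{\ast,\omega}(\cC,\LSp)$ and $\RMod_{\partial_*\id_\cC}(\SSeq_{\geq 1}(\LSp,\LSp))$, using the splittings $F \simeq \red F \oplus \const_{F(*)}$ and $M \simeq M_{\geq 1} \oplus M_0$; it then applies unitalization to the nonunital product rule, and the $\LSp$-linearity coherence you worry about is automatic because everything lives in $\presl_\LSp$. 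Two small adjustments: the input you need is the unitalization left adjoint of \cite[Proposition 5.4.4.8]{HA}, not the quasi-unital statement \cite[Corollary 5.4.4.7]{HA}, and you must check that $\Fun^{\omega}(\cC,\LSp)$ is presentable in order to work in $\presl_\LSp$, which the paper does via \cite[Proposition 3.3.3]{blansblom2025chainrulegoodwilliecalculus}.
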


For the proof we first need to know that $\Fun^{\omega}(\cC,\LSp)$ is presentable.

\begin{lemma}\label{lem:fun-omega-presentable}
    Let $\cC$ be a differentiable category and consider $\Fun^{\omega}(\cC,\LSp)$ as a symmetric monoidal category by equipping it with the pointwise tensor product.
    Then $\Fun^\omega(\cC,\LSp)$ lies in $\CAlg(\presl_\LSp)$.
    Similarly, $\Fun^{*,\omega}(\cC,\LSp)$ lies in $\CAlg^\mathrm{nu}(\presl_\LSp)$.
\end{lemma}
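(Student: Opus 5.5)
The plan is to realize $\Fun^\omega(\cC,\LSp)$ as an accessible localization of a presheaf category and then check that the pointwise tensor product is compatible with that localization.

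First, presentability. Since $\cC$ is presentable, choose a regular cardinal $\kappa$ such that $\cC$ is $\kappa$-compactly generated, and write $\cC^\kappa$ for its small full subcategory of $\kappa$-compact objects. Every $x\in\cC$ is a $\kappa$-filtered colimit of objects of $\cC^\kappa$. An $\omega$-filtered-colimit-preserving functor therefore need not be determined by its restriction to $\cC^\kappa$. Instead, I would use the equivalence $\Fun^\omega(\cC,\LSp)\simeq \Fun^\omega(\cC,\Sp)\otimes\LSp$, or argue directly as follows. Finitary functors $\cC\to\LSp$ are exactly the functors left Kan extended from $\cC^\kappa$ whose restriction to $\cC^\kappa$ sends the $\omega$-filtered colimits existing in $\cC$ to colimits. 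Hence $\Fun^\omega(\cC,\LSp)$ is the full subcategory of $\Fun(\cC^\kappa,\LSp)$ of objects local with respect to a small set of maps: the maps $\colim_i h^{c_i}\to h^{\colim c_i}$, indexed over small filtered diagrams in $\cC^\kappa$ whose colimit is again in $\cC^\kappa$, tensored with generators of $\LSp$.

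It is simplest to phrase this as follows. $\Fun^\omega(\cC,\LSp)$ is closed under limits and colimits in $\Fun(\cC,\LSp)$, because colimits commute with filtered colimits and, $\LSp$ being stable, finite limits do too. Arbitrary limits are products plus finite limits, and products commute with filtered colimits in a stable presentable category only sometimes. So I would instead invoke accessibility: $\Fun^\omega(\cC,\LSp)\simeq\Fun^\omega(\cC,\Sp)\otimes\LSp$ by the standard identification $\Fun^{\omega}(\cC,\sD)\simeq \FunL(\Ind\text{-type}\,\cC,\sD)$ for $\cC=\Ind_\omega$ of $\kappa$-compact objects. This gives presentability, $\LSp$-locality (mapping spectra are computed as ends of $\LSp$-valued mapping spectra), and closure under colimits in $\Fun(\cC,\LSp)$.

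Second, the monoidal structure. Since $\otimes$ on $\LSp$ preserves colimits in each variable, in particular filtered ones, the pointwise tensor of two finitary functors is finitary. The unit $\const_{L\Sph}$ is finitary, because filtered categories are weakly contractible. The pointwise structure on $\Fun(\cC,\LSp)$ is a commutative algebra in large categories, and $\Fun^\omega$ is a full subcategory closed under tensors and containing the unit, so it inherits a symmetric monoidal structure. That structure preserves colimits in each variable, since colimits are computed pointwise and $\otimes$ on $\LSp$ is cocontinuous in each variable. This places $\Fun^\omega(\cC,\LSp)$ in $\CAlg(\presl_\LSp)$.

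For the reduced version, $\Fun^{*,\omega}(\cC,\LSp)$ is the fiber of the colimit-preserving evaluation $\ev_*$ at the zero object. It is therefore presentable and closed under colimits. It is also closed under the pointwise tensor, since $0\otimes 0\simeq0$, but it does not contain the unit, which yields an object of $\CAlg^\mathrm{nu}(\presl_\LSp)$.

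The main obstacle is the presentability step. The difficulty is making precise that finitary functors out of a large presentable $\cC$ form a presentable category. I would first try to settle it by citing \cite[Proposition 5.3.5.10]{HTT}-style results that $\Fun^\omega(\Ind(\cC_0),\sD)\simeq\Fun(\cC_0,\sD)$ for essentially small $\cC_0$. To apply this to $\cC$, I would write $\cC\simeq\Ind_\kappa(\cC^\kappa)$ and express finitary functors as left Kan extensions that are local for a small set of maps.
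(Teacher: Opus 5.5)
Your handling of the monoidal part (pointwise tensors and the unit $\const_{L\Sph}$ are finitary, colimits are computed pointwise) and your deduction of the reduced case as the fiber of $\ev_*$ are fine. The real content of the lemma, however, is presentability of $\Fun^\omega(\cC,\LSp)$ for a differentiable $\cC$ that need not be compactly generated, and you leave it open; both mechanisms you suggest for it fail.

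\emph{The locality description.} The objects of $\Fun(\cC^\kappa,\LSp)$ that are local for a set of maps are closed under all limits. Finitary functors are in general not closed under infinite products there, precisely because, as you note, products need not commute with filtered colimits in $\LSp$. Correspondingly, Yoneda gives $\ulmap(\colim_i h^{c_i},f)\simeq\lim_i f(c_i)$, so preservation of a filtered colimit is not a locality condition on corepresentables.

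\emph{The fallback.} The identification $\Fun^\omega(\Ind(\cC_0),\sD)\simeq\Fun(\cC_0,\sD)$ needs $\cC$ to be compactly generated. For $\cC\simeq\Ind_\kappa(\cC^\kappa)$ the analogous statement only describes functors preserving $\kappa$-filtered colimits. Writing $\Fun^\omega(\cC,\LSp)\simeq\Fun^\omega(\cC,\Sp)\otimes\LSp$ already presupposes the presentability you are trying to prove.

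Also, contrary to your second sentence, a finitary functor \emph{is} determined by its restriction to $\cC^\kappa$, since it preserves $\kappa$-filtered colimits.

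The paper avoids all this by citing \cite[Proposition 3.3.3]{blansblom2025chainrulegoodwilliecalculus}. That result gives a presentable $\cP^{\mathrm{fin}}(\cC)$ with $\Fun^{*,\omega}(\cC,\LSp)\simeq\FunL(\cP^{\mathrm{fin}}(\cC),\LSp)$. This lies in $\presl_\LSp$ because $\FunL$ is the internal hom of $\presl$ and $\presl\to\presl_\LSp$ is strong monoidal. The unreduced case then follows from the equivalence $\Fun^{*,\omega}(\cC,\LSp)\times\LSp\simeq\Fun^\omega(\cC,\LSp)$, $(F,X)\mapsto F\oplus\const_X$, with inverse $G\mapsto(\red G,G(*))$. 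This is the opposite order to yours, which is equally fine once one case is established.

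If you want to keep a direct route, the correct form is as follows. Identify $\Fun^\omega(\cC,\LSp)$ with the full subcategory of $\Fun(\cC^\kappa,\LSp)$ of functors preserving the essentially small set of $\kappa$-small filtered colimits in $\cC^\kappa$, for $\kappa$ uncountable. This identification needs an argument, e.g.\ via $\Fun(I,\Ind_\kappa\cC^\kappa)\simeq\Ind_\kappa\Fun(I,\cC^\kappa)$ for $\kappa$-small $I$, together with writing any filtered colimit as a $\kappa$-filtered colimit of $\kappa$-small filtered ones. Then show this subcategory is accessible and closed under colimits, hence presentable, as in Lurie's proof that $\FunL(\sC,\sD)$ is presentable. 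It is not a reflective localization.
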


\begin{proof}
    First observe that the pointwise tensor product preserves colimits in each variable since colimits in $\Fun^{\omega}(\cC,\LSp)$ and $\Fun^{*,\omega}(\cC,\LSp)$ are computed pointwise.
    It therefore suffices to prove that $\Fun^{\omega}(\cC,\LSp)$ and $\Fun^{*,\omega}(\cC,\LSp)$ lie in $\presl_\LSp$.
    By \cite[Proposition 3.3.3]{blansblom2025chainrulegoodwilliecalculus}, there exists a presentable category $\cP^\mathrm{fin}(\cC)$ such that $\Fun^{*,\omega}(\cC,\LSp) \simeq \FunL(\cP^\mathrm{fin}(\cC),\LSp)$.
    Since $\FunL(-,-)$ is the internal hom of $\presl$ and the localization $\presl \to \presl_\LSp$ is strong monoidal, it follows that $\FunL(\cP^\mathrm{fin}(\cC),\LSp)$ lies in $\presl_\LSp$.
    For $\Fun^{\omega}(\cC,\LSp)$, note that we have the functor
    \[\Phi \colon \Fun^{*,\omega}(\cC,\LSp) \times \LSp \to \Fun^{\omega}(\cC,\LSp); \quad (F,X) \mapsto F \oplus \const_X.\]
    We claim this functor is an equivalence, from which the result follows.
    To see this, observe that we have natural equivalences
    \[\Map(F \oplus {\const_X}, G) \simeq \Map(F,G) \times \Map(\const_X, G) \simeq \Map((F,X),(\red G, G(*))),\]
    where $\red$ denotes the reduction functor, see e.g.\ \cite[Proposition 3.1.19]{blansblom2025chainrulegoodwilliecalculus}.
    The functor $\Phi$ therefore admits a right adjoint given by $G \mapsto (\red G, G(*))$.
    The unit $(F,X) \to (\red(F \oplus \const_X), (F \oplus \const_X)(*))$ and counit $\red(G) \oplus G(*) \to G$ are easily checked to be invertible, so we conclude that $\Phi$ is an equivalence.
\end{proof}

Recall from \cite[Proposition 5.4.4.8]{HA} that the forgetful functor $\CAlg(\presl_\LSp) \to \CAlg^\mathrm{nu}(\presl_\LSp)$ admits a left adjoint sending $\cA$ to its \emph{unitalization} $\cA^\mathrm{un} \simeq \cA \oplus \LSp$.
The main technical input to \cref{thm:convolution-product-rule-unital} is that both the left-hand and the right-hand side are unitalizations of nonunital presentably symmetric monoidal $\LSp$-local categories.

\begin{lemma}\label{lem:nonreduced-decomposition-fun}
    Let $\cC$ be a differentiable category.
    Then the canonical functor
    \[\Phi \colon \Fun^{\ast,\omega}(\cC,\LSp)^\mathrm{un} \simeq \Fun^{\ast,\omega}(\cC, \LSp) \oplus \LSp \to \Fun^{\omega}(\cC,\LSp)\]
    is a symmetric monoidal equivalence with respect to the pointwise tensor product.
\end{lemma}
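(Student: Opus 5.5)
The strategy is to produce a symmetric monoidal map from the universal property of unitalization, and then show it is an equivalence by checking its underlying functor.

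\textbf{Step 1: constructing the comparison map.} By \cref{lem:fun-omega-presentable}, $\Fun^{\omega}(\cC,\LSp)$ lies in $\CAlg(\presl_\LSp)$ and $\Fun^{\ast,\omega}(\cC,\LSp)$ lies in $\CAlg^\mathrm{nu}(\presl_\LSp)$. The inclusion $\Fun^{\ast,\omega}(\cC,\LSp) \hookrightarrow \Fun^{\omega}(\cC,\LSp)$ preserves colimits, since colimits are computed pointwise and reduced functors are closed under colimits. It also preserves the pointwise tensor product, because $F(*)\otimes G(*) \simeq 0$ when $F$ and $G$ are reduced. It is therefore a nonunital symmetric monoidal morphism in $\CAlg^\mathrm{nu}(\presl_\LSp)$. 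By the adjunction of \cite[Proposition 5.4.4.8]{HA}, it extends uniquely to a unital symmetric monoidal left adjoint
\[\Phi \colon \Fun^{\ast,\omega}(\cC,\LSp)^\mathrm{un} \to \Fun^{\omega}(\cC,\LSp).\]
This is the canonical functor in the statement.

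\textbf{Step 2: identifying the underlying functor.} I will show that the underlying functor of $\Phi$, under the identification $\cA^\mathrm{un}\simeq \cA\oplus\LSp$, is $(F,X)\mapsto F\oplus\const_X$. On the summand $\Fun^{\ast,\omega}(\cC,\LSp)$ this functor is the inclusion by construction. On the summand $\LSp$ it is the unique colimit-preserving functor sending $L\Sph$ to the unit $\const_{L\Sph}$, and that functor is $X\mapsto \const_X$. Since $\Phi$ preserves colimits, in particular the direct sum, it is given on objects by $(F,X)\mapsto F\oplus \const_X$.

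\textbf{Step 3: conclusion.} The proof of \cref{lem:fun-omega-presentable} already shows that $(F,X)\mapsto F\oplus\const_X$ is an equivalence of categories, with inverse $G\mapsto(\red G, G(*))$. So $\Phi$ is a symmetric monoidal functor whose underlying functor is an equivalence, and hence it is a symmetric monoidal equivalence.

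The main obstacle is Step 2: matching the abstract unitalization $\cA\oplus\LSp$ of \cite[Proposition 5.4.4.8]{HA} with the concrete functor $\Phi$ from \cref{lem:fun-omega-presentable}. I would handle this by using the explicit description in \cite{HA}, where $\cA^\mathrm{un}\simeq\cA\oplus\LSp$ in $\presl_\LSp$ and the unit inclusion $\LSp\to\cA^\mathrm{un}$ is a map in $\presl_\LSp$. Restricting $\Phi$ along the two summand inclusions and using that it preserves finite coproducts should then suffice.
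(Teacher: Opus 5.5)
Your proposal is correct and follows the same route as the paper. You define $\Phi$ as the adjunct of the nonunital symmetric monoidal inclusion under the unitalization adjunction of \cite[Proposition 5.4.4.8]{HA}, identify it on underlying categories with $(F,X)\mapsto F\oplus\const_X$, and conclude by the argument already given in the proof of \cref{lem:fun-omega-presentable}; your Step 2 just spells out the identification of the underlying functor that the paper reads off directly from \cite{HA}.
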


\begin{proof}
    The symmetric monoidal functor $\Phi$ is defined as the adjunct of the nonunital symmetric monoidal inclusion $\Fun^{\ast, \omega}(\cC,\LSp) \hookrightarrow \Fun^{\omega}(\cC,\LSp)$ in $\CAlg^\mathrm{nu}(\presl_\LSp)$, so by \cite[Proposition 5.4.4.8]{HA} it is given on underlying categories by $(F,X) \mapsto F \oplus {\const_X}$.
    We saw in the proof of \cref{lem:fun-omega-presentable} that this functor is an equivalence.
\end{proof}

\begin{lemma}\label{lem:decomposition-rmod}
    Let $\cO$ be an $\LSp$-enriched operad.
    Then the canonical functor
    \[\Phi \colon \RMod_{\cO}(\SSeq_{\geq 1}(\LSp,\LSp)) \oplus \LSp \to \RMod_{\cO}(\SSeq(\LSp,\LSp))\]
    is a symmetric monoidal equivalence with respect to the Day convolution symmetric monoidal structure.
\end{lemma}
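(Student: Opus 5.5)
The plan is to mirror the proof of \cref{lem:nonreduced-decomposition-fun}. First I will show that $\RMod_{\cO}(\SSeq_{\geq 1}(\LSp,\LSp))$ is a nonunital presentably symmetric monoidal $\LSp$-local category. It should be a full subcategory closed under $\circledast$ and under colimits. The unitalization adjunction of \cite[Proposition 5.4.4.8]{HA} then produces $\Phi$ as the adjunct of this nonunital symmetric monoidal inclusion. By construction $\Phi$ is symmetric monoidal, and on underlying objects it sends $(M,X)$ to $M \oplus X(0)$, where $X(0)$ denotes $X$ viewed as a right $\cO$-module concentrated in arity $0$. It therefore only remains to show that $\Phi$ is an equivalence of underlying categories.

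Closure under $\circledast$ follows from \cref{eq:Day-convolution-formula}: if $A_0 \simeq 0 \simeq B_0$ then $(A \circledast B)_0 \simeq A_0 \otimes B_0 \simeq 0$. Colimits are computed in symmetric sequences, since $- \circ \cO$ preserves colimits by \cref{prop: exactness-composition-mor}. I also need to know that $X(0)$ carries a right $\cO$-module structure. Because $\cO$ is positive in the situations we care about, $(X(0)\circ\cO)_n$ is $X$ in arity $0$ and zero elsewhere, so the action $X(0)\circ\cO\to X(0)$ is trivial. Moreover, $X \mapsto X(0)$ is the monoidal unit part: the unit of $\circledast$ is $L\Sph(0)$, as \cref{prop:day-convolution-is-indeed-lift} shows via the free functor.

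To prove that $\Phi$ is an equivalence, I will exhibit an inverse $N \mapsto (\tau_{\geq 1} N, N_0)$. Here $\tau_{\geq 1}N$ is the fiber of a map of right $\cO$-modules $N \to N_0(0)$, namely projection to arity $0$. I have to check that this map is $\cO$-linear. The formula for the composition product shows that $(N\circ\cO)_0 \simeq N_0$ when $\cO_0 \simeq 0$, so restricting to arity $0$ is a module map onto $N_0(0)$ with its trivial action. The arity-$0$ part gives a splitting $N_0(0)\to N$, and this splitting is also a map of modules, since arity-$0$ elements only receive action from $\cO_0 \simeq 0$ in the target's arity $0$. Hence $N \simeq \tau_{\geq 1}N \oplus N_0(0)$ as modules. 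Checking the unit and counit is then immediate arity by arity.

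The main obstacle is the hypothesis on $\cO$. If $\cO_0 \neq 0$, then arity $0$ interacts with higher arities, and the claim may fail. I expect the intended setting to be operads with $\cO(0)\simeq 0$, as for $\partial_*\id_\cC$, and I will impose that assumption.
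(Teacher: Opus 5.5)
Your proposal is correct and follows the paper's proof. In both, $\Phi$ is the unitalization adjunct and the inverse is $N \mapsto (N_{\geq 1}, N_0)$. Both also depend on $\cO(0)\simeq 0$, which the paper uses implicitly, as you noticed.

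The only difference is how the module maps are obtained. The paper gets $N_0(0)\to N$ and $N_{\geq 1}\to N$ as counits of adjunctions: the arity-$0$ coreflection composed with free/forget, and the coreflection onto positive modules. That is the coherent version of your arity-by-arity check.
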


\begin{proof}
    Let us write $\SSeq_{=0}(\LSp,\LSp) \subset \SSeq(\LSp,\LSp)$ for the symmetric sequences concentrated in degree 0; this full subcategory is canonically equivalent to $\LSp$.
    Since $\cO(0) = 0$, it follows that the inclusion
    \[\RMod_\cO(\SSeq_{\geq 1} (\LSp,\LSp))\hookrightarrow \RMod_\cO(\SSeq(\LSp,\LSp))\]
    admits a right adjoint given by $M \mapsto M_{\geq 1}$ by \cite[Construction 7.4.9]{Blans-Thesis}.
    Also note that if $F$ is a symmetric sequence concentrated in degree 0, then $F \circ \cO \simeq F$ by the formula for the composition product \cite[Proposition 3.2.9]{blansblom2025chainrulegoodwilliecalculus}.
    It follows that the unit of the composite adjunction
    \[
    \begin{tikzcd}[column sep = large]
        \SSeq_{=0}(\LSp,\LSp) \ar[r, "\mathrm{incl}", shift left] & \SSeq(\LSp,\LSp) \ar[l,shift left,"M_0 \mapsfrom M"] \ar[r,"{\mathrm{free}_\cO}", shift left] & \RMod_\cO(\SSeq(\LSp,\LSp)) \ar[l, "\mathrm{fgt}", shift left]
    \end{tikzcd}
    \]
    is invertible, and that the counit is invertible on those right $\cO$-modules that are concentrated in arity 0.
    We therefore have an inclusion
    \[\LSp \simeq \SSeq_{=0}(\LSp,\LSp) \simeq \RMod_\cO(\SSeq_{=0}(\LSp,\LSp)) \hookrightarrow \RMod_\cO(\SSeq(\LSp,\LSp))\]
    with right adjoint given by $M \mapsto M_0$.
    It now follows by the same proof as \cref{lem:nonreduced-decomposition-fun} that $\Phi$ admits a right adjoint which is a two-sided inverse.
\end{proof}

\begin{proof}[Proof of \cref{thm:convolution-product-rule-unital}]
    Applying $(-)^\mathrm{un} \colon \CAlg^\mathrm{nu}(\presl_\LSp) \to \CAlg(\presl_\LSp)$ to
    \[\partial_* \colon (\Fun^{\ast,\omega}(\cC, \LSp), \otimes_{\mathrm{pt}}) \to (\RMod_{\partial_* {\id_\cC}}(\SSeq_{\geq 1}(\LSp,\LSp)), \circledast),\]
    it extends to a symmetric monoidal functor $\Fun^{\omega}(\cC, \LSp) \to \RMod_{\partial_* {\id_\cC}}(\SSeq(\LSp,\LSp))$.
    To see that this is indeed the Goodwillie derivatives functor, note that $\partial_0 F = F(*)$ by \cref{def:definition-partial0}, and $\partial_n F \simeq \partial_n (\red F)$ by \cref{lem:partial-vs-reduction} for $n \geq 1$.
\end{proof}

\subsection{The coendomorphism operad of \texorpdfstring{$\Sigma^\infty$}{Σ\textasciicircum ∞}}\label{ssec:coend-operad-suspension}

We will now leverage the product rule and the description of Koszul duality from \cref{prop:KP-is-coend-operad} to obtain a description of the operad structure on (the Koszul dual of) $\partial_* \id_\cC$ that makes no reference to Goodwillie calculus.

\begin{proposition}\label{prop:main-computation-coend-operad}
    Let $\cC$ be a differentiable category such that $\Sp \cC \simeq \LSp$ and let $F \colon \cC \to \LSp$ be a finitary functor.
    Then the map
    \[\ulnat(F, (\Sigma^\infty_\cC)^{\otimes n}) \xrightarrow{\partial_*} \ulmap_{\RMod_{\partial_* \id_\cC}}(\partial_* F, (\partial_* \Sigma^\infty_\cC)^{\circledast n})\]
    is an equivalence of spectra for every $n \geq 0$.
\end{proposition}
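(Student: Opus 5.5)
The plan is to reduce to a Yoneda-type statement, using that $(\Sigma^\infty_\cC)^{\otimes n}$ is a very special functor. By the unital product rule (\cref{thm:convolution-product-rule-unital}), $\partial_*$ is strong symmetric monoidal. Hence $(\partial_*\Sigma^\infty_\cC)^{\circledast n} \simeq \partial_*((\Sigma^\infty_\cC)^{\otimes n})$, and the map in question is just the map on mapping spectra
\[
\ulnat(F, G) \to \ulmap_{\RMod_{\partial_*\id_\cC}}(\partial_* F, \partial_* G), \qquad G = (\Sigma^\infty_\cC)^{\otimes n}.
\]
For $n=0$, $G=\const_{L\Sph}$. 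Here $\partial_*G = L\Sph$ is concentrated in arity $0$, and the map identifies via \cref{lem:nonreduced-decomposition-fun,lem:decomposition-rmod} with $\ulmap(F(*), L\Sph)$ on both sides. So assume $n\geq 1$; then $G$ is reduced. Writing $F \simeq \red F \oplus \const_{F(*)}$, the constant summand contributes zero on both sides, since $G(*)=0$ and $(\partial_*G)_0=0$. So we may assume $F$ reduced as well.

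The key observation is that $G = (\Sigma^\infty_\cC)^{\otimes n} \simeq \mu_n \circ \Delta \circ \Sigma^\infty_\cC$ factors through $\Omega^\infty$-type data. Concretely, $G \simeq H\circ \Sigma^\infty_\cC$ with $H = (-)^{\otimes n}\colon \LSp \to \LSp$. I would use the adjunction $\Sigma^\infty_\cC \dashv \Omega^\infty_\cC$ on the functor-category side: $\ulnat(F, H\Sigma^\infty_\cC)$ is computed by a cobar construction for the comonad $\Sigma^\infty_\cC\Omega^\infty_\cC$, since $H\Sigma^\infty_\cC$ is a cofree right $\Sigma^\infty_\cC\Omega^\infty_\cC$-comodule after precomposing with $\Omega^\infty_\cC$. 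On the derivatives side, \cref{thm: calculus-and-koszul-duality-main} identifies $\RMod_{\partial_*\id_\cC}$ via Koszul duality with right comodules over $\partial_*(\Sigma^\infty_\cC\Omega^\infty_\cC)$, compatibly with precomposition by $\Omega^\infty_\cC$.

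The expected key computation is a \emph{stable} version of the statement. For $F', H$ finitary functors $\LSp \to \LSp$, with $H$ $n$-homogeneous of the form $X\mapsto X^{\otimes n}$, the map $\ulnat(F', H) \to \ulmap_{\SSeq}(\partial_*F', \partial_*H)$ is an equivalence. This is where Appendix B enters. $\partial_*\colon \Fun(\LSp,\LSp)\to\SSeq$ has a right adjoint, whose unit on homogeneous functors is the norm map $(A_n\otimes X^{\otimes n})_{h\Sigma_n}\to (A_n\otimes X^{\otimes n})^{h\Sigma_n}$. The functor $X^{\otimes n}$ with free $\Sigma_n$-coefficients $\LSp[\Sigma_n]$ has vanishing Tate construction, so the norm is an equivalence and $H$ is in the image of the right adjoint on the nose; hence the map is an equivalence. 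Then I pass to the comodule level by writing both mapping spectra as totalizations of cosimplicial objects (cobar resolutions against the comonads $\Sigma^\infty\Omega^\infty$ and $\partial_*(\Sigma^\infty\Omega^\infty)$). The derivatives functor preserves these levelwise, by the chain rule and the commutative square of \cref{thm: calculus-and-koszul-duality-main}, and the levelwise terms are of the stable form just handled, since cofree comodules pull back to composites with $\Sigma^\infty_\cC\Omega^\infty_\cC$.

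The main obstacle is making this cosimplicial comparison rigorous: one must check that the comodule mapping spectra are genuinely computed by these totalizations on both sides, and that the levelwise targets remain of the form ``homogeneous with free $\Sigma_k$-coefficients'' so the Tate vanishing applies. A potential convergence issue arises because $\ulnat$ out of an infinite Goodwillie tower need not commute with the limit. I would first try to avoid it by noting that $G$ is $n$-excisive. Then $\ulnat(F,G)\simeq\ulnat(P_nF,G)$, and correspondingly only arities $\le n$ of $\partial_*F$ matter, since $(\partial_*\Sigma^\infty)^{\circledast n}$ is concentrated in arity $n$ and right $\partial_*\id_\cC$-module maps into it factor through the truncation. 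This reduces to a finite induction over the layers $D_kF$, $k\le n$, using the local exactness \cref{prop: local-exactness-derivatives} to pass fiber sequences through $\partial_*$.
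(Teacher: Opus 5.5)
Your preliminary reductions are fine and agree with the paper: using the unital product rule to rewrite the target as $\partial_*((\Sigma^\infty_\cC)^{\otimes n})$, treating $n=0$ separately, and splitting off $\const_{F(*)}$. Your stable input also agrees with the paper: by \cref{cor: unit-right-adjoint-der-norm}, the unit of the right adjoint of $\partial_*$ at $X\mapsto X^{\otimes n}$ is the norm map, which is an equivalence because $\partial_n(\id_{\LSp}^{\otimes n})\simeq\Ind_*^{\Sigma_n}L\Sph$.

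The gap is the passage from your map to this stable comparison, which you leave as an unresolved cosimplicial argument. As written, it relies on Koszul duality \emph{identifying} $\RMod_{\partial_*\id_\cC}$ with right comodules over $\partial_*(\Sigma^\infty_\cC\Omega^\infty_\cC)$. That is not available: bar--cobar for modules is only an adjunction, and \cref{thm: calculus-and-koszul-duality-main} only gives $B\partial_*F\simeq\partial_*(F\Omega^\infty_\cC)$. So you never express $\ulmap_{\RMod_{\partial_*\id_\cC}}(\partial_*F,-)$ in comodule terms compatibly with the functor side.

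The fallback induction over the layers $D_kF$, $k\le n$, does not close the gap either. For $k<n$ the layers map nontrivially into $(\Sigma^\infty_\cC)^{\otimes n}$; for example $\ulnat(\Sigma^\infty,(\Sigma^\infty)^{\otimes 2})\simeq\Sph$ for $\cC=\Spc_*$. On the module side the corresponding mapping spectrum genuinely depends on the operad structure of $\partial_*\id_\cC$; this is where $\mathbf{Com}(2)$ comes from. So each layer is as hard as the original problem and is not of the stable form you handled.

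The missing idea is that no resolution is needed at all. Since $\partial_*\colon\Diff\to\MMor$ is a $2$-functor, it sends $\Sigma^\infty_\cC\dashv\Omega^\infty_\cC$ to an adjunction $\partial_*\Sigma^\infty_\cC\dashv\partial_*\Omega^\infty_\cC$ in $\MMor$. Hence, on both sides, precomposition with $\Omega^\infty_\cC$ is left adjoint to precomposition with $\Sigma^\infty_\cC$.

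Both targets are of the form ``something precomposed with $\Sigma^\infty_\cC$'':
\begin{itemize}
\item $(\Sigma^\infty_\cC)^{\otimes n}=\id_{\LSp}^{\otimes n}\circ\Sigma^\infty_\cC$;
\item by naturality of the pointwise monoidal structure in the source (\cref{lem:pointwise-tensor-prod}), $(\partial_*\Sigma^\infty_\cC)^{\circledast n}\simeq\unit^{\circledast n}\circ\partial_*\Sigma^\infty_\cC$.
\end{itemize}
The mate correspondence then gives a commutative square whose vertical maps are equivalences. It identifies your map with
\[\ulnat(F\Omega^\infty_\cC,\id_{\LSp}^{\otimes n})\to\ulmap_{\SSeq}(\partial_*(F\Omega^\infty_\cC),\unit^{\circledast n}),\]
using the chain rule $\partial_*F\circ_{\partial_*\id_\cC}\partial_*\Omega^\infty_\cC\simeq\partial_*(F\Omega^\infty_\cC)$. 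Your Appendix B argument applies to this map directly.

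Your remark that $H\Sigma^\infty_\cC$ is cofree after precomposing with $\Omega^\infty_\cC$ is the shadow of this. Maps into a cofree object are computed in a single adjunction step, so the totalizations and the convergence issue you worry about never arise.
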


\begin{proof}
First assume $F$ is reduced. Observe that $\partial_* \colon \Diff \to \MMor$ takes the adjunction $\Sigma^\infty_\cC \dashv \Omega^\infty_\cC$ to an adjunction $\partial_* \Sigma^\infty_\cC \dashv \partial_* \Omega^\infty_\cC$ in $\MMor$.
The mate correspondence therefore gives a commutative square
\[\begin{tikzcd}[ampersand replacement=\&]
	{\ulnat(F,(\Sigma^\infty_\cC)^{\otimes n})} \& {\ulmap_{\RMod}(\partial_*F,(\partial_*\Sigma^\infty_\cC)^{\circledast n})} \\
	{\ulnat(F\Omega^\infty_\cC,\id_{\LSp}^{\otimes n})} \& {\ulmap_{\SSeq}(\partial_*(F\Omega^\infty_\cC), \unit^{\circledast n})}
	\arrow["{\partial_*}", from=1-1, to=1-2]
	\arrow["\sim"', from=1-1, to=2-1]
	\arrow["\sim", from=1-2, to=2-2]
	\arrow["{\partial_*}"', from=2-1, to=2-2]
\end{tikzcd}\]

The functor $\partial_* \colon \Fun^{\ast, \omega}(\LSp,\LSp) \to \SSeq(\LSp,\LSp)$ admits a right adjoint $\Psi$ by \cref{cor: right-adjoint-derivatives-finitary}.
To see that the bottom map in the square is an equivalence, it therefore suffices to show that the unit map $\id_{\LSp}^{\otimes n} \to \Psi \partial_* (\id_{\LSp}^{\otimes n})$ is an equivalence.
By \cref{cor: unit-right-adjoint-der-norm}, this unit is given by applying the right adjoint of the inclusion $\Fun^{\ast, \omega}(\LSp, \LSp) \hookrightarrow \Fun^{\ast}(\LSp, \LSp)$
to the norm map
\[(\partial_n \id_{\LSp}^{\otimes n} \otimes (-)^{\otimes n})_{h\Sigma_n} \to (\partial_n \id_{\LSp}^{\otimes n} \otimes (-)^{\otimes n})^{h \Sigma_n}.\]
This is an equivalence since $\partial_n \id^{\otimes n}_{\LSp} \simeq (\unit^{\circledast n})_n \simeq \Ind_*^{\Sigma_n} L\Sph$ by formula \cref{eq:Day-convolution-formula} for Day convolution.

Now suppose $F$ is unreduced.
By the equivalences $\Fun^\omega(\cC,\LSp) \simeq \Fun^{\ast,\omega}(\cC,\LSp) \oplus \LSp$ and $\RMod_{\cO}(\SSeq_{\geq 1}(\LSp,\LSp)) \oplus \LSp \simeq \RMod_{\cO}(\SSeq(\LSp,\LSp))$ from \cref{lem:nonreduced-decomposition-fun,lem:decomposition-rmod}, the statement for $n \geq 1$ follow from the reduced case proved above.
On the other hand, since $\Sigma_\cC^{\otimes 0} = \const_{L\Sph}$ and $(\partial_* \Sigma^\infty_\cC)^{\circledast 0}$ is $L\Sph$ concentrated in degree 0, the case $n=0$ follows immediately.
\end{proof}

The following computation is a direct consequence.

\begin{corollary}
    Let $F \colon \cC \to \LSp$ be as in \cref{prop:main-computation-coend-operad}.
    Then there is a $\Sigma_n$-equivariant equivalence of spectra
    \[\ulnat(F,(\Sigma^\infty_\cC)^{\otimes n}) \simeq \ulmap(\partial_n(F \Omega^\infty_\cC), L\Sph).\]
    In particular, if $\sC = \LSp$, then $\ulnat(F,\id_{\LSp}^{\otimes n}) \simeq \mathbf{D}\partial_n F$.
\end{corollary}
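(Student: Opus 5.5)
We reduce the statement to \cref{prop:main-computation-coend-operad}, and then evaluate the right-hand side using the explicit Day convolution formula \cref{eq:Day-convolution-formula} together with the freeness of $(\partial_*\Sigma^\infty_\cC)^{\circledast n}$ as a right module. By that proposition, $\partial_*$ induces an equivalence
\[
\ulnat(F,(\Sigma^\infty_\cC)^{\otimes n}) \simeq \ulmap_{\RMod_{\partial_*\id_\cC}}(\partial_*F,(\partial_*\Sigma^\infty_\cC)^{\circledast n}),
\]
and it is $\Sigma_n$-equivariant. The reason is that $\partial_*$ is strong symmetric monoidal by \cref{thm:convolution-product-rule-unital}, so the permutation action on $(\Sigma^\infty_\cC)^{\otimes n}$ is carried to the permutation action on the $n$-fold Day convolution.

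Next I would identify the target. Since $\Sigma^\infty_\cC$ preserves colimits, \cref{prop: derivatives-free-left-right-module}(2) applies on the left. On the right side, the mate argument in the proof of \cref{prop:main-computation-coend-operad} already rewrites the mapping spectrum as
\[
\ulmap_{\SSeq}(\partial_*(F\Omega^\infty_\cC),\unit^{\circledast n}).
\]
This uses the adjunction $\partial_*\Sigma^\infty_\cC \dashv \partial_*\Omega^\infty_\cC$ in $\MMor$ and the identification $\partial_*\Sigma^\infty_\cC \simeq \unit$ as a free right module on $\unit$. Indeed, $\Sigma^\infty_\cC$ preserves finite limits after restricting to the relevant setting, or more directly, $\free_{\partial_*\id_\cC}$ is symmetric monoidal by \cref{prop:day-convolution-is-indeed-lift}, so $(\partial_*\Sigma^\infty_\cC)^{\circledast n}$ corresponds to $\unit^{\circledast n}$ under the free/forgetful adjunction.

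Then I would compute $\unit^{\circledast n}$ via \cref{eq:Day-convolution-formula}. It is concentrated in arity $n$, where it equals $\Ind_{\ast}^{\Sigma_n}L\Sph = \bigoplus_{\Sigma_n} L\Sph$. Mapping spectra in symmetric sequences are products over arities of $\Sigma_k$-equivariant mapping spectra, so only arity $n$ contributes, and we get
\[
\ulmap_{\LSp^{B\Sigma_n}}\bigl(\partial_n(F\Omega^\infty_\cC),\Ind_\ast^{\Sigma_n}L\Sph\bigr)
\simeq \ulmap\bigl(\partial_n(F\Omega^\infty_\cC),L\Sph\bigr).
\]
The last equivalence uses that induction is coinduction for finite groups (the Wirthmüller isomorphism, or simply that finite sums are products in the stable setting), followed by the coinduction adjunction. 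The residual $\Sigma_n$-action comes from the right action on $\Ind$, which corresponds to the given action on $\partial_n$. This bookkeeping is the step I expect to be the main obstacle: I need to check that the equivariance produced by the symmetric monoidal structure matches the conjugation action on $\ulmap(\partial_n(F\Omega^\infty_\cC),L\Sph)$. I would verify it by tracking the $\Sigma_n$-action on $\unit^{\circledast n}=\Ind_{\ast}^{\Sigma_n}L\Sph$ through the coinduction adjunction.

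For the special case $\cC=\LSp$, the functor $\Omega^\infty_\cC$ is the identity, so $\partial_n(F\Omega^\infty)=\partial_nF$. The right-hand side is then $\ulmap(\partial_nF,L\Sph)=\mathbf{D}\partial_nF$, the (local) Spanier--Whitehead dual, and $(\Sigma^\infty_\cC)^{\otimes n}=\id_{\LSp}^{\otimes n}$.
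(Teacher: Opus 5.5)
Your skeleton is the paper's proof. You use the mate square from the proof of \cref{prop:main-computation-coend-operad} to identify $\ulnat(F,(\Sigma^\infty_\cC)^{\otimes n})$ with $\ulmap_{\SSeq}(\partial_*(F\Omega^\infty_\cC),\unit^{\circledast n})$. You then read off from \cref{eq:Day-convolution-formula} that $\unit^{\circledast n}$ is $\Ind_*^{\Sigma_n}L\Sph\simeq\Coind_*^{\Sigma_n}L\Sph$ in arity $n$ and zero elsewhere, and apply the coinduction adjunction. That final computation, your handling of the residual $\Sigma_n$-action, and the specialization to $\cC=\LSp$ are all fine; the paper is no more explicit about equivariance than you are.

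What is wrong is the explanation you attach to the middle identification.
\begin{itemize}
\item $\partial_*\Sigma^\infty_\cC$ is $\unit$ with the \emph{trivial} right $\partial_*\id_\cC$-action, pulled back along the projection to arity $1$. It is not the free right module on $\unit$, which would be $\unit\circ\partial_*\id_\cC\simeq\partial_*\id_\cC$ (the spectral Lie operad when $\cC=\Spc_*$).
\item $\Sigma^\infty_\cC$ does not preserve finite limits in general.
\item The free/forgetful adjunction gives no control over maps \emph{into} $(\partial_*\Sigma^\infty_\cC)^{\circledast n}$.
\end{itemize}
If the step really rested on these claims it would fail: mapping into $\unit^{\circledast n}\circ\partial_*\id_\cC$ gives a different answer.

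The correct mechanism is the one the paper uses. The adjunction $\partial_*\Sigma^\infty_\cC\dashv\partial_*\Omega^\infty_\cC$ in $\MMor$ yields, by precomposition, for $N\in\SSeq(\LSp,\LSp)$:
\[
\ulmap_{\RMod_{\partial_*\id_\cC}}\bigl(\partial_*F,\ N\circ\partial_*\Sigma^\infty_\cC\bigr)\simeq\ulmap_{\SSeq}\bigl(\partial_*F\circ_{\partial_*\id_\cC}\partial_*\Omega^\infty_\cC,\ N\bigr).
\]
For reduced $F$, the left argument on the right-hand side is $\partial_*(F\Omega^\infty_\cC)$ by the chain rule. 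The general case splits off arity $0$ as in the proof of \cref{prop:main-computation-coend-operad}. Moreover, the functor $N\mapsto N\circ\partial_*\Sigma^\infty_\cC$, which equips $N$ with the trivial right action, is symmetric monoidal by \cref{lem:pointwise-tensor-prod}. Hence $\unit^{\circledast n}\circ\partial_*\Sigma^\infty_\cC\simeq(\partial_*\Sigma^\infty_\cC)^{\circledast n}$.

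So maps into this trivial module are computed through the bar construction $\partial_*F\circ_{\partial_*\id_\cC}\unit$, not through a free/forgetful adjunction. Replace your justification with this and the argument is complete.
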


\begin{proof}
    The proof of \cref{prop:main-computation-coend-operad} shows that $\ulnat(F,(\Sigma^\infty_\cC)^{\otimes n}) \simeq \ulmap_{\SSeq}(\partial_*(F \Omega^\infty_\cC), \unit^{\circledast n})$.
    The result follows since by the formula given in \cref{ssec:coend-operads-and-Koszul-duality}, $\unit^{\circledast n}$ is concentrated in degree $n$, where it is equivalent to $\Ind_*^{\Sigma_n} L\Sph \simeq \Coind_*^{\Sigma_n} L\Sph$.
\end{proof}

\begin{remark}
    For a reduced finitary functor $F \colon \Sp \to \Sp$, the equivalence between $\ulnat(F, \id_{\Sp}^{\otimes n})$ and $\mathbf{D}\partial_nF$ was already observed by Arone--Ching \cite[\S 3.1]{AroneChing2011}.
\end{remark}

The main theorem of this section (cf. \cref{introcor: product-rule-koszul-duality,introcor: right-module-der}) now follows by combining the results proved above with \cref{prop:KP-is-coend-operad}.

\begin{theorem} \label{thm: koszul-dual-der-id-is-coend-sigma-infty}
    Let $\cC$ be a differentiable category with $\Sp \cC \simeq \LSp$ and such that $\partial_n \id_\cC$ is dualizable in $\LSp$ for every $n \geq 1$.
    Then there is an equivalence $K\partial_* \id_\cC \simeq \coEnd(\Sigma_\cC^\infty)$ of operads.
    Moreover, if $F \colon \cC \to \LSp$ is a finitary functor such that $\partial_n F$ is dualizable in $\LSp$ for every $n$, then $K\partial_* F \simeq \ulnat(F,(\Sigma^\infty_\cC)^{\otimes \bullet})$ as right $K\partial_* \id_\cC$-modules.
\end{theorem}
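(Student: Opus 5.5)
The plan is to transport coendomorphism operads along the strong symmetric monoidal functor of \cref{thm:convolution-product-rule-unital}. Then \cref{prop:main-computation-coend-operad} identifies the result, and \cref{prop:KP-is-coend-operad} recognizes it as the Koszul dual.

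Set $\cO = \partial_*\id_\cC$, and write
\[
\partial_* \colon (\Fun^\omega(\cC,\LSp),\otimes_{\mathrm{pt}}) \to (\RMod_\cO(\SSeq(\LSp,\LSp)),\circledast)
\]
for the unital strong symmetric monoidal functor of \cref{thm:convolution-product-rule-unital}. Both sides are stably symmetric monoidal and $\LSp$-local. Coendomorphism operads are functorial in strong symmetric monoidal exact functors; this is the same functoriality used in the Construction of $\coEnd$ to show independence of $\cV_0$. More precisely, the canonical $(\SSeq(\LSp),\circ^{\mathrm{rev}})$-enrichment $\mathrm{hom}(x,y) = \{\ulmap(x,y^{\otimes n})\}_n$ is natural in such functors. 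Hence $\partial_*$ induces a map of operads
\[
\coEnd(\Sigma^\infty_\cC) \to \coEnd(\partial_*\Sigma^\infty_\cC).
\]
It also induces a map of right modules $\ulnat(F,(\Sigma^\infty_\cC)^{\otimes\bullet}) \to \ulmap(\partial_*F,(\partial_*\Sigma^\infty_\cC)^{\circledast\bullet})$, compatible along the first map. Levelwise, these maps are exactly the maps of \cref{prop:main-computation-coend-operad}: for the operad take $F = \Sigma^\infty_\cC$, which is finitary, and for the module take $F$ itself. That proposition shows they are equivalences for every $n \geq 0$. An operad map that is a levelwise equivalence is an equivalence, and the same holds for module maps.

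Next identify $\partial_*\Sigma^\infty_\cC$ with the trivial module $\unit$. Since $\Sigma^\infty_\cC$ preserves finite colimits, \cref{prop: derivatives-free-left-right-module}(2) shows that $\partial_*\Sigma^\infty_\cC$ is the free left module over $\partial_*\id_{\Sp\cC} \simeq \unit_{\LSp}$ on $\partial_1\Sigma^\infty_\cC \simeq \id$. So it is concentrated in arity $1$, with value $L\Sph$, and its right $\cO$-action is forced to be the trivial one for degree reasons. Here one uses that $\cO$ is strongly positive with $\cO(1)\simeq L\Sph$ and that the action is unital. This makes $\partial_*\Sigma^\infty_\cC \simeq \unit$ as right $\cO$-modules, and hence $\coEnd(\partial_*\Sigma^\infty_\cC) \simeq \coEnd(\unit)$.

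\cref{prop:KP-is-coend-operad} applies because $\cO$ is levelwise dualizable and strongly positive; strong positivity follows from $\partial_1\id_\cC \simeq \id_{\LSp}$. It gives $\coEnd(\unit) \simeq K\cO$ and $\ulmap(\partial_*F,\unit^{\circledast\bullet}) \simeq K\partial_*F$ as right $K\cO$-modules, using that $\partial_*F$ is levelwise dualizable. Composing these with the equivalences from the previous paragraphs proves both claims.

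The step I expect to be the main obstacle is the naturality of $\coEnd$ and of the associated right modules under a strong symmetric monoidal functor, together with matching the induced maps on underlying spectra with the maps of \cref{prop:main-computation-coend-operad}. I would handle this by applying the Yoneda-style construction through a small full symmetric monoidal subcategory containing $\Sigma^\infty_\cC$, its tensor powers and $F$. The functor $\partial_*$ induces a strong symmetric monoidal left adjoint between the presheaf categories $\cP_\LSp$, and endomorphism operads are functorial for such functors by \cite[\S 4.7.1]{HA}. A secondary check is that $\partial_*F$ is genuinely unital-compatible in arity $0$; this is covered by the $n=0$ case of \cref{prop:main-computation-coend-operad}.
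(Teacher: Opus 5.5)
Your proposal is correct and follows the paper's proof essentially step for step. You transport $\coEnd$ and the associated right modules along the unital strong symmetric monoidal functor $\partial_*$ of \cref{thm:convolution-product-rule-unital}, use \cref{prop:main-computation-coend-operad} to see that the induced maps are equivalences, identify $\partial_*\Sigma^\infty_\cC\simeq\unit$, and conclude by \cref{prop:KP-is-coend-operad}. The paper leaves implicit the extra justifications you supply: the functoriality of $\coEnd$ via Day-convolution presheaf categories, and $\partial_*\Sigma^\infty_\cC\simeq\unit$ via \cref{prop: derivatives-free-left-right-module}.
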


\begin{proof}
    By \cref{thm:convolution-product-rule-unital}, the functor $\partial_* \colon \Fun^{\omega}(\cC,\LSp) \to \RMod_{\partial_* \id_\cC}(\SSeq(\LSp,\LSp))$ is strong symmetric monoidal with respect to the pointwise tensor product on its domain and the Day convolution product on its target.
    In particular, the map $\coEnd(\Sigma_\cC^\infty) \to \coEnd(\partial_* \Sigma_\cC^\infty)$ is a map of algebras in $\SSeq(\LSp,\LSp)$; i.e.\ of $\LSp$-enriched operads.
    Similarly, for any $F$ in $\Fun^{\omega}(\cC,\LSp)$, the map $\ulnat(F,(\Sigma_\cC^\infty)^{\otimes n}) \to \ulmap_{\RMod}(\partial_* F, (\partial_*\Sigma_\cC^\infty)^{\circledast n})$ is a map of right $\coEnd(\Sigma_\cC^\infty)$-modules.
    It follows from \cref{prop:main-computation-coend-operad} that both maps are equivalences. Since $\partial_* \Sigma^\infty_\cC = \unit$ is the trivial right module $L\Sph$ concentrated in degree 1, the result follows from \cref{prop:KP-is-coend-operad}.
\end{proof}

\section{Examples}

\subsection{Pointed spaces}\label{ssec:pointed-spaces}

Ching proved that the derivatives of the identity functor in pointed spaces are equivalent to the spectral Lie operad \cite{chingThesis}.
He worked in a point-set model, and it is not clear how to deduce from his result that the operad $\partial_*{\id_{\Spc_*}}$ coming from the lax monoidal structure on $\partial_*$ described in \cref{ssec: prelim-chain-rule} is given by the spectral Lie operad.
In this section, we give a proof of this fact using the product rule.
A different proof appeared in \cite[Proposition 5.8]{blansheuts2026characterizationspectrallieoperad}.
We start by defining the commutative and spectral Lie operads.

\begin{definition}
    The \emph{unital commutative operad in spectra} $\mathbf{Com}^u$ is the endomorphism operad $\End_{\Sp}(\Sph)$.
\end{definition}

This is perhaps not the standard definition of $\mathbf{Com}^u$.
It is however very easy to compare different definitions of $\mathbf{Com}^u$ by means of the following proposition.

\begin{proposition}
    Let $\sO$ be an operad in spectra such that $\sO(0) \simeq \Sph$ and for all $n \geq 0$ the composition map
    \[
    \sO(n) \otimes \sO(0) \otimes \cdots \otimes \sO(0) \to \sO(0)
    \]
    is an equivalence.
    Then $\sO \simeq \mathbf{Com}^u$.
\end{proposition}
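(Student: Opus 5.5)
The plan is to build a map of operads $\sO \to \End_{\Sp}(\Sph) = \mathbf{Com}^u$ and then check that it is an equivalence in each arity. The map will come from an $\sO$-algebra structure on $\Sph$. To produce it, regard $\sO(0)$ as an $\sO$-algebra: the arity $0$ part of an operad is always an algebra over it, acting through the composition maps
\[
\sO(n) \otimes \sO(0)^{\otimes n} \to \sO(0).
\]
Concretely, $\sO(0) \simeq \sO \circ 0$ is the free $\sO$-algebra on the zero object, so this structure is canonical. Transporting along $\sO(0)\simeq\Sph$ makes $\Sph$ an $\sO$-algebra.

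Next I would use the universal property of endomorphism operads (\cite[\S 4.7.1]{HA}, as recalled in \cref{ssec:coend-operads-and-Koszul-duality}). An $\sO$-algebra structure on an object $x$ is the same as a map of operads $\sO \to \End(x)$. Applying this with $x=\Sph$ gives a map of operads
\[
\phi \colon \sO \to \End_{\Sp}(\Sph).
\]

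To check that $\phi$ is an equivalence, I would look at arity $n$. There $\phi_n$ is the adjoint of the action map
\[
\sO(n) \to \ulmap(\Sph^{\otimes n}, \Sph) \simeq \Sph,
\]
which is the composite $\sO(n) \simeq \sO(n)\otimes \Sph^{\otimes n} \to \Sph$. This is exactly the composition map $\sO(n) \otimes \sO(0)^{\otimes n} \to \sO(0)$, which is an equivalence by hypothesis. Hence $\phi$ is an equivalence in every arity. Since the forgetful functor from operads to symmetric sequences is conservative, $\phi$ is an equivalence of operads.

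I expect the main obstacle to be the first step: justifying precisely that $\sO(0)$ carries a canonical $\sO$-algebra structure whose action maps are the stated composition maps. In the paper's framework, $\Alg_\sO$ is defined as algebras for the monad $\Lambda_\sO$. Applied to $0$, this gives $\Lambda_\sO(0)\simeq \sO(0)$, by the formula in \cref{ex: sseq-arity-0}. It remains to identify the resulting action maps, and the comparison of \cref{prop:bratner-vs-our-composition-prod} with Brantner's definition should handle the identification of $\End$ and the arity-wise description of the action.
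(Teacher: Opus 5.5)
Your proposal is correct and is essentially the paper's argument. The paper obtains the comparison map $\sO \to \End(\sO(0))$, given in arity $n$ by the adjoint of the composition map, by citing \cite[Proposition 7.4.6]{Blans-Thesis}. You instead construct this map directly from the free $\sO$-algebra structure on $\Lambda_\sO(0)\simeq\sO(0)$ together with the universal property of endomorphism objects. From there both arguments conclude the same way, by applying the hypothesis in each arity.
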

\begin{proof}
    By \cite[Proposition 7.4.6]{Blans-Thesis}, we get a comparison map $\sO \to \End(\sO(0))$ that is given by the adjoint of the composition map
    \[
    \sO(n) \otimes \sO(0) \otimes \cdots \otimes \sO(0) \to \sO(0)
    \]
    in arity $n$.
    The assumptions imply that this is an equivalence of operads.
\end{proof}

\begin{remark}
    This proposition implies that the following definitions of $\mathbf{Com}^u$ are all equivalent:
    \begin{enumerate}[\upshape{(}\arabic*\upshape{)}]
        \item The endomorphism operad $\End_{\Sp}(\Sph)$;
        \item The coendomorphism operad $\coEnd_{\Sp}(\Sph)$;
        \item The functor $\Sigma^\infty_+$ applied to the unital commutative operad in $\mathrm{Set}$;
        \item The construction by Raksit given in \cite[Construction 4.1.6]{raksit2020hochschildhomologyderivedrham};
        \item The Spanier--Whitehead dual of the unital commutative cooperad.
    \end{enumerate}
\end{remark}

By \cite[Construction 7.4.9]{Blans-Thesis}, there is an adjunction (left adjoint on top)
\[
\begin{tikzcd}
    \Alg(\SSeq_{\geq 1}(\Sp)) \ar[r, shift left, hook] & \Alg(\SSeq(\Sp)) \ar[l, shift left, "\tau_{\geq 1}"].
\end{tikzcd}
\]
Given $\sO \in \Alg(\SSeq(\Sp))$, we call the operad $\tau_{\geq 1}\sO$ its \emph{non-unitalization}; the underlying symmetric sequence of $\tau_{\geq 1}\sO$ is obtained by deleting the $0$th term of $\sO$.

\begin{definition}
    The \emph{commutative operad in spectra} $\mathbf{Com}$ is the non-unitalization of $\mathbf{Com}^u$.
    The \emph{spectral Lie operad} $\mathbb{L}$ is its Koszul dual $K\mathbf{Com}$.
\end{definition}

Our proof will make use of the following version of the Yoneda lemma.

\begin{lemma} \label{lem: stable-yoneda-lemma}
    Let $\sC$ be a pointed category and let $F \colon \sC \to \Sp$ be a reduced functor. For each object $x \in \sC$, there is a natural equivalence
    \[
    \ulnat(\Sigma^\infty\Map_\sC(x, -), F) \simeq F(x)
    \]
    induced by evaluation at $\id_x$.
\end{lemma}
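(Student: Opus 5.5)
The plan is to reduce to the unstable Yoneda lemma by passing through the stabilization adjunction on functor categories. Here $\Sigma^\infty\Map_\sC(x,-)$ means the composite $\sC \to \Spc \to \Sp$ with $\Sigma^\infty_+$, or its pointed version $\Spc_* \to \Sp$. Since $\sC$ is pointed, $\Map_\sC(x,-)$ naturally takes values in pointed spaces, with basepoint the zero map. I will interpret $\Sigma^\infty$ as the pointed suspension spectrum functor $\Sigma^\infty \colon \Spc_* \to \Sp$. This functor is left adjoint to $\Omega^\infty$, and postcomposition gives an adjunction
\[
\Sigma^\infty \circ - \colon \Fun(\sC,\Spc_*) \rightleftarrows \Fun(\sC,\Sp) \colon \Omega^\infty \circ -,
\]
because adjunctions are preserved by $\Fun(\sC,-)$.

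The argument proceeds in three steps.

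\begin{enumerate}[\upshape{(}1\upshape{)}]
\item Stabilization adjunction. The adjunction above gives a natural equivalence
\[
\Map(\Sigma^\infty\Map_\sC(x,-),F) \simeq \Map_{\Fun(\sC,\Spc_*)}(\Map_\sC(x,-),\Omega^\infty F).
\]
\item Pointed Yoneda. The functor $\Map_\sC(x,-) \colon \sC \to \Spc_*$ is the free pointed functor on $x$. Indeed, $\Fun(\sC,\Spc_*)$ is the category of pointed objects in $\Fun(\sC,\Spc)$, and the free pointed object is $(-)_+$. So maps out of it in pointed functors are maps out of $\Map_\sC(x,-)$ in unpointed functors that respect the basepoint. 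Write $y_x$ for the unpointed corepresentable. The reduced version is the cofiber of $\Map_\sC(\ast,-)=\ast \to y_x$, using that $\Map_\sC(\ast,c)\simeq \ast$ because $\ast$ is initial. By the ordinary Yoneda lemma,
\[
\Map(\Map_\sC(x,-),G) \simeq \mathrm{fib}\bigl(G(x)\to G(\ast)\bigr).
\]
Taking $G=\Omega^\infty F$, the term $G(\ast)$ is contractible because $F$ is reduced, so the mapping space is $\Omega^\infty F(x)$.
\item Upgrade to spectra. Step (1) produces equivalences of mapping spaces, but the claim is about mapping spectra. I will apply the argument to $\Sigma^n F$ for every $n$, or equivalently to $\Omega^{\infty-n}$, to get $\Omega^{\infty-n}\ulnat(\ldots)\simeq \Omega^{\infty-n}F(x)$ compatibly in $n$. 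Alternatively, I will note that both sides are exact functors of $F \in \Fun^{\ast}(\sC,\Sp)$ and that the mapping spaces agree naturally in $F$, so the spectrum-level map is an equivalence.
\end{enumerate}

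Finally, I will trace $\id_x$ through these identifications to confirm that the map is evaluation at $\id_x$: under Yoneda, a natural transformation $y_x \Rightarrow G$ corresponds to its value on $\id_x$.

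The main obstacle is the bookkeeping in step (2). I have to check that the pointed corepresentable is the correct free object, so that the basepoint condition exactly cancels the $G(\ast)$ term. I also have to make sure the equivalence is natural in $F$, which is what the delooping in step (3) relies on.
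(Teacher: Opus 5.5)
Your proposal is correct and is essentially the paper's argument: the paper also reduces to $\Omega^\infty$ by noting that both sides are exact in $F \in \Fun^*(\sC,\Sp)$ and invoking the universal property of stabilization (your second option in step (3)), and then concludes with the $\Sigma^\infty \dashv \Omega^\infty$ adjunction and the ordinary Yoneda lemma. The only difference is how the basepoint is handled, and it is cosmetic: the paper observes that reduced functors $\sC \to \Spc_*$ are the same as reduced functors $\sC \to \Spc$, whereas you compute the pointed mapping space as $\mathrm{fib}(G(x)\to G(\ast))$. Your formula is right, but note that $\Map_\sC(x,-)$ pointed by the zero map is the free pointed functor on $x$ only among reduced functors; it is not $(y_x)_+$.
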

\begin{proof}
    The functors $\Fun^*(\sC, \Sp) \to \Sp$ sending $F$ to $\ulnat(\Sigma^\infty\Map_\sC(x, -), F)$ and $F(x)$ both preserve finite limits.
    Therefore, they are equivalent if and only if they are after postcomposition with $\Omega^\infty$ by the universal property of stabilization \cite[Proposition 1.4.2.23]{HA}.
    But by adjunction, we have
    \[
    \Omega^\infty\ulnat(\Sigma^\infty\Map_\sC(x, -), F) \simeq \Map_{\Fun^{\ast}(\sC, \Spc_*)}(\Map_\sC(x, -), \Omega^{\infty}F) \simeq \Omega^\infty F(x).
    \]
    The last step follows from the regular Yoneda lemma, using that as $\sC$ is pointed the forgetful functor $\Fun^\ast(\sC, \Spc_*) \to \Fun(\sC, \Spc)$ is fully faithful with essential image the functors that preserve the final object.
\end{proof}

\begin{theorem}
    There is an equivalence of operads $\partial_*{\id_{\Spc_*}} \simeq \mathbb{L}$.
\end{theorem}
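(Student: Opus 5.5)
The plan is to apply \cref{thm: koszul-dual-der-id-is-coend-sigma-infty} with $\sC = \Spc_*$ and $\LSp = \Sp$, and then to identify $\coEnd(\Sigma^\infty)$ with $\mathbf{Com}$. The dualizability hypothesis holds because the derivatives $\partial_n \id_{\Spc_*}$ are finite spectra. This can be seen as follows. By \cref{thm: calculus-and-koszul-duality-main}, $\partial_*\id_{\Spc_*}$ is the cobar construction on $\partial_*(\Sigma^\infty\Omega^\infty)$, whose terms are $\Sph$ with the trivial $\Sigma_n$-action (Snaith). The cobar construction is levelwise a finite limit of finite spectra, so it is again finite. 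Alternatively, one can quote the classical fact that $\partial_n\id$ is a finite wedge of spheres.

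The theorem then gives $K\partial_*\id_{\Spc_*} \simeq \coEnd_{\Fun^\omega(\Spc_*,\Sp)}(\Sigma^\infty)$. Since $\partial_*\id$ is levelwise dualizable, $KK\partial_*\id \simeq \partial_*\id$. It therefore suffices to show that the nonunital truncation $\tau_{\geq 1}\coEnd(\Sigma^\infty)$ is equivalent to $\mathbf{Com}$. The truncation is needed because $K$ of a strongly positive operad is strongly positive, and the arity-$0$ part of $\coEnd$ is discarded under this identification.

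To identify $\coEnd(\Sigma^\infty)$, observe that evaluation at $S^0$ is a strong symmetric monoidal functor $\Fun^\omega(\Spc_*,\Sp)\to\Sp$ sending $\Sigma^\infty$ to $\Sph$. Coendomorphism operads are functorial in strong symmetric monoidal functors, so this yields a map of operads $\coEnd(\Sigma^\infty)\to\coEnd_\Sp(\Sph)\simeq\mathbf{Com}^u$. In arity $n\geq 1$, the functor $(\Sigma^\infty)^{\otimes n}$ is reduced, and $\Sigma^\infty \simeq \Sigma^\infty\Map_{\Spc_*}(S^0,-)$. \cref{lem: stable-yoneda-lemma} then shows that
\[
\ulnat(\Sigma^\infty,(\Sigma^\infty)^{\otimes n})\simeq (\Sigma^\infty S^0)^{\otimes n}\simeq \Sph,
\]
and this equivalence is precisely the evaluation map at $S^0$. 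Hence the comparison map is an equivalence in positive arities, and applying $\tau_{\geq 1}$ gives $\tau_{\geq1}\coEnd(\Sigma^\infty)\simeq\tau_{\geq 1}\mathbf{Com}^u=\mathbf{Com}$.

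Combining these steps gives $K\partial_*\id_{\Spc_*}\simeq\mathbf{Com}$, and therefore $\partial_*\id_{\Spc_*}\simeq KK\partial_*\id_{\Spc_*}\simeq K\mathbf{Com}=\Lie$.

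The step requiring the most care is the arity-$0$ bookkeeping. \cref{prop:KP-is-coend-operad} is stated for the coendomorphism operad of $\unit$ in right modules, and the equivalence obtained from \cref{thm: koszul-dual-der-id-is-coend-sigma-infty} has to be read as an equivalence of operads after truncation. One must also check that the map $\coEnd(\Sigma^\infty)\to\coEnd(\Sph)$ is compatible with $\tau_{\geq 1}$ as a map of operads, not merely levelwise. This compatibility follows from the adjunction $\tau_{\geq1}$ being right adjoint to the inclusion: a map of operads that is an equivalence in arities $\geq 1$ induces an equivalence after applying $\tau_{\geq 1}$.
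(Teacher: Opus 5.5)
Your proposal is correct and follows essentially the paper's route. You apply \cref{thm: koszul-dual-der-id-is-coend-sigma-infty}, map $\coEnd(\Sigma^\infty)$ to $\coEnd_{\Sp}(\Sph)\simeq\mathbf{Com}^u$ via $\ev_{S^0}$, and conclude in positive arities using \cref{lem: stable-yoneda-lemma}. Your explicit check that each $\partial_n\id_{\Spc_*}$ is finite is needed both to apply the theorem and to get $KK\partial_*\id_{\Spc_*}\simeq\partial_*\id_{\Spc_*}$, and the paper leaves it implicit. The truncation $\tau_{\geq 1}$ is harmless but unnecessary: the arity-$0$ term $\ulnat(\Sigma^\infty,\const_{\Sph})$ already vanishes because $\Sigma^\infty$ is reduced, which is how the paper sees directly that the comparison map exhibits $\coEnd(\Sigma^\infty)$ as the non-unitalization of $\coEnd_{\Sp}(\Sph)$.
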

\begin{proof}
    By definition of the spectral Lie operad, it suffices to prove that $K\partial_*{\id_{\Spc_*}} = \mathbf{Com}$. By \cref{thm: koszul-dual-der-id-is-coend-sigma-infty}, there is an equivalence of operads
    \[
    \coEnd_{\Fun^{\omega}(\Spc_*, \Sp)}(\Sigma^\infty) \simeq K\partial_*{\id_{\Spc_*}}.
    \]
    The strong monoidal functor $\ev_{S^0} \colon \Fun^\omega(\Spc_*, \Sp) \to \Sp$ induces a map of coendomorphism operads $\coEnd_{\Fun^{\omega}(\Spc_*, \Sp)}(\Sigma^\infty) \to \coEnd_{\Sp}(\Sph)$.
    In arity $n$, this map is given by
    \[
    \ulnat(\Sigma^\infty, (\Sigma^\infty)^{\otimes n})  \xrightarrow{\ev_{\id_{S^0}}} \ulmap(\Sph, \Sph^{\otimes n}),
    \]
    which is an equivalence if $n \geq 1$ by \cref{lem: stable-yoneda-lemma}, since $\Omega^\infty$ is corepresented by $\Sph$.
    In arity zero, it is given by the map $0 \to \Sph$.
    This means that the map $\coEnd_{\Fun^{\omega}(\Spc_*, \Sp)}(\Sigma^\infty) \to \coEnd_{\Sp}(\Sph)$ exhibits the source as the non-unitalization of the target, so that we can conclude that
    \[
    \coEnd_{\Fun(\Spc_*, \Sp)}(\Sigma^\infty) \simeq \mathbf{Com}. \qedhere
    \]
\end{proof}

\subsection{Algebras over an operad} \label{sec: der-id-in-o-alg}

Throughout this section, we fix a stable presentable category $\sA$ and a strongly positive algebra $\sO \in \Alg(\SSeq_+(\sA, \sA))$.
We will prove the following theorem:
\begin{theorem}\label{thm: der-id-o-alg}
    There is an equivalence of algebras 
    \[
    \partial_*{\id_{\Alg_\sO}} \simeq \sO.
    \]
\end{theorem}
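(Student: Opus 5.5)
Consider the monadic adjunction
\[
\free \colon \sA \rightleftarrows \Alg_\sO(\sA) \colon \forget,
\]
so that $\forget \circ \free \simeq \Lambda_\sO$. The plan is to compute $\partial_*{\id_{\Alg_\sO}}$ from the derivatives of the functors in this adjunction, using the chain rule (\cref{thm:chain-rule-cherry}), \cref{prop: lambda-diff-is-id}, and \cref{prop: derivatives-free-left-right-module}.

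The first step is to identify the stabilization. Because $\sO$ is strongly positive, we expect $\Sp(\Alg_\sO) \simeq \sA$, with $\Sigma^\infty$ given by the indecomposables (or cotangent fiber) functor and $\Omega^\infty$ given by the trivial-algebra functor. This should follow from standard arguments of Lurie and Pereira, since $\forget$ is conservative and $\Lambda_\sO$ is the identity to first order. In particular, $\partial_1\forget \simeq \unit_\sA$ and $\partial_1\free \simeq \unit_\sA$.

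The second step applies \cref{prop: derivatives-free-left-right-module}. The functor $\forget$ preserves limits, so $\partial_*\forget$ is the free right $\partial_*{\id_{\Alg_\sO}}$-module on $\partial_1\forget$. This means
\[
\partial_*\forget \simeq \unit_\sA \circ \partial_*{\id_{\Alg_\sO}} \simeq \partial_*{\id_{\Alg_\sO}}
\]
as right modules. Similarly, $\free$ preserves colimits, so $\partial_*\free \simeq \partial_*{\id_{\Alg_\sO}}$ as a free left module. Since $\sA$ is stable, $\partial_*{\id_\sA} = \unit_\sA$, and the chain rule gives
\[
\partial_*(\forget\circ\free) \simeq \partial_*\forget \circ_{\partial_*{\id_{\Alg_\sO}}} \partial_*\free \simeq \partial_*{\id_{\Alg_\sO}} \circ_{\partial_*{\id_{\Alg_\sO}}} \partial_*{\id_{\Alg_\sO}} \simeq \partial_*{\id_{\Alg_\sO}}.
\]
On the other hand, $\forget\circ\free \simeq \Lambda_\sO$ as monads. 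By \cref{prop: lambda-diff-is-id}, together with the fact that $\partial_*$ restricted to $\diffsp$ is a strong 2-functor, $\partial_*\Lambda_\sO \simeq \sO$ as algebras in $\SSeq(\sA,\sA)$. This gives an equivalence of underlying symmetric sequences.

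The main obstacle is upgrading this to an equivalence of algebras. The strong 2-functor $\partial_* \colon \diff \to \MMor$ sends the adjunction $\free \dashv \forget$ to an adjunction in $\MMor$ between $(\sA,\unit_\sA)$ and $(\sA, \partial_*{\id_{\Alg_\sO}})$. In a Morita category, the monad on $(\sA,\unit_\sA)$ induced by the adjunction between $(\sA,\unit)$ and $(\sA,\sP)$ given by the bimodules ${}_\sP\sP$ and $\sP_\sP$ is exactly the algebra $\sP$. I would verify that our adjunction is of this form: the bimodules $\partial_*\free$ and $\partial_*\forget$ are identified with the free one-sided $\sP$-modules on $\unit_\sA$ by the second step, with $\sP = \partial_*{\id_{\Alg_\sO}}$. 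It then remains to check that the induced monad $\partial_*\forget \circ_\sP \partial_*\free$ is $\sP$ with its multiplication, and not merely with some twisted structure. That check requires identifying the unit and counit of the image adjunction with the canonical ones, for instance by noting that the counit is determined by the augmentation $\sP \circ_\sP \sP \to \sP$. Since $\partial_*$ preserves adjunctions and monads, the monad $\partial_*(\forget\circ\free) = \partial_*\Lambda_\sO$ is equivalent as an algebra to $\partial_*{\id_{\Alg_\sO}}$. Combining this with $\partial_*\Lambda_\sO \simeq \sO$ as algebras gives the result.
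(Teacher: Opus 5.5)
Your argument is correct, but it takes a different route from the paper's. The paper's first proof applies the Goodwillie transform $\maAlg=\reAlg\circ\partial_*$ to $\free\dashv\forget$. By \cref{cor: monadic-adjunction-goodwillie-transf}, the image is a monadic adjunction with monad $\maAlg(\Lambda_\sO)\simeq\Lambda_\sO$. Since $\partial_*\forget$ is the free right $\sP$-module on $\unit_\sA$ (where $\sP=\partial_*\id_{\Alg_\sO}$), $\maAlg(\forget)$ is the forgetful functor $\Alg_\sP(\sA)\to\sA$. This gives $\Lambda_\sP\simeq\Lambda_\sO$ as monads, and the paper then applies $\partial_*$ and \cref{prop: lambda-diff-is-id} a second time. (Its second proof instead combines $\cot_\sO\triv_\sO\simeq\Lambda_{B\sO}$ with Koszul duality.)

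You never leave $\MMor$: you identify the left adjoint $\partial_*\free$ with ${}_\sP\sP$ and read off the monad there. So you use \cref{prop: lambda-diff-is-id} only once, and monadicity of the transformed adjunction plays no role. This dispenses with \cref{cor: monadic-adjunction-goodwillie-transf}, and with it the pullback-preservation results of \cref{ssec: pullbacks-mmor}. In fact your argument gives $\partial_*\id_\sC\simeq\partial_*(GF)$ as algebras for any adjunction $F\colon\sA\rightleftarrows\sC\colon G$ in $\diff$ with $\sA$ stable and $\partial_1F$ an equivalence, monadic or not.

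The price is that you need a Morita-category fact: the adjunction ${}_\sP\sP\dashv\sP_\sP$ between $(\sA,\unit_\sA)$ and $(\sA,\sP)$ has the algebra $\sP$ as its monad. This is standard but not stated in the paper. The paper only uses its textbook analogue in $\Cat$, namely that $\free_\sP\dashv\fgt_\sP$ has monad $\Lambda_\sP$.

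Your closing ``check'' of the unit and counit is unnecessary and slightly misstated. The counit is the multiplication $\partial_*\free\circ\partial_*\forget\simeq\sP\circ\sP\Rightarrow\sP$ of $(\sP,\sP)$-bimodules, whereas $\sP\circ_\sP\sP\simeq\sP$ is the underlying object of the monad. The clean finish is that adjunction data extending a given left adjoint is unique up to a contractible space of choices. So $\partial_*\free\simeq{}_\sP\sP$ already identifies the entire image adjunction with the canonical one. Hence it identifies the monad $\partial_*(\forget\circ\free)\simeq\partial_*\Lambda_\sO\simeq\sO$ with $\sP$ as algebras.
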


\begin{remark}
Previous work in this direction has been done in the case $\sA = \Sp$.
Pereira proved the result on the level of symmetric sequences \cite{PereiraThesis}.
In a different setting, and using completely different methods, Ching proved that the operads $\sO$ and $\partial_*{\id_{\Alg_\sO}}$ are Morita equivalent \cite{ChingDayConvolution}.
Some of the results from this section already appeared in the thesis of the first author \cite[Section 7.2]{Blans-Thesis}.
\end{remark}

Before giving the proof, we recall a basic fact about the category $\Alg_\sO(\sA)$.
Projection onto the arity $1$ component defines a map of operads $\sO \to \unit_\sA$.
Extension and restriction of scalars along this morphism gives an adjunction
\[
\begin{tikzcd}[sep = large]
\Alg_\sO(\sA) \ar[r, "\cot_\sO", shift left] & \sA. \ar[l, "\triv_\sO", shift left] 
\end{tikzcd}
\]
As explained, for instance, in \cite[Proposition 6.3]{heuts2024koszulduality}, this adjunction exhibits $\sA$ as the stabilization of $\Alg_\sO(\sA)$.
In particular, we have equivalences
\[
    \Sigma^\infty_{\Alg_\sO} \simeq \cot_\sO \qquad \mathrm{and} \qquad \Omega^\infty_{\Alg_\sO} \simeq \triv_\sO.
\]

We now come to the proof of \cref{thm: der-id-o-alg}.
In fact, we give two proofs.

\begin{proof}[First proof]
    Applying the Goodwillie transform to the monadic adjunction
    \[
    \free_\sO \colon \sA \rightleftarrows \Alg_\sO(\sA) \colon \fgt_\sO
    \]
    we obtain a monadic adjunction by \cref{cor: monadic-adjunction-goodwillie-transf}.
    The corresponding monad of this adjunction is
    \[
    \maAlg(\fgt_\sO \circ \free_\sO) \simeq \maAlg(\Lambda_\sO) \simeq \Lambda_\sO,
    \]
    where the last equivalence follows from \cref{prop: lambda-diff-is-id}.
    We therefore get a commutative diagram
    \[
    \begin{tikzcd}
         \Alg_{\partial_*{\id_{\Alg_\sO}}} \ar[dr, "\maAlg(\fgt_\sO)"'] \ar[rr, "\sim"] & &  \Alg_\sO.   \ar[dl, "\fgt_\sO"] \\
        & \sA &
    \end{tikzcd}
    \]
    By \cref{prop: derivatives-free-left-right-module}, $\partial_*{\fgt_\sO}$ is the free right $\partial_*{\id_{\Alg_\sO}}$-module on $\partial_1{\fgt_{\sO}}$.
    But this first derivative is equivalent to $\id_\sA$, since $\fgt_\sO \circ \triv_\sO = \id_\sA$ and $\triv_\sO \simeq \Omega^\infty_{\Alg_\sO}$.
    Hence, $\partial_*{\fgt_\sO} \simeq \partial_*{\id_{\Alg_\sO}}$ as right $\partial_*{\id_{\Alg_\sO}}$-module and $\maAlg(\fgt_\sO) \simeq \fgt_{\partial_*{\id_{\Alg_\sO}}}$.
    The above diagram therefore gives an equivalence of monads
    \[
    \Lambda_{\partial_*{\id_{\Alg_\sO}}} \simeq \Lambda_\sO.
    \]
    Taking derivatives finishes the proof.
\end{proof}

\begin{remark}
    More generally, this proof shows that whenever $\sA$ is stable, $\sC$ is differentiable, and $F \colon \sA \rightleftarrows \sC \colon G$ is a finitary monadic adjunction that induces an equivalence on stabilizations, there is an equivalence of algebras $\partial_*{\id_\sC} \simeq \partial_*{GF}$.
\end{remark}

\begin{proof}[Second proof]
    There is an equivalence of comonads
    \[
    \cot_\sO \triv_\sO \simeq \Lambda_{B{\sO}},
    \]
    as is proved in \cite[Proposition 3.28]{brantner2023pd}.
    It then follows from \cref{prop: lambda-diff-is-id} that we have an equivalence of coalgebras
    \[
    \partial_*{\cot_\sO \triv_\sO} \simeq B{\sO}.
    \]
    On the other hand, we have $\cot_\sO \triv_\sO \simeq \Sigma^\infty_{\Alg_\sO} \Omega^\infty_{\Alg_\sO}$ so that
    \[
    \partial_*{\cot_\sO \triv_\sO} \simeq B\partial_*{\id_{\Alg_\sO}}
    \]
    by \cref{thm: calculus-and-koszul-duality-main}.
    Since the bar--cobar adjunction is an equivalence on strongly positive (co)algebras by \cref{thm:koszul-duality-operads}, we conclude that we have an equivalence of algebras
    \[
    \partial_*{\id_{\Alg_\sO}} \simeq \sO. \qedhere
    \]
\end{proof}

By the product rule, this implies:

\begin{corollary}
    Let $\sO$ be a strongly positive operad in $\LSp$ such that $\sO(n)$ is dualizable for all $n \geq 1$.
    There is an equivalence of operads
    \[
    \mathrm{coEnd}_{\Fun(\Alg_\sO, \LSp)}(\cot_\sO) \simeq K\sO.
    \]
\end{corollary}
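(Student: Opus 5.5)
This follows by applying \cref{thm: koszul-dual-der-id-is-coend-sigma-infty} to $\sC = \Alg_\sO(\LSp)$. The plan has three steps, and they use only results already stated.

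First, I check that $\sC$ satisfies the hypotheses of that theorem.
\begin{itemize}
\item $\sC$ is differentiable by \cref{prop: alg-O-differentiable}.
\item Its stabilization is $\LSp$. As recalled at the start of \cref{sec: der-id-in-o-alg}, the adjunction $\cot_\sO \dashv \triv_\sO$ exhibits $\LSp$ as the stabilization of $\Alg_\sO(\LSp)$. This also gives $\Sigma^\infty_{\Alg_\sO} \simeq \cot_\sO$.
\item The derivatives $\partial_n \id_{\Alg_\sO}$ are dualizable. By \cref{thm: der-id-o-alg} there is an equivalence of algebras $\partial_*\id_{\Alg_\sO} \simeq \sO$ in $\SSeq(\LSp,\LSp)$, and each $\sO(n)$ is dualizable by assumption.
\end{itemize}

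Second, with the hypotheses in place, \cref{thm: koszul-dual-der-id-is-coend-sigma-infty} gives an equivalence of operads
\[
\coEnd_{\Fun^\omega(\Alg_\sO,\LSp)}(\Sigma^\infty_{\Alg_\sO}) \simeq K\partial_*\id_{\Alg_\sO}.
\]
Substituting $\Sigma^\infty_{\Alg_\sO} \simeq \cot_\sO$ on the left turns this into the coendomorphism operad of $\cot_\sO$.

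Third, I replace $\partial_*\id_{\Alg_\sO}$ by $\sO$ on the right. Koszul duality $K$ is functorial in operads, so the equivalence $\partial_*\id_{\Alg_\sO} \simeq \sO$ induces $K\partial_*\id_{\Alg_\sO} \simeq K\sO$.

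The step needing most care is matching the notation. The statement writes $\Fun(\Alg_\sO,\LSp)$, whereas \cref{thm: koszul-dual-der-id-is-coend-sigma-infty} uses $\Fun^\omega(\Alg_\sO,\LSp)$. I will argue that the coendomorphism operad is the same in both. $\Fun^\omega$ is a full symmetric monoidal subcategory for the pointwise tensor product, and it contains $\cot_\sO$ and all its tensor powers, since $\cot_\sO$ is a left adjoint and hence finitary. The mapping spectra $\ulnat(\cot_\sO,\cot_\sO^{\otimes n})$ agree in the two categories. By the independence of the choice of small full subcategory noted in the construction of $\coEnd$, the two coendomorphism operads coincide.

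A further point is that \cref{thm: der-id-o-alg} gives an equivalence of algebras in $\SSeq(\LSp,\LSp)$. By \cref{prop:bratner-vs-our-composition-prod}, this is an equivalence of $\LSp$-enriched operads, which is the form that $K$ requires.
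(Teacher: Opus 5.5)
Your proposal is correct and is essentially the paper's argument: the paper deduces the corollary directly from \cref{thm: koszul-dual-der-id-is-coend-sigma-infty} applied to $\Alg_\sO$, using \cref{thm: der-id-o-alg} and the identification $\Sigma^\infty_{\Alg_\sO} \simeq \cot_\sO$. Your check that the coendomorphism operad is the same whether computed in $\Fun$ or $\Fun^\omega$ is a point the paper leaves implicit (it writes $\Fun$ for $\Fun^\omega$), and your argument for it is fine.
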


\begin{remark}
     This result was already obtained by Malin \cite[Theorem 1.1.2]{malinUnstable1semiadditivityClassifying2025}.
\end{remark}

\subsection{Sheaves}
Suppose $\sT$ is a site and let $\sC$ be a differentiable category.
Then the category $\Sh(\sT;\sC)$ of $\sC$-valued sheaves on $\sT$ is again differentiable.
The purpose of this section is to prove the following result.

\begin{theorem} \label{thm: goodwillie-transf-sheaves}
    There is an equivalence of categories
    \[
    \maAlg(\Sh(\sT; \sC)) \simeq \Sh(\sT; \Alg_{\partial_*{\id_\sC}}),
    \]
    and the Goodwillie transform preserves the sheafification adjunction.
\end{theorem}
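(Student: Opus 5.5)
The strategy is to express $\Sh(\sT;\sC)$ as a pullback of the type in \cref{prop: pullbacks-preserved-by-derivatives}, built out of cotensors. Then \cref{cor: goodwillie-transform-preserves-cotensors} and the pullback corollary for $\maAlg$ give the result directly. Concretely, $\Sh(\sT;\sC)\subseteq\Fun(\sT^\op,\sC)=\sC^{\sT^\op}$ is the full subcategory of presheaves $X$ such that for every covering sieve $R\subseteq y(t)$ the map $X(t)\to\lim_{R}X$ is an equivalence. I would first reduce to a small set $S$ of generating covering sieves. Using this set, I would write the sheaf condition as a pullback
\[
\Sh(\sT;\sC)\simeq \sC^{\sT^\op}\times_{\prod_{S}\Ar(\sC)}\prod_{S}\sC .
\]
Here the right vertical map sends $X$ to the family of comparison arrows $X(t)\to\lim_{R}X$, and the bottom map is the product of the constant-arrow inclusions $\sC\hookrightarrow\Ar(\sC)$. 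These are exactly the maps used in the conservativity corollary.

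Next I would check the hypotheses (i)--(iii) of \cref{prop: pullbacks-preserved-by-derivatives}. The inclusion $\sC\hookrightarrow\Ar(\sC)$ is fully faithful, reduced, finitary, and has the left adjoint $(a\to b)\mapsto b$. The comparison map $X\mapsto (X(t)\to\lim_R X)$ is a composite of restriction functors and right Kan extensions of the form $\sC^{J}\to\sC^{I}$, so it has a left adjoint. It is finitary only if the limits $\lim_R$ commute with filtered colimits, that is, if the relevant diagrams are finite. This is the main obstacle. I would first try to assume, or arrange, that $\sT$ is finitary, so that covering sieves are generated by finite families and the Čech-type limits are finite. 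Otherwise I would try to replace the sheaf condition by an equivalent one involving only finite limits, for example for hypercomplete or finitary sites.

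Granting this, $\maAlg$ preserves the pullback square. By \cref{cor: goodwillie-transform-preserves-cotensors} and the remark on Kan extensions, $\maAlg$ sends each cotensor, restriction, finite right Kan extension and constant-arrow inclusion to the corresponding construction for $\Alg_{\partial_*{\id_\sC}}$. Finite products are handled by \cref{cor:partial-to-mmor-preserves-prod}, and infinite products $\prod_S$ should be treated as the cotensor with the discrete category $S$. The image square is therefore the analogous pullback defining $\Sh(\sT;\Alg_{\partial_*{\id_\sC}})$, which proves the equivalence. For the sheafification adjunction $L\dashv\iota$, note that $\iota$ is the top map of the square and so is identified with the inclusion. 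Since $\maAlg$ is a $2$-functor it preserves adjunctions, so $\maAlg(L)$ is left adjoint to this inclusion, which makes it the sheafification functor for $\Alg_{\partial_*{\id_\sC}}$-valued sheaves.
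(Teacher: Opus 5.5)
\textbf{There is a genuine gap, and it is exactly the step you flag.} Neither of your proposed fixes closes it.

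To apply \cref{prop: pullbacks-preserved-by-derivatives}, the comparison functor $q^*\colon \sC^{\sT^\op}\to\prod_S\Ar(\sC)$ must be a morphism of $\diff$, i.e.\ finitary. To identify $\maAlg(q^*)$ with the comparison functor for $\Alg_{\partial_*\id_\sC}$-valued presheaves, the right Kan extension $\sC^{J}\to\sC^{[1]}$ computing $X(t)\to\lim_R X$ must also be finitary. This is why the remark on Kan extensions needs the slices $J_{k/}$ to be finite.

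For a general site the descent limits are infinite, and your remedies do not change this:
\begin{itemize}
\item Restricting to finitary sites does not help. A covering sieve generated by a finite family is still an infinite category. The limit it imposes is a \v{C}ech totalization, e.g.\ $F(L)^{hG}$ for a finite Galois cover $\mathrm{Spec}\,L\to\mathrm{Spec}\,k$. Such limits do not commute with filtered colimits in a differentiable category such as $\Sp$.
\item Hypercompleteness makes things worse, since it imposes descent along hypercovers as well.
\end{itemize}
Only in special situations does the sheaf condition reduce to finite (cubical) limits, for instance finite covers in a poset site such as the opens of a Noetherian space. There your argument goes through. As written, though, your pullback argument proves the theorem only for sites whose sheaf condition is cut out by finite limits, which is strictly less than the statement.

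\textbf{How the paper avoids this.} It never presents the sheaf condition through finitary functors.
\begin{itemize}
\item It uses cotensors only to obtain $\maAlg(\sP(\sT;\sC))\simeq\sP(\sT;\Alg_{\partial_*\id_\sC})$, and then applies $\maAlg$ directly to $L\dashv\iota$.
\item $L$ is left exact and $\iota$ preserves limits. So by \cref{prop: derivatives-free-left-right-module}, $\partial_*L$ and $\partial_*\iota$ are free right $\partial_*\id_{\sP(\sT;\sC)}$-modules on $\partial_1L$ and $\partial_1\iota$.
\item Consequently, under the forgetful functors, $\maAlg(L)$, $\maAlg(\iota)$ and the unit of $\maAlg(L)\dashv\maAlg(\iota)$ lie over the stable sheafification adjunction $\partial_1L\dashv\partial_1\iota$ on $\sP(\sT;\Sp\sC)$.
\item The forgetful functor $\Alg_{\partial_*\id_\sC}\to\Sp\sC$ preserves and reflects limits. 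Hence a presheaf of algebras is a sheaf iff its underlying presheaf is, so $\maAlg(\iota)$ lands in $\Sh(\sT;\Alg_{\partial_*\id_\sC})$.
\item The restricted adjunction is an equivalence because its unit forgets to the sheafification unit on sheaves, which is invertible.
\end{itemize}
The only finiteness this uses is that $\iota$ is a morphism of $\diff$. That is already implicit in the claim that $\maAlg$ preserves the sheafification adjunction, and it is weaker than requiring the descent limits themselves to commute with filtered colimits.

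Under your extra hypothesis, your argument is a valid alternative that exhibits the equivalence directly as a pullback of categories. To get the theorem as stated, switch to the forgetful-functor argument above.
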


We will need a lemma for the proof of this theorem. 
Write
\[
\begin{tikzcd}
    L \colon \sP(\sT; \sC) \ar[r, shift left] & \Sh(\sT; \sC) \colon \iota \ar[l, shift left, hook']
\end{tikzcd}
\]
for the sheafification adjunction, where $\sP(\sT; \sC)$ denotes the category of $\sC$-valued presheaves on $\sT$.
Recall that this is a reflective localization, and that $L$ preserves finite limits.

\begin{lemma}
    There are equivalences of categories
    \[
    \Sp(\sP(\sT; \sC)) \simeq \sP(\sT; \Sp\sC) \qquad \text{and} \qquad \Sp(\Sh(\sT; \sC)) \simeq \Sh(\sT; \Sp\sC),
    \]
    and stabilization preserves the sheafification adjunction.
\end{lemma}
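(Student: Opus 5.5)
The plan is to realize stabilization as a tensor product with $\Sp$ in $\presl$, and to use that this tensor product interacts well with presheaf categories and with reflective localizations cut out by locality conditions.

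First, recall that for a presentable category $\sE$ we have $\Sp(\sE) \simeq \sE \otimes \Sp$, computed as the limit of the tower $\cdots \xrightarrow{\Omega} \sE_* \xrightarrow{\Omega} \sE_*$. Since $\sC$ is pointed, so are $\sP(\sT;\sC)$ and $\Sh(\sT;\sC)$. Limits in presheaf categories are computed objectwise, and $\Omega$ on $\sP(\sT;\sC)$ is $\Omega_\sC$ applied objectwise. Hence
\[
\Sp(\sP(\sT;\sC)) \simeq \lim\big(\cdots \xrightarrow{\Omega} \Fun(\sT^\op,\sC) \xrightarrow{\Omega} \Fun(\sT^\op,\sC)\big) \simeq \Fun(\sT^\op, \Sp\sC),
\]
because $\Fun(\sT^\op,-)$ commutes with limits of categories. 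This gives the first equivalence.

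For sheaves, a presheaf is a sheaf if and only if it sends covering sieves to limits. Since $\Omega$ preserves limits, $\Sh(\sT;\sC)$ is closed under $\Omega$ in $\sP(\sT;\sC)$, and the tower for $\Sh$ is the full subtower of the presheaf tower. Spectrum objects in $\Sh(\sT;\sC)$ are therefore sequences of sheaves $X_n$ with $X_n \simeq \Omega X_{n+1}$. Evaluated at each $t \in \sT$, such a sequence is a spectrum object of $\sC$, and the sheaf condition at every level $n$ says exactly that the corresponding $\Sp\sC$-valued presheaf satisfies descent. This uses that limits in $\Sp\sC$ are detected levelwise by the functors $\Omega^{\infty-n}$, which jointly preserve and reflect limits. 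So $\Sp(\Sh(\sT;\sC)) \simeq \Sh(\sT;\Sp\sC)$ as a full subcategory of $\sP(\sT;\Sp\sC)$.

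For the adjunction, the inclusion $\iota$ preserves limits, so it stabilizes to the levelwise inclusion $\Sp(\iota)\colon \Sh(\sT;\Sp\sC) \hookrightarrow \sP(\sT;\Sp\sC)$. Since $\iota$ is a right adjoint, $\Sp(\iota)$ has a left adjoint, which must be $\Sp\sC$-valued sheafification $L'$. It remains to check that $L'$ is compatible with $L$ on the nose, in the sense that $\Omega^\infty L' \simeq L\,\Omega^\infty$.

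This compatibility is the main obstacle, and it is where we use that $L$ preserves finite limits. Because $L$ is left exact, it commutes with $\Omega$ and so acts levelwise on $\Omega$-spectrum objects. Concretely, $L$ sends an $\Omega$-spectrum $(X_n)$ in presheaves to $(LX_n)$, which is again an $\Omega$-spectrum, now in sheaves. This levelwise functor is left adjoint to $\Sp(\iota)$, because the adjunction $L \dashv \iota$ is compatible with the structure maps: the unit and counit commute with $\Omega$ since $L$ preserves finite limits. By uniqueness of adjoints it agrees with $L'$, so the stabilization of $L \dashv \iota$ is the sheafification adjunction for $\Sp\sC$-valued sheaves.

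Finally, one checks that these identifications are compatible with $\Sigma^\infty \dashv \Omega^\infty$, which holds because everything is computed objectwise in $\sT$.
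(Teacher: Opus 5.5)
Your proof is correct, and its first two steps agree in substance with the paper's. The paper models $\Sp\sD$ as $\Exc_*(\Spc_*^{\mathrm{fin}},\sD)$ rather than as the limit of the $\Omega$-tower. Either way, the point is that the construction of stabilization commutes with $\Fun(\sT^\op,-)$. The other point is that the sheaf condition for a $\Sp\sC$-valued presheaf can be checked on underlying $\sC$-valued presheaves. The paper phrases this as the inclusion of excisive functors preserving limits; you phrase it as the $\Omega^{\infty-n}$ jointly preserving and reflecting limits.

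You genuinely diverge at the adjunction.

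\emph{The paper's route.} It never works with $L$ directly. It identifies postcomposition with $\iota$ with $\partial_1\iota$ via \cite[Example 6.2.1.4]{HA}, which needs only that $\iota$ preserves limits. It then uses that taking first derivatives preserves adjunctions. So $\partial_1 L$ is automatically left adjoint to the inclusion $\Sh(\sT;\Sp\sC)\hookrightarrow\sP(\sT;\Sp\sC)$, i.e.\ it is sheafification. What you call ``the main obstacle'' is bypassed entirely, and left exactness of $L$ is not used in the lemma at all; the paper only needs it later, in the proof of the sheaf theorem.

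\emph{Your route.} You use left exactness of $L$ to build the levelwise functor $(X_n)\mapsto(LX_n)$ and check adjointness by hand, then conclude by uniqueness of adjoints. This is valid. A clean way to make the compatibility step rigorous: postcomposition with $L\dashv\iota$ gives an adjunction on $\Fun(\Spc_*^{\mathrm{fin}},-)$. Both functors preserve reduced excisive functors, since both are left exact, so the adjunction restricts.

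\emph{Comparison.} Your route is more hands-on and yields the explicit formula $\Omega^\infty L'\simeq L\,\Omega^\infty$. It also does not rely on the 2-functoriality of $\partial_1$. The paper's route is shorter and purely formal. Note that the paper later uses the lemma in the form ``$\partial_1 L\dashv\partial_1\iota$ is the sheafification adjunction.'' To get that from your version you still need \cite[Example 6.2.1.4]{HA}, applied to both $L$ and $\iota$, to identify their first derivatives with the levelwise functors.

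\emph{Minor points.}
\begin{itemize}
\item The opening appeal to $\sE\otimes\Sp$ is never used.
\item The claim that $\Sp(\iota)$ has a left adjoint ``since $\iota$ is a right adjoint'' is only justified by your later explicit construction.
\item The final remark about compatibility with $\Sigma^\infty\dashv\Omega^\infty$ is not needed.
\end{itemize}
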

\begin{proof}
    Recall that for a category with finite limits $\sD$, its stabilization is given by 
    \[
    \Sp\sD = \Exc_*(\Spc_*^{\mathrm{fin}}, \sD),
    \]
    where the right-hand side denotes the category of reduced excisive functors from the category of finite pointed spaces to $\sD$.
    It follows immediately from this description that 
    \[
    \Sp(\sP(\sT; \sC)) \simeq \sP(\sT; \Sp\sC)
    \]
    Moreover, since the inclusion $\Exc_*(\Spc_*^{\mathrm{fin}}, \sC) \hookrightarrow \Fun(\Spc_*^{\mathrm{fin}}, \sC)$ preserves limits, we also have that
    \[
    \Sp(\Sh(\sT; \sC)) \simeq \Sh(\sT; \Sp\sC),
    \]
    and under this equivalence, postcomposition with the inclusion functor $\iota \colon \Sh(\sT; \sC) \hookrightarrow \sP(\sT; \sC)$
    induces the inclusion
    \[
    \Sh(\sT; \Sp\sC) \hookrightarrow \sP(\sT; \Sp\sC)
    \]
    on stabilizations.
    Since $\iota$ is a limit preserving functor, the functor induced by postcomposition is equivalent to $\partial_1{\iota}$ by \cite[Example 6.2.1.4]{HA}.
    Since taking first derivatives preserves adjunctions, it follows that
    \[
    \begin{tikzcd}
    \partial_1{L} \colon \sP(\sT; \Sp\sC) \ar[r, shift left] & \Sh(\sT; \Sp\sC) \colon \partial_1{\iota} \ar[l, shift left, hook']
    \end{tikzcd}
    \]
    is the sheafification adjunction.
\end{proof}

\begin{proof}[Proof of \cref{thm: goodwillie-transf-sheaves}]
    Since the Goodwillie transform preserves cotensors by \cref{cor: goodwillie-transform-preserves-cotensors}, we have that
    \[
    \maAlg(\sP(\sT; \sC)) \simeq \sP(\sT; \Alg_{\partial_*{\id_\sC}}).
    \]
    Applying $\maAlg$ to the sheafification adjunction, we get a reflective localization
    \[
    \begin{tikzcd}
    \maAlg(L) \colon \sP(\sT; \Alg_{\partial_*{\id_\sC}}) \ar[r, shift left] & \Alg_{\partial_*{\id_{\Sh(\sT; \sC)}}} \ar[l, shift left, hook'] \colon \maAlg(\iota).
    \end{tikzcd}
    \]
    Since $L$ preserves finite limits, it follows from \cref{prop: derivatives-free-left-right-module} that the underlying right $\partial_*{\id_{\sP(\sT; \sC)}}$-module of $\partial_*{L}$ is free on $\partial_1{L}$:
    \[
    \partial_*{L} \simeq \partial_1{L} \circ \partial_*{\id_{\sP(\sT; \sC)}}.
    \]
    This implies that the square
    \begin{equation*}\label{eq: forget-sheafification-square}
        \begin{tikzcd}
            \sP(\sT; \Alg_{\partial_*{\id_\sC}}) \ar[r, "\maAlg(L)"] \ar[d, "\forget"'] & \Alg_{\partial_*{\id_{\Sh(\sT; \sC)}}} \ar[d, "\forget"] \\
            \sP(\sT; \Sp\sC) \ar[r, "\partial_1(L)"] & \Sh(\sT; \Sp\sC)
        \end{tikzcd}
    \end{equation*}
    commutes; indeed, one has a natural equivalence
    \begin{align*}
    \forget(\maAlg(L)(F)) &= \forget(\partial_*L \circ_{\partial_*{\id_{\sP(\sT; \sC)}}} F) \\  
    &\simeq \partial_1L \circ \partial_*{\id_{\sP(\sT; \sC)}} \circ_{\partial_*{\id_{\sP(\sT; \sC)}}} F \simeq \partial_1(L)(\forget(F)),
    \end{align*}
    for $F \in \sP(\sT; \Alg_{\partial_*{\id_\sC}})$.
    In the same way, one shows that the square
    \begin{equation*}\label{eq: forget-inclusion-square}
        \begin{tikzcd}
            \sP(\sT; \Alg_{\partial_*{\id_\sC}}) \ar[d, "\forget"'] & \Alg_{\partial_*{\id_{\Sh(\sT; \sC)}}} \ar[d, "\forget"] \ar[l, "\maAlg(\iota)"', hook'] \\
            \sP(\sT; \Sp\sC) & \Sh(\sT; \Sp\sC) \ar[l, "\partial_1(\iota)"', hook']
        \end{tikzcd}
    \end{equation*}
    commutes, using that $\iota$ preserves limits.
    Writing $\eta \colon  \id_{\sP(\sT; \sC)} \Rightarrow \iota \circ L$ for the unit of the sheafification adjunction, we also have that
    \[
    \forget(\maAlg(\eta)) \simeq \partial_1{\eta},
    \]
    or in other words, that applying the forgetful functor to the unit of the adjunction $\maAlg(L) \dashv \maAlg(\iota)$ yields the unit of the adjunction $\partial_1{L} \dashv \partial_1{\iota}$.
    This can be seen by considering the commutative square of symmetric sequences
    \[
    \begin{tikzcd}
        \partial_1{\id_{\sP(\sT; \sC)}} \ar[r] \ar[d, "\partial_1{\eta}"'] & \partial_*{\id_{\sP(\sT; \sC)}} \ar[d, "\partial_*{\eta}"] \\
        \partial_1(\iota \circ L) \ar[r] & \partial_*(\iota \circ L)
    \end{tikzcd}
    \]
    and noting that each horizontal arrow exhibits its target as a free right $\partial_*{\id_{\sP(\sT; \sC)}}$-module on its source.
    
    A presheaf $F \colon \sT^\op \to \Alg_{\partial_*{\id_\sC}}$ is a sheaf if and only if the composite
    \[
    \sT^\op \xrightarrow{F} \Alg_{\partial_*{\id_\sC}}  \xrightarrow{\forget} \Sp\sC
    \]
    is one, since the forgetful functor preserves and reflects limits.
    It therefore follows from the second commutative square that $\maAlg(\iota)$ factors through the full subcategory $\Sh(\sT; \Alg_{\partial_*{\id_\sC}})$, so that the adjunction $\maAlg(L) \dashv \maAlg(\iota)$ restricts to an adjunction
    \[
    \begin{tikzcd}
    \maAlg(L) \colon \Sh(\sT; \Alg_{\partial_*{\id_\sC}}) \ar[r, shift left] & \Alg_{\partial_*{\id_{\Sh(\sT; \sC)}}} \colon \maAlg(\iota). \ar[l, shift left, hook']    
    \end{tikzcd}
    \]
    In order to show that this is an equivalence of categories, it suffices to show that the unit is an equivalence, which can be verified after applying the forgetful functor.
    But this adjunction forgets to the sheafification adjunction on $\sP(\sT; \Sp\sC)$ restricted to $\Sh(\sT; \Sp\sC)$, which is an equivalence of categories.
    This completes the proof.
\end{proof}

\appendix

\section{Derivatives of \texorpdfstring{$\Lambda$}{Lambda}}

The purpose of this appendix is to prove the following proposition.

\begin{proposition} \label{prop: app-derivatives-lambda}
    The composite
    \[
    \begin{tikzcd}
        \pressymstc \ar[r, "\Lambda"] & \diffsp \ar[r, "\partial_*"] & \pressymstc
    \end{tikzcd}
    \]
    is equivalent to the identity functor of the $2$-category $\pressymstc$.
\end{proposition}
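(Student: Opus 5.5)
We construct a natural transformation $\id \Rightarrow \partial_*\circ\Lambda$ of functors $\pressymstc \to \pressymstc$ and then check that it is invertible on mapping categories. On objects both functors send $\sA$ to $\sA$, since $\Lambda(\sA)\simeq\sA$ and $\Sp\sA\simeq\sA$ for stable $\sA$. On a $1$-morphism $F\in\SSeq_{\geq 1}(\sA,\sB)$ we need a natural equivalence $F\simeq\partial_*\Lambda_F$ of symmetric sequences, compatible with composition. The first observation is that $\Lambda_F(x)\simeq\bigoplus_{n\geq1}F_n(x,\ldots,x)_{h\Sigma_n}$ is a sum of homogeneous finitary functors. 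Because $\Lambda_F$ is finitary and the sum is arity-wise, we get $P_n\Lambda_F\simeq\bigoplus_{k\le n}$ of these summands and $D_n\Lambda_F\simeq F_n(x,\ldots,x)_{h\Sigma_n}$. Uniqueness of the classifying multilinear symmetric functor then gives $\partial_n\Lambda_F\simeq F_n$ objectwise. What remains is to make this coherent, which is the $2$-categorical content.

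To obtain coherence, I would avoid choosing the equivalence by hand and instead produce it from a universal property. The functor $\Lambda$ is corepresented by $\ast$, and $\partial_*$ on $\diffsp$ is a strong $2$-functor. So the composite $\partial_*\Lambda\colon\pressymstc\to\pressymstc$ is a strong $2$-functor that is the identity on objects. It therefore suffices to construct, for each pair $\sA,\sB$, a natural equivalence of categories $\SSeq_{\geq1}(\sA,\sB)\to\SSeq_{\geq1}(\sA,\sB)$ identifying $\partial_*\Lambda_{(-)}$ with the identity, together with compatibility with the composition product. A convenient route is to compose with the right adjoint $\Psi$ of $\partial_*$ from the second appendix. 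Alternatively, one can note that $\partial_*\Lambda_{(-)}$ is an exact, colimit-preserving functor $\SSeq_{\geq1}(\sA,\sB)\to\SSeq_{\geq1}(\sA,\sB)$: here $\Lambda$ preserves colimits on mapping categories and $\partial_*$ does as well by \cref{prop: local-exactness-derivatives}. We then reduce to checking the identification on generators, namely symmetric sequences concentrated in a single arity $n$ built from $\sA^{\otimes n}$. On such generators, $\Lambda_F$ is $n$-homogeneous and the derivative is computed directly.

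The cleanest way to package all of this is to exhibit a $2$-natural transformation $\Lambda \Rightarrow \Lambda\circ\partial_*\circ\Lambda$, or more directly a comparison map $\partial_*\Lambda_F\to F$. This map would be the adjunct, under $\partial_*\dashv\Psi$, of the map $\Lambda_F\to\Psi F$ given on the summand $D_n$ by the norm map $(F_n)_{h\Sigma_n}\to(F_n)^{h\Sigma_n}$; compare \cref{cor: unit-right-adjoint-der-norm}. This comparison is natural in $F$ because it comes from an adjunction, and it is an equivalence by the objectwise computation above.

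The main obstacle is compatibility with composition, that is, the monoidality of the identification $\partial_*\Lambda_{F\circ G}\simeq F\circ G$ with respect to the chain-rule structure maps. For this I would exploit uniqueness. The space of $2$-natural equivalences $\id\Rightarrow\partial_*\Lambda$ that are identity on objects should be controlled by the first-derivative data. Concretely, I would restrict along the inclusion of the locally full sub-$2$-category of arity-$1$ symmetric sequences, namely $\preslst$. There $\Lambda$ is the inclusion and $\partial_*$ restricts to $\partial_1$, which is canonically the identity on colimit-preserving exact functors. The equivalence would then extend uniquely because $\pressymstc$ is generated under colimits of mapping categories and compositions by $\preslst$ together with the homogeneous generators. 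If this generation argument proves too weak, the fallback is to use the coalgebra description $\pressymst\subseteq\coCAlg(\preslst)$ and construct $\partial_*\circ\Lambda$ explicitly via cross-effects, verifying that it agrees with the inclusion of cofree coalgebras.
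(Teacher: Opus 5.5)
\textbf{There is a genuine gap: you never construct the comparison compatibly with composition.}

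Your objectwise computation $\partial_n\Lambda_F\simeq F_n$ is correct, and it is essentially the last step of the paper's argument. The content of the statement is producing this comparison as an \emph{icon}, i.e.\ compatibly with the composition constraints of the strong functors $\id$ and $\partial_*\Lambda$. You flag this yourself, but none of your routes supplies it.

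\emph{The adjunction $\partial_*\dashv\Psi$.} It lives only on individual mapping categories, and Appendix B only treats $\LSp$. So the norm-adjunct $\partial_*\Lambda_F\to F$ is natural in $F$ inside a fixed $\SSeq_{\geq 1}(\sA,\sB)$. It says nothing about how $\partial_*\Lambda_{F\circ G}$ relates to $\partial_*\Lambda_F\circ\partial_*\Lambda_G$ under the chain-rule equivalence. To fix that, you would have to assemble $\Psi$ into a lax functor and build a coherent icon from $\Lambda$ into it, which is the same coherence problem again.

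\emph{The uniqueness/generation argument.} This is not a valid principle. A transformation between strong $2$-functors is not determined by its restriction to the locally full sub-$2$-category $\preslst$, which contains none of the homogeneous generators. It is also not determined by its values on generators of the hom-categories: the coherence data on composites of homogeneous pieces is additional structure that nothing forces. Moreover, the composition product only preserves sifted colimits in the second variable (\cref{prop: exactness-composition-mor}), so ``generated under colimits and composition'' does not reduce the problem to anything checkable.

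\textbf{How the paper handles coherence.} The missing idea is to obtain the comparison as the \emph{mate of an identity icon}. This requires exhibiting $\Lambda$ as a local left adjoint of a factor of $\partial_*$, precomposed with a local right adjoint of the other factor.

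\begin{enumerate}
\item The paper factors $\partial_*=\widetilde{\mlin}\circ\widetilde{\cross}$ through the $2$-precategory $\sF$ of formal symmetric sequences. Here $\widetilde{\cross}$ is lax with an oplax local left adjoint $\widetilde{\Lambda}$, and $\widetilde{\mlin}$ is strong with a lax, locally fully faithful local right adjoint $k$.
\item It shows that $\widetilde{\Lambda}$ is strong on the image of $k$ and that $\widetilde{\Lambda}k\simeq\Lambda$. This is where corepresentability is really used: on $\pressymstc$ the functor $\Lambda$ is \emph{not} corepresentable, so everything is first extended to $\pressymst$ and the Yoneda lemma is applied there.
\item The identity icon of $\widetilde{\Lambda}k$ then has a mate $\eta\colon k\Rightarrow\widetilde{\cross}\widetilde{\Lambda}k$ by \cref{lem: icon-mate}, proved via envelopes and a mate correspondence for lax/oplax squares. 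This icon is coherent by construction, and its components are units of $p_!\dashv\cross$.
\item Only at this point is a hom-wise check performed. $\widetilde{\mlin}\eta$ is an equivalence by the multilinearization lemma of the chain-rule paper, stated for finitely many arities and extended via filtered colimits. This is where your objectwise computation lives.
\item Composing with the invertible counit $\widetilde{\mlin}k\Rightarrow\id$ finishes the proof.
\end{enumerate}
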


Before giving the proof, we need to recall some details about the construction of the functor $\partial_* \colon \diffsp \to \pressymstc$ from \cite[Section 3.3]{blansblom2025chainrulegoodwilliecalculus}.
The idea is to extend the standard description of the derivatives functor as multilinearized cross-effects to a commutative diagram of the form
\begin{equation} \label{eq: lax-derivative-cross-mlin}
\begin{tikzcd}[sep = large]
    \Fun^{\ast, \omega}(\sA, \sB) \ar[r, "\cross"] \ar[dr, "\widetilde{\cross}"'] & \SymFun^{\ast, \omega}_{\geq 1}(\sA, \sB) \ar[r, "\mlin"] \ar[d, shift left, hook, "\iota"] & \SSeq_{\geq 1}(\sA, \sB) \\
    & \SSeq_{\geq 1}(\Pfin(\sA), \Pfin(\sB)) \ar[ur, "\widetilde{\mlin}"'] \ar[u, shift left, "L"]
\end{tikzcd}
\end{equation}
Here $\sA$ and $\sB$ are stable presentable categories, and $\SymFun^{\ast, \omega}_{\geq 1}(\sA, \sB)$ is the category whose objects are sequences of functors
\[
F_n \colon \sA^{\times n}_{h \Sigma_n} \to \sB \qquad \text{for $n \geq 1$}
\]
such that the underlying functor of each $F_n$ is finitary and reduced in each variable separately.
The functors $\cross$ and $\mlin$ are given by symmetric cross-effects and multilinearization respectively, so that 
\[
\partial_* = \mlin \circ \cross.
\]
The categories $\Pfin(\sA)$ and $\Pfin(\sB)$ are obtained from $\sA$ and $\sB$ by formally adjoining non-empty finite colimits; they are defined as appropriate localizations of presheaf categories.
The functor $\iota$ is induced by the Yoneda embedding, and its left adjoint $L$ is induced by the functor that sends a formal colimit in $\Pfin(\sB)$ to the actual colimit in $\sB$.
Since $L \circ \iota \simeq \id$, we have that 
\[
\partial_* \simeq \widetilde{\mlin} \circ \widetilde{\cross}
\]
as well.
Since this will be useful later, we now remark that the adjunction $L \dashv \iota$ extends to not necessarily positive symmetric sequences:
\[
L \colon \SSeq(\Pfin(\sA), \Pfin(\sB)) \rightleftarrows \SymFun^{\ast, \omega}(\sA, \sB) \colon \iota.
\]
Here $\SymFun^{\ast, \omega}(\sA, \sB)$ denotes the category of sequences $(F_n)_{n \geq 0}$ where the $F_n$ for $n \geq 1$ are as above, and $F_0$ is an object of $\sB$.

The advantage of the categories of the form $\SSeq(\Pfin(\sA), \Pfin(\sB))$ over those of the form $\SymFun^{\ast, \omega}(\sA, \sB)$ is that the former can be assembled into a 2-precategory with a composition product, whereas this is not true for the latter.
This $2$-precategory is constructed in \cite[Definition 3.3.8]{blansblom2025chainrulegoodwilliecalculus}. 
We denote it by $\fdiffsp$.
Its objects are stable differentiable categories, and the mapping category from $\sA$ to $\sB$ in $\fdiffsp$ is given by $\SSeq(\Pfin(\sA), \Pfin(\sB))$.
Morphisms in this mapping category are called formal symmetric sequences from $\sA$ to $\sB$.
We will write $\sF$ for the smallest locally full subcategory of $\fdiffsp$ such that each of the mapping categories $\sF(\sA, \sB)$ contains the image of the functor
\[
\iota \colon \SymFun^{\ast, \omega}_{\geq 1}(\sA, \sB) \to \SSeq_{\geq 1}(\Pfin(\sA), \Pfin(\sB)) \subseteq \SSeq(\Pfin(\sA), \Pfin(\sB)).
\]
More concretely, morphisms in $\sF$ are finite composites of positive formal symmetric sequences lying in the image of the Yoneda embedding.

The construction of the lax derivatives functor proceeds by writing down the following functors of $2$-precategories:
\[
\begin{tikzcd}[sep = large]
    \diff_{\mathrm{St}} \ar[r, "\widetilde{\cross}"', shift right] & \sF \ar[l, shift right, "\widetilde{\Lambda}"'] \ar[r, "\widetilde{\mlin}", shift left] & \pressymstc \ar[l, "k", shift left]
\end{tikzcd}
\]
These functors can be described as follows.
First of all, $\widetilde{\cross}$ and $k$ are lax functors, $\widetilde{\Lambda}$ is an oplax functor, and $\widetilde{\mlin}$ is a strong functor.
The functors $\widetilde{\cross}$ and $\widetilde{\mlin}$ are the identity on objects and induce the functors with the same name on mapping categories.
The derivatives functor $\partial_*$ is defined as the composite $\widetilde{\mlin} \circ \widetilde{\cross}$.
The functor $\widetilde{\Lambda}$ is also the identity on objects and induces the left adjoint of $\widetilde{\cross}$ on mapping categories.
We therefore say that $\widetilde{\Lambda}$ is the local left adjoint of $\widetilde{\cross}$.
The functor $k$ is the local right adjoint of $\widetilde{\mlin}$; it is fully faithful on mapping categories.
See \cite[Section 3.3]{blansblom2025chainrulegoodwilliecalculus} for full details on these constructions.

We now turn to the proof of \cref{prop: app-derivatives-lambda}.
We will need a 2-categorical concept called an \emph{icon}, originally introduced by Lack in the setting of $(2,2)$-categories \cite{Lack2010Icons}.
Recall from \cite[Definition A.3.9]{blansblom2025chainrulegoodwilliecalculus} that a lax functor $F \colon \sX \to \sY$ between 2-precategories is defined as a map $F \colon \un \sX \to \un \sY$ in $\Cat_{/\Delta^\op}$ that preserves cocartesian lifts of inerts.
Here $\un$ denotes the cocartesian unstraightening.
Dually, an oplax functor $F \colon \sX \to \sY$ is defined as a map $F \colon \un^\vee \sX \to \un^\vee \sY$ in $\Cat_{/\Delta}$ between cartesian unstraightenings that preserves cartesian lifts of inerts.

\begin{definition}\label{def:icons}
    Let $F,G \colon \sX \to \sY$ be lax functors.
    Then an \emph{icon} from $F$ to $G$ is defined as a 2-morphism between $F \colon \un \sX \to \un \sY$ and $G \colon \un \sX \to \un \sY$ in $\Cat_{/\Delta^\op}$.
    We will write $\LaxFun^{\odot}(\sX,\sY)$ for the category of lax functors and icons between them, defined as the full subcategory of
    $\Fun_{/\Delta^\op}(\un \sX,\un \sY)$
    spanned by the lax functors.

    If instead $F$ and $G$ are oplax functors, then an \emph{icon} from $F$ to $G$ is a 2-morphism between $F \colon \un^\vee \sX \to \un^\vee \sY$ and $G \colon \un^\vee \sX \to \un^\vee \sY$ in $\Cat_{/\Delta}$, and we define the category $\OplaxFun^{\odot}(\sX,\sY)$ of oplax functors and icons analogously.

    We will write $\Fun^{\odot}(\sX,\sY)$ for the further full subcategory of strong functors and icons between them.
    Note that it does not matter whether we define this as a full subcategory of $\OplaxFun^{\odot}(\sX,\sY)$ or $\LaxFun^{\odot}(\sX,\sY)$, as both full subcategories are canonically equivalent by straightening-unstraightening.
\end{definition}

\begin{remark}
    The term icon stands for Identity Component Oplax Natural transformation, and can be understood as follows.
    Suppose that $F, G \colon \sX \to \sY$ are lax functors of $2$-(pre)categories.
    Part of the data of an oplax natural transformation from $F$ to $G$ consists of an oplax naturality square
    \[
    \begin{tikzcd}
        F(x) \ar[r] \ar[d] & F(y) \ar[d] \\
        G(x) \ar[r] \ar[ur, Rightarrow] & G(y).
    \end{tikzcd}
    \]
    for every $1$-morphism $x \to y$ in $\sX$.
    An oplax natural transformation is an icon precisely if the vertical 1-morphisms in all these naturality squares are equivalences.
    Also note that an oplax natural transformation is an ordinary natural transformation precisely if the $2$-morphisms in all these naturality squares are equivalences, so that both icons and natural transformations are special instances of the notion of oplax natural transformation.
\end{remark}

\begin{example}
    Suppose that $L \colon \sX \rightleftarrows \sY \colon R$ is a local adjunction between $2$-precategories, where $L$ is a strong functor.
    Then there are icons
    \[
    \eta \colon \id_\sX \Rightarrow RL \qquad \epsilon \colon LR \Rightarrow \id_\sY
    \]
    inducing the units and counits of the corresponding adjunctions on mapping categories.
\end{example}

\begin{remark}
    Suppose that $\eta \colon F \Rightarrow G$ is an icon between two functors from $\sX$ to $\sY$, such that for every morphism $s \colon a \to b$ in $\sX$ the component $\eta_s \colon F(s) \to G(s)$ is an equivalence.
    Then $\eta$ is a natural equivalence from $F$ to $G$.
\end{remark}

We will construct the equivalence between $\partial_* \Lambda$ and the identity functor of $\pressymstc$ as the composite of two icons:
\[
\begin{tikzcd}
\id_{\pressymstc} & \widetilde{\mlin} k \ar[l, "\epsilon"', "\sim", Rightarrow] \ar[r, "\eta", Rightarrow] & \widetilde{\mlin} \widetilde{\cross} \widetilde{\Lambda} k.
\end{tikzcd}
\]
Here, $\epsilon$ is the counit of the local adjunction $(\widetilde{\mlin}, k)$, which is an equivalence since $k$ is locally fully faithful.
The icon $\eta$, which should be given by the unit of the adjunction $(\widetilde{\Lambda}, \widetilde{\cross})$ on mapping categories, is a little more complicated to write down, as its target does not seem to be well-defined: $\widetilde{\Lambda}$ is an oplax functor and $k$ is a lax functor, so it is not obvious how to compose them.
It however turns out that $\widetilde{\Lambda}$ is a strong functor on the essential image of $k$, so that the composite $\widetilde{\Lambda} \circ k$ does make sense.
Moreover, $\widetilde{\Lambda} \circ k$ turns out to be a strong functor equivalent to $\Lambda$.
Since $\widetilde{\mlin} \circ \widetilde{\cross} \simeq \partial_*$, it will then suffice to show that $\eta$ is an equivalence to complete the proof of the proposition.

We start by proving the equivalence $\widetilde{\Lambda}k \simeq \Lambda$.

\begin{lemma} \label{lem: lambda-is-lambda'-k}
    Write $\im k$ for the smallest locally full subcategory of $\sF$ containing the image of $k$.
    The restriction of the oplax functor $\widetilde{\Lambda} \colon \sF \to \diffsp$ to $\im k$ is a strong functor, and the composite $\widetilde{\Lambda}k$ is equivalent to $\Lambda \colon \pressymstc \to \diffsp$.
\end{lemma}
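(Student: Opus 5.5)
The plan is to unwind what $\widetilde{\Lambda}$ does on mapping categories and check that, on formal symmetric sequences in the image of $k$, the oplax structure maps are equivalences. On a mapping category, $\widetilde{\Lambda}$ is the left adjoint of $\widetilde{\cross}\colon \Fun^{\ast,\omega}(\sA,\sB)\to \SSeq_{\geq 1}(\Pfin(\sA),\Pfin(\sB))$. I expect it to send a formal symmetric sequence $G$ to the functor $x\mapsto \bigoplus_{n\geq 1} L(G_n)(x,\ldots,x)_{h\Sigma_n}$: first realize formal colimits via $L$, then take the diagonal and homotopy orbits. This is the formula of \cref{ex: sseq-arity-0} applied after $L$. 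For $G=k(F)$ with $F\in\SSeq_{\geq1}(\sA,\sB)$, we have $L k(F)\simeq F$, since $k$ is the local right adjoint of $\widetilde{\mlin}$ and $F$ is already multilinear, so $\iota$ followed by $L$ returns $F$. Hence $\widetilde{\Lambda}k(F)\simeq \Lambda_F$ on each mapping category, naturally in $F$.

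For strongness, the oplax structure of $\widetilde{\Lambda}$ consists of comparison 2-morphisms
\[
\widetilde{\Lambda}(G\circ H)\Rightarrow \widetilde{\Lambda}(G)\circ\widetilde{\Lambda}(H)
\]
together with a unit comparison. I will verify these are equivalences whenever $G$ and $H$ are composites of morphisms in the image of $k$. The key input is that a formal symmetric sequence in the image of $k$ is levelwise colimit preserving in each variable once realized. Consequently, the formal composition product $k(F)\circ k(F')$ realizes under $L$ to the genuine composition product $F\circ F'$, using the partition formula for the composition product. Applying the diagonal-and-orbits formula then gives $\Lambda_{F\circ F'}$. This agrees with $\Lambda_F\circ\Lambda_{F'}$ because $\Lambda$ is a strong functor, being corepresented on $\pressymstc$. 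Concretely, I would check that the oplax comparison map is identified with the structure equivalence $\Lambda_{F\circ F'}\simeq \Lambda_F\Lambda_{F'}$. I would do this by reducing along the adjunction $\widetilde{\Lambda}\dashv\widetilde{\cross}$ on mapping categories: the oplax comparison is the mate of the lax structure of $\widetilde{\cross}$, and it can be tested after evaluating at objects $x\in\sA$. Compatibility with the 2-categorical coherence data, not just the components, then follows because a strong functor is determined by an oplax functor whose structure maps are invertible.

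To get the equivalence $\widetilde{\Lambda}k\simeq\Lambda$ as 2-functors, not merely objectwise and on mapping categories, I would use corepresentability. $\Lambda$ is corepresented by $\ast$. The composite $\widetilde{\Lambda}k$ is strong, and it sends $\ast$ to $\ast$ with $\widetilde{\Lambda}k(\ast)\simeq\Sp\otimes\ast$, which identifies with evaluation. By the 2-categorical Yoneda lemma, the canonical element, namely the identity of $\ast$ (equivalently the object $\Sph$), induces a transformation $\Lambda=\pressymstc(\ast,-)\Rightarrow \widetilde{\Lambda}k$. Composing with evaluation at the generator, this transformation is the map $\sA\simeq\SSeq(\ast,\sA)\to \Fun(\ast,\sA)$. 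It is an equivalence on objects, and by the mapping-category computation above it is compatible with the 1-morphisms; in fact it is an icon with invertible components. The remark on icons with invertible components then upgrades it to a natural equivalence.

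The main obstacle is the coherence step: identifying the oplax comparison maps of $\widetilde{\Lambda}$ on $\im k$ with the known equivalences. That requires unpacking the mate construction from \cite[Section 3.3]{blansblom2025chainrulegoodwilliecalculus}. I would try first to argue purely componentwise, checking invertibility after evaluation at objects, since invertibility is a property.
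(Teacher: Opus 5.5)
\textbf{There is a genuine gap in the step that produces the equivalence $\widetilde{\Lambda}k\simeq\Lambda$.} Your strongness argument is in the spirit of the paper, which simply cites \cite[Remark 3.3.14]{blansblom2025chainrulegoodwilliecalculus} for the invertibility of the oplax comparison maps on the image of $k$.

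The failure is in the Yoneda step: $\Lambda$ is \emph{not} corepresented by $\ast$ on $\pressymstc$. Since $\Sym(\ast)\simeq\Sp$, every symmetric sequence in $\SSeq(\ast,\sA)\simeq\sA$ is concentrated in arity $0$. Hence $\pressymstc(\ast,\sA)=\SSeq_{\geq 1}(\ast,\sA)\simeq\ast$, and $\pressymstc(\ast,-)$ is the terminal $2$-functor. The transformation that Yoneda produces from the unique point of $\widetilde{\Lambda}k(\ast)\simeq\ast$ therefore starts at a constant functor, not at $\Lambda$. The functor $\Lambda$ is corepresentable only on $\pressymst$; that is also the correct reason it is strong on $\pressymstc$, as a restriction, rather than ``being corepresented on $\pressymstc$''. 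Without this step you never obtain a coherent transformation. You only have identifications $\widetilde{\Lambda}k(F)\simeq\Lambda_F$ on objects and on each mapping category separately. The remark on icons with invertible components needs an icon to be given first, so it cannot upgrade these to a $2$-natural equivalence.

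The missing idea, which is exactly how the paper proceeds, is to extend the composite to $\pressymst$, where corepresentability holds.
\begin{enumerate}
\item Redo the construction of $\widetilde{\Lambda}$ on all of $\fdiffsp$, allowing non-positive formal symmetric sequences, to get $\Lambda^{\mathrm{FSym}}\colon\fdiffsp\to\Cat_{\St}$.
\item Assemble the localizations $\mlin\circ L$ into an oplax functor $\mlin^{\mathrm{FSym}}\colon\fdiffsp\to\pressymst$. Its local right adjoint $k^{\mathrm{FSym}}\colon\pressymst\to\fdiffsp$ restricts to $k$ on $\pressymstc$.
\item Check, using the same description of the oplax comparison maps, that $\Lambda^{\mathrm{FSym}}k^{\mathrm{FSym}}$ is strong.
\item Apply the $2$-categorical Yoneda lemma on $\pressymst$, where $\Lambda$ is corepresented by $\ast$.
\end{enumerate}
The resulting transformation $\Lambda\Rightarrow\Lambda^{\mathrm{FSym}}k^{\mathrm{FSym}}$ has identity components, so it is a natural equivalence. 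Restricting it to $\pressymstc$ gives $\Lambda\simeq\widetilde{\Lambda}k$.
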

\begin{proof}
    That the restriction of $\widetilde{\Lambda}$ to $\im k$ is a strong functor follows immediately from \cite[Remark 3.3.14]{blansblom2025chainrulegoodwilliecalculus}.
    Indeed, the map $\delta$ discussed in that remark is the oplax comparison map of $\widetilde{\Lambda}$, since this functor is the local left adjoint of $\widetilde{\cross}$.
    The category $\SymFun^{\mathrm{L}}(\sD, \sE)$ is the category of symmetric sequences from $\sD$ to $\sE$, which we denote by $\SSeq(\sD, \sE)$ in this paper, so that the final line of the remark says precisely that this comparison map is an equivalence on the image of $k$. 
    The same remark also implies that the composite 
    \[
    \widetilde{\Lambda} \circ k \colon \pressymstc \to \diffsp
    \]
    is a strong functor.
    
    We would like to construct a natural transformation from $\Lambda$ to $\widetilde{\Lambda}k$ by using the fact that $\Lambda \colon \pressymst \to \Cat$ is corepresented by the point.
    Note however that after restricting the domain of $\Lambda$ to $\pressymstc$, it is no longer corepresentable.
    We will work around this problem by showing that $\widetilde{\Lambda} \circ k$ can be extended to a functor on $\pressymst$.
    
    Inspecting the proof of \cite[Proposition 3.3.13]{blansblom2025chainrulegoodwilliecalculus}, we see that the construction of $\widetilde{\Lambda}$ can also be carried out for the $2$-precategory $\fdiffsp$ and not just its subcategory $\sF$.
    In this way, we obtain a functor $\Lambda^{\mathrm{FSym}} \colon \fdiffsp \to \Cat_{\mathrm{St}}$, where we write $\Cat_{\mathrm{St}}$ for the full subcategory of the $2$-category $\Cat$ on the stable presentable categories.
    This functor induces the composite
    \[
    \SSeq(\Pfin(\sA), \Pfin(\sB)) \xrightarrow{L} \SymFun^{\ast, \omega}(\sA, \sB) \to \Fun(\sA, \sB),
    \]
    on mapping categories, where the final arrow is the left adjoint of the symmetric cross-effects functor.
    The restriction of $\Lambda^{\mathrm{FSym}}$ to $\sF$ is equivalent to $\widetilde{\Lambda}$ by construction.

    For each pair of stable presentable categories $\sA$ and $\sB$, we have a localization functor given by the composite
    \[
    \SSeq(\Pfin(\sA), \Pfin(\sB)) \xrightarrow{L} \SymFun^{\ast, \omega}(\sA, \sB) \xrightarrow{\mlin} \SSeq(\sA, \sB).
    \]
    By an argument analogous to the proof of \cite[Proposition 3.3.17]{blansblom2025chainrulegoodwilliecalculus}, we see that these form a weakly compatible family of localizations in the sense of \cite[Remark A.3.28]{blansblom2025chainrulegoodwilliecalculus}, so that they assemble to an oplax functor $\mlin^{\mathrm{FSym}} \colon \fdiffsp \to \pressymst$.
    That the target of this functor can be identified with $\pressymst$ follows by repeating the argument from \cite[Section 3.3.4]{blansblom2025chainrulegoodwilliecalculus}.
    Passing to the local right adjoint of $\mlin^{\mathrm{FSym}}$ gives a lax functor $k^{\mathrm{FSym}} \colon \pressymst \to \fdiffsp$, which induces the fully faithful functor
    \[
    \SSeq(\sA, \sB) \hookrightarrow \SymFun^{\ast, \omega}(\sA, \sB) \xhookrightarrow{\iota} \SSeq(\Pfin(\sA), \Pfin(\sB))
    \]
    on mapping categories.
    The restriction of $k^{\mathrm{FSym}}$ to $\pressymstc$ is equivalent to $k$ by construction.
    
    The oplax comparison maps of $\Lambda^{\mathrm{FSym}} \colon \fdiffsp \to \Cat_{\mathrm{St}}$ admit the same description as those of $\widetilde{\Lambda}$, which are given in \cite[Remark 3.3.14]{blansblom2025chainrulegoodwilliecalculus}, so that it again follows that the restriction of $\Lambda^{\mathrm{FSym}}$ to the smallest locally full subcategory of $\fdiffsp$ containing the essential image of $k^{\mathrm{FSym}}$ is a strong $2$-functor, and that the composite $\Lambda^{\mathrm{FSym}} \circ k^{\mathrm{FSym}}$ is strong as well.

    Since $\Lambda \colon \pressymst \to \Cat_{\mathrm{St}}$ is corepresented by the point, the $2$-categorical Yoneda lemma \cite{Hinich2020YonedaLemmaEnriched} provides a natural transformation 
    \[
    \eta \colon \Lambda \Rightarrow \Lambda^{\mathrm{FSym}} \circ k^{\mathrm{FSym}}
    \]
    corresponding to the unique object in $\Lambda^{\mathrm{FSym}} \circ k^{\mathrm{FSym}}(\ast) \simeq \ast$.
    Both $\Lambda$ and $\Lambda^{\mathrm{FSym}} \circ k^{\mathrm{FSym}}$ act as the identity on objects, and it is easily verified that the component of $\eta$ on a stable presentable category $\sA$ is the identity functor.
    Hence $\eta$ is a natural equivalence $\Lambda \simeq \Lambda^{\mathrm{FSym}} \circ k^{\mathrm{FSym}}$.
    Restricting to $\pressymstc$, we obtain
    $\Lambda \simeq \widetilde{\Lambda} \circ k$.
\end{proof}

The following technical lemma will allow us to construct the icon $\eta$.

\begin{lemma} \label{lem: icon-mate}
    Suppose we are given functors of $2$-precategories
        \[
        \begin{tikzcd}[sep = large]
            \sX \ar[r, "R"', shift right] & \sY \ar[l, "L"', shift right] & \sZ \ar[l, "F"']
        \end{tikzcd}
        \]
        such that
        \begin{enumerate}[\upshape{(}1\upshape{)}]
            \item $R$ and $F$ are lax functors that both induce an equivalence on the space of objects;
            \item $R$ induces right adjoints on mapping categories, and $L$ is the oplax functor locally left adjoint to $R$;
            \item The restriction of $L$ to the smallest locally full subcategory of $\sY$ containing the image of $F$ is a strong functor, and the composite $LF$ is strong.
        \end{enumerate}
        Then for any strong functor $G \colon \sZ \to \sX$ and any icon $\alpha \colon LF \Rightarrow G$, there exists an icon $\beta \colon F \Rightarrow RG$ such that on mapping categories, $\beta$ is obtained by taking the mate of $\alpha$.
\end{lemma}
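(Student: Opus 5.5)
The plan is to work entirely in the unstraightened models: $\un\sX,\un\sY,\un\sZ$ over $\Delta^\op$ and $\un^\vee\sX,\un^\vee\sY$ over $\Delta$. In these models, icons are simply 2-morphisms between maps over the base.

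\emph{Step 1: globalize the local adjunction.} The local adjunction $L \dashv R$ has the following meaning. $R \colon \un\sY \to \un\sX$ is a map over $\Delta^\op$ that preserves cocartesian lifts of inerts. On each fibre over $[n]$ it is a product of the functors $R_{x,y}$ on mapping categories, so fibrewise it has a left adjoint. By the relative adjoint functor theorem (\cite[Proposition 7.3.2.6]{HA}), this fibrewise left adjoint assembles into a left adjoint relative to the base. This left adjoint lives on the \emph{cartesian} unstraightenings, after passing through the dual fibration. That is precisely how the oplax functor $L$ is produced in \cite[Section 3.3]{blansblom2025chainrulegoodwilliecalculus}.

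To compare everything in one place, I will use the fact that a strong functor can be viewed both as a lax and as an oplax functor. I will also use the fact that, over the sub-2-precategory $\im F$ where $L$ is strong (hypothesis (3)), both $L$ and $LF$ can be regarded as maps $\un(\im F) \to \un\sX$ preserving cocartesian lifts of inerts. Hypothesis (1), that $R$ and $F$ are equivalences on objects, ensures that the fibres over $[0]$ match up. This is needed for the components of the would-be icon to be identities on objects.

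\emph{Step 2: form the mate.} Let $\alpha \colon LF \Rightarrow G$ be a 2-morphism of maps $\un\sZ \to \un\sX$ over $\Delta^\op$. I will define $\beta$ as the composite
\[
F \xrightarrow{\eta F} RLF \xrightarrow{R\alpha} RG
\]
of maps $\un\sZ \to \un\sY$. Here $\eta$ is the unit of the relative adjunction between $L|_{\im F}$ (now a genuine map over $\Delta^\op$) and $R$. On the fibre over $[1]$, this recovers exactly the mate of $\alpha_{x,y}$ under $L_{x,y} \dashv R_{x,y}$. The target $RG$ is a lax functor, being the composite of a lax functor and a strong one, so $\beta$ is a 2-morphism between lax functors over $\Delta^\op$, i.e.\ an icon.

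\emph{Main obstacle.} The delicate point is Step 1's restriction. The relative adjunction $L \dashv R$ only exists a priori with $L$ defined on cartesian unstraightenings. I have to show that, on $\un(\im F)$, the unit $\id \Rightarrow RL$ is a well-defined natural transformation of maps over $\Delta^\op$. This requires $L$ to preserve cocartesian lifts of inerts there, and that is exactly what strongness of $L|_{\im F}$ in hypothesis (3) provides: straightening identifies the cartesian and cocartesian models of a strong functor. My first attempt will be to check fibrewise over each $[n]$ that the unit is compatible with the cocartesian transport along inerts, using \cite[Proposition 7.3.2.6]{HA}. The naturality along active maps comes for free, because the comparison 2-cells of $L$ on $\im F$ are invertible. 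Strongness of the composite $LF$ is used only to make $\alpha$ meaningful as a 2-morphism of strong functors.
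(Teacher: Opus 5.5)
\textbf{There is a genuine gap.} Your overall shape is sound. You restrict $L$ to the locally full sub-2-precategory $\sY' \subseteq \sY$ generated by the image of $F$, where $L$ is strong, produce a unit $\eta \colon \iota \Rightarrow RL|_{\sY'}$ there, and set $\beta = R\alpha \circ \eta F$. The whiskering in Step~2 is harmless once $\eta$ exists as an icon. But the existence of that icon is the whole content of the lemma, and your proposal does not establish it.

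\emph{The cited result does not apply.} $R$ is a map of cocartesian fibrations over $\Delta^{\op}$ (from $\un\sX$ to $\un\sY$, not the reverse) that preserves only cocartesian lifts of inerts, so the hypotheses of the relative adjoint functor theorem are not met. In fact a left adjoint relative to $\Delta^{\op}$ between cocartesian fibrations automatically preserves \emph{all} cocartesian edges. So such an adjoint can only exist when the local left adjoint is already strong.

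\emph{Restricting to $\un\sY'$ does not help.} This does not give a relative adjunction either, since $R$ does not land in $\un\sY'$.

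\emph{Remodelling $L|_{\sY'}$ transports the functor but not the adjunction data.} Viewing $L|_{\sY'}$ as a cocartesian-edge-preserving map moves the functor across models, but not the data relating it to $R$. Here $L$ lives over $\Delta$ in the cartesian model and $R$ over $\Delta^{\op}$ in the cocartesian model, so there is no relative adjunction over a single base to restrict.

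\emph{Naturality along active maps is not ``for free''.} Invertibility of the oplax structure maps $\lambda$ of $L$ on $\sY'$ only makes $RL|_{\sY'}$ a lax functor. Compatibility of $\eta$ with its structure maps is the identity $R(\lambda^{-1}) \circ \mu \circ (\eta_g \circ \eta_f) \simeq \eta_{g \circ f}$. That is, $\lambda$ must be the mate of the lax structure $\mu$ of $R$, and this must hold with all higher coherences. Fibrewise checks up to homotopy cannot assemble such coherent data into a natural transformation $\un\sY' \to \Fun([1], \un\sY)$ over $\Delta^{\op}$.

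\emph{The missing ingredient is a coherent mate correspondence for lax/oplax squares.} The paper proves this in \cref{lem:general-icon-mate-construction}. It identifies the lax and oplax arrow categories of 2-precategories, restricted to strong arrows, with locally full subcategories of $\Cat_{/[1]\times\Delta^{\op}}$ and $\Cat_{/[1]^{\op}\times\Delta}$. It then runs the local-adjoint argument of \cite{linskensLaxMonoidalAdjunctions} there. The paper next uses the envelope of \cref{lem:envelope-of-2cat} to replace the lax $F$ by a strong $\widetilde F$, and applies that lemma with $L' = \id$.

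\emph{Your route can be repaired with the same lemma.} Apply it to the strong inclusion $\iota \colon \sY' \hookrightarrow \sY$, with $G = L|_{\sY'}$, $L' = \id$ and the identity icon. This yields exactly your unit $\eta \colon \iota \Rightarrow RL|_{\sY'}$ as an icon. Your Step~2 then produces $\beta$ with the correct mate components, and you never need the envelope.
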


\begin{remark}\label{remark:what-are-mates}
    The conclusion of this lemma means that for any 1-morphism $s \colon a \to b$ in $\sZ$, the 2-morphism $\beta_s \colon F(s) \Rightarrow RG(s)$ is the mate of $\alpha_s \colon LF(s) \Rightarrow G(s)$ under the adjunction
    \[
    L : \sY(a, b) \rightleftarrows \sX(a, b) : R.
    \]
    That is, it is the composite $F(s) \xRightarrow{\eta_{F(s)}} RLF(s) \xRightarrow{R(\alpha_s)} RG(s)$.
\end{remark}

Since the proof of \cref{lem: icon-mate} is somewhat involved, we postpone it to \cref{ssec:proof-icon-mate}.

\begin{proof}[Proof of \cref{prop: app-derivatives-lambda}]
    Applying \cref{lem: icon-mate} to the functors
    \[
    \begin{tikzcd}[sep = large]
        \diff_{\St} \ar[r, "\widetilde{\cross}"', shift right] & \sF \ar[l, "\widetilde{\Lambda}"', shift right] & \pressymstc \ar[l, "k"']
    \end{tikzcd}
    \]
    and the identity icon of $\widetilde{\Lambda}k$, we obtain an icon $\eta \colon k \Rightarrow \widetilde{\cross}\widetilde{\Lambda}k$ that is given by the unit of the adjunction $(\widetilde{\cross}, \widetilde{\Lambda})$ on mapping categories.
    We claim that $\widetilde{\mlin} \eta \colon \widetilde{\mlin}k \Rightarrow \widetilde{\mlin}\widetilde{\cross}\widetilde{\Lambda}k$ is an equivalence.
    To prove the claim, it suffices to show that for every $\sA, \sB \in \pressymstc$ and $X \in \SSeq_{\geq 1}(\sA, \sB)$, the unit
    \[
    k(X) \Rightarrow \widetilde{\cross}\widetilde{\Lambda}k(X)
    \]
    of the adjunction $\widetilde{\Lambda} \dashv \widetilde{\cross}$ becomes an equivalence after applying $\widetilde{\mlin}$.
    Now $\widetilde{\cross} = \iota \circ \cross$ and $\widetilde{\mlin} = \mlin \circ L$, where we use the notation of \cref{eq: lax-derivative-cross-mlin}.
    Writing
    \[
    j \colon \SSeq_{\geq 1}(\sA, \sB) \hookrightarrow \SymFun^{\ast, \omega}_{\geq 1}(\sA, \sB),
    \]
    for the inclusion, we also have $k = \iota \circ j$.
    Moreover, the symmetric cross-effects functor has a left adjoint
    \[
    p_! \colon \SymFun^{\ast, \omega}_{\geq 1}(\sA, \sB) \to \Fun^{\ast, \omega}(\sA, \sB),
    \]
    and $\widetilde{\Lambda} = p_! \circ L$, as follows from \cite[Proposition 3.3.11]{blansblom2025chainrulegoodwilliecalculus}.
    The upshot is that it suffices to show that the unit 
    \[
    j(X) \Rightarrow \cross p_!j(X)
    \]
    of the adjunction $p_! \dashv \cross$ becomes an equivalence after applying the multilinearization functor $\mlin \colon \SymFun^{\ast, \omega}_{\geq 1}(\sA, \sB) \to \SSeq_{\geq 1}(\sA, \sB)$.
    This is proved in \cite[Lemma 3.4.5]{blansblom2025chainrulegoodwilliecalculus} for symmetric sequences concentrated in finitely many arities.
    But since both source and target commute with filtered colimits, and every symmetric sequence can be written as a filtered colimit of its truncations, it holds for arbitrary symmetric sequences.
    This proves the claim.

    Now consider the following span of icons
    \[
    \begin{tikzcd}
    \id_{\pressymstc} & \widetilde{\mlin} k \ar[l, "\epsilon"', Rightarrow] \ar[r, "\widetilde{\mlin}\eta", Rightarrow] & \widetilde{\mlin} \widetilde{\cross} \widetilde{\Lambda} k,
    \end{tikzcd}
    \]
    where $\epsilon$ is given by the counit of the local adjunction $(\widetilde{\mlin}, k)$.
    Both maps are equivalences: in case of $\epsilon$ this follows since $\widetilde{\mlin}$ is locally a reflective localization and for $\widetilde{\mlin}\eta$ we just proved this.
    Since $\widetilde{\mlin}\widetilde{\cross} = \partial_*$ by definition and $\widetilde{\Lambda}k \simeq \Lambda$ by \cref{lem: lambda-is-lambda'-k}, this completes the proof.
\end{proof}

\subsection{Proof of \texorpdfstring{\cref{lem: icon-mate}}{the icon lemma}} \label{ssec:proof-icon-mate}

We will use \emph{envelopes} or \emph{lax functor classifiers} to reduce to the case where $F$ is a strong functor.
These were also constructed in, for example, \cite[Chapter 11.A]{GaitsgoryRozenblyum2017Volume1} and \cite{glossnerModelIndependentUniversal2025}.
However, since to our knowledge there is no reference that spells out their effect on icons, we've decided to include a proof.

\begin{lemma}\label{lem:envelope-of-2cat}
    Let $\sX$ be a 2-precategory.
    Then there exists a 2-precategory $\Env(\sX)$ together with a map $\eta \colon \sX \to \Env(\sX)$ such that for any 2-precategory $\sY$, the restriction map
    \[
    \Fun^\odot(\Env(\sX),\sY) \xrightarrow{\eta^*} \LaxFun^\odot(\sX,\sY) 
    \]
    is an equivalence.
\end{lemma}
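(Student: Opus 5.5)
We construct $\Env(\sX)$ directly on the level of cocartesian fibrations over $\Delta^\op$. A lax functor $\sX \to \sY$ is a map $\un\sX \to \un\sY$ over $\Delta^\op$ that preserves cocartesian lifts of inert maps. A strong functor is one that preserves all cocartesian lifts. Icons are natural transformations over $\Delta^\op$ in both cases. So the problem is to freely turn a map that preserves inert-cocartesian edges into one that preserves all cocartesian edges. The tool is the free cocartesian fibration, or equivalently the left adjoint to the inclusion of cocartesian fibrations into marked fibrations. We apply it relative to the inert/active factorization system on $\Delta^\op$, in the same way that operadic envelopes are built in \cite[\S 2.2.4]{HA}.

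Concretely, set $\mathcal{E} \coloneqq \un\sX \times_{\Fun(\{0\},\Delta^\op)} \Fun^{\mathrm{act}}(\Delta^1,\Delta^\op)$. Its objects are pairs $(x, \alpha\colon [n]\to[m])$ with $x$ over $[n]$ and $\alpha$ active in $\Delta^\op$. It maps to $\Delta^\op$ by the target. This is the analogue of Lurie's $\mathcal{O}^\otimes_{\mathrm{act}}$-construction.

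The first step is to check that $\mathcal{E} \to \Delta^\op$ is a cocartesian fibration whose straightening is a Segal object in $\Cat$, so that it is the unstraightening of a 2-precategory $\Env(\sX)$. Cocartesian lifts are built from the inert/active factorization in $\Delta^\op$. The Segal condition reduces to the Segal condition for $\sX$, because active maps into $[m]$ decompose uniquely along the spine of $[m]$. The map $\eta$ is the inclusion $x \mapsto (x,\id)$. It preserves inert-cocartesian edges, so it is a lax functor.

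The second step is to prove the universal property. This is the mixed fibration version of \cite[Proposition 2.2.4.9]{HA}, or equivalently the statement that $\mathcal{E}$ is the free cocartesian fibration on the inert-marked fibration $\un\sX$ in the sense of \cite[Theorem 4.5]{glossnerModelIndependentUniversal2025}. Restriction along $\eta$ induces an equivalence from cocartesian-preserving functors $\mathcal{E}\to\un\sY$ over $\Delta^\op$ onto inert-preserving functors $\un\sX\to\un\sY$ over $\Delta^\op$. The inverse sends $F$ to the functor $(x,\alpha)\mapsto \alpha_! F(x)$, and we identify it as a relative left Kan extension along $\eta$. The key point is that $\eta$ is fully faithful and that each object $(x,\alpha)$ receives a cocartesian edge from $\eta(x)$, so this Kan extension exists and is computed by cocartesian transport.

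The whole argument is carried out at the level of full functor categories $\Fun_{/\Delta^\op}$. Natural transformations over $\Delta^\op$ are precisely icons, so we obtain the claimed equivalence of categories $\Fun^\odot(\Env(\sX),\sY)\simeq\LaxFun^\odot(\sX,\sY)$, not just an equivalence of groupoids.

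The main obstacle is the Segal condition. An object over $[m]$ is a string of composable 1-morphisms in $\sX$ regrouped along an active map, and one must carefully identify the fiber over $[m]$ with the iterated pullback of fibers over $[1]$. Once that is established, the universal property is a formal consequence of the theory of relative Kan extensions.
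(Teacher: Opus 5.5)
Your proposal is correct and follows essentially the same route as the paper. The paper also defines $\Env(\sX)$ as the straightening of $\un\sX \times_{\Delta^\op} \Ar_{\mathrm{act}}\Delta^\op$. The difference is that it cites \cite{BarkanHaugsengea2025EnvelopesAlgebraicPatterns} for the universal property (at the level of full functor categories over $\Delta^\op$, hence for icons) and for the Segal condition, where you sketch direct proofs.

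The one check you skip is that $\Env(\sX)_0 \simeq \sX_0$ is a space. This holds because the only active map into $[0]$ in $\Delta^\op$ is the identity, and it is needed for $\Env(\sX)$ to be a $2$-precategory rather than just a Segal object in $\Cat$.
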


\begin{proof}
    Let $\sX$ and $\sY$ be 2-precategories and let $\Ar_\mathrm{act}\Delta^\op \subset \Ar\Delta^\op$ denote the full subcategory spanned by the active morphisms.
    By \cite[Proposition 2.2.4]{BarkanHaugsengea2025EnvelopesAlgebraicPatterns}, $\un \sX \times_{\Delta^\op} \Ar_\mathrm{act}\Delta^\op \to \Delta^\op$ is a cocartesian fibration with the property that the inclusion
    \[\un \sX \hookrightarrow \un \sX \times_{\Delta^\op} \Ar_\mathrm{act}\Delta^\op\]
    induces an equivalence
    \[\Fun_{/\Delta^\op}^\mathrm{cocart}(\un \sX \times_{\Delta^\op} \Ar_\mathrm{act}\Delta^\op, \un \sY) \to \Fun_{/\Delta^\op}^\mathrm{inert}(\un \sX, \un \sY) = \LaxFun^\odot(\sX,\sY),\]
    where $\Fun_{/\Delta^\op}^\mathrm{inert}$ (resp.\  $\Fun_{/\Delta^\op}^\mathrm{cocart}$) denote the categories of functors over $\Delta^\op$ that preserve cocartesian lifts of inerts (resp.\ all cocartesian lifts).
    We therefore define $\Env(\sX)$ as the straightening of $\un \sX \times_{\Delta^\op} \Ar_\mathrm{act}\Delta^\op$; to conclude the result we need to show that $\Env(\sX) \colon \Delta^\op \to \Cat$ is indeed a 2-precategory.
    It is clear from the construction that $\Env(\sX)_0 \simeq \sX_0$, which is a space, while the Segal condition follows from \cite[Example 4.3.4]{BarkanHaugsengea2025EnvelopesAlgebraicPatterns}, noting that $\un \sX$ is a generalized nonsymmetric $\infty$-operad.\footnote{The Segal condition can also be proved directly since $\Delta_{[n]/}^\mathrm{act} \simeq \Delta_{[1]/}^\mathrm{act} \times \cdots \times \Delta_{[1]/}^\mathrm{act}$.}
\end{proof}

Using the envelope construction, we will be able to reduce \cref{lem: icon-mate} to the following version of the mate correspondence.

\begin{lemma}\label{lem:general-icon-mate-construction}
    Let $\sX$, $\sY$, $\sZ$ and $\sW$ be 2-precategories, let $F \colon \sZ \to \sX$ and $G \colon \sW \to \sY$ be strong functors and let $L \colon \sX \to \sY$ and $L' \colon \sZ \to \sW$ be oplax functors that admit local right adjoints $R$ and $R'$ in the sense of \cite[Proposition A.3.16]{blansblom2025chainrulegoodwilliecalculus}.
    Then there is an equivalence between the space of lax commuting squares of the form
    \begin{equation}\label{eq:mate-squares-lax-oplax}\begin{tikzcd}[ampersand replacement=\&]
        \sZ \& \sW \\
        \sX \& \sY
        \arrow["{L'}", from=1-1, to=1-2]
        \arrow["F"', from=1-1, to=2-1]
        \arrow["G", from=1-2, to=2-2]
        \arrow[Rightarrow, shorten = 0.7em, from=2-1, to=1-2]
        \arrow["L"', from=2-1, to=2-2]
    \end{tikzcd}
    \qquad \text{and} \qquad
    \begin{tikzcd}[ampersand replacement=\&]
        \sZ \& \sW \\
        \sX \& \sY
        \arrow["F"', from=1-1, to=2-1]
        \arrow[Rightarrow, shorten = 0.7em, from=1-1, to=2-2]
        \arrow["{R'}"', from=1-2, to=1-1]
        \arrow["G", from=1-2, to=2-2]
        \arrow["R", from=2-2, to=2-1]
    \end{tikzcd}\end{equation}
    In other words, there is an equivalence
    \[\Map_{\OplaxFun^\odot}(LF,GL') \simeq \Map_{\LaxFun^\odot}(FR',RG).\]
    On the mapping categories of $\sX$ and $\sY$, this equivalence is given by the usual mate correspondence (cf.\ \cref{remark:what-are-mates}).
\end{lemma}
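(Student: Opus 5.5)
The plan is to reduce the statement to the ordinary mate correspondence, applied uniformly over the simplicial structure. I would work with unstraightenings and use the definitions of lax and oplax functors and icons from \cref{def:icons}.

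First I would rewrite the data. The oplax functors $L$ and $L'$ are maps over $\Delta$ between cartesian unstraightenings $\un^\vee$, preserving cartesian lifts of inerts. Their local right adjoints $R$ and $R'$ are lax functors, so they are maps over $\Delta^\op$ between cocartesian unstraightenings $\un$. Since $F$ and $G$ are strong, they may be viewed either way. A lax square of the first kind is then an icon $LF \Rightarrow GL'$, that is, a natural transformation of functors $\un^\vee\sZ \to \un^\vee\sY$ over $\Delta$. A square of the second kind is a natural transformation $FR' \Rightarrow RG$ of functors $\un\sW \to \un\sX$ over $\Delta^\op$.

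Next, recall from the construction in \cite[Proposition A.3.16]{blansblom2025chainrulegoodwilliecalculus} that local right adjoints arise as follows. One passes to the relative adjunction over $\Delta$: $L\colon \un^\vee\sX\to\un^\vee\sY$ is a map whose fiber over each $[n]$ admits a right adjoint. These fiberwise right adjoints assemble into a functor between the cocartesian unstraightenings, obtained by dualizing the fibration (the equivalence between cartesian fibrations over $\Delta$ and cocartesian fibrations over $\Delta^\op$ with the same fibers). The key step is then a relative mate correspondence. For each $[n]$, the fiberwise adjunctions $L_n\dashv R_n$ and $L'_n\dashv R'_n$ give the usual equivalence between transformations $L_nF_n\Rightarrow G_nL'_n$ and $F_nR'_n\Rightarrow R_nG_n$. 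These equivalences are natural in $[n]$: a transformation over $\Delta$ is a compatible family of fiberwise transformations together with compatibility with the transport functors, and mates respect pasting with the Beck--Chevalley data encoding the oplax/lax structure maps.

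To make this rigorous without hand-checking coherences, I would invoke a parametrized mate equivalence. Concretely, I would use that adjunctions in the $2$-category $\Fun(\Delta^\op,\Cat)$ (with lax or oplax morphisms) correspond to relative adjunctions over $\Delta^\op$. Then mates are given by the mate equivalence in the $2$-category of categories over $\Delta^\op$ with functors not necessarily preserving cocartesian edges, as in Haugseng--Hebestreit--Linskog--Nuiten's ``Lax monoidal adjunctions, two-variable fibrations and the calculus of mates''. That result says that lax-commuting squares of left adjoints and oplax squares of the corresponding right adjoints are equivalent as spaces, compatibly with the fiberwise mate correspondence. Finally, restricting to transformations whose components over $[0]$ are identities (icons) is compatible on both sides, since $F$, $G$, $L$, $R$ and their primed versions are equivalences on objects. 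This yields the stated equivalence and its description on mapping categories.

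The main obstacle is this coherence step: identifying the local right adjoints of \cite[Proposition A.3.16]{blansblom2025chainrulegoodwilliecalculus} with the relative right adjoints of the dualized fibrations, so that the HHLN mate theorem applies verbatim. I would first try to read this identification directly off the construction of local adjoints, which is itself built from relative adjunctions and dualization of fibrations.
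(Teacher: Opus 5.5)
There is a genuine gap, and it sits in your coherence step, though not where you locate it.

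\textbf{The cited tool only gives globular mates.} Applied over the base $\Delta^\op$, the HHLN result (equivalently, the argument behind \cite[Proposition A.3.16]{blansblom2025chainrulegoodwilliecalculus}) is an equivalence of $2$-categories. On one side are fibrations over $\Delta$ with oplax morphisms that are fiberwise left adjoints; on the other, the $\coop$ of fibrations over $\Delta^\op$ with lax morphisms that are fiberwise right adjoints. This only yields globular mates: $2$-cells $L_1 \Rightarrow L_2$ correspond to $2$-cells $R_2 \Rightarrow R_1$.

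The squares in the lemma have vertical arrows $F$ and $G$ that are arbitrary strong functors, not local adjoints. So $LF$ and $GL'$ are not morphisms of that $2$-category, and the square cannot be transported through the equivalence. You also cannot fall back on the classical formula $FR' \Rightarrow RLFR' \Rightarrow RGL'R' \Rightarrow RG$ inside some ambient $2$-category. $L$ is a map of cartesian unstraightenings over $\Delta$, while $R$ is a map of cocartesian unstraightenings over $\Delta^\op$. Hence $L \dashv R$ is not an adjunction in $\Cat_{/\Delta^\op}$ or in any single $2$-category, and composites like $RLF$ are undefined.

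Your step 2 (fiberwise mates plus compatibility with transports) is therefore exactly the coherence that remains unresolved. The obstacle you flag, matching the local adjoints of A.3.16 with HHLN's dual-fibration adjoints, is not really an issue, since A.3.16 is proved by that very argument.

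\textbf{The missing idea is to put the vertical arrows into the base.} The paper combines \cite[Corollary 2.8.7]{AbellanGagnaea2025StraighteningLaxTransformations}, \cite[Theorem 5.3.1]{linskensLaxMonoidalAdjunctions} and the Gray-fibration characterization \cite[Proposition 2.4.9]{linskensLaxMonoidalAdjunctions}. From these it shows that $\Ar^\lax(\PreCattwo^\lax)^{\mathrm{str}}$ is the following locally full subcategory of $\Cat_{/[1]\times\Delta^\op}$:
\begin{itemize}
\item objects are cocartesian fibrations whose fibers over $0$ and $1$ are $2$-precategories;
\item morphisms preserve cocartesian lifts of the inerts $(\id_i,\phi)$, with $\phi$ inert in $\Delta^\op$.
\end{itemize}
Dually, the oplax arrow category is described over $[1]^\op\times\Delta$. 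In this picture a strong functor $F$ is simply the transport over $[1]$. A lax square is then a single morphism of such fibrations, which need not preserve the (co)cartesian edges lying over $[1]$.

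Rerunning the A.3.16 / \cite[Theorem 3.4.7]{linskensLaxMonoidalAdjunctions} argument over $[1]\times\Delta^\op$ instead of $\Delta^\op$ gives
\[
\Ar^\oplax(\PreCattwo^\oplax)^{\mathrm{str},L}\simeq(\Ar^\lax(\PreCattwo^\lax)^{\mathrm{str},R})^{\coop}.
\]
The dual of \cite[Proposition 3.2.7]{linskensLaxMonoidalAdjunctions} then identifies this equivalence with the mate correspondence.

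Finally, your restriction to icons is unnecessary. In this formalism every transformation over $\Delta$ or $\Delta^\op$ is already an icon.
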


\begin{proof}
    Write $\PreCattwo^\lax$ and $\PreCattwo^\oplax$ for the 2-categories of 2-precategories, (op)lax functors and icons; more precisely, these are defined as locally full sub-2-categories of $\Cat_{/\Delta^\op}$ and $\Cat_{/\Delta}$, respectively (cf.\ \cref{def:icons}).
    Write $\Ar^\lax(\PreCattwo^\lax)^\mathrm{str} \subset \Ar^\lax(\PreCattwo^\lax)$ and $\Ar^\oplax(\PreCattwo^\oplax)^\mathrm{str} \subset \Ar^\oplax(\PreCattwo^\oplax)$ for the full subcategories whose objects are those arrows that are strong functors between 2-precategories.
    Here $\Ar^\lax$ and $\Ar^\oplax$ denote the (op)lax arrow categories, see e.g. \cite[Notation 2.4.1]{AbellanGagnaea2025StraighteningLaxTransformations}.
    It follows by definition that we may identify squares on the left and right of \cref{eq:mate-squares-lax-oplax} with 1-morphisms in $\Ar^\oplax(\PreCattwo^\oplax)^\mathrm{str}$ and $\Ar^\lax(\PreCattwo^\lax)^\mathrm{str}$ that are componentwise left and right adjoints, respectively.
    Let us write $\Ar^\oplax(\PreCattwo^\oplax)^{\mathrm{str},L}$ and $\Ar^\lax(\PreCattwo^\lax)^{\mathrm{str},R}$ for the wide and locally full subcategories spanned by these 1-morphisms.
    The result will therefore follow if we prove an equivalence
    \[\Ar^\oplax(\PreCattwo^\oplax)^{\mathrm{str},L} \simeq (\Ar^\lax(\PreCattwo^\lax)^{\mathrm{str},R})^{\coop}.\]
    
    This will be a consequence of the following claim:

    \begin{claim*}
    $\Ar^\lax(\PreCattwo^\lax)^\mathrm{str}$ is equivalent to the locally full subcategory of $\Cat_{/[1] \times \Delta^\op}$ such that
    \begin{itemize}
        \item its objects are cocartesian fibrations $\cC \to [1] \times \Delta^\op$ such that for both $i=0,1$, the fiber $\cC_i \to \Delta^\op$ is (the unstraightening of) a 2-precategories, and
        \item its 1-morphisms are functors over $[1] \times \Delta^\op$ that preserve cocartesian lifts of inerts, where a map in $[1] \times \Delta^\op$ is called inert if it is of the form $(\id_i, \alpha)$ with $i \in \{0,1\}$ and $\alpha$ inert in $\Delta^\op$.
    \end{itemize}
    Similarly, $\Ar^\oplax(\PreCattwo^\oplax)^\mathrm{str}$ is equivalent to the locally full sub-2-category of $\Cat_{[1]^\op \times \Delta}$ satisfying the same properties, but with ``cocartesian'' replaced by ``cartesian''.
    \end{claim*}
    
    By exactly the same argument as \cite[Proposition A.3.16]{blansblom2025chainrulegoodwilliecalculus}\footnote{Which is essentially just the argument of \cite[Theorem 3.4.7]{linskensLaxMonoidalAdjunctions}.}, with $\Delta^\op$ replaced by $[1] \times \Delta^\op$, it now follows that there is an equivalence between $\Ar^\oplax(\PreCattwo^\oplax)^{\mathrm{str},L}$ and $(\Ar^\lax(\PreCattwo^\lax)^{\mathrm{str},R})^\coop$, and by (the dual of) \cite[Proposition 3.2.7]{linskensLaxMonoidalAdjunctions} it is given by the mate correspondence.

    We will now prove the claim in the lax case, the other case is similar (and formally dual).
    By \cite[Corollary 2.8.7]{AbellanGagnaea2025StraighteningLaxTransformations}, there is a fully faithful inclusion
    \[\Ar^\lax(\Cat_{/\Delta^\op}) \hookrightarrow \Ar^\lax(\Cat)_{/\const_{\Delta^\op}},\]
    whose essential image consists of the strongly commuting squares
    \[\begin{tikzcd}[ampersand replacement=\&]
        \cC \& \cD \\
        {\Delta^\op} \& {\Delta^\op.}
        \arrow[""{name=0, anchor=center, inner sep=0}, from=1-1, to=1-2]
        \arrow[from=1-1, to=2-1]
        \arrow[from=1-2, to=2-2]
        \arrow[""{name=1, anchor=center, inner sep=0}, equals, from=2-1, to=2-2]
        \arrow["\circlearrowright"{description}, draw=none, from=0, to=1]
    \end{tikzcd}\]
    By \cite[Theorem 5.3.1]{linskensLaxMonoidalAdjunctions}, there is an equivalence between $\Ar^\lax(\Cat)$ and the full subcategory of $\Cat_{/[1]}$ spanned by the cocartesian fibrations; combining these equivalences, it follows that under straightening-unstraightening, $\Ar^\lax(\Cat_{/\Delta^\op})$ is equivalent to the full subcategory of $\Cat_{/[1] \times \Delta^\op}$ spanned by the functors $\cC \to [1] \times \Delta^\op$ that are cocartesian fibrations over $[1]$ in the sense of \cite[Definition 2.2.3]{linskensLaxMonoidalAdjunctions}.
    Writing $\mathrm{Cocart}^\lax(\Delta^\op)$ for the full subcategory of $\Cat_{/\Delta^\op}$ spanned by the cocartesian fibrations, it follows that $\Ar^\lax(\mathrm{Cocart}^\lax(\Delta^\op))$ is equivalent to the full subcategory of $\Cat_{/[1] \times \Delta^\op}$ spanned by the Gray fibrations in the sense of \cite[Definition 2.4.1]{linskensLaxMonoidalAdjunctions}.
    It now follows using the equivalence of items (1) and (5) in \cite[Proposition 2.4.9]{linskensLaxMonoidalAdjunctions} that $\Ar^\lax(\PreCattwo^\lax)^\mathrm{str}$ is the locally full subcategory of $\Cat_{/[1] \times \Delta^\op}$ described in the claim.
\end{proof}

\begin{proof}[Proof of \cref{lem: icon-mate}]
    Write $\widetilde{F}$ and $\widetilde{G}$ for the strong 2-functors corresponding to $F$ and $G$ under the equivalence $\Fun^\odot(\Env(\sZ),-) \simeq \LaxFun^\odot(\sZ,-)$ from \cref{lem:envelope-of-2cat}.
    Applying \cref{lem:general-icon-mate-construction} with $L' = \id_{\Env(\sZ)}$, it follows that there is an equivalence between spaces of icons
    \[\Map_{\OplaxFun^\odot}(L\widetilde{F}, \widetilde{G}) \simeq \Map_{\LaxFun^\odot}(\widetilde{F}, R\widetilde{G}),\]
    which on mapping categories is the usual mate correspondence.
    We first note that $L \widetilde{F}$ is in fact a strong functor.
    To see this, let $\sY' \subset \sY$ be the smallest locally full subcategory of $\sY$ containing the image of $F$, and let $F' \colon \sZ \to \sY'$ and $L' \colon \sY' \to \sX$ be the (co)restrictions of $F$ and $L$, respectively.
    By naturality, the diagram
    \[\begin{tikzcd}[ampersand replacement=\&]
        \& {\sY'} \& \\
        {\Env(\sZ)} \& \sY \& \sX
        \arrow[hook, from=1-2, to=2-2]
        \arrow["{L'}", from=1-2, to=2-3]
        \arrow["{\widetilde{F'}}", from=2-1, to=1-2]
        \arrow["{\widetilde{F}}"', from=2-1, to=2-2]
        \arrow["L"', from=2-2, to=2-3]
    \end{tikzcd}\]
    commutes.
    Since $L'$ is strong by assumption, it follows that $L \widetilde{F} = L' \widetilde{F'}$ is strong.
    We therefore see that $\Map_{\OplaxFun^\odot}(L\widetilde{F}, \widetilde{G}) = \Map_{\Fun^\odot}(L\widetilde{F}, \widetilde{G})$, so we may form the composite
    \begin{align*}
        \Map_{\LaxFun^\odot}(LF,G) &\xrightarrow{(\eta^*)^{-1}} \Map_{\Fun^\odot}(L\widetilde{F}, \widetilde{G}) \\
         &\simeq \Map_{\LaxFun^\odot}(\widetilde{F}, R\widetilde{G}) \xrightarrow{\eta^*} \Map_{\LaxFun^\odot}(F,RG)
    \end{align*}
    Here we use that $\widetilde{F}\eta = F$ and $\widetilde{G}\eta = G$ by definition.
    This gives the desired construction of an icon $\beta \colon F \Rightarrow RG$ from an icon $\alpha \colon LF \Rightarrow G$.

    Now let
    \[\begin{tikzcd}[ampersand replacement=\&]
        \& {\sY(a,b)} \& \\
        {\sZ(a,b)} \&\& {\sX(a,b)}
        \arrow["{L_{a,b}}", from=1-2, to=2-3]
        \arrow["{F_{a,b}}", from=2-1, to=1-2]
        \arrow[""{name=0, anchor=center, inner sep=0}, "{G_{a,b}}"', from=2-1, to=2-3]
        \arrow["{\alpha_{a,b}}", shorten=0.5em, Rightarrow, from=1-2, to=0]
    \end{tikzcd}\]
    be the component of the icon $\alpha$ at two objects $a$ and $b$ of $\sZ$.
    Then it follows by construction that $\beta_{a,b}$ is given by the composite
    \[F_{a,b} \xRightarrow{u} L_{a,b}R_{a,b}F_{a,b} \xRightarrow{R_{a,b}(\alpha_{a,b})} R_{a,b}G_{a,b},\]
    where $u$ denotes the unit of the adjunction $L_{a,b} \dashv R_{a,b}$.
    This composite is precisely the mate of $\alpha_{a,b}$.
\end{proof}

\begin{remark}
    In fact, the map $\eta^* \colon \Map_{\LaxFun^\odot}(\widetilde{F}, R\widetilde{G}) \rightarrow \Map_{\LaxFun^\odot}(F,RG)$ appearing in this proof is also an equivalence.
    In particular, it follows that there is an equivalence between the space of icons $LF \Rightarrow G$ and the space of icons $F \Rightarrow RG$, strengthening the conclusion of \cref{lem: icon-mate}.
    This is not directly clear from the universal property of $\Env(\sZ)$, since $R\widetilde{G}$ is not a strong functor.
    However, one can show that $\eta^*$ is part of a coreflective adjunction
    \[\begin{tikzcd}[ampersand replacement=\&]
        {\LaxFun^\odot(\Env(\sZ),\sX)} \&\& {\LaxFun^\odot(\sZ,\sX)}
        \arrow[""{name=0, anchor=center, inner sep=0}, "{\eta^*}"', shift left=-2, from=1-1, to=1-3]
        \arrow[""{name=1, anchor=center, inner sep=0}, "{\eta_!}"', shift left=-2, hook', from=1-3, to=1-1]
        \arrow["\dashv"{anchor=center, rotate=-90}, draw=none, from=0, to=1]
    \end{tikzcd}\]
    such that the colocal objects are precisely the strong functors $\Env(\sZ) \to \sX$.
    Since $\widetilde{F}$ is strong, it follows that
    \[\Map_{\LaxFun^\odot}(\widetilde{F},R\widetilde{G}) = \Map_{\LaxFun^\odot}(\eta_! F, R\widetilde{G}) \xrightarrow{\eta^*} \Map_{\LaxFun^\odot}(F,RG)\]
    is an equivalence.
    As we don't need this stronger version of \cref{lem: icon-mate} here, we will not prove this.
\end{remark}

\section{Derivatives and the norm}

Let $\LSp$ be a stable presentable localization of $\Sp$, as defined in \cref{ssec:localizations-of-spectra}.
Since this category is stable, the derivatives functor
\[
\partial_* \colon \End^{\ast, \omega}(\LSp) \to \SSeq_{\geq 1}(\LSp)
\]
preserves colimits, so that it admits a right adjoint by the adjoint functor theorem.
It is the purpose of this appendix to record a proof of the well-known fact that the unit of this adjunction on an $n$-homogeneous functor $H$ is given by the norm map
\[
(\partial_nH \otimes \id_{\LSp}^{\otimes n})_{h\Sigma_n} \Rightarrow (\partial_nH \otimes \id_{\LSp}^{\otimes n})^{h\Sigma_n}.
\]

As the previous display shows, the formula for this right adjoint involves homotopy fixed points, which do not commute with filtered colimits.
We therefore need to deal with derivatives of non-finitary functors.
For $n \geq 1$, we let $\SymFunlin_n(\LSp)$ denote the full subcategory of $\Fun(\LSp^{\times n}_{h\Sigma_n}, \LSp)$ spanned by those functors that are linear in each variable separately; recall that a functor between stable categories is called linear if it is reduced and exact.
We write 
\[
\SymFunlin(\LSp) = \prod_{n \geq 1} \SymFunlin_n(\LSp).
\]
Note that $\SSeq_{\geq 1}(\LSp)$ is equivalent to the full subcategory of $\SymFunlin(\LSp)$ spanned by the sequences $(A_n)_{n \geq 1}$ for which each $A_n$ preserves filtered colimits. 
Given $A \in \SymFunlin_n(\LSp)$, we write $A \circ \Delta \in \End^{\ast}(\LSp)^{B\Sigma_n}$ for the restriction of $A$ along the diagonal $\Delta \colon \LSp \times B\Sigma_n \to \LSp^{\times n}_{h\Sigma_n}$.
The functor $(A \circ \Delta)_{h\Sigma_n} \colon \LSp \to \LSp$ is $n$-homogeneous.
The derivatives construction, defined as multilinearized cross-effects, defines a functor
\[
\partial_* \colon \End^{\ast}(\LSp) \to \SymFunlin(\LSp)
\]
by \cite[Remark 6.1.3.23 and Theorem 6.1.4.7]{HA}.
The restriction of this functor to $\End^{\ast, \omega}(\LSp)$ factors through $\SSeq_{\geq 1}(\LSp)$, and is equivalent to the derivatives functor as we have been using it throughout this paper.

Consider the functor $\Psi \colon \SymFunlin(\LSp) \to \End^{\ast}(\LSp)$, defined by the formula
\[
\Psi(A) = \prod_{n \geq 1}(A_n \circ \Delta)^{h\Sigma_n}.
\]
We will prove that $\Psi$ is right adjoint to $\partial_*$.

Suppose that $F \colon \LSp \to \LSp$ is a reduced functor.
Recall that, for instance by \cite[Remark 6.1.6.29]{HA}, we have for each $n \geq 1$ a natural pullback square of the form
\[
\begin{tikzcd}
    P_nF \ar[r, "\eta_n"] \ar[d] & (\partial_nF \circ \Delta)^{h\Sigma_n} \ar[d] \\
    P_{n-1}F \ar[r] & (\partial_nF \circ \Delta)^{t\Sigma_n},
\end{tikzcd}
\]
where the right vertical arrow is the canonical map from the fixed points to the Tate construction.
These are called the Tate squares.

By definition we have a product decomposition $\Psi \simeq \prod_{n \geq 1} \Psi_n$, with 
\[
\Psi_nA = (A_n \circ \Delta)^{h\Sigma_n}.
\]
We define a natural transformation $\eta \colon \id \Rightarrow \Psi \partial_*$ by letting its $n$th component be given by the composite
\[
F \Rightarrow P_nF \xRightarrow{\eta_n} \Psi_n \partial_*F,
\]
where the first arrow is the canonical map from $F$ to its $n$-excisive approximation and the second arrow is given by the top horizontal map from the Tate square.

\begin{proposition} \label{prop: right-adjoint-derivatives}
    The natural transformation $\eta \colon \id \Rightarrow \Psi \partial_*$ exhibits $\Psi$ as the right adjoint of $\partial_*$.
\end{proposition}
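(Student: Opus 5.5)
The plan is to verify the universal property directly: for every reduced $F \colon \LSp \to \LSp$ and every $A \in \SymFunlin(\LSp)$, the composite
\[
\Map_{\SymFunlin}(\partial_*F, A) \xrightarrow{\Psi} \Map_{\End^\ast}(\Psi\partial_*F, \Psi A) \xrightarrow{\eta_F^*} \Map_{\End^\ast}(F, \Psi A)
\]
should be an equivalence. Both $\Psi$ and the categories split as products over $n \geq 1$, so it suffices to treat one factor $\Psi_n$ at a time. We therefore need to show that $F \Rightarrow P_nF \xRightarrow{\eta_n} (\partial_nF\circ\Delta)^{h\Sigma_n}$ induces an equivalence
\[
\Map_{\SymFunlin_n}(\partial_nF, A_n) \simeq \Map_{\End^\ast}(F, (A_n\circ\Delta)^{h\Sigma_n}).
\]

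First, the target functor $(A_n\circ\Delta)^{h\Sigma_n}$ is $n$-excisive. It is a limit of $n$-excisive functors $A_n\circ\Delta$, and $n$-excisive functors are closed under limits. By the universal property of $P_n$, maps out of $F$ into it therefore factor uniquely through $P_nF$, so we may replace $F$ by $P_nF$.

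Next we use the Tate square. Maps from $P_{n-1}F$ and from $(\partial_nF\circ\Delta)^{t\Sigma_n}$ into $(A_n\circ\Delta)^{h\Sigma_n}$ should vanish, in the sense that the mapping spectra are zero. Granting this, applying $\ulnat(-, (A_n\circ\Delta)^{h\Sigma_n})$ to the pullback square, which is also a pushout since we are in a stable setting, shows that $\eta_n$ induces an equivalence
\[
\ulnat((\partial_nF\circ\Delta)^{h\Sigma_n}, (A_n\circ\Delta)^{h\Sigma_n}) \simeq \ulnat(P_nF, (A_n\circ\Delta)^{h\Sigma_n}).
\]
The vanishing for $P_{n-1}F$ holds because $\Map(G, (A_n\circ\Delta)^{h\Sigma_n}) \simeq \Map(G, A_n\circ\Delta)^{h\Sigma_n}$, and an $(n-1)$-excisive $G$ admits no nonzero maps into the $n$-homogeneous $A_n\circ\Delta$. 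This follows from the adjunction between cross-effects and diagonals: $\Map(G, B\circ\Delta) \simeq \Map(\cross_n G, B)$ for multilinear symmetric $B$, and $\cross_n$ of an $(n-1)$-excisive functor is zero. This same adjunction identifies $\Map(H\circ\Delta, A_n\circ\Delta)$ with $\Map(\cross_n(H\circ\Delta), A_n)$ in terms of $\partial_n$.

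The Tate vanishing is what I expect to be the main obstacle. The Tate construction is built as a colimit of $(n-1)$-excisive pieces; more precisely, it is the cofiber of the norm, and the norm's cofiber is a filtered colimit of functors of lower degree in the stable setting. The first approach I would try is to show directly that $\partial_n$ of $(\partial_nF\circ\Delta)^{t\Sigma_n}$ vanishes, i.e.\ that this functor is $(n-1)$-excisive. In the non-finitary setting this is exactly the content of the Tate square, \cite[Remark 6.1.6.29]{HA}: its bottom map is a map of $(n-1)$-excisive functors. Since the Tate term is $(n-1)$-excisive, the vanishing reduces to the previous case.

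Finally, identify $\ulnat((B\circ\Delta)^{h\Sigma_n}, (A_n\circ\Delta)^{h\Sigma_n})$ with $\Map(B, A_n)$. Precompose with the norm $(B\circ\Delta)_{h\Sigma_n} \to (B\circ\Delta)^{h\Sigma_n}$, whose cofiber is Tate and hence $(n-1)$-excisive, so this is an equivalence on maps into $n$-homogeneous targets. Then use the adjunction $\Map((B\circ\Delta)_{h\Sigma_n}, (A_n\circ\Delta)^{h\Sigma_n}) \simeq \Map(B\circ\Delta, A_n\circ\Delta)^{\Sigma_n\text{-equiv}} \simeq \Map(B, A_n)$, the last step because $\partial_n(B\circ\Delta) \simeq B$ for multilinear $B$. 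It remains to check that the resulting composite agrees with the map induced by $\eta$; this is a naturality check using that $\eta_n$ induces the identity on $n$th derivatives.
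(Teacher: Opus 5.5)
Your route differs from the paper's and is sound in outline, but the final step is mis-stated as written. The paper never checks the hom-space criterion. It defines a counit $\epsilon\colon\partial_*\Psi\Rightarrow\id$ from \cite[Proposition 6.1.6.27]{HA} (the norm exhibits $(A_n\circ\Delta)_{h\Sigma_n}$ as the $n$-homogeneous part of $(A_n\circ\Delta)^{h\Sigma_n}$, so $\partial_n\Psi_nA\simeq A_n$) together with the coassembly map $\partial_*\prod_n\Psi_n\to\prod_n\partial_n\Psi_n$. It then gets the triangle identities from one fact: by the Tate square, $\eta_n$ induces on $\partial_n$ the inverse of that equivalence.

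You instead verify directly that $\eta_F^*\circ\Psi$ is an equivalence on mapping spaces. Your key lemma is the diagonal/cross-effect adjunction $\Map(G,B\circ\Delta)\simeq\Map(\cross_nG,B)$. It holds here because $\Delta_!G\simeq G(X_1\oplus\cdots\oplus X_n)$, which in the stable setting splits as $\cross_nG$ plus summands constant in some variable, and those admit no maps to a $B$ reduced in each variable. With that input, the following steps are correct: the reduction to $P_nF$, the pushout argument with the Tate square, and the vanishing of maps from $P_{n-1}F$ and from the Tate term into $\Psi_nA$.

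Your approach costs more computation, but it replaces the paper's terse coherence claim by concrete equivalence checks. In fact the symmetric form $\Map(G,(A_n\circ\Delta)^{h\Sigma_n})\simeq\Map_{\mathrm{sym}}(\cross_nG,A_n)\simeq\Map(\partial_nG,A_n)$ already gives $\partial_n\dashv\Psi_n$ outright. The Tate square is then needed only to identify the unit with $\eta_n$.

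The problem is the last identification. $\Map\bigl((B\circ\Delta)_{h\Sigma_n},(A_n\circ\Delta)^{h\Sigma_n}\bigr)$ is the homotopy fixed points of $\Map(B\circ\Delta,A_n\circ\Delta)$ for $\Sigma_n\times\Sigma_n$ acting on source and target separately. It is not the diagonally equivariant maps, which are too many. For $n=2$ and $A_2=B=\otimes$, the mapping spectrum of diagonally equivariant self-maps of $X\otimes X$ is $\Sph^{h\Sigma_2}\oplus\Sph^{h\Sigma_2}$, since conjugation fixes both the identity and the swap. The mapping spectrum of symmetric self-maps of $\otimes$ is only $\Sph^{h\Sigma_2}$. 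Similarly, $\partial_n(B\circ\Delta)\simeq\bigoplus_{\sigma\in\Sigma_n}B\circ\sigma$ is induced; it is $\partial_n\bigl((B\circ\Delta)_{h\Sigma_n}\bigr)$ that recovers $B$.

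The fix uses your own tool. Apply the symmetric cross-effect adjunction to $G=(B\circ\Delta)_{h\Sigma_n}$, whose symmetric $n$th cross-effect is $B$. Then use naturality of the norm, $f^{h\Sigma_n}\circ N_B\simeq N_{A_n}\circ f_{h\Sigma_n}$, to see that $\Psi_n$ on mapping spaces becomes postcomposition with an equivalence.

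Two smaller points:
\begin{itemize}
\item The vanishing you use holds because the target has the form $B\circ\Delta$, not merely because it is $n$-homogeneous. The boundary map $\Sigma^{-1}(B\circ\Delta)^{t\Sigma_n}\to(B\circ\Delta)_{h\Sigma_n}$ is a nonzero map from an $(n-1)$-excisive functor to an $n$-homogeneous one whenever the Tate term is nonzero.
\item The $(n-1)$-excisiveness of the Tate term should be cited from \cite[Proposition 6.1.6.27]{HA}, not justified by a filtered-colimit description.
\end{itemize}
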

\begin{proof}
    Let $A_n \in \SymFunlin_n(\LSp)$.
    By \cite[Proposition 6.1.6.27]{HA}, the norm
    \[
    (A_n \circ \Delta)_{h\Sigma_n} \Rightarrow (A_n \circ \Delta)^{h\Sigma_n}
    \]
    exhibits the source as the $n$-homogeneous approximation of the target.
    In particular, this map induces an equivalence $\partial_n(A_n \circ \Delta)^{h\Sigma_n} \simeq A_n$, which defines a natural transformation $\epsilon_n \colon \partial_n \Psi_n \Rightarrow \id_{\SymFunlin_n(\LSp)}$.
    Taking these together for all $n$, we obtain a natural transformation
    \[
    \epsilon \colon \partial_* \Psi = \partial_* \prod_{n \geq 1} \Psi_n \Rightarrow \prod_{n \geq 1} \partial_n \Psi_n \xRightarrow{\epsilon_n} \prod_{n \geq 1} \id_{\SymFunlin_n(\LSp)} \simeq \id_{\SymFunlin(\LSp)},
    \]
    where the first map combines the coassembly map for $\partial_*$  with the projection $\partial_* \to \partial_n$.

    Since the Tate square is a pullback square, the map $\eta_n$ induces an isomorphism on $n$th derivatives.
    Moreover, since the fiber of the right vertical map is the norm map, this isomorphism is an inverse to the isomorphism used in the definition of $\epsilon_n$.
    This implies that we can choose homotopies of the composites
    \[
    \partial_* \xRightarrow{\eta} \partial_*\Psi\partial_* \xRightarrow{\epsilon} \partial_* \quad \text{and} \quad \Psi \xRightarrow{\eta} \Psi \partial_* \Psi \xRightarrow{\epsilon} \Psi
    \]
    to the identity, so that $\eta$ and $\epsilon$ satisfy the triangle identities.
\end{proof}

\begin{corollary} \label{cor: unit-right-adjoint-der-norm}
    Let $n \geq 1$ and let $H \colon \LSp \to \LSp$ be an $n$-homogeneous functor.
   Then the unit $\eta \colon H \Rightarrow \Psi \partial_*H$ is homotopic to the norm map.
\end{corollary}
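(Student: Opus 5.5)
Write $H \simeq (A\circ\Delta)_{h\Sigma_n}$ with $A = \partial_n H \in \SymFunlin_n(\LSp)$; every $n$-homogeneous functor has this form. The plan is to compute each component $\eta_m \colon H \Rightarrow \Psi_m\partial_*H$ of the unit from the definition and show that only $m=n$ contributes.

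First, for $m \neq n$ we have $\partial_m H \simeq 0$, so $\Psi_m\partial_* H \simeq 0$. Hence
\[
\Psi\partial_*H \simeq \prod_{m\geq 1}\Psi_m\partial_*H \simeq (A\circ\Delta)^{h\Sigma_n},
\]
and the unit is identified with its $n$th component. By definition this component is the composite
\[
H \Rightarrow P_nH \xRightarrow{\eta_n} (A\circ\Delta)^{h\Sigma_n}.
\]
The first map is an equivalence because $H$ is $n$-excisive.

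Next I would analyse the Tate square for $H$. Since $H$ is $n$-homogeneous, $P_{n-1}H \simeq 0$, so the square exhibits $P_nH \simeq H$ as the fiber of the canonical map $(A\circ\Delta)^{h\Sigma_n} \to (A\circ\Delta)^{t\Sigma_n}$. The map $\eta_n$ is then the inclusion of this fiber. By the definition of the Tate construction, that fiber is identified with $(A\circ\Delta)_{h\Sigma_n}$ via the norm map, so $\eta_n$ is the norm up to the identification $H \simeq (A\circ\Delta)_{h\Sigma_n}$.

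The main obstacle is checking that these identifications agree. The identification of $H$ with $(\partial_nH\circ\Delta)_{h\Sigma_n}$ coming from the Tate square has to match the one used to define $\partial_n H$ (via \cite[Proposition 6.1.6.27]{HA}). To settle this, I would apply $\partial_n$ to the square and use the argument in the proof of \cref{prop: right-adjoint-derivatives}. There, $\partial_n(\eta_n)$ was shown to be inverse to the equivalence $\partial_n(A\circ\Delta)^{h\Sigma_n} \simeq A$ induced by the norm. Since $n$-homogeneous functors are determined by their $n$th derivative, i.e. $\partial_n$ restricted to the fiber is fully faithful, the two maps $H \Rightarrow (A\circ\Delta)^{h\Sigma_n}$ agree. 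The comparison is between maps out of a homogeneous functor into a target whose homogeneous approximation is the norm, so one may first factor both maps through $D_n$ of the target, using the universal property of \cite[Proposition 6.1.6.27]{HA}. The statement then reduces to the corresponding identity on $n$th derivatives, which was established in that proof.
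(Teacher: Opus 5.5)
Your proposal is correct and follows the paper's proof. Both arguments use the Tate square for $H$: since $P_{n-1}H \simeq 0$, it exhibits $H \simeq P_nH$ as the fiber of $(\partial_nH\circ\Delta)^{h\Sigma_n} \to (\partial_nH\circ\Delta)^{t\Sigma_n}$, and this fiber is the norm map, so $\eta_n$ is the norm. Your extra compatibility check is a valid spelling-out of a point the paper leaves implicit in its diagram: it uses the universal property of the norm together with the fact, from the proof of \cref{prop: right-adjoint-derivatives}, that $\partial_n(\eta_n)$ is inverse to the norm-induced equivalence.
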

\begin{proof}
    Consider the commutative diagram
    \[
    \begin{tikzcd}
        H \ar[r, "\sim"] \ar[d, equal] & (\partial_nH \circ \Delta)_{h\Sigma_n} \ar[d, "\mathrm{N}"] \\
        H \ar[r, "\eta_n"] \ar[d] & (\partial_nH \circ \Delta)^{h\Sigma_n} \ar[d] \\
        \ast \ar[r] & (\partial_nH \circ \Delta)^{t\Sigma_n},
    \end{tikzcd}
    \]
    where the bottom square is the Tate square for $H$ and the vertical columns are fiber sequences.
    The result follows since $\eta_n$ is the unit and $N$ is the norm map.
\end{proof}

By the following lemma, we also get a description of the right adjoint of the derivatives functor on $\End^{\ast, \omega}(\LSp)$.

\begin{lemma}
    The inclusion $\End^{\ast, \omega}(\LSp) \hookrightarrow \End^*(\LSp)$ admits a right adjoint.
\end{lemma}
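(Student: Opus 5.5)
The natural candidate for the right adjoint is the "finitary coreflection". The approach is to write it down by hand: every functor $F \colon \LSp \to \LSp$ is to be replaced by the left Kan extension of its restriction to compact objects.

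Write $\LSp^\omega$ for the full subcategory of compact objects. Since $\LSp$ is a stable presentable localization of $\Sp$, it is compactly generated: the local sphere $L\Sph$ need not be compact, but $\LSp$ is an accessible localization, so it is $\kappa$-compactly generated for some regular cardinal $\kappa$. To keep the argument clean, first treat the case where $\LSp$ is compactly generated. The general case uses the same argument with $\omega$-accessibility replaced by the observation that finitary functors on $\LSp$ are determined by their restriction to a small generating subcategory closed under filtered colimits. Actually, a more robust route avoids compact generation entirely, as follows.

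$\End^{\ast}(\LSp)$ is not presentable (it is large), but it has all small colimits, computed pointwise. The full subcategory $\End^{\ast,\omega}(\LSp)$ is closed under colimits, because colimits commute with filtered colimits and with preservation of $0$. Moreover, $\End^{\ast,\omega}(\LSp)$ is presentable: this is exactly the case $\cC = \LSp$ of the identification $\Fun^{\ast,\omega}(\cC,\LSp) \simeq \FunL(\Pfin(\cC),\LSp)$ from \cite[Proposition 3.3.3]{blansblom2025chainrulegoodwilliecalculus} used in \cref{lem:fun-omega-presentable}. The inclusion is therefore a colimit-preserving functor out of a presentable category into a locally small category with small colimits.

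The plan is to apply the adjoint functor theorem in the form: a colimit-preserving functor from a presentable category into an arbitrary locally small category admits a right adjoint. Concretely, for fixed $G \in \End^\ast(\LSp)$, the presheaf $\Map(\iota(-), G)$ on $\End^{\ast,\omega}(\LSp)$ sends colimits to limits, and is hence representable by the representability criterion for presentable categories \cite[Proposition 5.5.2.2]{HTT}. The assignment $G \mapsto$ (representing object) then assembles into the right adjoint.

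The only point requiring care is local smallness of $\End^\ast(\LSp)$, which is a category of functors between large categories. Mapping spaces between endofunctors of a presentable category can fail to be small in general. In our situation, however, only the mapping spaces $\Map(\iota F, G)$ with $F$ finitary are needed. These are small: $F$ is the left Kan extension of its restriction to $\LSp^\kappa$ for suitable $\kappa$, because it preserves filtered colimits and $\LSp$ is $\kappa$-compactly generated with $\kappa$ chosen so that $\kappa$-filtered colimits are filtered. Hence $\Map(\iota F, G) \simeq \Map(F|_{\LSp^\kappa}, G|_{\LSp^\kappa})$, which is small. Verifying this smallness is the main (mild) obstacle; once it is in place, the representability argument goes through, and the right adjoint is given explicitly by $G \mapsto \mathrm{Lan}_{\LSp^\omega}(G|_{\LSp^\omega})$ in the compactly generated case. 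This is reduced because $0$ is compact.
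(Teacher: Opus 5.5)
Your argument is correct. It rests on the same two inputs as the paper: $\kappa$-compact generation of $\LSp$, so that a finitary (hence $\kappa$-accessible) functor is the left Kan extension of its restriction to $\LSp^\kappa$; and presentability of $\End^{\ast,\omega}(\LSp)$. You package them differently, though.

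The paper factors the inclusion as $\End^{\ast,\omega}(\LSp) \hookrightarrow \End^{\ast,\kappa}(\LSp) \hookrightarrow \End^{\ast}(\LSp)$. The first step is handled by the usual adjoint functor theorem between presentable categories. The second has the explicit right adjoint $G \mapsto \mathrm{Lan}(G|_{\LSp^\kappa})$.

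Your identity $\Map(\iota F, G) \simeq \Map(F|_{\LSp^\kappa}, G|_{\LSp^\kappa})$ is exactly the adjunction behind that second step. You use it only to check that the presheaf $\Map(\iota(-), G)$ on $\End^{\ast,\omega}(\LSp)$ is small-valued. You then conclude by the representability criterion for presentable categories together with pointwise existence of adjoints, which amounts to an adjoint functor theorem whose target is not locally small.

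The paper's factorization avoids the size question altogether and describes the right adjoint more explicitly, as a restrict-and-Kan-extend coreflection followed by a coreflection between presentable categories. Your route is more direct, but you must verify the smallness by hand, which you do correctly.

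Two points of cleanup, neither of which affects correctness:
\begin{itemize}
\item The opening claim that $\LSp$ is compactly generated does not follow from its being a localization of $\Sp$; you only get $\kappa$-compact generation for some $\kappa$.
\item The remark about a ``small generating subcategory closed under filtered colimits'' does not make sense as stated.
\end{itemize}
Your final argument uses only $\kappa$-compact generation, and you correctly limit the formula $G \mapsto \mathrm{Lan}(G|_{\LSp^\omega})$ to the compactly generated case, so these slips are harmless.
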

\begin{proof}
    Choose a regular cardinal $\kappa$ such that $\LSp$ is $\kappa$-compactly generated and write $\End^{\ast, \kappa}(\LSp)$ for the full subcategory of $\End^{\ast}(\LSp)$ spanned by the functors that preserve $\kappa$-filtered colimits.
    Consider the inclusions
    \[
    \End^{\ast, \omega}(\LSp) \hookrightarrow \End^{\ast, \kappa}(\LSp) \hookrightarrow \End^{\ast}(\LSp).
    \]
    The first functor admits a right adjoint since it is a colimit preserving functor between presentable categories.
    The second functor admits a right adjoint given by first restricting to the full subcategory of $\kappa$-compact objects and then left Kan extending to $\LSp$.
    Therefore, the composite of these inclusions admits a right adjoint as well.
\end{proof}

\begin{corollary} \label{cor: right-adjoint-derivatives-finitary}
    The derivatives functor $\partial_* \colon \End^{\ast, \omega}(\LSp) \to \SSeq_{\geq 1}(\LSp)$ admits a right adjoint given by the composite
    \[
    \SSeq_{\geq 1}(\LSp) \xrightarrow{\Psi} \End^{\ast}(\LSp) \xrightarrow{R} \End^{\ast, \omega}(\LSp),
    \]
    where $R$ denotes the right adjoint from the previous lemma. \qed
\end{corollary}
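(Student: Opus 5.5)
The plan is to obtain the right adjoint by composing two adjunctions and then restricting along a fully faithful inclusion on the symmetric sequence side. Write $j \colon \End^{\ast, \omega}(\LSp) \hookrightarrow \End^{\ast}(\LSp)$ for the inclusion, which has right adjoint $R$ by the previous lemma. Write $i \colon \SSeq_{\geq 1}(\LSp) \hookrightarrow \SymFunlin(\LSp)$ for the fully faithful inclusion onto sequences $(A_n)_{n \geq 1}$ with each $A_n$ finitary.

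First I would record the compatibility $\partial_* \circ j \simeq i \circ \partial_*$ as functors $\End^{\ast,\omega}(\LSp) \to \SymFunlin(\LSp)$. This is exactly the statement made before \cref{prop: right-adjoint-derivatives}: the derivatives functor on all reduced endofunctors, defined as multilinearized cross-effects, restricts on finitary functors to the derivatives functor used throughout the paper, and it takes values in $\SSeq_{\geq 1}(\LSp)$.

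Then, for $F \in \End^{\ast,\omega}(\LSp)$ and $A \in \SSeq_{\geq 1}(\LSp)$, I would chain natural equivalences:
\[
\Map(F, R\Psi i A) \simeq \Map(jF, \Psi i A) \simeq \Map(\partial_* jF, iA) \simeq \Map(i\partial_* F, iA) \simeq \Map(\partial_* F, A).
\]
The first equivalence uses $j \dashv R$. The second uses \cref{prop: right-adjoint-derivatives}. The third uses the compatibility from the previous step. The last uses that $i$ is fully faithful. Naturality in both variables shows that $R \circ \Psi \circ i$, which is what the composite in the statement denotes, is right adjoint to $\partial_*$.

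The only point needing care is the compatibility $\partial_* j \simeq i \partial_*$. It should be natural, so that the chain of equivalences is natural in $F$. This holds essentially by construction, since both derivatives functors are defined via multilinearized cross-effects as in \cite[Theorem 6.1.4.7]{HA}. I do not expect any genuine obstacle here.
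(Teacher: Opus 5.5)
Your proposal is correct and is the argument the paper leaves implicit behind the \qed: compose the adjunction $\partial_* \dashv \Psi$ of \cref{prop: right-adjoint-derivatives} with the adjunction $j \dashv R$ from the preceding lemma. The two facts that make this work are that $\partial_*$ on finitary functors is the restriction of the general one, and that it lands in the full subcategory $\SSeq_{\geq 1}(\LSp) \subseteq \SymFunlin(\LSp)$; you identify both.
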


\printbibliography

\end{document}